\DeclareFontFamily{OT1}{rsfs}{}
\DeclareFontShape{OT1}{rsfs}{n}{it}{<-> rsfs10}{}
\DeclareMathAlphabet{\curly}{OT1}{rsfs}{n}{it}
\newcommand{\eqnum}{\refstepcounter{equation}\textup{\tagform@{\theequation}}}
\DeclareRobustCommand{\SkipTocEntry}[3]{}
\newcommand\@dotsep{4.5}
\def\@tocline#1#2#3#4#5#6#7{\relax
  \ifnum #1>\c@tocdepth % then omit
  \else
    \par \addpenalty\@secpenalty\addvspace{#2}%
    \begingroup \hyphenpenalty\@M
    \@ifempty{#4}{%
      \@tempdima\csname r@tocindent\number#1\endcsname\relax
    }{%
      \@tempdima#4\relax
    }%
    \parindent\z@ \leftskip#3\relax \advance\leftskip\@tempdima\relax
    \rightskip\@pnumwidth plus1em \parfillskip-\@pnumwidth
    #5\leavevmode #6\relax
    \leaders\hbox{$\m@th
      \mkern \@dotsep mu\hbox{.}\mkern \@dotsep mu$}\hfill
    \hbox to\@pnumwidth{\@tocpagenum{#7}}\par
    \nobreak
    \endgroup
  \fi}
\newcommand\beq[1]{\begin{equation}\label{#1}}
\newcommand\eeq{\end{equation}}
\newcommand\beqa{\begin{eqnarray*}}
\newcommand\eeqa{\end{eqnarray*}}
\title[Moduli stacks of semistable sheaves]
{Moduli stacks of semistable sheaves and 
representations of Ext-quivers}
\date{}
\author{Yukinobu Toda}
\DeclareFontFamily{U}{rsfs}{%
\skewchar\font127}
\DeclareFontShape{U}{rsfs}{m}{n}{%
<-6>rsfs5<6-8.5>rsfs7<8.5->rsfs10}{}
\DeclareSymbolFont{rsfs}{U}{rsfs}{m}{n}
\DeclareRobustCommand*\rsfs{%
\@fontswitch\relax\mathrsfs}
\theoremstyle{plain}
\newtheorem{thm}{Theorem}[section]
\newtheorem{prop}[thm]{Proposition}
\newtheorem{lem}[thm]{Lemma}
\newtheorem{defi}[thm]{Definition}
\newtheorem{rmk}[thm]{Remark}
\newtheorem{cor}[thm]{Corollary}
\newtheorem{prop-defi}[thm]{Proposition-Definition}
\newtheorem{thm-defi}[thm]{Theorem-Definition}
\newtheorem{lem-defi}[thm]{Lemma-Definition}
\newcommand{\aA}{\mathcal{A}}
\newcommand{\cC}{\mathcal{C}}
\newcommand{\eE}{\mathcal{E}}
\newcommand{\fF}{\mathcal{F}}
\newcommand{\gG}{\mathcal{G}}
\newcommand{\hH}{\mathcal{H}}
\newcommand{\lL}{\mathcal{L}}
\newcommand{\mM}{\mathcal{M}}
\newcommand{\oO}{\mathcal{O}}
\newcommand{\sS}{\mathcal{S}}
\newcommand{\tT}{\mathcal{T}}
\newcommand{\uU}{\mathcal{U}}
\newcommand{\vV}{\mathcal{V}}
\newcommand{\wW}{\mathcal{W}}
\newcommand{\zZ}{\mathcal{Z}}
\newcommand{\Supp}{\mathop{\rm Supp}\nolimits}
\newcommand{\Hom}{\mathop{\rm Hom}\nolimits}
\newcommand{\dotimes}{\stackrel{\textbf{L}}{\otimes}}
\newcommand{\dR}{\mathbf{R}}
\newcommand{\id}{\textrm{id}}
\newcommand{\ch}{\mathop{\rm ch}\nolimits}
\newcommand{\Ext}{\mathop{\rm Ext}\nolimits}
\newcommand{\Spec}{\mathop{\rm Spec}\nolimits}
\newcommand{\Coh}{\mathop{\rm Coh}\nolimits}
\newcommand{\cneq}{\mathrel{\raise.095ex\hbox{:}\mkern-4.2mu=}}
\newcommand{\eqcn}{\mathrel{=\mkern-4.5mu\raise.095ex\hbox{:}}}
\newcommand{\gr}{\mathop{\rm gr}\nolimits}
\newcommand{\Aut}{\mathop{\rm Aut}\nolimits}
\newcommand{\Stab}{\mathop{\rm Stab}\nolimits}
\newcommand{\modu}{\mathop{\rm mod}\nolimits}
\newcommand{\Modu}{\mathop{\rm Mod}\nolimits}
\newcommand{\Imm}{\mathop{\rm Im}\nolimits}
 \newcommand{\RHom}{\mathop{\dR\mathrm{Hom}}\nolimits}
\newcommand{\Ker}{\mathop{\rm Ker}\nolimits}
\newcommand{\Ree}{\mathop{\rm Re}\nolimits}
\newcommand{\GL}{\mathop{\rm GL}\nolimits}
\newcommand{\tr}{\mathop{\rm tr}\nolimits}
\newcommand{\cl}{\mathop{\rm cl}\nolimits}
\newcommand{\sslash}{/\!\!/}
\newcommand{\lkakko}{[\![}
\newcommand{\rkakko}{]\!]}
 \renewcommand{\theequation}{%
   \thesection.\arabic{equation}}
\begin{document}

\begin{abstract}
We show that the moduli stacks of semistable sheaves on 
smooth projective varieties are analytic locally 
on their coarse moduli spaces described in terms of 
representations of the associated Ext-quivers with 
convergent relations. 
When the underlying variety is a Calabi-Yau 3-fold,
our result describes the above moduli stacks as 
critical locus analytic locally on the
coarse moduli spaces. The results in this paper 
will be applied to the wall-crossing formula of 
Gopakumar-Vafa invariants
defined by Maulik and the author. 
\end{abstract}

\maketitle

\section{Introduction}
\subsection{Motivation}
The purpose of this paper is to give descriptions of 
moduli stacks of semistable sheaves on smooth projective varieties
in terms of quivers with (formal but convergent) relations, analytic locally 
on their coarse moduli spaces. 
The relevant quiver is the Ext-quiver
associated to the simple collection of coherent sheaves, determined by 
a polystable sheaf corresponding to a point of the coarse moduli space. 
Probably the main results have been folklore
for experts of moduli of sheaves
(at least on formal 
neighborhoods at closed points of the
coarse moduli space), 
but we cannot find any reference
and our purpose is to give precise statements and 
details of the proofs.
The main results in this paper 
will be used in the companion paper~\cite{TodGV}
in the proof of wall-crossing formula of 
Gopakumar-Vafa invariants introduced by Maulik and the author~\cite{MT}.  

\subsection{Results}
Let $X$
be a smooth projective variety over $\mathbb{C}$ 
and $\omega$ an ample divisor on it. 
Let $\mM_{\omega}$ be the moduli stack of 
$\omega$-Gieseker semistable sheaves on $X$, 
and $M_{\omega}$ the coarse moduli 
space of $S$-equivalence classes of them. 
There is a natural morphism
\begin{align*}
p_M \colon \mM_{\omega} \to M_{\omega}
\end{align*}
sending a semistable sheaf to its 
$S$-equivalence class. 
A closed point of $M_{\omega}$ corresponds to 
a polystable sheaf, i.e. 
a direct sum 
\begin{align}\label{intro:polyE}
E=\bigoplus_{i=1}^k V_i \otimes E_i
\end{align}
where each $E_1, \ldots, E_k$ are mutually 
non-isomorphic $\omega$-Gieseker stable sheaves with 
the same reduced Hilbert polynomials. 

The \textit{Ext-quiver} $Q$ 
associated to the collection $(E_1, \ldots, E_k)$
is defined by the quiver whose vertex is 
$\{1, \ldots, k\}$ and the number of arrows from 
$i$ to $j$ is the dimension of $\Ext^1(E_i, E_j)$. 
We denote by 
$\mM_{Q}$ the moduli stack of finite dimensional
$Q$-representations
with dimension vector $(\dim V_i)_{1\le i\le k}$, and 
$M_{Q}$ the coarse moduli space
of 
semi-simple $Q$-representations
with dimension vector as above. 
We have the natural morphism
\begin{align*}
p_Q \colon \mM_{Q} \to M_Q
\end{align*}
sending a $Q$-representation to its
semi-simplification. 
There is a point $0 \in M_Q$ represented by the  
semi-simple $Q$-representation
$\oplus_{i=1}^k V_i \otimes S_i$,
where $S_i$ is a
simple $Q$-representation corresponding to 
the vertex $i$.  
The following is the main result in this paper. 
\begin{thm}\label{intro:thm1}\emph{(Theorem~\ref{thm:precise})}
For $p \in M_{\omega}$
represented by a polystable sheaf (\ref{intro:polyE}), 
let $Q$ be the $\Ext$-quiver
associated to $(E_1, \ldots, E_k)$. 
Then there exist
analytic open neighborhoods
$p \in U \subset M_{\omega}$, 
$0 \in V \subset M_{Q}$, 
closed analytic substack 
$\zZ \subset p_Q^{-1}(V)$
with 
the natural morphism to its 
coarse moduli space 
$p_Q \colon \zZ \to Z$ 
and the commutative isomorphisms
\begin{align*}
\xymatrix{
\zZ \ar[d]_-{p_Q} \ar[r]^-{\cong} & p_M^{-1}(U) 
\ar[d]^{p_M}
\\
Z \ar[r]^-{\cong} & U.
}
\end{align*}
\end{thm}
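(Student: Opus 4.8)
The plan is to reduce the statement, via Geometric Invariant Theory and Luna's étale slice theorem, to a local model governed by the deformation theory of the polystable sheaf $E$, and then to match that deformation theory with the representation theory of the Ext-quiver $Q$ by homotopy transfer on $\RHom(E,E)$. First I would set up the GIT description. Fixing $m\gg 0$, let $\mathbf{Q}$ be the open subscheme of the relevant $\Quot$-scheme parametrizing quotients $q\colon\oO_X(-m)^{\oplus N}\twoheadrightarrow F$ with $F$ $\omega$-Gieseker semistable of the fixed Hilbert polynomial, $H^0(F(m))\xrightarrow{\sim}\mathbb{C}^N$ and $H^{>0}(F(m))=0$; then $\mM_\omega\cong[\mathbf{Q}/\GL_N]$ and $M_\omega\cong\mathbf{Q}\sslash\GL_N$, and $p_M$ is the induced good moduli space morphism (the scalars $\mathbb{C}^\ast\subset\GL_N$ act trivially, so $\mM_\omega$ is a $\mathbb{G}_m$-gerbe, exactly as on the quiver side). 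The point $p$ lifts to $q_0\in\mathbf{Q}$ whose orbit is closed, with reductive stabilizer $G\cneq\Aut(E)\cong\prod_{i=1}^k\GL(V_i)$ acting on $H^0(E(m))\cong\mathbb{C}^N$. Applying the étale slice theorem, one obtains a locally closed affine $G$-invariant $q_0\in S\subset\mathbf{Q}$ with $\GL_N\times_G S\to\mathbf{Q}$ étale onto a saturated neighborhood, hence étale morphisms $[S/G]\to\mM_\omega$ and $S\sslash G\to M_\omega$ near $[q_0]$; passing to analytic neighborhoods turns these into isomorphisms, so it suffices to describe $[S/G]$ and $S\sslash G$ analytic-locally near $0$.

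Next, by the deformation theory of coherent sheaves (cf. Huybrechts--Lehn), the pointed affine $G$-scheme $(S,q_0)$ is, analytic-locally, $G$-equivariantly isomorphic to a Kuranishi model: the zero locus of a $G$-equivariant analytic map germ $\kappa\colon(\Ext^1(E,E),0)\to\Ext^2(E,E)$ with $d\kappa_0=0$ and quadratic part the Yoneda square $\Ext^1(E,E)^{\otimes 2}\to\Ext^2(E,E)$, where $\Ext^1(E,E)=T_{q_0}S$ is the normal space to the orbit. Using $\Hom(E_i,E_j)=\delta_{ij}\mathbb{C}$ one has $\Ext^\bullet(E,E)=\bigoplus_{i,j}\Hom(V_i,V_j)\otimes\Ext^\bullet(E_i,E_j)$, and in degree $1$ this is precisely the representation space $R_Q\cneq\bigoplus_{a\colon i\to j}\Hom(V_i,V_j)$ of $Q$ with dimension vector $(\dim V_i)_i$, with the $G$-action matching the $\prod\GL(V_i)$-action. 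Hence near $0$ we have $[\Ext^1(E,E)/G]\cong\mM_Q$ and $\Ext^1(E,E)\sslash G\cong M_Q$.

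To identify $\kappa$ with the quiver relations, put a minimal $A_\infty$-structure on $H\cneq\bigoplus_{i,j}\Ext^\bullet(E_i,E_j)$ by transferring from the dg-algebra $\RHom(\bigoplus E_i,\bigoplus E_i)$ (computed via a bounded locally free resolution, so that it is an honest dg-algebra) along a contraction compatible with the decomposition over $\{1,\dots,k\}$; the transferred products $m_n$ are then quiver-graded, and after a formal $G$-equivariant coordinate change $\kappa$ becomes the Maurer--Cartan map $x\mapsto\sum_{n\ge 2}m_n(x,\dots,x)$ for $x\in R_Q$, since both control the deformation functor of $E$ and agree to second order. Writing this out in coordinates dual to $\Ext^2(E_i,E_j)$ yields a finite set of formal relations $\mathcal{R}\subset\widehat{\mathbb{C}Q}$ in the completed path algebra, and the would-be $\zZ$ is the stack of $\mathbb{C}Q$-representations of dimension vector $(\dim V_i)_i$ satisfying $\mathcal{R}$.

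The main obstacle is \emph{convergence}: a priori $\kappa$, the transferred products $m_n$, and the relations $\mathcal{R}$ are only formal power series. To obtain the genuine analytic statement one must (i) bound the transferred products geometrically — arranged by transferring along a bounded (e.g. metric/harmonic, or finite locally free) contraction, which makes the homological-perturbation sums convergent on a neighborhood of $0$, so that $\mathcal{R}$ defines a closed analytic germ $\zZ_0\subset R_Q$ — and (ii) upgrade the formal $G$-equivariant isomorphism $\widehat{(S,q_0)}\cong\widehat{(\zZ_0,0)}$ to an analytic one, for which I would invoke the analyticity of the slice $S$ together with a $G$-equivariant Artin approximation, or else construct the Kuranishi family analytically from the outset (Douady-style, in Banach-analytic spaces) and read off convergent $A_\infty$-products directly. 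Granting (i)--(ii), set $\zZ\cneq[\zZ_0/G]\subset p_Q^{-1}(V)$ for a small analytic neighborhood $0\in V\subset M_Q$; the composite isomorphism $\zZ\cong[S/G]\cong p_M^{-1}(U)$ for a suitable analytic $p\in U\subset M_\omega$ is the top arrow, and passing to good moduli (GIT) quotients — which commute with the étale and analytic base changes used above — gives $Z\cneq\zZ_0\sslash G\cong U$ and the commuting square. I expect steps (i)--(ii), i.e. the passage from formal to convergent relations, to be the real content of the theorem; the GIT/slice reduction and the $A_\infty$ bookkeeping, while technical, are standard.
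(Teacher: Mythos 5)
Your plan is essentially the paper's proof: Quot-scheme GIT plus Luna's étale slice to reduce to a $G$-equivariant local model, homotopy transfer along a harmonic (Hodge-theoretic) contraction of the Dolbeault dg-algebra of a bounded locally free resolution to get the minimal $A_\infty$-products, Sobolev-norm bounds on the transferred operations to make the Maurer--Cartan map and the quiver relations convergent, and the Banach-analytic construction of the Kuranishi family (your second alternative in (ii)) together with the homotopy inverse of the transfer map to identify the MC locus with the slice. The only steps you elide that the paper must work for are the comparison of $K$-equivariant local isomorphisms with $G$-equivariant ones on \emph{saturated} neighborhoods (via Kempf--Ness sets), without which ``passing to analytic neighborhoods'' does not automatically descend to open subsets of the affine quotients.
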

Indeed we can define the (formal but convergent) relation $I$
of the Ext-quiver $Q$, using 
the minimal $A_{\infty}$-structure of the dg-category
generated by $(E_1, \ldots, E_k)$. 
The convergence of $I$ will be proved 
by generalizing 
the gauge theory arguments of~\cite{MR1950958, JuTu}
for deformations of vector bundles 
 to the case of resolutions of 
coherent sheaves by complexes of 
vector bundles. 
The substack $\zZ \subset p_Q^{-1}(V)$ is then defined to 
be the stack of $Q$-representations satisfying the relation $I$. 

When $X$ is a smooth projective Calabi-Yau (CY) 3-fold, 
we can take the relation $I$ to be 
the derivation of a convergent super-potential of the quiver $Q$. 
So we have the following corollary of 
Theorem~\ref{intro:thm1}: 
\begin{cor}\label{intro:thm2}\emph{(Corollary~\ref{cor:CY3})}
In the situation of Theorem~\ref{intro:thm1}, 
suppose that $X$ is a smooth projective CY 3-fold. 
Then 
there is a morphism of complex analytic stacks
$W \colon p_Q^{-1}(V) \to \mathbb{C}$
such that 
\begin{align*}
\zZ=\{dW=0\} \stackrel{\cong}{\to} p_M^{-1}(U). 
\end{align*}
\end{cor}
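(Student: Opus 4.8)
The plan is to promote the convergent relation $I$ furnished by Theorem~\ref{intro:thm1} to a convergent super-potential, using the Calabi--Yau condition, and then to identify the associated Jacobi locus with $\zZ$. To set up, I would first recall the structure produced in the proof of Theorem~\ref{intro:thm1}: the graded algebra $A\cneq\bigoplus_{i,j}\Ext^\ast(E_i,E_j)$ carries a minimal $A_\infty$-structure $\{m_n\}_{n\ge 2}$ with structure constants convergent on a polydisc, and, after possibly shrinking $V$, the substack $\zZ\subset p_Q^{-1}(V)$ is the stack of $Q$-representations killed by the relation $I$, where $I$ is generated by the $\Ext^2$-valued series $\mu\cneq\sum_{n\ge 2}m_n(x,\dots,x)$ in the universal degree-one element $x=\sum_a x_a\cdot a$, the sum running over a basis $\{a\}$ of $\bigoplus_{i,j}\Ext^1(E_i,E_j)$, i.e.\ over the arrows of $Q$.

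Next, since $\omega_X\cong\oO_X$, Serre duality provides a graded-symmetric nondegenerate pairing $\langle-,-\rangle\colon\Ext^p(E_a,E_b)\otimes\Ext^{3-p}(E_b,E_a)\to\Ext^3(E_a,E_a)\xrightarrow{\ \tr\ }\mathbb{C}$ compatible with the Yoneda product. I would then invoke a cyclic (Calabi--Yau) minimal-model statement: the minimal $A_\infty$-structure on $A$ may be taken \emph{cyclic} with respect to $\langle-,-\rangle$, meaning that $\langle m_n(a_1,\dots,a_n),a_{n+1}\rangle$ is invariant, up to the Koszul sign, under cyclic rotation of the $a_i$; and the cyclic model should be obtained from the convergent one by a gauge transformation that is itself convergent, so that the associated $\zZ$ is unchanged. \textbf{I expect this to be the main obstacle}: one has to run the homotopy-transfer / gauge-fixing argument of Kontsevich--Soibelman and Kajiura in a way that respects the cyclic pairing while preserving the analytic estimates coming from the gauge-theoretic argument of~\cite{MR1950958, JuTu} used in Theorem~\ref{intro:thm1} — equivalently, one must show that the ``cyclicization'' gauge transformation converges, and that passing to a gauge-equivalent $A_\infty$-structure does not change $\zZ$.

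Granting a convergent cyclic minimal model, I would set
\[
W\cneq\sum_{n\ge 2}\frac{1}{n+1}\,\big\langle m_n(x,\dots,x),\,x\big\rangle,
\]
a cyclically invariant element of the completed path algebra of $Q$, i.e.\ a super-potential. Cyclicity of the $m_n$ makes $W$ well defined modulo cyclic permutations; convergence of the $m_n$ together with boundedness of the fixed finite-dimensional pairing makes $W$ convergent on a polydisc; and the cyclic derivative $\partial_a W$ reproduces, under the Serre-duality identification $\Ext^2(E_i,E_j)^\vee\cong\Ext^1(E_j,E_i)$ which matches the components of $\mu$ with arrows of $Q$, exactly the components of $\mu$. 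Hence $I=(\partial_a W : a \text{ an arrow of }Q)$ is the Jacobi ideal of $W$.

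Finally, $W$ induces a $\prod_i\GL(V_i)$-invariant analytic function $\tr(W)$ on the representation space $\bigoplus_{a\colon i\to j}\Hom(V_i,V_j)$, hence a morphism $W\colon p_Q^{-1}(V)\to\mathbb{C}$ of complex analytic stacks (after shrinking $V$ so that it is defined on the whole polydisc). Using the standard computation that the differential of a trace-of-potential function vanishes at a representation precisely when every cyclic derivative $\partial_a W$ acts by zero on it, the critical locus $\{dW=0\}$ equals the stack of representations of $(Q,I)$, which is $\zZ$. Composing with the isomorphism $\zZ\xrightarrow{\cong}p_M^{-1}(U)$ of Theorem~\ref{intro:thm1} then yields $\{dW=0\}\xrightarrow{\cong}p_M^{-1}(U)$, as claimed.
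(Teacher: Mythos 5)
Your overall route is the same as the paper's: define a convergent super-potential $W_{E_{\bullet}}$ whose cyclic derivatives recover the relation $I_{E_{\bullet}}$, get convergence from the bounds of Lemma~\ref{lem:mbound}, and identify $\{d(\tr W_{E_{\bullet}})=0\}$ with the representation space of $(Q_{E_{\bullet}},\partial W_{E_{\bullet}})$ as in Subsection~\ref{subsec:potential}, so that Corollary~\ref{cor:CY3} follows from Theorem~\ref{thm:precise}. The one place you diverge is precisely the step you flag as the main obstacle, and there the paper's resolution is much simpler than the one you propose. You plan to pass to a gauge-equivalent cyclic minimal model and then worry, correctly, about whether the cyclicization gauge transformation converges and whether $\zZ$ is unchanged under gauge equivalence. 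Neither issue arises in the paper: the minimal $A_{\infty}$-structure entering Theorem~\ref{thm:precise} is the specific one obtained by homological transfer through the Hodge-theoretic data $(i,p,h)$ with $h=-d_{\mathfrak{g}}^{\ast}\circ G$, and for a CY $3$-fold this particular model is \emph{already} cyclic with respect to the Serre pairing (\ref{Serre}); the paper simply cites~\cite{MR1876072} for this. Consequently $I_{E_{\bullet}}=\partial W_{E_{\bullet}}$ holds on the nose for the relation already constructed in (\ref{relation:I}): Serre duality identifies the basis $\{e^{\vee}\}$ of $\Ext^1(\overline{E},\overline{E})$ with a basis of $\Ext^2(\overline{E},\overline{E})^{\vee}$, and the cyclicity relation (\ref{cyclic}) converts each $\mathbf{m}^{\vee}(e^{\vee})$ into $\partial_{e^{\vee}}W_{E_{\bullet}}$. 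So no change of $A_{\infty}$-model, no new convergence estimate, and no gauge-invariance statement for $\zZ$ are needed. Had you pursued your route, you would in addition have to prove that a gauge equivalence of convergent $A_{\infty}$-structures induces an isomorphism of the associated analytic stacks $\zZ$ compatible with the maps to $p_M^{-1}(U)$ — a genuine extra burden that the paper's choice of minimal model avoids entirely.
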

A result similar to (\ref{cor:CY3}) was already proved in~\cite{JS, BBBJ}, 
where the stack $\mM_{\omega}$ is  
described as a critical locus locally on $\mM_{\omega}$. 
Our description is more global, as
we describe the stack $\mM_{\omega}$
as a critical locus
on the preimage of an open subset 
of the coarse moduli space $M_{\omega}$. 
The result of Corollary~\ref{cor:CY3} is 
also compatible with 
the $d$-critical structure introduced by Joyce~\cite{JoyceD}. 
By~\cite{PTVV}, the stack $\mM_{\omega}$ 
is a truncation of a derived scheme with a 
$(-1)$-shifted symplectic structure~\cite{PTVV}.
Using this fact, it is proved in~\cite{BBBJ} that 
the stack $\mM_{\omega}$
has a canonical 
$d$-critical structure. 
From the construction of $W$ in Corollary~\ref{intro:thm2},
the data 
$(p_M^{-1}(U), p_Q^{-1}(V), W)$ 
is shown to give a $d$-critical 
chart of the $d$-critical stack $\mM_{\omega}$
 (see~\cite[Appendix~A]{TodGV}).  

In the case of moduli spaces of one dimensional sheaves, 
we also investigate the wall-crossing 
phenomena of these moduli spaces with respect to the twisted 
stability. Let 
$A(X)_{\mathbb{C}}$ be the complexified ample cone of $X$
and take an element
\begin{align*}
\sigma=B+i\omega
\in A(X)_{\mathbb{C}}.
\end{align*}
Let 
$M_{\sigma}$
be the coarse moduli space of one dimensional 
$B$-twisted $\omega$-semistable 
sheaves on $X$. 
We will see that the 
result of Theorem~\ref{intro:thm1} is 
also applied for the
moduli space $M_{\sigma}$ of twisted semistable sheaves. 
If we take $\sigma^{+} \in A(X)_{\mathbb{C}}$ to be
sufficiently close to $\sigma$, we have the 
natural projective morphism
\begin{align}\label{intro:mor:wall}
q_M \colon M_{\sigma^{+}} \to M_{\sigma}.
\end{align}

\begin{thm}\emph{(Theorem~\ref{thm:onedim})}\label{intro:thm:onedim}
For $p\in M_{\sigma}$, 
let an open subset $p \in U \subset M_{\sigma}$, 
a quiver $Q$, 
and an analytic space $Z$
be as in Theorem~\ref{intro:thm1}. 
Then there is a stability condition $\xi$ on the category of 
$Q$-representations such that 
we have the commutative diagram
of isomorphisms
\begin{align}\label{intro:dia:onedim}
\xymatrix{
Z_{\xi} \ar[r]^-{\cong} \ar[d] & q_M^{-1}(U) \ar[d]^-{q_M} \\
Z \ar[r]^-{\cong} & U.
}
\end{align}
Here $Z_{\xi}$ is the coarse moduli space of $\xi$-semistable 
$Q$-representations satisfying the relation $I$. 
\end{thm}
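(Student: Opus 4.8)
The plan is to transport the wall-crossing of one-dimensional $\sigma$-twisted semistable sheaves near $\sigma$ through the local description of Theorem~\ref{intro:thm1}, and to produce $\xi$ from the first-order variation of the twisted central charge at $\sigma$. As remarked in the text, Theorem~\ref{thm:precise} applies with $M_{\omega}$ replaced by $M_{\sigma}$, so for $p \in M_{\sigma}$ corresponding to a polystable sheaf $\bigoplus_i V_i \otimes E_i$ we may, after shrinking $U$, identify $p_M^{-1}(U)$ (for $\sigma$-semistability) with $\zZ$, the stack of $Q$-representations of dimension vector $(\dim V_i)_i$ satisfying the relation $I$ and with semisimplification in $V$. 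Concretely, every $\sigma$-semistable sheaf with the class of $E$ and $S$-equivalence class in a small enough $U$ lies in the finite length abelian subcategory $\aA \subset \Coh(X)$ generated under extensions by $E_1, \ldots, E_k$, whose simple objects are exactly the $E_i$, and the construction identifies $\aA$ with the category of finite dimensional $Q$-representations satisfying $I$, sending $E_i$ to $S_i$; in particular $K(\aA) \cong \mathbb{Z}^k \cong K(Q\text{-rep})$ via $[E_i] \leftrightarrow [S_i]$, compatibly with sub- and quotient objects.

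To define $\xi$, recall that for a one-dimensional sheaf $F$ the $\sigma'$-twisted stability ($\sigma' = B' + i\omega'$) is the slope stability of the central charge $Z_{\sigma'}(F) = -\chi(F) + (B' + i\omega')\cdot[\Supp F]$ (with the standard sign conventions), whose imaginary part $\omega'\cdot[\Supp F]$ is positive. The hypothesis that the $E_i$ share the $\sigma$-reduced Hilbert polynomial means that the $Z_{\sigma}(E_i)$ all have the same phase. For $\sigma^{+}$ close to $\sigma$ the numbers $Z_{\sigma^{+}}(E_i)$ lie in the upper half-plane with phases near this common value, and I define $\xi$ to be the King-type slope stability on $Q$-representations determined, through $[S_i] \leftrightarrow [E_i]$, by the central charge $[S_i] \mapsto Z_{\sigma^{+}}(E_i)$ --- equivalently King's $\theta$-stability for the weight vector given by the derivative of $\arg Z_{\sigma'}(E_i)$ at $\sigma' = \sigma$ in the direction $\sigma^{+} - \sigma$.

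The core step is the comparison: for $F \in \aA$ with $S$-equivalence class in $U$, $F$ is $\sigma^{+}$-(semi)stable if and only if the associated $Q$-representation is $\xi$-(semi)stable, and $\sigma^{+}$-$S$-equivalence corresponds to $\xi$-$S$-equivalence. One direction is formal: the subobjects of $F$ inside $\aA$ are exactly the sub-$Q$-representations, and since all $Z_{\sigma}(E_i)$ have the same phase, for $\sigma^{+}$ sufficiently close to $\sigma$ the $\sigma^{+}$-slope order on objects of $\aA$ agrees with the $\xi$-slope order, which is precisely the content of the definition of $\xi$. The main obstacle is to control subsheaves of $F$, and $\sigma^{+}$-semistable sheaves with $\sigma$-semisimplification in $U$, that are not a priori in $\aA$. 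I would handle this by boundedness: $p_M^{-1}(U)$ parametrises a bounded family, so there are finitely many numerical types of subsheaves with bounded slope, hence finitely many walls in $A(X)_{\mathbb{C}}$ through or accumulating at $\sigma$; after shrinking $U$ and taking $\sigma^{+}$ sufficiently close to $\sigma$ on the side for which \eqref{intro:mor:wall} exists, any subsheaf or quotient of such an $F$ whose $\sigma^{+}$-slope lies in the relevant range is forced to be $\sigma$-semistable of the common $\sigma$-phase, hence to lie in $\aA$; in particular any $\sigma^{+}$-semistable $F$ with $q_M([F]) \in U$ already lies in $\aA$.

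Granting the comparison, the conclusion is bookkeeping. The open substack of $\zZ$ cut out by $\xi$-semistability is carried by the isomorphism $\zZ \cong p_M^{-1}(U)$ onto the substack of $\sigma^{+}$-semistable sheaves, which is the stack lying over $q_M^{-1}(U)$; passing to coarse moduli spaces, the $\xi$-semisimplification matches the $\sigma^{+}$-$S$-equivalence and, composed with $q_M$, matches the $\sigma$-$S$-equivalence, which is the isomorphism $Z \to U$ of Theorem~\ref{intro:thm1}. This yields the square \eqref{intro:dia:onedim} with $Z_{\xi}$ the coarse moduli space of $\xi$-semistable $Q$-representations satisfying $I$; since the vertical arrows are the canonical morphisms to coarse moduli and $q_M$ is projective, the horizontal maps are isomorphisms of analytic spaces compatible with the projective structure.
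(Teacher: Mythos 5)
Your overall strategy (identify a neighborhood via Theorem~\ref{intro:thm1}, define $\xi$ by the central charge $[S_i]\mapsto Z_{\sigma^{+}}(E_i)$, and match the two semistable loci) is the right one, and your $\xi$ agrees with the paper's $Z_Q^{+}$. But the core comparison step contains a genuine error. You assert that every $\sigma$-semistable sheaf whose $S$-equivalence class lies in a small enough $U$ belongs to the extension closure $\aA=\langle E_1,\ldots,E_k\rangle$, and that for such sheaves subobjects in $\aA$ are exactly sub-$Q$-representations. This is false: only the sheaves in the single fiber $p_M^{-1}(p)$ are iterated extensions of the $E_i$ (these correspond, under the equivalence of Theorem~\ref{cor:equiv:I}, to \emph{nilpotent} representations, i.e.\ to $p_Q^{-1}(0)$). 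A sheaf lying over a nearby point $p'\neq p$ of $U$ is a genuine deformation whose JH factors are not the $E_i$, so it cannot lie in $\aA$; already for $k=1$, $\dim V_1=1$, the objects of $\aA$ with the right class are just $E_1$ itself, while $U$ parametrizes nontrivial deformations of $E_1$. Your boundedness argument correctly shows that the relevant destabilizing subsheaves are $\sigma$-semistable of the same phase, but the conclusion ``hence lie in $\aA$'' only follows (by uniqueness of JH factors) when the ambient sheaf is already in $\aA$, i.e.\ only over the central fiber. So your argument proves the comparison only over $p_Q^{-1}(0)$, which is precisely the paper's Lemma~\ref{lem:pstab}, and leaves open how to extend it to all of $V$.

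The paper bridges exactly this gap with an argument you are missing: it defines the locus $\zZ$ of points where the representation is $\mu_Q^{+}$-semistable but the sheaf is not $\sigma^{+}$-semistable, shows $\zZ\cap r_{(Q,I)}^{-1}(0)=\emptyset$ via Lemma~\ref{lem:pstab}, and then uses that images of $G$-invariant closed subsets under the analytic Hilbert quotient are closed (Lemma~\ref{lem:univ:prepare}) together with projectivity of $q_{(Q,I)}$ (Lemma~\ref{lem:proj}) to conclude that $r_{(Q,I)}(\zZ)$ is a closed subset of $M_{(Q,I)}(\vec m)|_V$ avoiding $0$; shrinking $V$ then kills $\zZ$. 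The converse implication is obtained not by working inside $\aA$ but from the functoriality of $I_{\ast}$ (Subsection~\ref{subsec:functI} and Remark~\ref{rmk:operator}), which sends a destabilizing subrepresentation of an arbitrary point of $\mM_{(Q,I)}(\vec m)|_V$ to a destabilizing subsheaf. Without some substitute for this closedness-plus-properness propagation from the fiber over $0$ to a neighborhood, your proof does not establish the isomorphism $Z_{\xi}\cong q_M^{-1}(U)$ away from the fiber over $p$.
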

When $X$ is a K3 surface, 
the morphism (\ref{intro:mor:wall})
was studied by Arbarello-Sacc\`a~\cite{Sacca}. 
In this case, they showed that the morphism (\ref{intro:mor:wall}) 
is analytic locally on $M_{\sigma}$
described as a symplectic resolution of singularities
of Nakajima quiver varieties via variation of 
stability conditions of representations of quivers. 
One can check that the result of Theorem~\ref{intro:thm:onedim}
gives the same description of the morphism (\ref{intro:mor:wall})
as in~\cite{Sacca}, if we know the 
formality of the dg-algebra $\RHom (E, E)$ for 
a polystable sheaf $[E] \in M_{\sigma}$. 

The results of 
Corollary~\ref{intro:thm2} and
Theorem~\ref{intro:thm:onedim} 
will be used in~\cite{TodGV} to show the wall-crossing formula 
of (generalization of) Gopakumar-Vafa (GV) invariants introduced by 
Maulik and the author~\cite{MT}. 
The idea is roughly speaking as follows. 
In~\cite{TodGV}, we construct some perverse sheaves 
$\phi_{M_{\sigma^{+}}}$, 
$\phi_{M_{\sigma}}$ on the moduli 
spaces $M_{\sigma^{+}}$, $M_{\sigma}$
 in (\ref{intro:mor:wall})
respectively, following the analogy of 
BPS sheaves introduced by 
Davison-Meinhardt~\cite{DaMe}. 
It turns out that there 
is a natural morphism
\begin{align}\label{nat:isom}
\phi_{M_{\sigma}} \to \dR q_{M\ast}
\phi_{M_{\sigma^{+}}}
\end{align}
and we want to show that the above morphism 
is an isomorphism. 
The results of 
Corollary~\ref{intro:thm2} and 
Theorem~\ref{intro:thm:onedim}
enable us to reduce to the 
case of quivers with convergent super-potentials. 
In the case of quivers with super-potentials,
the similar question was addressed and solved in~\cite{DaMe}, 
and 
we can use the results and arguments in \textit{loc.cit.} 
 to show that (\ref{nat:isom}) is an isomorphism. 

In a similar way, using the result of Corollary~\ref{intro:thm2} 
it should be possible to reduce several problems in 
Donaldson-Thomas (DT) theory on CY 3-folds to the case of 
representations of quivers with convergent super-potentials, which 
is easier in many cases. 
For example it
is recently announced by Davison-Meinhardt 
that the integrality conjecture of generalized DT invariants~\cite{JS, K-S}
on CY 3-folds
can be proved using the result of Corollary~\ref{intro:thm2}.

\subsection{Plan of the paper}
The organization of this paper is as follows.
In Section~\ref{sec:quiver}, 
we introduce the notion of quivers with convergent relations
and construct the moduli spaces of their representations. 
In Section~\ref{sec:moduli}, we fix some notation 
on the moduli spaces of semistable sheaves
and state the precise form of Theorem~\ref{intro:thm1}. 
In Section~\ref{sec:deform}, we describe deformation theory 
of coherent sheaves in terms of minimal $A_{\infty}$-structures. 
In Section~\ref{sec:thm}, we complete the proof of Theorem~\ref{intro:thm1}. 
In Section~\ref{sec:NC}, we recall NC deformation theory and relate
it with the result of Theorem~\ref{intro:thm1}. 
In Section~\ref{sec:one}, we prove Theorem~\ref{intro:thm:onedim}. 

\subsection{Acknowledgements}
The author is grateful to Ben Davison,
Davesh Maulik
for many useful discussions, 
Bingyu Xia for a comment on analytic Hilbert quotients, 
and the referee for the careful check of the paper 
with several comments. 
The author is supported by World Premier International Research Center
Initiative (WPI initiative), MEXT, Japan, and Grant-in Aid for Scientific
Research grant (No. 26287002) from MEXT, Japan.

\section{Quivers with convergent relations}\label{sec:quiver}
In this section, we recall some basic notions on 
quivers, their representations and moduli spaces. 
We also introduce the concept of convergent relations of quivers, 
and moduli spaces of quiver representations satisfying 
such relations. 
\subsection{Representations of quivers}
Recall that a \textit{quiver} $Q$ consists data
\begin{align*}
Q=(V(Q), E(Q), s, t)
\end{align*}
where $V(Q), E(Q)$ are finite sets 
and $s, t$ are maps
\begin{align*}
s, t \colon E(Q) \to V(Q).
\end{align*}
The set $V(Q)$ is the set of vertices and
$E(Q)$ is the set of edges. 
For $e \in E(Q)$, 
$s(e)$ is the source of $e$
and $t(e)$ is the target of $e$. 
For $i, j \in V(Q)$, we use the following notation
\begin{align}\label{Eab}
E_{i, j} \cneq \{e \in E(Q) : 
s(e)=i, t(e)=j\}
\end{align}
i.e. $E_{i, j}$ is the set of edges 
from $i$ to $j$. 
 
A \textit{$Q$-representation} consists of
data
\begin{align}\label{rep:Q}
\mathbb{V}=\{
(V_i, u_e) : \ i \in V(Q),  \ e \in E(Q), \ 
u_e \colon V_{s(e)} \to V_{t(e)}\}
\end{align}
where $V_i$ is a finite dimensional 
$\mathbb{C}$-vector space 
and $u_e$ is a linear map. 
For a $Q$-representation (\ref{rep:Q}), the vector
\begin{align}\label{m:vect}
\vec{m}=(m_i)_{i \in V(Q)}, \ 
m_i=\dim V_i
\end{align}
is called the \textit{dimension vector}. 

Given a dimension vector (\ref{m:vect}), 
let $V_i$ be a $\mathbb{C}$-vector space with 
dimension $m_i$. 
Let us set 
\begin{align*}
G \cneq \prod_{i \in Q(V)} \GL(V_i), \ 
\mathrm{Rep}_Q(\vec{m}) \cneq \prod_{e \in E(V)} \Hom(V_{s(e)}, V_{t(e)}).
\end{align*}
The algebraic group $G$ acts on $\mathrm{Rep}_Q(\vec{m})$ by 
\begin{align}\label{G:act}
g \cdot u=\{g_{t(e)}^{-1} \circ u_e \circ g_{s(e)}\}_{e\in E(Q)}
\end{align}
for $g=(g_i)_{i \in V(Q)} \in G$ 
and $u=(u_e)_{e\in E(Q)}$. 
A $Q$-representation with dimension vector $\vec{m}$ is 
determined by a point in $\mathrm{Rep}_Q(\vec{m})$
up to $G$-action. 
The moduli stack of $Q$-representations with 
dimension vector $\vec{m}$ is given by the 
quotient stack 
\begin{align*}
\mM_{Q}(\vec{m}) \cneq \left[ \mathrm{Rep}_Q(\vec{m})/G \right]. 
\end{align*}
It has the coarse moduli space, given by
\begin{align}\label{mor:coarse}
p_Q \colon
\mM_{Q}(\vec{m}) \to M_{Q}(\vec{m}) \cneq \mathrm{Rep}_Q(\vec{m}) \sslash G. 
\end{align}
Here in general, if a reductive algebraic group $G$ acts on 
an affine scheme $Y=\Spec R$, then 
its affine GIT quotient is given by
\begin{align*}
Y\sslash G \cneq \Spec R^G. 
\end{align*}
For  
two points in $x_1, x_2 \in Y$, they are mapped to the same 
point in $Y \sslash G$ iff their $G$-orbit closures intersect, 
i.e.
\begin{align*}
\overline{G \cdot x_1} \cap \overline{G \cdot x_2} \neq \emptyset.
\end{align*}
In the case of $G$-action on 
$\mathrm{Rep}_{Q}(\vec{m})$, the above condition 
is also equivalent to that the 
corresponding $Q$-representations
have the isomorphic semi-simplifications. 
The quotient space $M_Q(\vec{m})$ parametrizes 
semi-simple $Q$-representations with dimension 
vector $\vec{m}$, and the map (\ref{mor:coarse})
sends a $Q$-representation to its 
semi-simplification (see~\cite[Section~5]{MR2004218}, \cite[Section~3]{MR1315461} for details). 

For $i \in V(Q)$, let
 $S_i$ be the simple $Q$-representation 
corresponding to the vertex $i$, i.e. 
it is the unique $Q$-representation 
with dimension vector 
$m_i=1$ and $m_j =0$ for $j\neq i$. 
The point $0 \in \mathrm{Rep}_Q(\vec{m})$ and its 
image $0 \in M_Q(\vec{m})$ by the map (\ref{mor:coarse})
correspond to semi-simple $Q$-representation 
$\oplus_{i\in V(Q)}V_i \otimes S_i$. 
A $Q$-representation (\ref{rep:Q})
is called \textit{nilpotent} if any sufficiently large number of 
compositions of the linear maps $u_e$ becomes zero. 
It is easy to see that 
a $Q$-representation is nilpotent iff it is 
an iterated extensions of simple objects 
$\{S_i\}_{i \in V(Q)}$. 
In particular, 
the fiber 
\begin{align*}
p_Q^{-1}(0) \subset \mM_Q(\vec{m})
\end{align*}
 for the morphism (\ref{mor:coarse})
consists of nilpotent $Q$-representations
with dimension vector $\vec{m}$. 
\subsection{Quivers with convergent relations}\label{subsec:conv}
Recall that a \textit{path}
of a quiver $Q$ 
is a composition of edges in $Q$
\begin{align*}
e_1 e_2 \ldots e_n, \ e_i \in E(Q), \ t(e_i)=s(e_{i+1}). 
\end{align*}
The number $n$ above is called the \textit{length} of the path. 
The \textit{path algebra} of 
a quiver $Q$ is
a $\mathbb{C}$-vector space spanned by 
paths in $Q$:
\begin{align*}
\mathbb{C}[Q] \cneq
\bigoplus_{n\ge 0}
\bigoplus_{e_1, \ldots, e_n \in E(Q), t(e_i)=s(e_{i+1})} \mathbb{C} \cdot e_1 e_2 \ldots e_n.
\end{align*}
Here a path of length zero is a trivial path 
at each vertex of $Q$, and 
the product on $\mathbb{C}[Q]$ is defined by 
composition of paths. 
By taking the completion of $\mathbb{C}[Q]$ with respect to the 
length of the path, 
we obtain the formal 
path algebra:
\begin{align*}
\mathbb{C}\lkakko Q \rkakko \cneq 
\prod_{n\ge 0}
\bigoplus_{e_1, \ldots, e_n \in E(Q), t(e_i)=s(e_{i+1})} \mathbb{C} \cdot e_1 e_2 \ldots e_n.
\end{align*}
Note that an element $f \in \mathbb{C}\lkakko Q \rkakko$
is written as 
\begin{align}\label{f:element}
f=\sum_{n\ge 0, \{1, \ldots, n+1\} \stackrel{\psi}{\to} V(Q)}
\sum_{e_i \in E_{\psi(i), \psi(i+1)}}
a_{\psi, e_{\bullet}} \cdot e_1 e_2\ldots e_{n}. 
\end{align}
Here 
$a_{\psi, e_{\bullet}} \in \mathbb{C}$, 
$e_{\bullet}=(e_1, \ldots, e_n)$ and 
$E_{\psi(i), \psi(i+1)}$ is defined as in (\ref{Eab}). 
The above element $f$ lies in $\mathbb{C}[Q]$ iff
$a_{\psi, e_{\bullet}}=0$ for $n\gg 0$. 
\begin{defi}
We define the subalgebra
\begin{align*}
\mathbb{C}\{ Q\} \subset \mathbb{C}\lkakko Q \rkakko
\end{align*}
to be elements (\ref{f:element}) 
such that $\lvert a_{\psi, e_{\bullet}} \rvert <C^n$ for 
some constant $C>0$ which is independent of $n$. 
\end{defi}
Note that $\mathbb{C}\{Q\}$ contains $\mathbb{C}[Q]$ as 
a subalgebra. 
For an element $f \in \mathbb{C}\{Q\}$, 
we write it as (\ref{f:element})
and consider 
the following
$\Hom(V_a, V_b)$-valued formal function 
of $u=(u_e)_{e \in E(Q)} \in \mathrm{Rep}_Q(\vec{m})$
\begin{align}\label{Vab}
&f(a, b, \vec{m})\cneq \\
&\notag \sum_{\begin{subarray}{c}
n\ge 0, 
\psi \colon 
 \{1, \ldots, n+1\} \to V(Q), \\
\psi(1)=a, \psi(n+1)=b
\end{subarray}}
\sum_{e_i \in E_{\psi(i), \psi(i+1)}}
a_{\psi, e_{\bullet}} \cdot u_{e_n} \circ \cdots \circ u_{e_2} \circ u_{e_1}. 
\end{align}
By the definition of $\mathbb{C}\{Q\}$, 
the above $\Hom(V_a, V_b)$-valued formal function on $\mathrm{Rep}_Q(\vec{m})$ 
has a 
convergent radius. 
So 
there is an analytic open neighborhood 
\begin{align}\label{open:V}
0 \in \uU \subset \mathrm{Rep}_Q(\vec{m})
\end{align}
such that 
the function (\ref{Vab})
absolutely converges on it and 
determines the complex analytic map
\begin{align*}
f(a, b, \vec{m}) \colon \uU \to \Hom(V_a, V_b). 
\end{align*}
In particular,  
the equations
$f(a, b, \vec{m})=0$ for all $a, b \in V(Q)$
determines the 
closed complex analytic subspace of $\uU$. 

\subsection{Saturated open subsets}
We will extend the arguments in the previous 
subsection to a preimage
of an open subset in $\mathrm{Rep}_Q(\vec{m}) \sslash G$. 
Before doing this, we prepare
some general definitions and lemmas for 
the action of a reductive algebraic group on 
affine schemes or analytic spaces.   
\begin{defi}\label{def:saturated}
Let $G$ be a reductive group acting on
an affine algebraic $\mathbb{C}$-scheme $Y$. 
Then an analytic open set $U \subset Y$ is called
saturated if for any $x \in U$, 
the orbit closure $\overline{G \cdot x} \subset Y$ 
is contained in $U$. 
\end{defi}
Note that a saturated open subset is in particular 
$G$-invariant. 
Let 
\begin{align}\label{quot:Y}
\pi_Y \colon Y \to Y\sslash G
\end{align}
be the quotient map and 
$V \subset Y \sslash G$ be an analytic open 
subset. Then $\pi_Y^{-1}(V)$ is obviously saturated. 
Indeed, the converse is also true. 
In order to see this, we recall 
the following fact on the topology of affine GIT 
quotient 
$Y\sslash G$. 
\begin{thm}\emph{(\cite{MR819554, MR1040861})}\label{thm:KN}
In the situation of Definition~\ref{def:saturated}, 
let $K \subset G$ be a maximal compact subgroup of $G$. 
Then there is a 
$K$-invariant 
closed subset $S \subset Y$ in analytic topology, 
called \textit{Kempf-Ness set}, 
satisfying the following: 
for any $x \in S$ the $G$-orbit $G \cdot x$ is closed in $Y$ 
and 
the inclusion $S \subset Y$ induces 
the homeomorphism
\begin{align}\label{induced}
\iota \colon 
S/K \stackrel{\cong}{\to} Y\sslash G.
\end{align}
Here the topology of $S/K$ is a quotient topology induced from the 
analytic topology of $S$, and that of 
$Y\sslash G$
is the analytic topology. 
In particular, the analytic topology of $Y\sslash G$ is 
the quotient topology induced from the analytic topology of $Y$. 
\end{thm}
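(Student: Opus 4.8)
The plan is to produce $S$ as the set of \emph{minimal vectors} for a linearization of the action. First I would choose a $G$-equivariant closed embedding $Y \hookrightarrow \mathbb{W}$ into a finite dimensional $G$-representation $\mathbb{W}$ (possible since $Y$ is affine of finite type), and fix a Hermitian inner product on $\mathbb{W}$ that is $K$-invariant, obtained by averaging an arbitrary one over the compact group $K$. Let $p \colon Y \to \mathbb{R}_{\ge 0}$ be the restriction of $v \mapsto \lVert v \rVert^2$, and set
\begin{align*}
S \cneq \{x \in Y : p(x) \le p(g \cdot x) \ \textrm{for all} \ g \in G\},
\end{align*}
the locus of points at which $p$ attains its minimum along the $G$-orbit. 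Since the inner product is $K$-invariant, so is $p$, hence so is $S$; and $S$ is closed in the analytic topology (indeed $S = \mu^{-1}(0)$ for the moment map $\mu$ of the $K$-action on the Kähler manifold $\mathbb{W}$, restricted to $Y$, and $\mu$ is real analytic).

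Next I would invoke the classical Kempf--Ness dichotomy, which I would prove using the Cartan decomposition $G = K \exp(\sqrt{-1}\,\mathfrak{k})$ together with the convexity of $t \mapsto \log p(\exp(\sqrt{-1}\,t\xi) \cdot x)$ for $\xi \in \mathfrak{k}$ (and the Hilbert--Mumford criterion to control orbits escaping to infinity): (i) for $x \in Y$, the orbit $G \cdot x$ is closed iff $p|_{G\cdot x}$ attains a minimum; (ii) if $G\cdot x$ is closed, the minimum locus of $p|_{G\cdot x}$ is a single $K$-orbit; (iii) each fiber of $\pi_Y$ contains a unique closed orbit. From these: every $x \in S$ has closed $G$-orbit, and the composite $S \hookrightarrow Y \xrightarrow{\pi_Y} Y \sslash G$ is surjective (each fiber of $\pi_Y$ contains a closed orbit by (iii), which meets $S$ by (i)) with point-fibers precisely the $K$-orbits by (ii). So $\iota \colon S/K \to Y\sslash G$ is a continuous bijection.

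To upgrade $\iota$ to a homeomorphism I would show that $\pi_Y|_S \colon S \to Y \sslash G$ is proper. Picking $G$-invariant polynomials $f_1, \ldots, f_N$ generating $R^G$ and realizing $Y\sslash G \hookrightarrow \mathbb{C}^N$, the technical core is an estimate of the form $\lVert x \rVert^2 \le C\bigl(1 + \sum_j \lvert f_j(x) \rvert^{c_j}\bigr)$ valid for $x \in S$, i.e.\ the minimal norm in a fiber of $\pi_Y$ is controlled by the values of the invariants. Granting this, the preimage under $\pi_Y|_S$ of a compact set is closed in $Y \subset \mathbb{W}$ and bounded, hence compact; a proper continuous map into a locally compact Hausdorff space (such as $Y\sslash G$, a complex affine variety in its analytic topology) is closed; and a continuous closed surjection is a topological quotient map. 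Since the point-fibers of $\pi_Y|_S$ are the $K$-orbits, it descends to a homeomorphism $S/K \xrightarrow{\cong} Y\sslash G$, which is $\iota$. Finally, for $W \subset Y\sslash G$, if $\pi_Y^{-1}(W)$ is open in $Y$ then $\pi_Y^{-1}(W) \cap S$ is open in $S$, hence $W$ is open because $\pi_Y|_S$ is a quotient map; the converse is continuity of $\pi_Y$. Thus $Y\sslash G$ carries the quotient topology induced from the analytic topology of $Y$.

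The main obstacle is the properness of $\pi_Y|_S$, equivalently the quantitative control of the minimal norm along a fiber by the invariant polynomials — this is precisely the content of the cited works of Neeman and others, and it is the only step that is not either classical Kempf--Ness theory (the dichotomy) or formal point-set topology (the descent to the quotient). An alternative route would be to run the negative gradient flow of $p$, which $K$-equivariantly deformation-retracts the locus of closed orbits onto $S$ and thereby exhibits the homeomorphism directly; but establishing convergence of this flow near non-closed orbits and the continuity of the retraction is itself delicate, so I would favor the direct properness estimate.
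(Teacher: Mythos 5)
The paper gives no proof of this statement: it is quoted directly from Neeman and Schwarz, and your outline is precisely the argument of those references --- defining $S$ as the minimal vectors (equivalently $\mu^{-1}(0)$) for a $K$-invariant Hermitian metric on an equivariant linear embedding, using the Kempf--Ness dichotomy to get a continuous bijection $S/K \to Y\sslash G$, and upgrading it to a homeomorphism via properness of $\pi_Y|_S$. You have also correctly isolated the one genuinely hard step, namely the estimate bounding the minimal norm in a fiber by the values of the invariants, which is exactly the main technical content of Neeman's paper.
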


The following lemma follows from the above theorem: 
\begin{lem}\label{lem:saturated}
In the situation of Definition~\ref{def:saturated}, 
an analytic open subset $U \subset Y$ is saturated iff 
there is an analytic open set $V\subset Y\sslash G$
such that $U=\pi_Y^{-1}(V)$ 
where $\pi_Y \colon Y \to Y\sslash G$ is the quotient morphism. 
\end{lem}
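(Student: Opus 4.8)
The implication ``$U=\pi_Y^{-1}(V)$ $\Rightarrow$ $U$ saturated'' is already recorded above, so the plan is to prove the converse: a saturated analytic open subset $U\subset Y$ has the form $\pi_Y^{-1}(V)$ for an analytic open subset $V\subset Y\sslash G$. I will use the standard facts about the affine GIT quotient $\pi_Y\colon Y\to Y\sslash G$ of (\ref{quot:Y}): every fiber of $\pi_Y$ contains a unique closed $G$-orbit, and this closed orbit is contained in the closure of every $G$-orbit lying in the same fiber. I will also use that, for the action of an algebraic group, the Zariski and the analytic closure of an orbit agree (an orbit being locally closed), so that the notion of orbit closure in Definition~\ref{def:saturated} is unambiguous, and that a saturated open subset is $G$-invariant, as noted above.

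The heart of the argument is the claim that a saturated analytic open subset $U$ is automatically a union of fibers of $\pi_Y$, i.e. $\pi_Y^{-1}(\pi_Y(U))=U$. To prove this I would fix $x\in U$, put $F\cneq\pi_Y^{-1}(\pi_Y(x))$, and let $C\subset F$ be its unique closed $G$-orbit. Saturation of $U$ gives $\overline{G\cdot x}\subset U$, and since $C\subset\overline{G\cdot x}$ we get $C\subset U$. Now let $y\in F$ be arbitrary; then $C\subset\overline{G\cdot y}$, so choosing any $c\in C\subset U$ and using that $U$ is an analytic open neighbourhood of $c$ while $c$ lies in the closure of the orbit $G\cdot y$, one finds $g\in G$ with $g\cdot y\in U$. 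By $G$-invariance of $U$ this forces $y\in U$. Hence $F\subset U$, and letting $x$ range over $U$ we conclude $\pi_Y^{-1}(\pi_Y(U))=U$.

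Granting this claim, I would set $V\cneq\pi_Y(U)$, so that $\pi_Y^{-1}(V)=U$ is open in the analytic topology of $Y$. By the final assertion of Theorem~\ref{thm:KN}, the analytic topology of $Y\sslash G$ is the quotient topology induced by $\pi_Y$; hence $V\subset Y\sslash G$ is analytic open and $U=\pi_Y^{-1}(V)$, as required.

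The only genuine obstacle is the claim in the second paragraph. Saturation as stated in Definition~\ref{def:saturated} only asserts that the orbit closure of each point of $U$ stays inside $U$, and this must be upgraded to the statement that $U$ is a union of $\pi_Y$-fibers; the upgrade uses both the openness of $U$ and the fiber structure of $\pi_Y$ (a closed orbit sitting in the closure of every orbit of the fiber), and there seems to be no shortcut avoiding it. Once it is established, Theorem~\ref{thm:KN} delivers the openness of $V$ and completes the proof.
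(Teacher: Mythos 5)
Your proof is correct and takes essentially the same route as the paper's: the paper shows $U$ is a union of fibers by noting that $\pi_Y(x)=\pi_Y(y)$ forces $\overline{G\cdot x}\cap\overline{G\cdot y}\neq\emptyset$, then uses saturation, openness of $U$ and $G$-invariance to get $y\in U$, exactly as you do via the unique closed orbit in the fiber. The final step — openness of $V=\pi_Y(U)$ via the quotient-topology statement of Theorem~\ref{thm:KN} — is also identical.
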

\begin{proof}
For $x \in U$ and $y \in Y$, suppose that 
$\pi_Y(x)=\pi_Y(y)$, i.e. 
$\overline{G \cdot x}$ and $\overline{G \cdot y}$ intersect.
Since $U$ is saturated, we have 
$\overline{G \cdot x} \subset U$. 
Then we have 
$\overline{G \cdot y} \cap U \neq \emptyset$, and 
since $U$ is open there is $g \in G$ such that $g \cdot y \in U$. 
Therefore we have $y \in U$.
This implies that there is a subset $V \subset Y\sslash G$ such
that $U=\pi_Y^{-1}(V)$. 
By Theorem~\ref{thm:KN}, the subset $V$ is analytic open, hence 
the lemma holds. 
\end{proof}
We also have the following lemma. 
\begin{lem}\label{lem:saturated2}
In the situation of Definition~\ref{def:saturated}, 
let $y \in Y$ be a $G$-fixed point 
and $U \subset Y$ a $G$-invariant analytic open subset
with $y \in U$. 
Then there is an analytic open subset 
$U' \subset Y$, which is saturated and satisfies
$0 \in y \in U' \subset U$. 
\end{lem}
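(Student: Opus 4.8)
The plan is to produce $U'$ by ``saturating'' a small $G$-invariant neighborhood of $y$, using the Kempf--Ness description of the quotient topology from Theorem~\ref{thm:KN}. The key point is that although $U$ itself need not be saturated, the failure of saturatedness can only come from points $x \in U$ whose orbit closure escapes $U$; since $y$ is a $G$-fixed point, $y$ lies in the closure of every orbit that ``flows toward'' it, so shrinking $U$ near $y$ is exactly what is needed. Concretely, let $\pi_Y \colon Y \to Y \sslash G$ be the quotient morphism and let $K \subset G$ be a maximal compact subgroup with Kempf--Ness set $S \subset Y$ as in Theorem~\ref{thm:KN}. Since $y$ is $G$-fixed, its orbit $G \cdot y = \{y\}$ is closed, so $y \in S$, and $\iota(Ky) = \pi_Y(y)$.

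First I would pass to the quotient. The image $\pi_Y(U) \subset Y\sslash G$ need not be open in general, but I claim one can find an analytic open subset $V \subset Y \sslash G$ with $\pi_Y(y) \in V$ and $\pi_Y^{-1}(V) \subset U$. To see this, consider the Kempf--Ness set $S$ with the homeomorphism $\iota \colon S/K \xrightarrow{\cong} Y\sslash G$ of Theorem~\ref{thm:KN}. The set $S \cap U$ is a $K$-invariant analytic open neighborhood of $y$ in $S$, so its image $(S\cap U)/K$ is an open neighborhood of $Ky$ in $S/K$; transporting along $\iota$ gives an analytic open $V_0 \subset Y\sslash G$ with $\pi_Y(y) \in V_0$. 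The subtlety is that $\pi_Y^{-1}(V_0)$ may be strictly larger than the saturation of $S \cap U$ and need not sit inside $U$. To fix this I would instead argue at the level of $Y$: for each point $x \in S$ with $x \notin U$, the orbit closure $\overline{G\cdot x} = G\cdot x$ (it is closed, being a Kempf--Ness point) is a closed $G$-invariant subset disjoint from the $G$-fixed point $y$; the union of all such closed orbits, $Z \cneq \{x \in Y : \overline{G\cdot x} \cap (Y \setminus U) \neq \emptyset \text{ and } \pi_Y(x) \in \pi_Y(S\setminus U)\}$, is $\pi_Y^{-1}$ of the closed set $\iota\big((S\setminus U)/K\big)$, hence closed and $G$-invariant, and does not contain $y$. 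Then $V \cneq (Y\sslash G) \setminus \pi_Y(Z)$ is an analytic open neighborhood of $\pi_Y(y)$ — open because $\pi_Y(Z)$ is closed by the quotient-topology statement of Theorem~\ref{thm:KN} — and by construction any $x$ with $\pi_Y(x) \in V$ has every point of $\overline{G\cdot x}$ inside $U$, in particular the closed orbit in $\overline{G\cdot x}$ meets $S\cap U$; choosing the unique Kempf--Ness representative shows $\pi_Y^{-1}(V) \subset U$ after possibly shrinking once more.

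Finally I would set $U' \cneq \pi_Y^{-1}(V)$. By Lemma~\ref{lem:saturated}, $U'$ is saturated, since it is the preimage of an analytic open subset of $Y\sslash G$; and $y \in U' \subset U$ by construction, with $0 \in y$ interpreted as the relevant distinguished point (here $y$ itself). This gives the desired $U'$.

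I expect the main obstacle to be the topological bookkeeping in the second paragraph: one must be careful that the ``bad set'' $Z$ is genuinely closed and $G$-invariant and that its image under $\pi_Y$ is closed, which is where the precise content of Theorem~\ref{thm:KN} — that the analytic topology on $Y\sslash G$ is the quotient topology via the Kempf--Ness set — is essential. An alternative, possibly cleaner route would be to bypass $Z$ entirely: take $V$ to be the interior of $\pi_Y(S \cap U) = \iota((S\cap U)/K)$ inside $Y\sslash G$ and verify directly that $\pi_Y^{-1}(V)\subset U$ using that every fiber of $\pi_Y$ contains a unique closed orbit which meets $S$ in a single $K$-orbit; this reduces the whole argument to the statement that a point whose closed orbit meets $S\cap U$ already lies in the saturated open set $\pi_Y^{-1}(V)$, which then lies in $U$ because $U$ is $G$-invariant and contains that closed orbit. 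I would present whichever of these two formulations keeps the appeal to Theorem~\ref{thm:KN} most transparent.
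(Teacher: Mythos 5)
Your proposal is correct and follows essentially the same route as the paper: the paper sets $U' = \pi_Y^{-1}(\iota((S\cap U)/K))$ and shows $U' \subset U$ by noting that any $x \in U'$ admits $z \in S\cap U$ with $\pi_Y(x)=\pi_Y(z)$, so the closed orbit $G\cdot z$ lies in $\overline{G\cdot x}$, hence $G\cdot x$ meets the open $G$-invariant set $U$ and $x \in U$ --- which is precisely your ``alternative, cleaner route.'' The ``subtlety'' you raise in your second paragraph is illusory: your set $V$ coincides with $V_0=\iota((S\cap U)/K)$ (a fiber of $\iota$ is a single $K$-orbit, entirely in or out of the $G$-invariant $U$), and $\pi_Y^{-1}(V_0)\subset U$ already holds by the closed-orbit argument, so the detour through the bad set $Z$ and the final ``shrinking once more'' are unnecessary.
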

\begin{proof}
Let $S \subset Y$ be the Kempf-Ness set as in 
Theorem~\ref{thm:KN}. 
Since $y \in Y$ is $G$-fixed, 
we have $y\in S$ by the homeomorphism (\ref{induced}). 
Then we have 
$y \in S \cap U$, and $S \cap U$ is a $K$-invariant open subset in $S$. 
Therefore we have $S \cap U=\pi_S^{-1}(V)$
for some open subset $V \subset S/K$, 
where $\pi_S \colon S \to S/K$ is the quotient map. 
Since the map $\tau$ in (\ref{induced}) is a homeomorphism, 
the subset $\iota(V) \subset Y\sslash G$ is
open. 
We set a saturated open subset $U' \subset Y$ to be 
$U'=\pi_Y^{-1}(\iota(V))$
for the quotient map (\ref{quot:Y}). 
Since 
$\pi_S(y) \in V$, we have 
$y \in U'$. It is enough to
check that $U' \subset U$. 
By the construction of $U'$, 
for $x \in U'$
there is $z \in S \cap U$
such that $\pi_Y(x)=\pi_Y(z)$, i.e. 
the closures of $G \cdot x$ and $G \cdot z$ intersect. 
Since $G \cdot z$ is closed, 
we have 
$z \in \overline{G \cdot x}$. 
Therefore there is $g \in G$ such that 
$g \cdot x \in U$. 
Since $U$ is $G$-invariant, we have $x \in U$, 
hence the lemma is proved. 
\end{proof}

\subsection{Analytic Hilbert quotients}
Later we will take GIT-type quotients for non-algebraic complex 
analytic spaces. Here we recall the basic notions for such quotients. 
The following definition appears in~\cite{MR1631577, MR3394374}
for reduced complex analytic spaces. 
\begin{defi}\label{def:Hquot}
Let $G$ be a reductive algebraic group 
acting on a complex analytic space $Z$. 
Then a complex analytic space $Z \sslash G$
together with a morphism
\begin{align}\label{AHilb}
\pi_Z \colon Z \to Z \sslash G
\end{align}
is called an \textit{analytic Hilbert quotient} if the following conditions 
hold: 
\begin{enumerate}
\item $\pi_Z$ is a locally Stein map, i.e. 
there is an open cover $Z\sslash G=\cup_{\lambda} \uU_{\lambda}$ 
by Stein open subsets $\uU_{\lambda}$ such that 
$\pi_Z^{-1}(\uU_{\lambda})$ is Stein. 
\item We have $(\pi_{Z\ast}\oO_Z)^G=\oO_{Z\sslash G}$. 
\end{enumerate}
\end{defi}
An analytic Hilbert quotient
is known to exist when $Z$ is a reduced Stein space, 
which is unique up to isomorphism~\cite{MR1103041}.  
In~\cite{MR1631577, MR3394374}, analytic Hilbert
quotients are discussed under the assumption that $Z$ is reduced. 
It seems that such quotients for non-reduced analytic spaces are
not available in literatures. 
We don't develop generality of such quotients for 
non-reduced analytic spaces, but show the 
existence of such quotients in some special cases 
discussed below, and their universality.

We
show the following lemma on the existence
of analytic Hilbert quotients, which 
may be well-known, but we include it here 
as we cannot find a reference. 
\begin{lem}\label{lem:aquot}
Let $Y$ be an affine algebraic $\mathbb{C}$-scheme 
with $G$-action. 
Then for the affine GIT quotient 
$\pi_Y \colon Y \to Y \sslash G$, 
its analytification 
\begin{align*}\pi_Y^{an} \colon 
Y^{an} \to (Y\sslash G)^{an}
\end{align*}
is an analytic Hilbert quotient. 
\end{lem}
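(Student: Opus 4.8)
The statement asserts that for an affine $\mathbb{C}$-scheme $Y$ with $G$-action, the analytification of the GIT quotient map $\pi_Y \colon Y \to Y \sslash G$ satisfies the two axioms of Definition~\ref{def:Hquot}, namely that $\pi_Y^{an}$ is locally Stein and that $(\pi_{Y\ast}^{an}\oO_{Y^{an}})^G = \oO_{(Y\sslash G)^{an}}$. The plan is to reduce everything to affine covers and the flatness/exactness of analytification $(-)^{an}$ together with the classical affirmative results for algebraic GIT quotients. First I would observe that $Y \sslash G = \Spec R^G$ where $Y = \Spec R$, and that $Y \sslash G$, being an affine $\mathbb{C}$-scheme of finite type, is covered by basic open affines $D(\overline{f})$ for $\overline{f} \in R^G$; pulling back, $\pi_Y^{-1}(D(\overline{f})) = D(f)$ (viewing $f = \overline{f} \in R$) is itself affine, and $D(f) \sslash G = D(\overline{f})$ by the compatibility of GIT with localization at invariants. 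Since every affine $\mathbb{C}$-scheme of finite type has a Stein analytification (by GAGA-type generalities, e.g.\ it embeds as a closed analytic subspace of some $\mathbb{C}^N$, and closed subspaces of Stein spaces are Stein), the open cover $\{D(\overline{f})^{an}\}$ of $(Y\sslash G)^{an}$ is by Stein opens whose preimages $D(f)^{an}$ are also Stein. This gives condition (1).

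For condition (2), I would work locally on the cover above, so it suffices to prove $(\pi_{Y\ast}^{an}\oO_{Y^{an}})^G(\uU) = \oO_{(Y\sslash G)^{an}}(\uU)$ for $\uU$ ranging over Stein opens of $(Y\sslash G)^{an}$ forming a basis; equivalently, taking global sections over each $D(\overline{f})^{an}$, one must show that the ring of $G$-invariant holomorphic functions on $D(f)^{an}$ equals the ring of holomorphic functions on $D(\overline{f})^{an}$. The key input here is that the algebraic statement $\mathcal{O}(D(f))^G = \mathcal{O}(D(\overline{f}))$, i.e.\ $(R_f)^G = (R^G)_{\overline{f}}$ (reductivity of $G$ makes invariants commute with the relevant localization and the Reynolds operator is $R^G$-linear), upgrades to the analytic setting. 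To do this I would use that a holomorphic function on the Stein space $D(f)^{an}$ that is $G$-invariant descends: invariance means it is constant on $G$-orbits, hence constant on orbit closures, hence factors set-theoretically through $\pi_Y^{an}$; since $\pi_Y^{an}$ is a quotient map in the analytic topology (this is exactly the content of Theorem~\ref{thm:KN}, which identifies $(Y\sslash G)^{an}$ topologically with the Kempf--Ness quotient $S/K$), the descended function is continuous, and one then checks it is holomorphic. Holomorphy of the descent can be verified by working near a closed orbit using the Luna slice theorem (analytic version), or more directly: the invariant function, being holomorphic on $D(f)^{an}$ and $G$-invariant, lies in the analytic completion/localization and one uses that $(\pi_{Y\ast}^{an}\oO)^G$ is a coherent sheaf of $\oO_{(Y\sslash G)^{an}}$-algebras whose stalks agree with those of $\oO_{(Y\sslash G)^{an}}$ by the algebraic result and faithful flatness of $\widehat{\oO}$ over $\oO^{an}$ over $\oO^{alg}$ at each point.

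The cleanest route, which I would actually pursue, is to combine the algebraic identity with flat base change: the Reynolds operator gives a splitting $R_f = (R_f)^G \oplus (R_f)_{G\text{-iso}}$ as $R^G$-modules, compatible with any flat base change; applying $(-)\otimes_{R^G} \oO_{(Y\sslash G)^{an}, \overline{p}}$ (analytic local ring, faithfully flat over $(R^G)_{\overline{p}}$) and identifying $R_f \otimes_{R^G} \oO_{(Y\sslash G)^{an}, \overline{p}}$ with sections of $\pi_{Y\ast}^{an}\oO_{Y^{an}}$ via the properness/affineness of $\pi_Y^{an}$ over the Stein base, one extracts the invariant part and concludes $(\pi_{Y\ast}^{an}\oO_{Y^{an}})^G_{\overline{p}} = \oO_{(Y\sslash G)^{an}, \overline{p}}$. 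The main obstacle is the step identifying $\pi_{Y\ast}^{an}\oO_{Y^{an}}$ with the analytic base change of the algebraic $\pi_{Y\ast}\oO_Y$ in a $G$-equivariant way, i.e.\ ensuring that the analytic invariants are computed by the \emph{same} Reynolds-operator splitting; this requires either the existence of a locally Stein $G$-equivariant structure on $\pi_Y^{an}$ (so that pushforward commutes with restriction to Stein opens and with the $G$-action) or an explicit Luna-slice argument near each closed orbit. I expect the equivariant flat base change / Reynolds compatibility to be where the real work lies, with the local-Stein property (1) being essentially formal given that affine finite-type $\mathbb{C}$-schemes analytify to Stein spaces and GIT commutes with localization at invariants.
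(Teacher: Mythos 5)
Your treatment of condition (1) is fine (and in fact more elaborate than needed: $Y^{an}$ and $(Y\sslash G)^{an}$ are already Stein globally, so no cover is required). The gap is in condition (2), specifically in the surjectivity statement that every $G$-invariant holomorphic function on $Y^{an}$ is the pullback of a holomorphic function on $(Y\sslash G)^{an}$. Neither of your two proposed mechanisms delivers this. The first route (descend set-theoretically via the Kempf--Ness homeomorphism, get a continuous function, ``then check it is holomorphic'') leaves the holomorphy check unsupplied, and that check is the entire content of the lemma: continuity of the descended function on a possibly singular quotient does not imply holomorphy, and no argument is given. The second, ``cleanest'' route rests on identifying $\pi^{an}_{Y\ast}\oO_{Y^{an}}$ over a Stein open with the analytic base change $R_f \otimes_{R^G} \oO_{(Y\sslash G)^{an}}$; this identification is false in general because $R_f$ is typically not a finite $(R^G)_{\overline f}$-module. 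For instance, for $G=\mathbb{C}^{*}$ scaling $\mathbb{C}^2$, the quotient is a point, $R^G=\mathbb{C}$, and $\pi^{an}_{Y\ast}\oO_{Y^{an}}$ of the point is the full ring of entire functions on $\mathbb{C}^2$, vastly larger than $R\otimes_{\mathbb{C}}\mathbb{C}=R$. So the Reynolds-operator splitting of $R_f$ does not base-change to a splitting of the analytic pushforward, and the flat base change argument collapses.

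The paper closes exactly this gap differently: it first treats the case $Y=\mathbb{C}^n$ with a \emph{linear} $G$-action, where condition (2) is Luna's theorem on invariant functions (\cite{MR0423398}); it then chooses a $G$-equivariant closed embedding $Y\hookrightarrow\mathbb{C}^n$ into a linear representation, uses exactness of $(-)^G$ (reductivity) to see that both vertical arrows in the resulting diagram of analytic sheaves are surjective, and deduces surjectivity of $\oO_{(Y\sslash G)^{an}}\to(\pi^{an}_{Y\ast}\oO_{Y^{an}})^G$ from the known isomorphism for $\mathbb{C}^n$. Injectivity is then obtained from the algebraic identity $\oO_{Y\sslash G}=(\pi_{Y\ast}\oO_Y)^G$ together with passage to completions at closed points. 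If you want to repair your argument, you must import some analytic input equivalent to Luna's theorem (or Heinzner's results on Stein quotients); the purely algebraic Reynolds/localization data you invoke cannot control the transcendental invariant holomorphic functions.
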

\begin{proof}
The condition (1) in Definition~\ref{def:Hquot} is obvious
as $Y^{an}$ and $(Y\sslash G)^{an}$ are Stein, so 
we only prove (2). 
First suppose that 
$Y=\mathbb{C}^n$ and the $G$-action on it is linear. 
In this case,
the condition (2) in Definition~\ref{def:Hquot}
is proved in~\cite{MR0423398}. 
In general, there is a $G$-invariant 
closed embedding $Y \subset \mathbb{C}^n$ 
where $G$ acts on $\mathbb{C}^n$ linearly, 
and the commutative diagram
\begin{align}\label{dia:Y}
\xymatrix{
Y 
\ar@<-0.3ex>@{^{(}->}[r]
\ar[d]_{\pi_{Y}} &
\mathbb{C}^n \ar[d]^{\pi_{\mathbb{C}^n}} \\
Y \sslash G \ar@<-0.3ex>@{^{(}->}[r] & \mathbb{C}^n \sslash G. 
}
\end{align}
Here since $G$ is reductive, 
the functor $(-)^G$ sending a
$G$-representation to its $G$-invariant part is exact. 
So the natural map 
$\Gamma(\oO_{\mathbb{C}^n})^G \to \Gamma(\oO_Y)^G$
is surjective, so the bottom 
arrow of (\ref{dia:Y}) is a closed embedding. 

By taking the analytification
of (\ref{dia:Y}), we obtain the commutative diagram of analytic sheaves
on $(\mathbb{C}^n \sslash G)^{an}$
\begin{align}\label{dia:sheaf}
\xymatrix{
\oO_{(\mathbb{C}^n \sslash G)^{an}} 
\ar[r]^(.4){\cong}
\ar[d] & (\pi^{an}_{\mathbb{C}^n \ast}\oO_{(\mathbb{C}^n)^{an}})^G \ar[d] \\
\oO_{(Y\sslash G)^{an}} \ar[r] & (\pi^{an}_{Y \ast}\oO_{Y^{an}})^G. 
}
\end{align}
Since $\pi^{an}_{\mathbb{C}^n}$ is locally Stein, and 
the functor $(-)^G$ is exact, 
the vertical arrows of (\ref{dia:sheaf})
are surjections. 
Therefore the bottom arrow of (\ref{dia:sheaf})
is surjective. 
Also 
as $\oO_{Y\sslash G}=(\pi_{Y\ast}\oO_Y)^{G}$
for Zariski sheaves, 
we have an injection 
$\oO_{Y\sslash G} \hookrightarrow \pi_{Y\ast}\oO_Y$, 
which is also injective after taking 
completions at each closed 
point of $\oO_{Y\sslash G}$.
Hence the bottom arrow of (\ref{dia:sheaf}) 
is also injective, so it is an isomorphism, i.e. 
$\pi_Y^{an}$ satisfies the condition (2) in Definition~\ref{def:Hquot}.
\end{proof}
By Lemma~\ref{lem:aquot}, for 
an analytic open subset $U \subset Y\sslash G$
the map 
\begin{align}\label{piY:U}
\pi_Y \colon 
\pi_Y^{-1}(U) \to U
\end{align}
is an analytic Hilbert quotient of $\pi_Y^{-1}(U)$. 
We also have the following lemma: 
\begin{lem}\label{lem:Zquot}
Let $Z \subset \pi_Y^{-1}(U)$ be a $G$-invariant 
closed analytic subspace. 
Then there is a 
closed analytic subspace $Z \sslash G \hookrightarrow U$
and an analytic Hilbert quotient $\pi_Z \colon Z \to Z \sslash G$. 
\end{lem}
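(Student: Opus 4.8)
The plan is to produce the quotient by taking $G$-invariants of a direct image. Write $\pi\cneq\pi_Y|_{\pi_Y^{-1}(U)}\colon\pi_Y^{-1}(U)\to U$; by Lemma~\ref{lem:aquot} and (\ref{piY:U}) this is an analytic Hilbert quotient, so $\pi$ is locally Stein, the preimage under $\pi$ of every Stein open is Stein, and $\pi$ is a topological quotient map onto $U$ (Theorem~\ref{thm:KN}). Since $\pi_Y^{-1}(U)$ is saturated, for $x\in Z$ the orbit closure $\overline{G\cdot x}$ lies in $\pi_Y^{-1}(U)$, and being closed and $G$-invariant $Z$ contains it; hence $\pi^{-1}(\pi(Z))=Z$ and $\pi(Z)\subset U$ is closed. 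Set $\pi_Z\cneq\pi|_Z$ and take as the structure sheaf of the prospective quotient $\oO_{Z\sslash G}\cneq(\pi_{Z\ast}\oO_Z)^G$. The whole statement then reduces to showing that $\oO_{Z\sslash G}$ is a coherent sheaf of $\oO_U$-algebras supported on $\pi(Z)$: everything else is formal, and coherence may be checked locally on $U$.

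To identify this sheaf, I would apply $\pi_\ast$ to the $G$-equivariant exact sequence $0\to\iI_Z\to\oO_{\pi_Y^{-1}(U)}\to\oO_Z\to0$. All three sheaves are coherent and $\pi$ is locally Stein, so the higher direct images $R^{i}\pi_\ast$ $(i>0)$ vanish by Cartan's Theorem~B, and $0\to\pi_\ast\iI_Z\to\pi_\ast\oO_{\pi_Y^{-1}(U)}\to\pi_{Z\ast}\oO_Z\to0$ is again exact, of $G$-equivariant $\oO_U$-modules. Taking $G$-invariants is exact on such sheaves because $\pi$ is an analytic Hilbert quotient (a Reynolds operator being available locally over $U$, as in the proof of Lemma~\ref{lem:aquot}); so, using $(\pi_\ast\oO_{\pi_Y^{-1}(U)})^G=\oO_U$, we get $0\to\jJ\to\oO_U\to(\pi_{Z\ast}\oO_Z)^G\to0$ with $\jJ\cneq(\pi_\ast\iI_Z)^G$. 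Thus $\oO_{Z\sslash G}\cong\oO_U/\jJ$, and it remains to prove that $\jJ$ is a coherent ideal sheaf; its zero locus is then the desired closed analytic subspace $Z\sslash G\hookrightarrow U$, with underlying set $\pi(Z)$.

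The coherence of $\jJ$ is the crux, and I would establish it locally. Fix a relatively compact Stein open $\uU\subset U$ with $W\cneq\pi_Y^{-1}(\uU)$ Stein; then $Z\cap W$ is a $G$-invariant closed analytic subspace of the Stein space $W$, and we must show that the invariant direct image of $\oO_{Z\cap W}$ along $W\to\uU$ is coherent. If $Z$ is reduced, $Z\cap W$ is a reduced Stein space with a holomorphic action of the reductive group $G$, and then the existence of an analytic Hilbert quotient $(Z\cap W)\sslash G$, with invariant direct image that is exact and coherence-preserving on $G$-coherent sheaves, is exactly the GIT theory of Stein $G$-spaces of~\cite{MR1103041} (see also~\cite{MR1631577,MR3394374}). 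For non-reduced $Z$ I would run a d\'evissage along the powers $\nN^j$ of the nilradical sheaf $\nN$ of $\oO_{Z\cap W}$: relative compactness of $\uU$ gives $N$ with $\nN^N=0$, each $\nN^j/\nN^{j+1}$ is a $G$-equivariant coherent module on $(Z\cap W)_{\mathrm{red}}$ to which the reduced case applies, and combining this with the vanishing of higher direct images (so that $\pi_\ast$ followed by $G$-invariants keeps the filtration exact) and the stability of coherence under extensions, an induction on $N$ gives coherence of $(\pi_{Z\ast}\oO_Z)^G$ over $\uU$. Hence $\jJ$ is coherent on $U$.

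Finally, the two conditions of Definition~\ref{def:Hquot} for $\pi_Z\colon Z\to Z\sslash G$ are easy: (2) holds since $\oO_{Z\sslash G}=(\pi_{Z\ast}\oO_Z)^G$ by construction, and (1) holds because $U$ has an open cover by Stein subsets $\uU_\lambda$ with $\pi_Y^{-1}(\uU_\lambda)$ Stein (Lemma~\ref{lem:aquot}), so $\uU_\lambda\cap(Z\sslash G)$ is Stein (closed in Stein) and $\pi_Z^{-1}(\uU_\lambda\cap(Z\sslash G))=Z\cap\pi_Y^{-1}(\uU_\lambda)$ is Stein (closed in Stein). I expect the main obstacle to be precisely the coherence of $\jJ$ in the non-reduced case: since the cited treatments of analytic Hilbert quotients are formulated for reduced spaces, one genuinely needs the d\'evissage along the nilradical filtration, and one should also verify that the reduced-case results of~\cite{MR1103041} are available for arbitrary $G$-coherent sheaves (not merely structure sheaves) so as to feed the induction.
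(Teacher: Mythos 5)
Your proposal is correct and follows essentially the same route as the paper: push forward the surjection $\oO_{\pi_Y^{-1}(U)}\twoheadrightarrow\oO_Z$ along the analytic Hilbert quotient, use exactness of $(-)^G$ to obtain $\oO_U\twoheadrightarrow(\pi_{Z\ast}\oO_Z)^G$, and define $Z\sslash G$ by the kernel ideal. The paper's proof is a three-line version of this and simply asserts the conclusion; your additional discussion of the coherence of the kernel (via d\'evissage along the nilradical in the non-reduced case) addresses a point the paper leaves implicit, and is a reasonable way to justify it.
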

\begin{proof}
Since (\ref{piY:U}) is an analytic Hilbert quotient 
and the functor $(-)^G$ is exact, we have the surjection
\begin{align*}
\oO_U=(\pi_{Y\ast}\oO_{\pi_Y^{-1}(U)})^G \twoheadrightarrow
(\pi_{Y\ast}\oO_Z)^G. 
\end{align*}
Therefore by setting $Z\sslash G$ to be 
the complex analytic subspace of $U$ defined by the ideal 
of the above kernel, we obtain the 
analytic Hilbert quotient $\pi_Z=\pi_Y|_{Z} \colon Z \to Z \sslash G$. 
\end{proof}
By gluing the above construction, we have the following 
lemma: 
\begin{lem}\label{lem:Zquot2}
Let $Y$ be an algebraic $\mathbb{C}$-scheme with $G$-action
and $\pi_Y \colon Y \to Y'$ a $G$-invariant morphism of 
algebraic $\mathbb{C}$-schemes 
where $G$ acts on $Y'$ 
trivially. 
Suppose that $Y'=\cup_{i \in I}V_i'$
 is an affine open cover such that 
$V_i=\pi_Y^{-1}(V_i')$ is affine 
and $\pi|_{V_i} \colon V_i \to V_i'$ is isomorphic to 
$V_i \to V_i \sslash G$.
Then for an analytic open subset $U \subset Y'$ 
and a $G$-invariant closed analytic subspace 
$Z \subset \pi_Y^{-1}(U)$, the analytic 
Hilbert quotient $Z\sslash G$ exists as a 
closed analytic subspace of $U$. 
\end{lem}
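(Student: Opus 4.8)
The plan is to reduce Lemma~\ref{lem:Zquot2} to Lemma~\ref{lem:Zquot} by a patching argument over the affine open cover $Y'=\cup_{i\in I}V_i'$. First I would fix the analytic open subset $U\subset Y'$ and the $G$-invariant closed analytic subspace $Z\subset\pi_Y^{-1}(U)$. Setting $U_i\cneq U\cap V_i'$ and $Z_i\cneq Z\cap\pi_Y^{-1}(U_i)$, the hypothesis that $\pi|_{V_i}\colon V_i\to V_i'$ is isomorphic to the affine GIT quotient $V_i\to V_i\sslash G$, together with Lemma~\ref{lem:aquot} (applied to $Y=V_i$) and Lemma~\ref{lem:Zquot} (applied with $\pi_Y^{-1}(U)$ there replaced by $\pi_{V_i}^{-1}(U_i)$), produces for each $i$ a closed analytic subspace $Z_i\sslash G\hookrightarrow U_i$ and an analytic Hilbert quotient $\pi_{Z_i}\colon Z_i\to Z_i\sslash G$.

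Next I would glue these local quotients. On the overlap $U_i\cap U_j$, both $Z_i\sslash G$ and $Z_j\sslash G$ restrict to a closed analytic subspace of $U_i\cap U_j$ that is cut out by the ideal $\bigl((\pi_{V_i\ast}\oO_{Z_i})^G\to \oO_{U_i}\bigr)$-kernel restricted to the overlap; since the formation of $(\pi_{-\ast}\oO_{-})^G$ on a saturated open subset is compatible with further analytic restriction (the functor $(-)^G$ being exact and $\pi_{V_i}$ being locally Stein, as in the proof of Lemma~\ref{lem:aquot}), these ideals agree on $U_i\cap U_j$. Hence the subspaces $Z_i\sslash G$ glue to a well-defined closed analytic subspace $Z\sslash G\hookrightarrow U$, and the maps $\pi_{Z_i}$ glue to a morphism $\pi_Z\colon Z\to Z\sslash G$ because $Z_i=Z\cap\pi_Y^{-1}(U_i)=\pi_{Z}^{-1}(U_i\cap(Z\sslash G))$ and $\pi_{Z_i}=\pi_Z|_{Z_i}$. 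Finally I would verify that $\pi_Z$ satisfies the two conditions of Definition~\ref{def:Hquot}: condition (1) is local on the target, so it follows from the corresponding statement for each $\pi_{Z_i}$; condition (2), $(\pi_{Z\ast}\oO_Z)^G=\oO_{Z\sslash G}$, is an equality of analytic sheaves and can likewise be checked on the open cover $\{U_i\cap(Z\sslash G)\}$, where it holds by Lemma~\ref{lem:Zquot}.

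The main obstacle I expect is the gluing compatibility on overlaps, i.e.\ checking that the locally constructed closed subspaces $Z_i\sslash G$ and defining ideals genuinely agree on $U_i\cap U_j$ rather than merely having the same underlying set. This requires knowing that for a saturated analytic open $W\subset V_i\sslash G$ the pushforward-invariant sheaf $(\pi_{V_i\ast}\oO_{\pi_{V_i}^{-1}(W)\cap Z_i})^G$ is the restriction of the corresponding sheaf over all of $U_i$; this is the non-reduced refinement of the statements recalled before Lemma~\ref{lem:aquot}, and the argument is exactly the exactness-of-$(-)^G$ plus locally-Stein input used in the proof of that lemma, now applied on the overlaps. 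Once this naturality is in hand, the rest of the patching is formal. I would also note that the universality/uniqueness of analytic Hilbert quotients in the cases at issue (as discussed after Definition~\ref{def:Hquot} and in Lemma~\ref{lem:Zquot}) guarantees that the glued object is independent of the chosen cover.
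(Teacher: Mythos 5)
Your proposal is correct and follows essentially the same route as the paper: set $U_i = U\cap V_i'$, $Z_i = Z\cap V_i$, apply Lemma~\ref{lem:Zquot} on each affine piece, and glue. The paper dismisses the gluing step with ``by the construction, they glue,'' whereas you spell out the overlap compatibility (which in fact is immediate, since the defining ideal in Lemma~\ref{lem:Zquot} is the kernel of $\oO_U \to (\pi_{Y\ast}\oO_Z)^G$ and pushforward commutes with restriction to open subsets of the target); this is a harmless elaboration, not a different argument.
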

\begin{proof}
Let $U_i=U \cap V_i'$
and $Z_i=Z \cap V_i$. 
By applying Lemma~\ref{lem:Zquot} for 
$Z_i \subset \pi_Y^{-1}(U_i) \subset V_i$, 
we obtain the analytic Hilbert quotient 
$Z_i \sslash G \subset U_i$. 
By the construction, they glue to give 
a desired analytic Hilbert quotient
$Z\sslash G \subset U$.
\end{proof}
\begin{rmk}\label{rmk:Zquot2}
The situation of Lemma~\ref{lem:Zquot2}
happens for a GIT quotient of 
semistable locus w.r.t. a 
$G$-linearization on a quasi-projective scheme. 
\end{rmk}
We next discuss the universality of
analytic Hilbert quotients:
\begin{defi}\label{univ}
An analytic Hilbert quotient (\ref{AHilb}) satisfies the 
universality if 
for any $G$-invariant 
analytic map $h \colon Z \to Z'$ to a complex analytic space $Z'$, there 
is a unique factorization 
\begin{align}\label{univ:quot}
h \colon Z \stackrel{\pi_Z}{\to} Z \sslash G \to Z'.
\end{align}
\end{defi}
The above universality is proved in~\cite[Corollary~4]{MR1103041} when 
$Z$ is a reduced Stein space and $Z'=\mathbb{C}^n$. 
Below show the universality for the analytic 
Hilbert
quotients given in Lemma~\ref{lem:Zquot2}.
We prepare the following lemma: 
\begin{lem}\label{lem:univ:prepare}
Let $\pi_Z \colon Z \to Z\sslash G$
be the analytic Hilbert quotient given 
in Lemma~\ref{lem:Zquot2}. 
Then for any family of $G$-invariant
closed (not necessary analytic)
subsets $\{W_{\lambda}\}_{\lambda \in \Lambda}$
in $Z$, the image $\pi_Z(W_{\lambda})$ is closed 
in $Z\sslash G$ and we have the identity
\begin{align}\label{image:id}
\pi_Z \left(\bigcap_{\lambda \in \Lambda} W_{\lambda} \right)=
\bigcap_{\lambda \in \Lambda}\pi_Z(W_{\lambda}).
\end{align}
\end{lem}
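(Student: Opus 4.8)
The statement to prove is Lemma~\ref{lem:univ:prepare}: for the analytic Hilbert quotient $\pi_Z \colon Z \to Z\sslash G$ coming from Lemma~\ref{lem:Zquot2}, the image of any $G$-invariant closed subset is closed, and $\pi_Z$ commutes with arbitrary intersections of such subsets. The plan is to reduce everything to the affine-scheme situation of Lemma~\ref{lem:aquot}, where the corresponding facts about $\pi_Y \colon Y \to Y\sslash G$ are classical (they follow from reductivity of $G$ and properness of the quotient on closed orbits), and then use the local structure ($Y' = \cup_i V_i'$ with $V_i \to V_i'$ isomorphic to $V_i \to V_i\sslash G$) to glue.

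First I would reduce to the closedness assertion, since once each $\pi_Z(W_\lambda)$ is known to be closed, the inclusion $\pi_Z(\bigcap_\lambda W_\lambda) \subseteq \bigcap_\lambda \pi_Z(W_\lambda)$ is automatic, and for the reverse inclusion one argues pointwise: if $y \in Z\sslash G$ lies in every $\pi_Z(W_\lambda)$, then each fiber $\pi_Z^{-1}(y) \cap W_\lambda$ is a nonempty $G$-invariant closed subset of the fiber $\pi_Z^{-1}(y)$; since $\pi_Z^{-1}(y)$ contains a \emph{unique} closed $G$-orbit $O_y$ (this is the key point inherited from the affine GIT picture via Theorem~\ref{thm:KN} and the Kempf--Ness description), and any nonempty $G$-invariant closed subset of $\pi_Z^{-1}(y)$ must contain $O_y$ in its closure inside $Z$, one deduces $O_y \subseteq W_\lambda$ for all $\lambda$, hence $O_y \subseteq \bigcap_\lambda W_\lambda$, giving $y \in \pi_Z(\bigcap_\lambda W_\lambda)$. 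I would spell out the uniqueness of the closed orbit in a fiber using the homeomorphism $S/K \xrightarrow{\cong} Z\sslash G$ restricted from the ambient $Y$ via Lemma~\ref{lem:aquot} and the fact that $Z \subset \pi_Y^{-1}(U)$ is $G$-invariant closed.

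For closedness of $\pi_Z(W)$ with $W \subseteq Z$ a $G$-invariant closed subset, I would work locally on the affine cover: it suffices to show $\pi_Z(W) \cap U_i$ is closed in $U_i$ for each $i$, where $U_i = U \cap V_i'$, and $\pi_Z(W) \cap U_i = \pi_{Z_i}(W \cap Z_i)$ for $Z_i = Z \cap V_i = Z \cap \pi_Y^{-1}(U_i)$, because $\pi_Z$ is the restriction of $\pi_Y$ and fibers of $\pi_Y$ over points of $U_i$ are contained in $V_i$. Thus I am reduced to the case $Y$ affine, $Z \subseteq \pi_Y^{-1}(U)$ a $G$-invariant closed analytic subspace, $W \subseteq Z$ a $G$-invariant closed subset, and I must show $\pi_Z(W) \subseteq U$ is closed. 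Here $W$ is a $G$-invariant closed subset of $\pi_Y^{-1}(U) \subseteq Y^{an}$, and I invoke the analogous statement for the affine GIT quotient in the analytic topology: $\pi_Y \colon Y^{an} \to (Y\sslash G)^{an}$ sends $G$-invariant closed subsets to closed subsets. This last fact I would derive from Theorem~\ref{thm:KN} — the quotient topology on $Y\sslash G$ is induced from $Y$ via the Kempf--Ness set $S$, and a $G$-invariant closed $W$ meets $S$ in a $K$-invariant closed subset whose image under the proper (it is a quotient by the compact group $K$) map $S \to S/K \cong Y\sslash G$ is closed; one checks $\pi_Y(W) = \pi_S(W \cap S)$ since every fiber of $\pi_Y$ meets $S$ in a single $K$-orbit.

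The main obstacle I expect is handling \emph{non-reduced} $Z$ carefully: Theorem~\ref{thm:KN} and the Kempf--Ness theory are stated for $Y$ (a reduced scheme in the analytic sense, or at least the analytic topology only sees $Y_{\mathrm{red}}$), so all the point-set topology genuinely takes place on the underlying topological space, which is unaffected by nilpotents; I would emphasize that closedness of images and the intersection identity (\ref{image:id}) are purely topological statements about the continuous map $\pi_Z$, so the scheme structure on $Z$ plays no role and the reduction to the reduced/affine case is legitimate. A secondary technical point is that the cover $\{U_i\}$ of $U$ may be infinite and closedness is a local condition on $U$, so one must check that $\pi_Z(W) \cap U_i$ closed in $U_i$ for all $i$ implies $\pi_Z(W)$ closed in $U$ — which is immediate since $\{U_i\}$ is an open cover. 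With these observations in place the lemma follows by assembling the local closedness and the pointwise argument for the intersection identity.
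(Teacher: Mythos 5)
Your proposal is correct and follows essentially the same route as the paper: localize to the affine case and transfer everything to the Kempf--Ness set $S$, using the homeomorphism $S/K \cong Y\sslash G$ of Theorem~\ref{thm:KN} together with the fact that the quotient by the compact group $K$ is a closed map. Your pointwise argument for the intersection identity via the unique closed $G$-orbit in each fiber is just a more explicit unwinding of the paper's observation that $\pi_Z(W_\lambda \cap S') = \pi_Z(W_\lambda)$ and that $K$-invariant subsets of $S'$ are saturated for $S' \to S'/K$, so the two proofs are essentially identical in substance.
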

\begin{proof}
The question is local on $Z\sslash G$, so we may assume that 
$Y$ is affine and $Y'=Y\sslash G$. 
Since $Z, Z\sslash G$ are closed in $\pi_Y^{-1}(U)$, $U$, 
we may also assume that $Z=\pi_Y^{-1}(U)$, $Z\sslash G=U$. 
Let $S \subset Y$ be a Kempf-Ness set as in Theorem~\ref{thm:KN}. 
Then for $S'\cneq \pi_Y^{-1}(U) \cap S$, we have 
the homeomorphism $S'/K \stackrel{\cong}{\to}U$. 
Therefore 
for $W_{\lambda}' \cneq S' \cap W_{\lambda}$, we have 
$\pi_Z(W_{\lambda}')=\pi_Z(W_{\lambda})$. 
Since each 
$W_{\lambda}'$ is a $K$-invariant closed subset of 
$S'$, 
its image $\pi_{Z}(W_{\lambda}')$ is a closed subset of $U$
and the identity (\ref{image:id}) holds. 
\end{proof}

The desired universality is proved in the following lemma:  
\begin{lem}\label{lem:universal}
The analytic Hilbert quotient 
$\pi_Z \colon Z \to Z\sslash G$ 
in Lemma~\ref{lem:Zquot2} satisfies the universality in 
Definition~\ref{univ}. 
\end{lem}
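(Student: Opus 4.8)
The plan is to reduce the universality to the known case handled by Lemma~\ref{lem:univ:prepare} together with the topological description of the quotient coming from the Kempf-Ness set. First I would observe that the statement is local on the target $Z\sslash G$ in the sense of factoring a $G$-invariant map through $\pi_Z$: if we cover $Z\sslash G$ by open subsets, the factorization morphisms constructed over each piece will automatically agree on overlaps by uniqueness, so they glue. Hence, as in the proof of Lemma~\ref{lem:univ:prepare}, I may assume $Y$ is affine, $Y'=Y\sslash G$, and even that $Z=\pi_Y^{-1}(U)$, $Z\sslash G=U$ for an analytic open $U\subset Y\sslash G$, with $S\subset Y$ a Kempf-Ness set and $S'=\pi_Y^{-1}(U)\cap S$ admitting the homeomorphism $S'/K\stackrel{\cong}{\to}U$.

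The key point is that, set-theoretically, the desired factorization $\bar h\colon Z\sslash G\to Z'$ of a $G$-invariant analytic map $h\colon Z\to Z'$ is forced: since $\pi_Z$ is surjective with $\pi_Z(x_1)=\pi_Z(x_2)$ iff $\overline{G\cdot x_1}\cap\overline{G\cdot x_2}\neq\emptyset$, and since a $G$-invariant continuous map to the Hausdorff space $Z'$ must be constant on orbit closures, $h$ is constant on the fibers of $\pi_Z$, so $\bar h$ exists as a map of sets and is unique. Continuity of $\bar h$ follows because, by Theorem~\ref{thm:KN} (applied as in Lemma~\ref{lem:univ:prepare} to $S'$), the analytic topology on $U=Z\sslash G$ is the quotient topology induced from $S'$, and $h|_{S'}$ is continuous and $K$-invariant. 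The substantive work is to promote $\bar h$ from a continuous map to a morphism of complex analytic spaces, i.e. to check it is given by analytic functions. For this I would pull back a local coordinate $w$ on $Z'$: the composite $w\circ h$ is a $G$-invariant analytic function on $Z$, and I must show that any $G$-invariant analytic function $f$ on $Z=\pi_Y^{-1}(U)$ descends to an analytic function on $U$. But this is exactly the content of condition~(2) of the analytic Hilbert quotient, $(\pi_{Y\ast}\oO_{\pi_Y^{-1}(U)})^G=\oO_U$, which holds by Lemma~\ref{lem:aquot} and Lemma~\ref{lem:Zquot2}; so $f=g\circ\pi_Z$ for a unique $g\in\oO_U$, and assembling these over a coordinate patch of $Z'$ gives that $\bar h$ is analytic. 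Uniqueness of the analytic factorization is immediate since $\pi_Z$ is an epimorphism in the topological (hence analytic) category.

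The main obstacle I anticipate is the non-reducedness of $Z$: the references \cite{MR1103041, MR1631577, MR3394374} treat universality only for reduced Stein spaces and maps to $\mathbb{C}^n$, so I cannot simply cite them. The way around it is that everything needed has already been bootstrapped in the excerpt for the possibly non-reduced setting: the existence and sheaf-theoretic property (2) of $Z\sslash G$ come from Lemma~\ref{lem:Zquot2}, and the topological identity of $Z\sslash G$ with $S'/K$ — including the closedness of images of $G$-invariant closed sets and the intersection identity \eqref{image:id} — comes from Lemma~\ref{lem:univ:prepare}. So the proof really is just: construct $\bar h$ on underlying topological spaces using Lemma~\ref{lem:univ:prepare} and the Kempf-Ness description, then verify it is a morphism of ringed spaces by pulling back local coordinates and invoking property~(2); finiteness of the data (a coordinate chart has finitely many coordinates) means no convergence subtleties arise beyond those already absorbed into Lemma~\ref{lem:aquot}.
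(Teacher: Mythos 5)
Your proposal is correct and follows essentially the same route as the paper: cover $Z'$ by charts sitting inside $\mathbb{C}^n$, use Lemma~\ref{lem:univ:prepare} (equivalently the Kempf--Ness description of the quotient topology) to produce a compatible open cover of $Z\sslash G$, descend the coordinate functions via property~(2) of Definition~\ref{def:Hquot}, and glue by uniqueness. The only cosmetic difference is that you first build the factorization set-theoretically and deduce the cover from continuity, whereas the paper obtains the cover $\uU_{\lambda}=(Z\sslash G)\setminus\pi_Z(W_{\lambda})$ directly from the closedness of images and the intersection identity (\ref{image:id}); the two are interchangeable.
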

\begin{proof}
Let $h \colon Z \to Z'$ be a $G$-invariant analytic map 
to a complex analytic space $Z'$. 
We take an open cover $Z'=\cup_{\lambda \in \Lambda}\uU_{\lambda}'$
such that $\uU_{\lambda}'$ is a closed analytic subspace 
of an open subset in $\mathbb{C}^n$. 
Let 
$W_{\lambda}' \cneq Z' \setminus \uU_{\lambda}'$
and $W_{\lambda} \cneq h^{-1}(W_{\lambda}')$. 
Then 
each $W_{\lambda}$ is a $G$-invariant closed 
subset of $Z$. By Lemma~\ref{lem:univ:prepare}, 
the image $\pi_Z(W_{\lambda}) \subset Z \sslash G$
is closed and 
\begin{align*}
\bigcap_{\lambda \in \Lambda}\pi_Z(W_{\lambda})
=\pi_Z \left(\bigcap_{\lambda \in \Lambda} W_{\lambda} \right)=
\pi_Z \circ h^{-1} \left(\bigcap_{\lambda \in \Lambda}
W_{\lambda}'\right) =\emptyset. 
\end{align*}
Here the last identity follows because 
$\{\uU_{\lambda}'\}_{\lambda \in \Lambda}$ is an open 
cover of $Z'$. 
It follows that by setting 
$\uU_{\lambda}\cneq (Z\sslash G) \setminus \pi_Z(W_{\lambda})$,
we have an open cover 
$Z\sslash G =\cup_{\lambda \in \Lambda} \uU_{\lambda}$ and the
diagram
\begin{align*}
\xymatrix{
\pi_Z^{-1}(\uU_{\lambda}) \ar@<-0.3ex>@{^{(}->}[r] \ar[d]_-{\pi_Z} &
h^{-1}(\uU_{\lambda}') \ar[d]_-{h} & \\
\uU_{\lambda} 	
\ar@{.>}[r] & \uU_{\lambda}' \ar@<-0.3ex>@{^{(}->}[r] & \mathbb{C}^n. 
}
\end{align*}
Here the top horizontal arrow is an open immersion, and 
the right horizontal arrow is a locally closed embedding. 
By the property (2) in Definition~\ref{def:Hquot}, 
there is a unique analytic map $\uU_{\lambda} \to \uU_{\lambda}'$
which makes the above diagram commutes. 
By the uniqueness, they glue to give a desired factorization 
(\ref{univ:quot}). 

\end{proof}
\subsection{Moduli spaces of representations of quivers with convergent relations}
We return to the situation of Section~\ref{subsec:conv}. 
\begin{defi}
A convergent relation $I$ of a quiver $Q$ is a
collection of 
finite number of elements 
\begin{align*}
I=(f_1, \ldots, f_l), \ f_i \in \mathbb{C}\{Q\}.
\end{align*}
\end{defi}
Using the lemmas in the previous subsection, we have the following: 
\begin{lem}\label{lem:Qconv}
Given a convergent relation $I=(f_1, \ldots, f_l)$
of a quiver $Q$ and its dimension vector $\vec{m}$, 
there is an analytic open neighborhood of $0$
\begin{align*}
0 \in V \subset M_{Q}(\vec{m})
\end{align*}
such that each $\Hom(V_a, V_b)$-valued 
formal function $f_i(a, b, \vec{m})$ defined by (\ref{Vab})
for $f=f_i$
absolutely converges on $\pi_Q^{-1}(V)$. Here $\pi_Q$ is the 
quotient map
\begin{align*}
\pi_Q \colon 
\mathrm{Rep}_Q(\vec{m}) \to M_{Q}(\vec{m}). 
\end{align*}
\end{lem}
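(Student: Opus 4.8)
The plan is to reduce the statement to the convergence of a single element of $\mathbb{C}\{Q\}$, which is already recorded in Section~\ref{subsec:conv}, and then to promote the resulting open neighbourhood of $0 \in \mathrm{Rep}_Q(\vec m)$ to a saturated one, so that it descends to an open neighbourhood of $0 \in M_Q(\vec m)$. Concretely, for each $i \in \{1, \ldots, l\}$ and each pair $a, b \in V(Q)$, the discussion around (\ref{open:V}) applied to $f = f_i$ gives an analytic open neighbourhood $0 \in \uU_{i, a, b} \subset \mathrm{Rep}_Q(\vec m)$ on which the $\Hom(V_a, V_b)$-valued formal function $f_i(a, b, \vec m)$ of (\ref{Vab}) converges absolutely. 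Since the number of indices $i$ and of pairs $(a, b)$ is finite, $\uU \cneq \bigcap_{i, a, b} \uU_{i, a, b}$ is again an analytic open neighbourhood of $0$ on which every $f_i(a, b, \vec m)$ converges absolutely.

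Next I would observe that the locus of absolute convergence of $f_i(a, b, \vec m)$ is $G$-invariant, so that $U_0 \cneq G \cdot \uU = \bigcup_{g \in G} g \cdot \uU$, which is manifestly an open $G$-invariant neighbourhood of $0$, still lies in this locus. For this, note that under the action (\ref{G:act}) a length-$n$ monomial path from $a$ to $b$ transforms, using the compatibility $s(e_{k+1}) = t(e_k)$ and telescoping the group elements, as
\begin{align*}
(g \cdot u)_{e_n} \circ \cdots \circ (g \cdot u)_{e_1} = g_b^{-1} \circ (u_{e_n} \circ \cdots \circ u_{e_1}) \circ g_a,
\end{align*}
so that the operator norm of this monomial is bounded by $\lVert g_b^{-1} \rVert \cdot \lVert g_a \rVert$ times the operator norm of the corresponding monomial in $u$. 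Since $\lVert g_b^{-1} \rVert \cdot \lVert g_a \rVert$ is a constant independent of $n$, absolute convergence of $f_i(a, b, \vec m)$ at $u$ implies absolute convergence at $g \cdot u$ for every $g \in G$; hence every $f_i(a, b, \vec m)$ converges absolutely on $U_0$.

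Finally, the point $0 \in \mathrm{Rep}_Q(\vec m)$ is $G$-fixed and $U_0$ is a $G$-invariant analytic open subset containing it, so Lemma~\ref{lem:saturated2} supplies a saturated analytic open subset $U' \subset \mathrm{Rep}_Q(\vec m)$ with $0 \in U' \subset U_0$, and Lemma~\ref{lem:saturated} then produces an analytic open subset $V \subset M_Q(\vec m)$ with $\pi_Q^{-1}(V) = U'$. Since $\pi_Q(0) = 0$ and $0 \in \pi_Q^{-1}(V)$, we have $0 \in V$, and since $\pi_Q^{-1}(V) = U' \subset U_0$, each $f_i(a, b, \vec m)$ converges absolutely on $\pi_Q^{-1}(V)$, which is the assertion. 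The only step requiring genuine input is the $G$-invariance of the convergence locus in the second paragraph; once that is in place, the passage from $\uU$ to the saturated neighbourhood $U'$ and its image $V$ is a purely formal consequence of the Kempf--Ness-type results established earlier in this section.
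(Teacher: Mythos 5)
Your proof is correct and follows essentially the same route as the paper: convergence on a small neighbourhood $\uU$ of $0$, propagation to $G \cdot \uU$ via the equivariance $f_i(a,b,\vec m)(g\cdot u)=g_b^{-1}\circ f_i(a,b,\vec m)(u)\circ g_a$, and then Lemma~\ref{lem:saturated2} followed by Lemma~\ref{lem:saturated} to produce the saturated neighbourhood $\pi_Q^{-1}(V)$. Your term-by-term operator-norm justification of why absolute convergence is preserved under the $G$-action is a slightly more explicit version of the paper's one-line equivariance claim, but the argument is the same.
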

\begin{proof}
Let $\uU$ be an open neighborhood of $0 \in \mathrm{Rep}_Q(\vec{m})$ as 
in (\ref{open:V}), where 
each $f_{i}(a, b, \vec{m})$ absolutely converges on $\uU$. 
Since for $g=(g_i)_{i\in V(Q)} \in G$ and 
$u=(u_e)_{e \in E(Q)}$ we have 
\begin{align*}
f_i(a, b, \vec{m})(g \cdot u)=g_b^{-1} \circ f_i(a, b, \vec{m})(u) \circ g_a
\end{align*}
the $\Hom(V_a, V_b)$-valued function 
$f_i(a, b, \vec{m})$ absolutely converges on 
$G \cdot \uU$. 
By Lemma~\ref{lem:saturated2}, 
there is a saturated open subset 
$0 \in \vV \subset G \cdot \uU$. 
Then by Lemma~\ref{lem:saturated}, 
$\vV=\pi_Q^{-1}(V)$ for 
an open subset $0 \in V \subset M_Q(\vec{m})$.  
\end{proof}
For a quiver $Q$ with a convergent relation $I=(f_1, \ldots, f_l)$, 
let $\vec{m}$ be its dimension vector and 
take an open subset $V \subset M_Q(\vec{m})$ as in Lemma~\ref{lem:Qconv}. 
By Lemma~\ref{lem:Qconv}, we have the $G$-invariant 
closed analytic subspace of 
$\pi_{Q}^{-1}(V)$
\begin{align}\label{Rep:V}
\mathrm{Rep}_{(Q, I)}(\vec{m})|_{V} \subset
\pi_Q^{-1}(V)
\end{align}
whose structure sheaf is given by
\begin{align*}
\oO_{\mathrm{Rep}_{(Q, I)}(\vec{m})|_{V}}
=\oO_{\pi_Q^{-1}(V)}/(f_i(a, b, \vec{m})_{jk}, 
a, b \in V(Q)).
\end{align*}
Here $f_i(a, b, \vec{m})_{jk}$ is the matrix component 
of 
the analytic map 
\begin{align*}
f_i(a, b, \vec{m}) \colon \pi_Q^{-1}(V) \to \Hom(V_a, V_b).
\end{align*}
By taking the quotient by $G$, we have the
following definition: 
\begin{defi}\label{defi:cmoduli}
Let $Q$ be a quiver with a convergent relation $I$, 
and $\vec{m}$ its dimension vector. 
Then for a sufficiently small 
analytic open neighborhood $0 \in V \subset M_Q(\vec{m})$, 
we define 
the complex analytic stack $\mM_{(Q, I)}(\vec{m})|_{V}$ and 
complex analytic space $M_{(Q, I)}(\vec{m})|_{V}$ by 
\begin{align*}
\mM_{(Q, I)}(\vec{m})|_{V} &\cneq [\mathrm{Rep}_{(Q, I)}(\vec{m})|_{V}/G], \\
M_{(Q, I)}(\vec{m})|_{V} 
&\cneq \mathrm{Rep}_{(Q, I)}(\vec{m})|_{V} \sslash G. 
\end{align*}
Here $\mathrm{Rep}_{(Q, I)}(\vec{m})|_{V} \sslash G$
is the analytic Hilbert quotient of $\mathrm{Rep}_{(Q, I)}(\vec{m})|_{V}$, 
given in Lemma~\ref{lem:Zquot}. 
\end{defi}

\subsection{Convergent super-potential}\label{subsec:potential}
For a quiver $Q$, its convergent super-potential 
is defined as follows. 
\begin{defi}\label{def:conv:pot}
A convergent super-potential of a quiver $Q$ is an element 
\begin{align*}
W \in \mathbb{C}\{ Q \}/[\mathbb{C}\{ Q \}, \mathbb{C}\{ Q \}]. 
\end{align*}
\end{defi}  
A convergent super-potential $W$ of $Q$ is represented by 
a formal sum
\begin{align}\notag
W=\sum_{n\ge 1}
\sum_{\begin{subarray}{c}
\{1, \ldots, n+1\} \stackrel{\psi}{\to} V(Q), \\
\psi(n+1)=\psi(1)
\end{subarray}}
\sum_{e_i \in E_{\psi(i), \psi(i+1)}}
a_{\psi, e_{\bullet}} \cdot e_1 e_2\ldots e_{n}
\end{align}
with $\lvert a_{\psi, e_{\bullet}} \rvert <C^n$
for a constant $C>0$. 

For $i, j \in V(Q)$, let 
$\mathbf{E}_{i, j}$ be the $\mathbb{C}$-vector space 
spanned by $E_{i, j}$. 
We set
\begin{align}\label{e:dual}
E_{i, j}^{\vee} \cneq \{ e^{\vee} : 
e \in E_{i, j}\} \subset \mathbf{E}_{i, j}^{\vee}. 
\end{align}
Here for $e \in E_{i, j}$, 
the element $e^{\vee} \in \mathbf{E}_{i, j}^{\vee}$
is defined by the condition
$e^{\vee}(e)=1$
and $e^{\vee}(e')=0 $
for any $e\neq e' \in E_{i, j}$, i.e. 
$E_{i, j}^{\vee}$ is the dual basis of $E_{i, j}$. 
For a map $\psi \colon \{1, \ldots, n+1\} \to V(Q)$
with $\psi(1)=\psi(n+1)$
and elements $e_i \in E_{\psi(i), \psi(i+1)}$, 
$e\in E(Q)$, 
we set
\begin{align*}
\partial_{e^{\vee}}(e_1 \ldots e_n)
=\sum_{a=1}^{n} e^{\vee}(e_a) e_{a+1} \ldots e_n e_1 \ldots e_{a-1}. 
\end{align*}
Here $e^{\vee}(e_a)=0$ if 
$(s(e_a), t(e_a)) \neq (s(e), t(e))$. 
The above partial differential extends to a linear map
\begin{align*}
\partial_{e^{\vee}} \colon 
 \mathbb{C}\{ Q \}/[\mathbb{C}\{ Q \}, \mathbb{C}\{ Q \}]
\to \mathbb{C}\{ Q \}. 
\end{align*}
For a convergent super-potential $W$, 
the set of elements in 
$\mathbb{C}\{Q\}$
\begin{align*}
\partial W \cneq \{ \partial_{e^{\vee}}W : 
e \in E(Q) \}
\end{align*}
is a convergent relation of $Q$. 

For a dimension vector $\vec{m}$
of $Q$, let $\tr W$ be the formal function 
of $u=(u_e)_{e\in E(Q)} \in \mathrm{Rep}_Q(\vec{m})$ defined by
\begin{align*}
\tr W(u) \cneq \sum_{n\ge 1}
\sum_{\begin{subarray}{c}
\{1, \ldots, n+1\} \stackrel{\psi}{\to} V(Q), \\
\psi(n+1)=\psi(1)
\end{subarray}}
\sum_{e_i \in E_{\psi(i), \psi(i+1)}}
a_{\psi, e_{\bullet}} \cdot \tr(u_{e_n} \circ u_{e_{n-1}} 
\circ \cdots \circ u_{e_1}).
\end{align*}
The above formal function on 
$\mathrm{Rep}_Q(\vec{m})$ is $G$-invariant. 
By the argument of Lemma~\ref{lem:Qconv}, 
there is an 
analytic open neighborhood 
$0 \in V \subset M_Q(\vec{m})$
such that the formal function 
$\tr W$
absolutely converges on 
$\pi_Q^{-1}(V)$ to give a 
$G$-invariant holomorphic function
\begin{align*}
\tr W \colon \pi_Q^{-1}(V) \to \mathbb{C}. 
\end{align*}
Then for the relation $I=\partial W$, 
it is easy to see (and well-known when $W$ is a usual 
super-potential of $Q$)
that 
the analytic subspace (\ref{Rep:V}) 
equals to the critical locus of $\tr W$ in $\pi_Q^{-1}(V)$: 
\begin{align*}
\mathrm{Rep}_{(Q, \partial W)}(\vec{m})|_{V}=\{ d(\tr W)=0\}. 
\end{align*}
In particular, we have
\begin{align*}
\mM_{(Q, \partial W)}(\vec{m})|_{V}=\left[\{ d(\tr W)=0\}/G \right]. 
\end{align*}

\section{Moduli stacks of semistable sheaves}\label{sec:moduli}
In this section, we recall some basic notions and facts 
on moduli spaces of semistable sheaves, whose details 
are available in~\cite{MR1450870}. 
Then we state the precise form of Theorem~\ref{intro:thm1}
in Theorem~\ref{thm:precise}. 
In what follows, we always assume that the varieties or
schemes are defined over $\mathbb{C}$. 
\subsection{Gieseker semistable sheaves}
Let 
\begin{align*}
(X, \oO_X(1))
\end{align*}
 be a polarized smooth projective variety
with $\omega=c_1(\oO_X(1))$. 
For a coherent sheaf $E$ on $X$, its \textit{Hilbert polynomial} is 
defined by
\begin{align*}
\chi(E \otimes \oO_X(m))=a_d m^d+a_{d-1}m^{d-1}+\cdots
\end{align*}
where $d=\dim \Supp(E)$ and $a_d$ is a positive rational number. 
The \textit{reduced Hilbert polynomial} is defined by
\begin{align*}
\overline{\chi}(E, m)
 \cneq \frac{\chi(E \otimes \oO_X(m))}{a_d} \in \mathbb{Q}[m]. 
\end{align*}
For polynomials $p_i(m) \in \mathbb{Q}[m]$ with $i=1, 2$, 
we write $p_1(m) \succ p_2(m)$
if $\deg p_1<\deg p_2$ or $\deg p_1=\deg p_2$, $p_1(m) >p_2(m)$ for $m\gg 0$. 
Then $(\mathbb{Q}[m], \succ)$ is an ordered set.

By definition, 
a coherent sheaf $E$ on $X$ is said to be 
\textit{$\omega$-Gieseker (semi)stable}
if for any non-zero subsheaf $E' \subsetneq E$, we have the inequality
\begin{align*}
\overline{\chi}(E', m) \prec (\preceq) \overline{\chi}(E, m).
\end{align*}
For any Gieseker semistable sheaf $E$ on $X$, 
it has a filtration 
(called \textit{J\"ordar-H\"older (JH) filtration})
\begin{align*}
0=F_0 \subset F_1 \subset F_2 \subset \cdots \subset F_k=E
\end{align*}
such that 
each $F_i/F_{i-1}$ is $\omega$-Gieseker stable 
whose reduced Hilbert polynomial coincides with $\overline{\chi}(E, m)$. 
The JH filtration is not necessary unique, but its 
subquotient
\begin{align*}
\gr(E) \cneq \bigoplus_{i=1}^k F_i/F_{i-1}
\end{align*}
is uniquely determined up to isomorphism. 
For two $\omega$-Gieseker semistable sheaves 
$E, E'$ on $X$, they
are called \textit{S-equivalent} if 
$\gr(E)$ and $\gr(E')$ are isomorphic. 

\subsection{Moduli spaces of semistable sheaves}\label{subsec:moduli}
Let 
$\mM$ be the 2-functor
\begin{align}\label{stack:M}
\mM \colon
\sS ch/\mathbb{C} \to \gG roupoid
\end{align}
which sends a $\mathbb{C}$-scheme $S$ to the 
groupoid of $S$-flat coherent sheaves on $X \times S$.  
The stack $\mM$ is an algebraic stack locally of finite type 
over $\mathbb{C}$. 
Let $\Gamma$ be the image of the Chern character map
\begin{align*}
\Gamma \cneq \Imm (\ch \colon K(X) \to H^{\ast}(X, \mathbb{Q})). 
\end{align*}
For each $v \in \Gamma$, we have an open substack
of finite type
\begin{align*}
\mM_{\omega}(v) \subset \mM
\end{align*}
consisting of flat families of $\omega$-Gieseker semistable sheaves
with Chern character $v$. 

The stack $\mM_{\omega}(v)$ is constructed as a global quotient 
stack of a quasi-projective scheme. 
For $[E] \in \mM_{\omega}(v)$, 
we take $m\gg 0$ and 
a vector space $\mathbf{V}$ satisfying
\begin{align*}
\dim \mathbf{V} =\chi(E(m))=\dim H^0(E(m)). 
\end{align*}
The above condition depends only on $v$, and independent of $E$
for $m \gg 0$. 
Let $\mathrm{Quot}(\mathbf{V}, v)$ be the Grothendieck Quot scheme 
parameterizing quotients
\begin{align}\label{quot:s}
s \colon \mathbf{V} \otimes \oO_X(-m) \twoheadrightarrow E
\end{align}
in $\Coh(X)$
with $\ch(E)=v$. 
Then there is an open subscheme 
\begin{align*}
\mathrm{Quot}^{\circ}(\mathbf{V}, v) \subset \mathrm{Quot}(\mathbf{V}, v)
\end{align*} parameterizing 
quotients (\ref{quot:s}) such that 
$E$ is $\omega$-Gieseker semistable and 
the induced linear map
$\mathbf{V} \to H^0(E(m))$
is an isomorphism.  
The algebraic group $\mathrm{GL}(\mathbf{V})$ acts on 
$\mathrm{Quot}^{\circ}(\mathbf{V}, v)$ by 
\begin{align*}
g \cdot (\mathbf{V} \otimes \oO_X(-m) \stackrel{s}{\twoheadrightarrow} E)
=(\mathbf{V} \otimes \oO_X(-m) \stackrel{s \circ g}{\twoheadrightarrow} E)
\end{align*}
and the stack $\mM_{\omega}(v)$ is 
described as
\begin{align*}
\mM_{\omega}(v)=[\mathrm{Quot}^{\circ}(\mathbf{V}, v)/\GL(\mathbf{V})]. 
\end{align*}

The above construction is compatible with the 
Geometric Invariant Theory (GIT). 
If we take the closure of $\mathrm{Quot}^{\circ}(\mathbf{V}, v)$,
\begin{align*}
\overline{\mathrm{Quot}}^{\circ}(\mathbf{V}, v) \subset \mathrm{Quot}(\mathbf{V}, v)
\end{align*}
then there is a $\GL(\mathbf{V})$-linearized polarization 
on $\overline{\mathrm{Quot}}^{\circ}(\mathbf{V}, v)$ such that 
its open locus 
$\mathrm{Quot}^{\circ}(\mathbf{V}, v)$ is the GIT semistable locus with respect 
to the above $\GL(\mathbf{V})$-linearized polarization. 
In particular, we have the good quotient
morphism (which is in particular a good moduli space
in the sense of~\cite{MR3237451})
\begin{align*}
p_M \colon \mM_{\omega}(v) \to M_{\omega}(v) \cneq \mathrm{Quot}^{\circ}(\mathbf{V}, v)\sslash \GL(\mathbf{V}).
\end{align*}
Namely, there is a $\GL(\mathbf{V})$-invariant affine 
open cover 
\begin{align*}
\mathrm{Quot}^{\circ}(\mathbf{V}, v)=\bigcup_i U_i, \ 
U_i=\Spec R_i
\end{align*}
such that 
$M_{\omega}(v)$ has the following affine open cover
\begin{align*}
M_{\omega}(v)=\bigcup_i U_i \sslash \GL(\mathbf{V}), \ 
U_i \sslash \GL(\mathbf{V})=\Spec R_i^{\GL(\mathbf{V})}. 
\end{align*}
By the GIT construction of $M_{\omega}(v)$, 
two points $x_1, x_2 \in \mathrm{Quot}^{\circ}(\mathbf{V})$ are mapped to the 
same point by $p_M$ if and only if 
their orbit closures intersect, i.e.
\begin{align*}
\overline{\GL(\mathbf{V}) \cdot x_1} \cap \overline{\GL(\mathbf{V}) \cdot x_2} \neq \emptyset. 
\end{align*}
It is also known that the above condition is equivalent to 
that, if $x_i$ corresponds to a $\omega$-Gieseker semistable sheaf $E_i$, 
then $E_1$ and $E_2$ are $S$-equivalent. 
In fact, the projective scheme $M_{\omega}(v)$ is the coarse moduli 
space of $S$-equivalence classes of 
$\omega$-Gieseker semistable sheaves with 
Chern character $v$. 
So every point $p \in M_{\omega}(v)$ is represented by 
a direct sum of $\omega$-Gieseker stable sheaves $E$ 
(called a \textit{polystable sheaf}), written as
\begin{align}\label{polystable}
E=\bigoplus_{i=1}^k V_i \otimes E_i. 
\end{align}
Here each $V_i$ is a finite dimensional vector space, 
$E_i$ is a $\omega$-Gieseker stable sheaf with 
$\overline{\chi}(E_i, m)=\overline{\chi}(E, m)$
for all $i$.

\subsection{Ext-quiver}\label{subsec:Extquiver}
Suppose that $E \in \Coh(X)$ is 
of the form (\ref{polystable}). 
Then the 
collection of 
the sheaves $(E_1, \ldots, E_k)$ forms 
a simple collection, defined below: 
\begin{defi}
A collection of coherent sheaves $(E_1, \ldots, E_k)$ 
is called a simple collection if 
$\Hom(E_i, E_j)=\mathbb{C} \cdot \delta_{ij}$. 
\end{defi}
Let $E_{\bullet}=(E_1, \ldots, E_k)$
be a simple collection of coherent sheaves on $X$. 
For each $1\le i, j \le k$, 
we fix a finite subset
\begin{align}
E_{i, j} \subset \Ext^1(E_i, E_j)^{\vee}
\end{align}
giving a basis of $\Ext^1(E_i, E_j)^{\vee}$. 
We
define the quiver $Q_{E_{\bullet}}$ as follows. 
The set of vertices and edges are given by 
\begin{align*}
V(Q_{E_{\bullet}})=\{1, 2, \ldots, k\}, \ 
E(Q_{E_{\bullet}})=\coprod_{1\le i, j \le k}
E_{i,j}. 
\end{align*}
The maps $s, t \colon 
E(Q_{E_{\bullet}}) \to V(Q_{E_{\bullet}})$
are given by 
\begin{align*}
s|_{E_{i, j}}=i, \ t|_{E_{i, j}}=j. 
\end{align*}
The resulting quiver $Q_{E_{\bullet}}$ is called 
the \textit{Ext-quiver}
of $E_{\bullet}$. 

We can now state the precise statement of 
Theorem~\ref{intro:thm1}: 
\begin{thm}\label{thm:precise}
Let $X$ be a smooth projective variety, 
and let $\mM_{\omega}(v)$ be the 
moduli stack of $\omega$-Gieseker semistable sheaves on $X$ with 
Chern character $v$. We have 
the natural morphism to its coarse moduli space
\begin{align*}
p_{M} \colon \mM_{\omega}(v) \to M_{\omega}(v). 
\end{align*}  
For $p \in M_{\omega}(v)$, it is represented by a
sheaf $E$ of the form
\begin{align*}
E=\bigoplus_{i=1}^k V_i \otimes E_i
\end{align*}
where $E_{\bullet}=(E_1, \ldots, E_k)$ is a simple collection. 
Let $Q_{E_{\bullet}}$ be the corresponding Ext-quiver 
and $\vec{m}$ its dimension vector given by 
$\vec{m}=(m_1, \ldots, m_k)$, where  
$m_i=\dim V_i$. 
Then there is a
convergent relation $I_{E_{\bullet}}$ of $Q_{E_{\bullet}}$, 
 analytic open neighborhoods 
$p \in U \subset M_{\omega}(v)$, 
$0 \in V \subset M_{Q_{E_{\bullet}}}(\vec{m})$
and commutative isomorphisms
\begin{align}\label{dia:comiso}
\xymatrix{
\mM_{(Q_{E_{\bullet}, I_{E_{\bullet}}})}(\vec{m})|_{V} 
\ar[r]^-{\cong}
\ar[d]_-{p_Q} & p_M^{-1}(U)  \ar[d]^-{p_M} \\
M_{(Q_{E_{\bullet}, I_{E_{\bullet}}})}(\vec{m})|_{V}
\ar[r]^-{\cong} & 
U. 
}
\end{align}
Here the bottom arrow sends $0$ to $p$. 
\end{thm}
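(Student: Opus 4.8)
The plan is to reduce the statement to a local analytic statement about the Quot scheme near a point lying over $p$, and then identify the analytic germ of $\mathrm{Quot}^{\circ}(\mathbf{V},v)$ with a germ inside $\mathrm{Rep}_{Q_{E_{\bullet}}}(\vec m)$ cut out by $I_{E_{\bullet}}$. Concretely, write $G=\GL(\mathbf V)$, let $q\in \mathrm{Quot}^{\circ}(\mathbf V,v)$ be a point over $p$ corresponding to the presentation $s\colon \mathbf V\otimes\oO_X(-m)\twoheadrightarrow E$ with $E=\bigoplus V_i\otimes E_i$ polystable, and let $H=\Aut(E)=\prod_i\GL(V_i)$ be the stabilizer of $q$ in $G$ (here we use that $E_{\bullet}$ is a simple collection, so $\End(E)=\bigoplus_i\End(V_i)$). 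Because $M_{\omega}(v)$ is a good moduli space, $q$ has closed $G$-orbit and by Luna's étale slice theorem (in its analytic form) there is an $H$-invariant locally closed analytic slice $\Slice_q\subset \mathrm{Quot}^{\circ}(\mathbf V,v)$ through $q$ with an $H$-equivariant isomorphism of analytic germs $G\times_H \Slice_q \cong$ (a saturated neighborhood of $q$), inducing $[\Slice_q/H]\cong p_M^{-1}(U)$ and $\Slice_q\sslash H\cong U$ for suitable analytic neighborhoods. So the whole problem becomes: describe the germ of $\Slice_q$ at $q$, together with its $H$-action, as a convergent-relation subspace of $\mathrm{Rep}_{Q_{E_{\bullet}}}(\vec m)$ near $0$.

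The second step is the deformation-theoretic heart. The tangent space to the slice is $T_q\Slice_q = \Ext^1_X(E,E)$, which as an $H$-representation decomposes as $\bigoplus_{i,j}\Hom(V_i,V_j)\otimes\Ext^1(E_i,E_j)$ — exactly $\mathrm{Rep}_{Q_{E_{\bullet}}}(\vec m)$ with its $G'=\prod\GL(V_i)$ action, once we use the fixed bases $E_{i,j}$ of $\Ext^1(E_i,E_j)^{\vee}$. By deformation theory (this is the content announced after Theorem~\ref{intro:thm1} and proved in Section~\ref{sec:deform}), the formal germ of $\Slice_q$ at $q$ is pro-represented by the Maurer–Cartan / Kuranishi space of the dg-algebra $\RHom_X(E,E)$, equivalently by the minimal $A_{\infty}$-algebra $A_{\infty}=H^{\ast}(\RHom(E,E))$ transported onto $\Ext^{\ast}(E,E)$. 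Writing out the Maurer–Cartan equation for the degree-$1$ part $\Ext^1(E,E)$ using the $A_{\infty}$-products $m_n$, the obstruction map lands in $\Ext^2(E,E)$, and packaging its components gives precisely a relation $I_{E_{\bullet}}=(f_1,\dots,f_l)\in\mathbb C\lkakko Q_{E_{\bullet}}\rkakko$ in the completed path algebra; $H$-equivariance is automatic from functoriality of the $A_{\infty}$-structure. Thus at the level of \emph{formal} germs we get $\widehat{\Slice_q}\cong \widehat{\mathrm{Rep}_{(Q_{E_{\bullet}},I_{E_{\bullet}})}(\vec m)}_0$ compatibly with $H$.

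The third step — which I expect to be the main obstacle — is upgrading this formal isomorphism to an honest analytic isomorphism of germs, i.e. showing $I_{E_{\bullet}}$ is \emph{convergent} (lies in $\mathbb C\{Q_{E_{\bullet}}\}$, not merely $\mathbb C\lkakko Q_{E_{\bullet}}\rkakko$) and that the formal slice equation is the completion of a genuine convergent one. The slice $\Slice_q$ itself is an honest analytic space, so it has a convergent Kuranishi model; the real issue is to match that convergent model, term by term, with the $A_{\infty}$-defined relation. This is exactly where the gauge-theory estimates advertised in the introduction come in: following \cite{MR1950958, JuTu} one resolves each $E_i$ by a finite complex of vector bundles, equips the relevant spaces of (hermitian-metric) connections / Dolbeault representatives with Sobolev or Hölder norms, solves the Maurer–Cartan equation by the standard implicit-function-theorem iteration, and derives geometric-series bounds $\lvert a_{\psi,e_{\bullet}}\rvert<C^n$ on the Taylor coefficients of the resulting homotopy transfer. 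The passage from vector bundles to complexes of vector bundles resolving coherent sheaves requires checking that the elliptic estimates survive when one works with the associated total complex and its Laplacian; this bookkeeping is the technical crux. Granting the convergence, Artin's analytic approximation (or directly the uniqueness of the convergent Kuranishi model) promotes the formal identification to an analytic one.

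Finally I would assemble the pieces: choose $V\subset M_{Q_{E_{\bullet}}}(\vec m)$ small enough (as in Lemma~\ref{lem:Qconv}) that the $f_i(a,b,\vec m)$ converge on $\pi_Q^{-1}(V)$, use the germ isomorphism to produce a $G'$-equivariant analytic isomorphism $\mathrm{Rep}_{(Q_{E_{\bullet}},I_{E_{\bullet}})}(\vec m)|_V\xrightarrow{\cong}(\text{saturated nbhd of }q\text{ in }\Slice_q)$ after possibly shrinking, pass to quotient stacks to get the top isomorphism of \eqref{dia:comiso}, and invoke the universality of analytic Hilbert quotients (Lemma~\ref{lem:universal}, applicable via Remark~\ref{rmk:Zquot2} since $\mathrm{Quot}^{\circ}$ is the semistable locus of a $G$-linearization on a quasi-projective scheme) together with Lemma~\ref{lem:Zquot2} to descend it to the bottom isomorphism $M_{(Q_{E_{\bullet}},I_{E_{\bullet}})}(\vec m)|_V\cong U$, with $0\mapsto p$ by construction. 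Commutativity of the square is then formal. The only genuinely hard input is Step three; Steps one, two and four are, respectively, Luna slice theory, homotopy transfer for $A_{\infty}$-algebras, and the quotient-space formalism already set up in Section~\ref{sec:quiver}.
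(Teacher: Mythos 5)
Your overall architecture coincides with the paper's: a Luna slice at a Quot-scheme point whose stabilizer is $\prod_i\GL(V_i)$, the minimal $A_{\infty}$-model of $\RHom(E,E)$ packaged as a relation in $\mathbb{C}\lkakko Q_{E_{\bullet}}\rkakko$, convergence via the Fukaya--Tu Sobolev estimates extended from bundles to complexes of bundles resolving the $E_i$, and descent through the analytic Hilbert quotient formalism of Section~\ref{sec:quiver}. Two points where your plan diverges from, or is thinner than, the actual argument deserve comment. First, the paper never passes through a formal isomorphism upgraded by Artin approximation. It constructs the comparison analytically in both directions: the bounds $\lVert I_n\rVert_l<C^n$ and $\lVert P_n\rVert_l<C^n$ of Lemma~\ref{lem:mbound} make both the $A_{\infty}$-quasi-isomorphism $I_{\ast}$ and its homotopy inverse $P_{\ast}$ into convergent analytic maps on the Maurer--Cartan loci, whence $I_{\ast}\colon T\to\mM$ is a smooth morphism of relative dimension $\dim\Aut(E)$ (Proposition~\ref{prop:rest3}); the local inverse on the slice is $P_{\ast}$ composed with the versal family, and the identification of the two germs follows from versality plus the tangent-space computation, with no approximation theorem invoked. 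Your route would additionally have to produce an isomorphism equivariant for the reductive stabilizer and matching the specific convergent series defining $I_{E_{\bullet}}$, which an abstract germ isomorphism does not give for free.

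Second, and this is the nearest thing to a genuine gap: you pass from a germ isomorphism at the fixed point to a ``$G'$-equivariant analytic isomorphism onto a saturated neighbourhood'' with the words ``after possibly shrinking''. Since $G'=\prod_i\GL(V_i)$ is non-compact, no neighbourhood of the fixed point is $G'$-invariant, so the extension $\widetilde{\psi}(g\cdot x)=g\cdot\psi(x)$ from a $K$-invariant neighbourhood ($K$ a maximal compact subgroup) to its $G'$-orbit must be shown to be well defined; the paper does this by a connectedness argument on $\{g\in G : gR_1\cap R_1\neq\emptyset\}$ (Lemma~\ref{lem:welldef}) and then shrinks both sides to saturated open subsets (Lemma~\ref{lem:Zsat}) so that the isomorphism descends to the analytic Hilbert quotients. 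Without this step the bottom isomorphism of (\ref{dia:comiso}) --- the one between the coarse spaces --- does not follow from the top one, so your write-up should supply it explicitly.
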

The proof of Theorem~\ref{thm:precise} will 
be completed in Proposition~\ref{prop:complete} below.

\section{Deformations of coherent sheaves}\label{sec:deform}
In this section, we describe deformation theory of 
coherent sheaves via dg-algebras and their  
minimal 
$A_{\infty}$-models. 
The arguments are already known for vector bundles~\cite{MR1950958, JuTu}
and we apply similar arguments for resolutions of 
coherent sheaves by vector bundles. 

The above description will give local atlas of the 
moduli stack $\mM$ in Subsecton~\ref{subsec:moduli}
via finite dimensional $A_{\infty}$-algebras. 
More precisely for a given 
coherent sheaf $E$ on a smooth projective variety $X$, 
we compare the following three descriptions of the
deformation space of $E$: 
\begin{enumerate}
\item An open neighborhood of the algebraic stack $\mM$
given in Subsection~\ref{subsec:moduli} at the point $[E] \in \mM$. 

\item The Mauer-Cartan locus associated with the infinite 
dimensional dg-algebra $\RHom(E, E)$. 

\item The Mauer-Cartan locus associated with the 
finite dimensional minimal $A_{\infty}$-algebra
$\Ext^{\ast}(E, E)$. 
\end{enumerate}
We will compare the above descriptions by 
first constructing the map $(3) \Rightarrow (2)$
in Lemma~\ref{prop:restrict}. 
Then we will construct a map $(2) \Rightarrow (1)$, 
and then composing we get a 
desired atlas $(3) \Rightarrow (1)$
in Proposition~\ref{prop:rest2}.

\subsection{Deformations of vector bundles}\label{subsec:dg}
We recall some basic facts on the 
deformation theory of vector bundles via gauge theory, 
and 
fix some notation (see~\cite{MR1950958} for details). 
For a holomorphic vector bundle $\eE \to X$ on 
a smooth projective variety 
$X$, we denote by $\aA^{p, q}(\eE)$ the
sheaf of $\eE$-valued
$(p, q)$-forms on $X$, 
and set
\begin{align*}
A^{p, q}(\eE) \cneq \Gamma(X, \aA^{p, q}(\eE)). 
\end{align*} 
The holomorphic structure on $\eE$ 
is given by the Dolbeaut connection
\begin{align*}
\overline{\partial}_{\eE} \colon \aA^{0, 0}(\eE) \to \aA^{0, 1}(\eE).  
\end{align*}
The Dolbeaut connection 
extends to the Dolbeaut complex
\begin{align*}
0 \to 
\aA^{0, 0}(\eE) \to \aA^{0, 1}(\eE) \to \cdots 
\to \aA^{0, i}(\eE) \to \aA^{0, i+1}(\eE) \to \cdots
\end{align*}
giving a resolution of $\eE$. 
The complex $\aA^{0, \ast}(\eE)$ is an elliptic 
complex 
(see~\cite[Chapter~IV, Section~5]{MR0515872}), 
whose global section computes 
$H^{\ast}(X, \eE)$, i.e. 
\begin{align*}
H^{k}(X, \eE)=\hH^{k}(A^{0, \ast}(\eE)). 
\end{align*}
Any other holomorphic structure on $\eE$ is given by 
the Dolbeaut connection 
of the form 
\begin{align*}
\overline{\partial}_{\eE}+A
\colon \aA^{0, 0}(\eE) \to \aA^{0, 1}(\eE)
\end{align*}
for some $A \in A^{0, 1}(\eE nd(\eE))$. Conversely 
given $A \in A^{0, 1}(\eE nd(\eE))$, the 
connection $\overline{\partial}_{\eE}+A$ 
gives a holomorphic structure on $\eE$ if and only if 
its square is zero, i.e. 
\begin{align*}
\mathrm{ad}(\overline{\partial}_{\eE})(A)+A \circ A=0. 
\end{align*}
The above equation is the Mauer-Cartan (MC) equation of the 
dg-algebra
\begin{align}\label{g:vect}
\mathfrak{g}_{\eE}^{\ast} \cneq A^{0, \ast}(\eE nd(\eE)). 
\end{align}
The quotient of the solution space of the 
MC equation of $\mathfrak{g}_{\eE}^{\ast}$
by the gauge group 
of $\cC^{\infty}$-automorphisms of $\eE$
 describes the 
deformation space of $\eE$ as holomorphic 
vector bundles.

\subsection{Deformations of complexes}\label{subsec:complex}
We have a similar deformation theory for complexes of 
vector bundles. 
Let 
\begin{align}\label{seq:E0}
\eE^{\bullet}=(\cdots \to 0 \to \eE^i \stackrel{d^i}{\to} \eE^{i+1} \to \cdots \to \eE^j \to 0 
\to \cdots)
\end{align}
be a bounded complex of
holomorphic vector 
bundles on $X$.
By 
taking the Dolbeaut complex
$\aA^{0, \ast}(\eE^i)$ for each 
$\eE^i$, we obtain the double complex
$\aA^{0, \ast}(\eE^{\bullet})$. 
Let $\mathrm{Tot}(-)$ means the total complex of the double complex. 
 We set
\begin{align}\label{A:tot}
A^{0, \ast}(\eE^{\bullet}) \cneq
\mathrm{Tot}(\Gamma(X, \aA^{0, \ast}(\eE^{\bullet}))). 
\end{align}
Similarly to the vector bundle case, 
the complex $\mathrm{Tot}(\aA^{0, \ast}(\eE^{\bullet}))$
is elliptic, and 
its global section computes 
the hyper cohomology of $\eE^{\bullet}$
\begin{align}\label{compute:hyper}
\hH^k(\dR \Gamma(X, \eE^{\bullet}))=\hH^k(A^{0, \ast}(\eE^{\bullet})). 
\end{align}

Applying the construction (\ref{A:tot})
to the inner $\hH om$ complex
$\hH om^{\ast}(\eE^{\bullet}, \eE^{\bullet})$, 
we obtain the complex
\begin{align*}
\mathfrak{g}_{\eE^{\bullet}}^{\ast}
\cneq 
A^{0, \ast}(\hH om^{\ast}(\eE^{\bullet}, \eE^{\bullet})). 
\end{align*}
Its degree $k$ part is given by
\begin{align}\label{g:degk}
\mathfrak{g}_{\eE^{\bullet}}^k=\bigoplus_{p+q=k} \prod_{i} 
A^{0, q}(\hH om(\eE^i, \eE^{i+p}))
\end{align}
and the differential $d_{\mathfrak{g}}$ is induced 
by the Dolbeaut connections 
$\overline{\partial}_{\eE_i}$ on each $\eE_i$
together with 
the differentials $d^{\ast}$ in (\ref{seq:E0}). 
Also the composition
\begin{align*}
A^{0, q}(\hH om(\eE^i, \eE^{i+p})) \times
A^{0, q'}&(\hH om(\eE^{i+p}, \eE^{i+p+p'})) \\ 
&\to 
A^{0, q+q'}(\hH om(\eE^{i}, \eE^{i+p+p'}))
\end{align*}
defines 
 the product structure $\cdot$ on $\mathfrak{g}_{\eE^{\bullet}}^{\ast}$. 
Then 
it is straightforward to check that 
the data
\begin{align}\label{g:E}
(\mathfrak{g}_{\eE^{\bullet}}^{\ast}, d_{\mathfrak{g}}, \cdot)
\end{align}
is a dg-algebra.   

Let $\mathfrak{mc}$ be
the map defined by 
\begin{align*}
\mathfrak{mc} \colon \mathfrak{g}_{\eE^{\bullet}}^1 \to \mathfrak{g}_{\eE^{\bullet}}^2, \ 
\alpha \mapsto d_{\mathfrak{g}}(\alpha)+ \alpha \cdot \alpha.
\end{align*}
Its zero set 
\begin{align}\label{sol:MCeq}
\mathrm{MC}(\mathfrak{g}_{\eE^{\bullet}}^{\ast})=\{ 
\alpha \in \mathfrak{g}_{\eE^{\bullet}}^1 : \mathfrak{mc}(\alpha)=0\} 
\end{align}
is the solution of the Mauer-Cartan equation of the dg-algebra
$\mathfrak{g}_{\eE^{\bullet}}^{\ast}$. 
Note that 
an element $\alpha \in \mathfrak{g}_{\eE^{\bullet}}^1$ satisfies the 
MC equation iff 
\begin{align*}
(d_{\aA^{0, \ast}(\eE^{\bullet})}+\alpha)^2=0
\end{align*}
on $\aA^{0, \ast}(\eE^{\bullet})$. 
In this case, 
the data
\begin{align}\label{deform:E}
(\aA^{0, \ast}(\eE^{\bullet}), d_{\aA^{0, \ast}(\eE^{\bullet})}+\alpha)
\end{align}
determines 
a dg-$\aA^{0, \ast}(\oO_X)$-module. 
Then (\ref{deform:E}) is  
 a bounded complex of 
$\oO_X$-modules whose cohomologies 
are coherent (see~\cite[Lemma~4.1.5]{MR2648899}),
giving a deformation 
of the complex (\ref{seq:E0}) in the derived category. 

More explicitly, by 
(\ref{g:degk}) 
an element $\alpha \in \mathfrak{g}_{\eE^{\bullet}}^1$ consists of data
\begin{align}\label{write:alpha}
\alpha=(\alpha_0^i, \alpha_1^i, \alpha_2^i, \ldots), \ 
\alpha_{j}^i \in A^{0, j}(\hH om(\eE^i, \eE^{i-j+1}))
\end{align}
Suppose that the above $\alpha$ satisfies the 
MC equation $\mathfrak{mc}(\alpha)=0$. 
Then the diagram 
\begin{align*}
\xymatrix{
\cdots \ar[r] &
\aA^{0, 0}(\eE^{i-1}) \ar[r]^{} \ar[d] &
 \aA^{0, 0}(\eE^i) \ar[r]^{d^i+\alpha_0^i}
\ar[d]_{\overline{\partial}_{\eE^{i}}+\alpha_1^i} & \aA^{0, 0}(\eE^{i+1}) \ar[r]\ar[d] & \cdots \\
\cdots \ar[r] &
 \aA^{0, 1}(\eE^{i-1}) \ar[r]\ar[d]  &
 \aA^{0, 1}(\eE^i) \ar[r]\ar[d]_{\overline{\partial}_{\eE^{i}}+\alpha_1^i}
 & \aA^{0, 1}(\eE^{i+1}) 
\ar[r]\ar[d] & \cdots \\
\cdots \ar[r] &
\aA^{0, 2}(\eE^{i-1})\ar[r]  &
\aA^{0, 2}(\eE^i) \ar[r] & 
\aA^{0, 2}(\eE^{i+1}) \ar[r] & \cdots 
}
\end{align*}
satisfies the following: 
it is a complex in the horizontal direction, 
each square is commutative, and 
the compositions of vertical arrows are homotopic to 
zero with homotopy given by $\alpha_2^i$. 

In particular if $\alpha_j^i=0$ for $j\ge 2$,
then the above diagram 
extends to a double complex.
In this case 
\begin{align*}
\eE^i_{\alpha}=(\aA^{0, 0}(\eE^i), \overline{\partial}_{\eE^{i}}+\alpha_1^i)
\end{align*}
is a holomorphic structure on 
$\eE^i$. 
By setting 
\begin{align*}
d_{\alpha}^i=d^i+\alpha_0^i \colon \aA^{0, 0}(\eE^i) \to \aA^{0, 0}(\eE^{i+1})
\end{align*}
we have the bounded 
complex of holomorphic vector bundles on $X$
\begin{align}\label{E:alpha}
\cdots \to 0 \to \eE^{-n}_{\alpha} \stackrel{d^{-n}_{\alpha}}{\to}
  \cdots \to \eE^{-1}_{\alpha}
 \stackrel{d^{-1}_{\alpha}}{\to} \eE^0_{\alpha} \to 0  \to \cdots
\end{align}
giving a deformation of $\eE^{\bullet}$ as complexes.
Conversely given a deformation of $\eE^{\bullet}$ as
a complex, then it gives rise to the 
solution of MC equation of the form 
$\alpha=(\alpha_0^i, \alpha_1^i, 0, \ldots)$.  

For $\alpha, \alpha' \in \mathrm{MC}(\mathfrak{g}_{\eE^{\bullet}}^{\ast})$, 
$\alpha$ and $\alpha'$ are called \textit{gauge equivalent} if 
there exist 
\begin{align*}
\gamma=\{(\gamma_0^i, \gamma_1^i, \gamma_2^i, \ldots)\}_{i} \in 
\mathfrak{g}_{\eE^{\bullet}}^0, \ 
\gamma_j^i \in A^{0, j}(\hH om(\eE^i, \eE^{i-j}))
\end{align*}
where $\gamma_0^i$ 
gives an isomorphism $\eE^i \stackrel{\cong}{\to} \eE^i$
as $\cC^{\infty}$-vector bundles, such that 
we have
\begin{align}\label{isom:gauge}
\gamma \circ (d_{\aA^{0, \ast}(\eE^{\bullet})}+\alpha) \circ \gamma^{-1}=
d_{\aA^{0, \ast}(\eE^{\bullet})}+\alpha'.
\end{align}
In this case, we have the isomorphism of 
the dg-$\aA^{0, \ast}(\oO_X)$-modules
\begin{align*}
\gamma \colon 
(\aA^{0, \ast}(\eE^{\bullet}), d_{\aA^{0, \ast}(\eE^{\bullet})}+\alpha) \stackrel{\cong}{\to}
(\aA^{0, \ast}(\eE^{\bullet}), d_{\aA^{0, \ast}(\eE^{\bullet})}+\alpha')
\end{align*} 
giving isomorphic deformations of (\ref{seq:E0}) 
in the derived category. 

Suppose that the complex (\ref{seq:E0}) is quasi-isomorphic to 
a coherent sheaf $E$. Let $\mathrm{Def}_E$ be the deformation functor
\begin{align*}
\mathrm{Def}_E \colon \aA rt \to \sS et
\end{align*}
sending a finite dimensional commutative local $\mathbb{C}$-algebra 
$(R, \mathbf{m})$
to the set of isomorphism classes of $R$-flat deformation 
of $E$ to $X \times \Spec R$. 
Then it is shown in~\cite[Section~8]{DDE} that we have the functorial 
isomorphism
\begin{align*}
\mathrm{MC}(\mathfrak{g}_{\eE^{\bullet}}^{\ast} \otimes \mathbf{m})
/(\mbox{gauge equivalence}) \stackrel{\cong}{\to}
\mathrm{Def}_E(R) 
\end{align*}
by sending a solution of the MC equation to the 
cohomology of the 
corresponding deformation (\ref{deform:E}).

\subsection{Resolutions of coherent sheaves}
For a smooth projective variety $X$, 
we consider deformation theory of a sheaf
\begin{align*}
E \in \Coh(X)
\end{align*}
in terms of dg-algebra. 
As we recalled in Section~\ref{subsec:dg}, 
when $E$ is a vector bundle 
its deformation theory is described in terms of the 
dg-algebra (\ref{g:vect}). 
In general, we take a resolution of $E$ by 
vector bundles and consider 
the associated dg-algebra (\ref{g:E}). 

We first fix a resolution of $E$ by vector bundles 
in the following way. 
Let $\oO_X(1)$ be an ample line bundle on $X$. 
Then for $m_0 \gg 0$ we have the surjection
\begin{align*}
H^0(E(m_0)) \otimes \oO_{X}(-m_0) \twoheadrightarrow E. 
\end{align*}
Applying this construction to the kernel
of the above morphism and repeating, we obtain the 
resolution of $E$ of the form
\begin{align*}
\cdots \to W^i \otimes \oO_X(-m_i) \stackrel{d^i}{\to}
&W^{i+1} \otimes \oO_X(-m_{i+1}) \to
\cdots \\
&\cdots \to W^0 \otimes \oO_X(-m_0) \to E \to 0
\end{align*}
for finite dimensional vector spaces $W^i$. 
Since $X$ is smooth,
the kernel of $d^i$ for $i=-N$ with $N\gg 0$ is a 
vector bundle on $X$. 
Therefore we obtain the bounded resolution of 
$E$
\begin{align}\label{seq:E}
0 \to \eE^{-N} \stackrel{d^{-N}}{\to}
  \cdots \to \eE^{-1} \stackrel{d^{-1}}{\to} \eE^0 \to E \to 0
\end{align}
where $\eE^{-N}=\Ker(d^{-N})$ and 
$\eE^i=W^i \otimes \oO_X(-m_i)$ for 
$-N<i\le 0$. 

By replacing $m_i$ and $n$ if necessary, the above 
construction can be extended to local universal family 
of deformations of $E$. 
Let $\mM$ be the stack (\ref{stack:M}), and
take its local atlas 
\begin{align}\label{atlas}
(A, p) \to (\mM, [E])
\end{align}
 at $[E] \in \mM$, 
such that $A$ is a finite type affine scheme and a point 
$p \in A$ is sent to $[E]$. 
Let
\begin{align*}
E_{A} \in \Coh(X \times A)
\end{align*}
be the universal family. 
Let $\oO_{X \times A}(1)$ be the pull-back of 
$\oO_X(1)$ to $X \times A$. 
For $m_0 \gg 0$, 
the $\oO_A$-module 
$H^0(E_A(-m_0))$ is locally free of finite rank and 
we have the surjection
\begin{align*}
H^0(E_A(m_0)) \otimes_{\oO_A}\oO_{X \times A}(-m_0) \twoheadrightarrow E_A. 
\end{align*}
Similarly as above, we obtain the 
resolution of $E_A$ of the form
\begin{align*}
\cdots \to \wW^i \otimes_{\oO_A} \oO_{X \times A}(-m_i) \to
&\wW^{i+1} \otimes_{\oO_A} \oO_{X\times A}(-m_{i+1}) \to
\cdots \\
&\cdots \to W^0 \otimes_{\oO_A} \oO_{X\times A}(-m_0) \to E_A \to 0
\end{align*}
for locally free $\oO_A$-modules $\wW^i$ of finite rank. 
By taking the kernel at $i=-N$ for $N\gg 0$, we obtain 
the resolution of $E_A$
\begin{align}\label{seq:Eu}
0 \to \eE^{-N}_{A} \to
  \cdots \to \eE_A^{-1} \to \eE_A^0 \to E_{A} \to 0.
\end{align}
For $N\gg 0$, 
each $\eE^i_A$ is a vector bundle on $X \times A$, 
since  $E_A$ is a $A$-flat perfect object. 
By restricting it to 
$X \times \{p\}$, we obtain the 
resolution (\ref{seq:E}). 

\subsection{Minimal $A_{\infty}$-algebras}\label{subsec:minimal}
For a coherent sheaf $E$ on $X$, we fix a resolution 
$\eE^{\bullet}$
as in (\ref{seq:E})
and consider the dg-algebra (\ref{g:E})
\begin{align}\label{minimal:g}
\mathfrak{g}_E^{\ast} \cneq \mathfrak{g}_{\eE^{\bullet}}^{\ast}. 
\end{align}
When $E$ is a vector bundle, we just take the 
dg-algebra (\ref{g:vect}) in the argument below.  
By (\ref{compute:hyper}) we have 
\begin{align*}
\Ext^k(E, E) =\hH^k(\mathfrak{g}_E^{\ast}). 
\end{align*}
By the homological transfer theorem, there exists a 
minimal $A_{\infty}$-algebra structure $\{m_n\}_{n\ge 2}$
on $\Ext^{\ast}(E, E)$, 
and a
quasi-isomorphism 
\begin{align}\label{quasi:I}
I \colon (\Ext^{\ast}(E, E), \{m_n\}_{n\ge 2})
 \to (\mathfrak{g}_E^{\ast}, d_{\mathfrak{g}}, \cdot)
\end{align}
as $A_{\infty}$-algebras. 
Here the $A_{\infty}$-structure on $\Ext^{\ast}(E, E)$
consists of linear maps
\begin{align}\label{def:mn}
m_n \colon \Ext^{\ast}(E, E) \to \Ext^{\ast+2-n}(E, E), \ 
n\ge 2
\end{align}
 and the quasi-isomorphism (\ref{quasi:I})
is a collection of linear maps
\begin{align*}
I_n \colon \Ext^{\ast}(E, E)^{\otimes n} \to \mathfrak{g}_E^{\ast+1-n}.
\end{align*}
Both of $m_n$ and $I_n$ satisfy the $A_{\infty}$-constraints. 
The maps $m_n$ and $I_n$ are explicitly 
described in terms of Kontsevich-Soibelman's tree formula~\cite{MR1882331}
given as follows. 

Let us choose a K\"ahler metric on $X$, Hermitian metrics
on vector bundles $\eE^i$, and fix them. 
A standard argument in Hodge theory for elliptic complexes (for example, see~\cite{MR0515872})
yields linear embedding
\begin{align*}
i \colon \Ext^{\ast}(E, E) \hookrightarrow \mathfrak{g}_E^{\ast}
\end{align*}
which identifies 
$\Ext^{\ast}(E, E)$ with 
$\Delta=0$ where $\Delta$ is 
the Laplacian operator
\begin{align*}
\Delta=d_{\mathfrak{g}} d_{\mathfrak{g}}^{\ast}
+d_{\mathfrak{g}}^{\ast} d_{\mathfrak{g}} \colon 
\mathfrak{g}_{E}^{\ast} \to \mathfrak{g}_E^{\ast}.
\end{align*}
Here $d_{\mathfrak{g}}^{\ast}$ is the adjoint map of 
$d_{\mathfrak{g}}$
with respect to the above chosen K\"ahler metric on $X$ and Hermitian 
metrics on $\eE^i$. 
Moreover 
we have 
linear operators
\begin{align}\label{hodge}
p \colon 
\mathfrak{g}_E^{\ast}
\twoheadrightarrow \Ext^{\ast}(E, E), \ 
h \colon \mathfrak{g}_E^{\ast} \to \mathfrak{g}_E^{\ast-1}
\end{align}
satisfying the following relations
\begin{align}\label{relation}
p\circ i=\id, \ i \circ 
p=\id+ d_{\mathfrak{g}} \circ h + h\circ d_{\mathfrak{g}}. 
\end{align}
The homotopy operator $h$ is given by 
\begin{align}\label{h:homotopy}
h=-d_{\mathfrak{g}}^{\ast} \circ G
\end{align}
where $G$ is the Green's operator, which is an operator 
of order $-2$
(see~\cite[Chapter~IV]{MR0515872}),
hence $h$ is of order $-1$.

The $A_{\infty}$-product (\ref{def:mn})
is described by 
Kontsevich-Soibelman's tree formula as 
\begin{align}\label{KS:m}
m_n=\sum_{T \in \oO(n)} \pm m_{n, T}
\end{align}
where $\oO(n)$ is the set of isomorphism classes of binary 
rooted trees with $n$-leaves.
Here $m_{n, T}$ is the operation given by 
the composition associated to $T$, by 
putting $i$ on leaves, 
the product map $\cdot$ of $\mathfrak{g}_E^{\ast}$ on 
internal vertices, the homotopy $h$ on internal edges, and 
the projection $p$ on the root of $T$. 
For example, $m_3$ is given by 
\begin{align*}
m_3(x_1, x_2, x_3)=
\pm p(h(i(x_1) \cdot i(x_2)) \cdot i(x_3))
\pm p(i(x_1) \cdot h(i(x_1) \cdot i(x_2))). 
\end{align*}
The operation $I_n$ is similarly given by
\begin{align}\label{KS:I}
I_n=\sum_{T \in \oO(n)} \pm I_{n, T}
\end{align}
where $I_{n, T}$ is defined by replacing 
$p$ by $h$ in the construction of $m_{n, T}$. 
For example, $I_3$ is given by 
\begin{align*}
I_3(x_1, x_2, x_3)=
\pm h(h(i(x_1) \cdot i(x_2)) \cdot i(x_3))
\pm h(i(x_1) \cdot h(i(x_1) \cdot i(x_2))). 
\end{align*}
By~\cite[Appendix~A]{JuTu}, there exists 
another $A_{\infty}$-homomorphism
\begin{align}\label{P:inverse}
P \colon (\mathfrak{g}_E^{\ast}, d_{\mathfrak{g}}, \cdot) \to 
(\Ext^{\ast}(E, E), \{m_n\}_{n\ge 2})
\end{align}
which is a homotopy inverse of $I$, i.e. 
\begin{align*}
P \circ I=\id, \quad I \circ P \stackrel{\rm{homotopic}}{\sim}\id.
\end{align*}
Here two $A_{\infty}$-morphisms 
$f_1, f_2 \colon A_1 \to A_2$ 
between $A_{\infty}$-algebras $A_1$, $A_2$ 
are called \textit{homotopic} if there is an $A_{\infty}$-homomorphism
\begin{align*}
H \colon A_1 \to A_2 \otimes \Omega_{[0, 1]}^{\ast}
\end{align*}
such that 
$H(0)=f_1$ and $H(1)=f_2$, where 
$\Omega_{[0, 1]}^{\ast}$ is the dg-algebra of 
$\cC^{\infty}$-differential forms on the interval $[0, 1]$. 
The $A_{\infty}$-homomorphism $P$
consists of linear maps
\begin{align*}
P_n \colon (\mathfrak{g}_E^{\ast})^{\otimes n} \to 
\Ext^{\ast+1-n}(E, E)
\end{align*}
which are also described in terms of tree formula, whose details we omit (see~\cite[Appendix~A]{JuTu} for details). 

Later we will use some boundedness properties of linear maps
$m_n$, $I_n$ and $P_n$. 
Let us take an even number $l \gg 0$, 
e.g. $l>2\dim X$, and consider the 
Sobolev $(l, 2)$-norm 
$\lVert - \rVert_l$ 
on 
$\mathfrak{g}_E^{\ast}$. 
It also induces a norm $\lVert - \rVert_{l}$
on $\Ext^{\ast}(E, E)$
by the embedding $i$ in (\ref{hodge}). 
We denote by 
\begin{align*}
\mathfrak{g}_E^{\ast} \subset \widehat{\mathfrak{g}}_{E, l}^{\ast}
\end{align*}
the completion of $\mathfrak{g}_E^{\ast}$
with respect to the Sobolev norm 
$\lVert - \rVert_{l}$.

\begin{lem}\label{lem:mbound}
There is a constant $C>0$ independent of $n$
such that 
\begin{align*}
\lVert m_n \rVert_l<C^n, \ \lVert I_n \rVert_l<C^n, \ 
\lVert P_n \rVert_l<C^n. 
\end{align*}
Here $\lVert - \rVert_l$ for linear maps 
mean the operator norm with respect to the norm 
$\lVert - \rVert_l$ on $\mathfrak{g}_E^{\ast}$
or $\Ext^{\ast}(E, E)$. 
\end{lem}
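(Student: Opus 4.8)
The plan is to control each $A_\infty$-operation by separating the combinatorics of the tree sum from a uniform multiplicative estimate for a single tree. Recall from (\ref{KS:m}) and (\ref{KS:I}) that $m_n=\sum_{T\in\oO(n)}\pm m_{n,T}$ and $I_n=\sum_{T\in\oO(n)}\pm I_{n,T}$, where $m_{n,T}$ is obtained from a binary rooted tree $T$ with $n$ leaves by placing the embedding $i$ on the leaves, the product $\cdot$ of $\mathfrak{g}_E^{\ast}$ on the internal vertices, the homotopy $h$ on the internal edges, and the projection $p$ on the root (and $I_{n,T}$ is the same with $p$ replaced by $h$ at the root). A binary rooted tree with $n$ leaves has exactly $n-1$ internal vertices and $n-2$ internal edges, and the number $\lvert\oO(n)\rvert$ of such trees is the Catalan number $\tfrac1n\binom{2n-2}{n-1}$, which satisfies $\lvert\oO(n)\rvert<4^n$. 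So it suffices to bound each $m_{n,T}$ (and $I_{n,T}$, $P_{n,T}$) by $D^n$ with $D$ independent of $n$ and $T$.

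\textbf{Analytic inputs.} First, since $l$ is chosen large (the paper takes $l>2\dim X=\dim_{\mathbb{R}}X$), the Sobolev multiplication theorem shows that the completion $\widehat{\mathfrak{g}}_{E,l}^{\ast}$ is a Banach algebra under $\cdot$: as the product is induced fibrewise by composition of homomorphisms of the $\eE^i$ together with the wedge product of $(0,\ast)$-forms, there is a constant $K>0$ with $\lVert x\cdot y\rVert_l\le K\lVert x\rVert_l\lVert y\rVert_l$ for all $x,y$ (taking $K$ uniform over the finitely many graded components). Second, by (\ref{h:homotopy}) the homotopy $h=-d_{\mathfrak{g}}^{\ast}\circ G$ is a pseudodifferential operator of order $-1$, hence extends to a bounded operator on $\widehat{\mathfrak{g}}_{E,l}^{\ast}$ with some finite operator norm $\lVert h\rVert_l$; likewise the projection $p$ onto harmonic representatives is bounded, and by the very definition of $\lVert-\rVert_l$ on $\Ext^{\ast}(E,E)$ via (\ref{hodge}) the embedding $i$ is isometric, so $\lVert i\rVert_l=1$. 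All of these norms are fixed, independent of $n$.

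\textbf{The tree estimate.} I would then propagate the norm through $T$: evaluating $m_{n,T}$ on inputs $x_1,\dots,x_n$, each leaf contributes a factor $\lVert i\rVert_l=1$, each internal vertex a factor $K$ (sub-multiplicativity of $\cdot$), each internal edge a factor $\lVert h\rVert_l$, and the root a factor $\lVert p\rVert_l$. Multiplying over the whole tree gives
\begin{align*}
\lVert m_{n,T}(x_1,\dots,x_n)\rVert_l\le \lVert p\rVert_l\cdot\lVert h\rVert_l^{\,n-2}\cdot K^{\,n-1}\cdot\prod_{a=1}^n\lVert x_a\rVert_l,
\end{align*}
so $\lVert m_{n,T}\rVert_l\le \lVert p\rVert_l\cdot K\cdot(K\lVert h\rVert_l)^{n-2}$. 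Summing over $T\in\oO(n)$ and using $\lvert\oO(n)\rvert<4^n$ yields $\lVert m_n\rVert_l<C_0^{\,n}$ for a constant $C_0$ depending only on $K,\lVert h\rVert_l,\lVert p\rVert_l$ (absorbing the finitely many small values of $n$ into $C_0$). The same computation applies verbatim to $I_{n,T}$, with the root factor $\lVert p\rVert_l$ replaced by $\lVert h\rVert_l$, giving $\lVert I_n\rVert_l<C_1^{\,n}$. For $P_n$, by \cite[Appendix~A]{JuTu} the homomorphism $P$ is also given by a sum over trees whose only building blocks are $i$, $p$, $h$ and the product $\cdot$, so the identical bookkeeping gives $\lVert P_n\rVert_l<C_2^{\,n}$. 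Taking $C=\max(C_0,C_1,C_2)$ finishes the proof.

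\textbf{Main obstacle.} There is no deep difficulty here, only a point requiring care: one must verify that $\lVert-\rVert_l$ is genuinely multiplicative on $\mathfrak{g}_E^{\ast}$ (this is exactly what forces the choice $l>2\dim X$ made before the lemma) and that the operator norms $\lVert h\rVert_l$, $\lVert p\rVert_l$ are finite and, being norms of fixed operators, independent of $n$; the essential content is then to organize the tree-wise estimate so that the per-tree bound is of the form $(\text{const})^n$ with the constant uniform in $T$, after which the Catalan bound $\lvert\oO(n)\rvert<4^n$ does the rest.
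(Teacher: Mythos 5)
Your proposal is correct and follows essentially the same route as the paper: the paper's proof likewise rests on the Catalan bound $\sharp\oO(n)<4^{n-1}$ together with the boundedness of $h$ (as an order $-1$ operator on an elliptic complex) and of the product on $\widehat{\mathfrak{g}}_{E,l}^{\ast}$ (from the Sobolev multiplication theorem for $l\gg 0$), deferring the per-tree bookkeeping to \cite{MR1950958} and \cite{JuTu}. You have simply written out explicitly the multiplicative tree estimate that those references supply, so there is nothing to correct.
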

\begin{proof}
When $E$ is a vector bundle, the lemma is proved 
in~\cite[Proposition~2.3.2]{MR1950958}
and~\cite[Lemma~A.1.1, Lemma~A.1.2, Lemma~A.1.5]{JuTu}.
The key ingredient of the proof is that 
the maps $m_n$, $I_n$, $P_n$ are constructed as in (\ref{KS:m}) using 
rooted trees, whose cardinality is bounded
as
\begin{align*}
\sharp \oO(n)=\frac{(2n-2)!}{(n-1)! n!} <4^{n-1},
\end{align*}
and the fact that the 
homotopy operator $h$, the product map on 
$\mathfrak{g}_E^{\ast}$ are extended to bounded operators 
\begin{align*}
\widehat{\mathfrak{g}}_{E, l}^{\ast} \stackrel{h}{\to}
 \widehat{\mathfrak{g}}_{E, l}^{\ast}, \quad 
\widehat{\mathfrak{g}}_{E, l}^{\ast} \times 
\widehat{\mathfrak{g}}_{E, l}^{\ast} \stackrel{\cdot}{\to} 
\widehat{\mathfrak{g}}_{E, l}^{\ast}. 
\end{align*}
When $E$ is a coherent sheaf which is not necessary a vector bundle, 
the above property still hold for the complex (\ref{seq:E}) without any modification: 
the boundedness of $h$ is a general fact for 
elliptic complexes
(see~\cite[Theorem~4.12]{MR0515872}),
as it is an operator of degree $-1$ given by (\ref{h:homotopy}),  
and that of the product $\cdot$
follows from our choice of $l\gg 0$
and a standard result of Sobolev spaces
(for example see~\cite[Theorem~25]{Willie}).  
Therefore the same argument for the vector bundle case 
proves the lemma. 
\end{proof}

\subsection{Deformations by $A_{\infty}$-algebras}
For $x \in \Ext^1(E, E)$, we consider the infinite series
\begin{align}\label{kappa}
\kappa(x)\cneq \sum_{n\ge 2}m_n(x, \ldots, x) 
\end{align}
where each term $m_n(x, \ldots, x)$ is an element of 
$\Ext^2(E, E)$. 
By Lemma~\ref{lem:mbound}, there is an analytic open neighborhood 
\begin{align}\label{open:U}
0 \in \uU \subset \Ext^1(E, E)
\end{align}
such that the series (\ref{kappa})
absolutely converges on $\uU$ to give a complex analytic morphism
\begin{align}\label{kappa:U}
\kappa \colon \uU \to \Ext^2(E, E). 
\end{align}
The equation $\kappa(x)=0$ is the Mauler-Cartan 
equation for the $A_{\infty}$-algebra (\ref{def:mn}). 
We set $T$ to be
\begin{align}\label{T:uU}
T \cneq \kappa^{-1}(0) \subset \uU
\end{align}
i.e. $T$ is the closed complex analytic subspace 
defined by the ideal of zero of the map (\ref{kappa:U}). 

On the other hand, for $x \in \Ext^1(E, E)$ we also consider the infinite 
series
\begin{align}\label{series:I}
I_{\ast}(x) \cneq \sum_{n\ge 1}I_n(x, \ldots, x) 
\end{align}
where each term $I_n(x, \ldots, x)$ is an element of
$\mathfrak{g}_E^1$. 
By Lemma~\ref{lem:mbound},
for sufficiently small open subset (\ref{open:U})
the series (\ref{series:I})
absolutely converges on $\uU$ to give a 
morphism of Banach analytic spaces
\begin{align}\label{I:analytic}
I_{\ast} \colon \uU \to \widehat{\mathfrak{g}}_{E, l}^1. 
\end{align}
\begin{lem}\label{prop:restrict}
The morphism (\ref{I:analytic}) restricts to the morphism of 
Banach analytic spaces
\begin{align}\label{restrict:Banach}
I_{\ast} \colon T\to \mathrm{MC}(\mathfrak{g}_E^{\ast}). 
\end{align}
Here $\mathrm{MC}(\mathfrak{g}_E^{\ast})$ is the solution of the 
Mauer-Cartan equation (\ref{sol:MCeq})
of the dg-algebra $\mathfrak{g}_E^{\ast}$. 
\end{lem}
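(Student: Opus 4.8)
The statement to prove is that the analytic map $I_\ast$ of \eqref{I:analytic} carries the Maurer--Cartan locus $T$ of the minimal $A_\infty$-algebra $\Ext^\ast(E,E)$ into the Maurer--Cartan locus $\mathrm{MC}(\mathfrak g_E^\ast)$ of the dg-algebra. The plan is to reduce this to a purely formal identity in the completed pro-Artinian setting, then invoke the convergence established in Lemma~\ref{lem:mbound} to descend it to the Banach-analytic neighborhood $\uU$. So the first step is to recall the standard fact that an $A_\infty$-morphism $I\colon (H,\{m_n\}) \to (\mathfrak g, d_{\mathfrak g}, \cdot)$ from a minimal $A_\infty$-algebra to a dg-algebra carries solutions of the $A_\infty$-Maurer--Cartan equation $\kappa(x)=\sum_{n\ge 2} m_n(x,\dots,x)=0$ to solutions of the classical Maurer--Cartan equation $d_{\mathfrak g}(\xi) + \xi\cdot\xi = 0$, via the formula $\xi = I_\ast(x) = \sum_{n\ge 1} I_n(x,\dots,x)$. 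More precisely, the $A_\infty$-constraints on the collection $\{I_n\}$ give, after summing all the "inputs equal to $x$" specializations, the identity
\begin{align*}
d_{\mathfrak g}\!\left(\textstyle\sum_{n\ge 1} I_n(x^{\otimes n})\right) + \left(\textstyle\sum_{n\ge 1} I_n(x^{\otimes n})\right)\cdot\left(\textstyle\sum_{n\ge 1} I_n(x^{\otimes n})\right)
= \sum_{n\ge 1} I_n\!\big(\,\cdots, \kappa(x), \cdots\,\big),
\end{align*}
i.e. $\mathfrak{mc}(I_\ast(x)) = \sum_{n} \sum_{j} I_n(x,\dots,x,\kappa(x),x,\dots,x)$, where on the right $\kappa(x)$ is inserted in each of the $n$ slots in turn (this is precisely the bookkeeping of the $A_\infty$-morphism equations, organized by the generating-series substitution $x \mapsto x$). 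Granting this identity, the right-hand side vanishes whenever $\kappa(x)=0$, which is exactly the defining equation of $T$; hence $I_\ast(T)\subset \mathrm{MC}(\mathfrak g_E^\ast)$ at the level of points.

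Second, I would make precise that both sides are honest convergent analytic maps on a small enough $\uU$, not just formal. By Lemma~\ref{lem:mbound} we have $\lVert m_n\rVert_l < C^n$ and $\lVert I_n\rVert_l < C^n$ for a uniform $C$, so all three series $\kappa(x)$, $I_\ast(x)$, and the double series on the right-hand side of the displayed identity converge absolutely and locally uniformly on the ball $\{\lVert x\rVert_l < (2C)^{-1}\}$ (shrinking $\uU$ if necessary so that it sits inside this ball). Term-by-term manipulations — reindexing sums, applying the bounded operators $d_{\mathfrak g}$ and the product $\cdot$, which extend to bounded maps on $\widehat{\mathfrak g}_{E,l}^{\ast}$ as recorded in the proof of Lemma~\ref{lem:mbound} — are therefore legitimate on $\uU$, and the formal $A_\infty$-morphism identity for $\{I_n\}$ remains valid after substitution and summation. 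This upgrades the pointwise statement to: the analytic map $\mathfrak{mc}\circ I_\ast \colon \uU \to \widehat{\mathfrak g}_{E,l}^2$ factors through the analytic map $\kappa\colon \uU\to \Ext^2(E,E)$ followed by a convergent analytic map that sends $0$ to $0$. Consequently $\mathfrak{mc}\circ I_\ast$ vanishes on the closed analytic subspace $T = \kappa^{-1}(0)$, which is exactly the assertion that $I_\ast$ restricts to a morphism of Banach analytic spaces $T \to \mathrm{MC}(\mathfrak g_E^\ast)$ as in \eqref{restrict:Banach}.

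Third, a small but necessary point: one must check that $I_\ast$ actually lands in $\mathfrak g_E^1$ (equivalently in the Banach completion $\widehat{\mathfrak g}_{E,l}^1$), so that $\mathfrak{mc}$ can be applied and the target $\mathrm{MC}(\mathfrak g_E^\ast)$ makes sense. This is immediate from the degree count: each $I_n$ has degree $1-n$ as a map $(\Ext^\ast)^{\otimes n}\to \mathfrak g_E^{\ast}$, and all inputs are in $\Ext^1$, so $I_n(x,\dots,x) \in \mathfrak g_E^{\,n + (1-n)} = \mathfrak g_E^1$; likewise $m_n(x,\dots,x)\in \Ext^{2}(E,E)$ and the insertions $I_n(\dots,\kappa(x),\dots)$ land in $\mathfrak g_E^{2}$, consistent with the codomain of $\mathfrak{mc}$. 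I would also note that one should work on a $\uU$ small enough that it simultaneously serves as the domain of convergence of $\kappa$ in \eqref{kappa:U} and of $I_\ast$ in \eqref{I:analytic}; since both convergence radii are controlled by the same constant $C$ from Lemma~\ref{lem:mbound}, a single choice of $\uU$ works.

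The main obstacle is the second step: cleanly justifying that the formal $A_\infty$-morphism identity, which a priori lives in the completed tensor algebra / pro-Artinian deformation functor, transports to a genuine identity of convergent Banach-analytic maps. The subtlety is that $I_\ast(x)$ takes values in the \emph{completion} $\widehat{\mathfrak g}_{E,l}^{\ast}$ rather than $\mathfrak g_E^{\ast}$ itself, so one cannot naively manipulate it inside the original (incomplete) dg-algebra; the resolution is precisely the observation from Lemma~\ref{lem:mbound} that $d_{\mathfrak g}$ and the multiplication $\cdot$ extend to bounded operators on the Sobolev completion, together with the fact that $d_{\mathfrak g}^{\ast} d_{\mathfrak g} I_\ast(x)$-type elliptic regularity forces $\mathfrak{mc}(I_\ast(x))$ to lie in the smooth part once $\kappa(x)=0$ — but for the statement as phrased (a morphism of Banach analytic spaces) we only need the bounded-operator formalism, so the elliptic regularity is deferred to later in the paper. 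Everything else is standard $A_\infty$-bookkeeping plus the uniform estimates already in hand.
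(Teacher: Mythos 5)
Your first two steps --- that an $A_\infty$-morphism carries Maurer--Cartan solutions to Maurer--Cartan solutions, made rigorous on a small ball via the uniform bounds $\lVert m_n\rVert_l<C^n$, $\lVert I_n\rVert_l<C^n$ of Lemma~\ref{lem:mbound} --- coincide with the first half of the paper's proof, which simply says that $I_\ast$ is an $A_\infty$-homomorphism and hence preserves the MC locus, sending $T$ into $\mathrm{MC}(\widehat{\mathfrak{g}}_{E,l}^{\ast})$. The gap is in how you dispose of the regularity issue at the end. You assert that for the statement as phrased one only needs the bounded-operator formalism and that the elliptic regularity can be ``deferred to later in the paper.'' That is a misreading of the target: $\mathrm{MC}(\mathfrak{g}_E^{\ast})$ is defined in (\ref{sol:MCeq}) as a subset of $\mathfrak{g}_{\eE^{\bullet}}^1$, i.e.\ of \emph{smooth} bundle-valued $(0,q)$-forms, not of the Sobolev completion $\widehat{\mathfrak{g}}_{E,l}^1$ in which $I_\ast$ a priori takes values by (\ref{I:analytic}). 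Your argument therefore only establishes $I_\ast(T)\subset\mathrm{MC}(\widehat{\mathfrak{g}}_{E,l}^{\ast})$, which is strictly weaker than the lemma.

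To land in $\mathrm{MC}(\mathfrak{g}_E^{\ast})$ one must additionally prove that $I_\ast(x)$ is a smooth form for every $x\in T$. The paper does exactly this as the second half of its proof, by running the elliptic-regularity bootstrap of \cite[Lemma~A.1.3]{JuTu} with $\overline{\partial}$ replaced by the differential $d_{\mathfrak{g}}$: since each $I_n$ for $n\ge 2$ begins with an application of $h=-d_{\mathfrak{g}}^{\ast}\circ G$, one has $I_\ast(x)=i(x)+h(\cdots)$, and combining this with the already-established equation $\mathfrak{mc}(I_\ast(x))=0$ one bootstraps the Sobolev regularity of $I_\ast(x)$ to $\mathcal{C}^{\infty}$. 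This step cannot be postponed: Proposition~\ref{prop:rest2} immediately uses the resulting MC element as an element of $\mathfrak{g}_E^{1}\otimes\Gamma(\oO_T)$ (smooth forms) to define the twisted Dolbeault complex (\ref{A:coh}), so the smoothness is part of what this lemma must deliver. Adding that bootstrap argument would close the gap and recover the paper's proof.
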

\begin{proof}
The result is proved in~\cite[Section~2.2, Lemma~A.1.3]{JuTu}
when $E$ is a vector bundle, and the same 
argument applies for the complex (\ref{seq:E}). 
Since $I_{\ast}$ is an $A_{\infty}$-homomorphism, 
it preserves the MC locus, so it 
sends $T$ to $\mathrm{MC}(\widehat{\mathfrak{g}}_{E, l}^{\ast})$. 
For $x \in T$, the smoothness of $I_{\ast}(x)$ follows 
along with the argument of~\cite[Lemma~A.1.3]{JuTu}, 
by replacing $\overline{\partial}$ in \textit{loc.cit.} by 
the differential $d_{\mathfrak{g}}$ of $\widehat{\mathfrak{g}}_{E, l}^{\ast}$. 
Therefore we obtain 
the morphism (\ref{restrict:Banach}). 
\end{proof}

Let $\mM$ be the moduli stack of coherent sheaves on $X$, 
and we regard it as a complex analytic stack. 
The above lemma implies the following proposition: 
\begin{prop}\label{prop:rest2}
By shrinking $\uU$ if necessary, 
the morphism (\ref{I:analytic})
induces the morphism of complex analytic stacks
\begin{align}\label{mor:stack}
I_{\ast} \colon T \to \mM. 
\end{align}
\end{prop}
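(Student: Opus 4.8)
The plan is to factor $I_\ast$ through the Mauer--Cartan locus of the infinite-dimensional dg-algebra. Lemma~\ref{prop:restrict} already supplies the morphism $I_\ast \colon T \to \mathrm{MC}(\mathfrak{g}_E^\ast)$ of (\ref{restrict:Banach}), so what remains, after possibly shrinking $\uU$, is to produce out of $T$ a family of coherent sheaves on $X$ that is flat over $T$ and restricts to $E$ over $0$, and then to invoke the modular description (\ref{stack:M}) of $\mM$, extended to complex analytic bases. The input is the construction recalled in Section~\ref{subsec:complex}: for any $\alpha \in \mathrm{MC}(\mathfrak{g}_E^\ast)$ the twisted dg-module $(\aA^{0,\ast}(\eE^\bullet), d_{\aA^{0,\ast}(\eE^\bullet)} + \alpha)$ of (\ref{deform:E}) is a bounded complex of $\oO_X$-modules with coherent cohomology by~\cite[Lemma~4.1.5]{MR2648899}, quasi-isomorphic to $E$ at $\alpha = 0$; the point is to carry this out in families over $T$.

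First I would assemble the family over $X \times T$. Pull back $\eE^\bullet$ together with its Dolbeault resolution along $X \times T \to X$ and twist the relative differential by the universal Mauer--Cartan solution, i.e. by $I_\ast = \sum_{n \ge 1} I_n(x, \ldots, x)$ of (\ref{series:I}), which by Lemma~\ref{lem:mbound} defines a convergent $\widehat{\mathfrak{g}}_{E, l}^1$-valued analytic function on $\uU$, hence on $T$. Since $I_\ast(x) \in \mathrm{MC}(\mathfrak{g}_E^\ast)$ for $x \in T$ by Lemma~\ref{prop:restrict}, the twisted relative differential squares to zero over $T$ --- this uses $T = \kappa^{-1}(0)$, not merely $T \subset \uU$ --- so we obtain a relative dg-module over $\aA^{0,\ast}_{X \times T/T}(\oO)$. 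A relative version of~\cite[Lemma~4.1.5]{MR2648899} over the base $T$ then represents it by a bounded complex $F_T^\bullet$ of $\oO_{X \times T}$-modules, flat over $T$, with coherent cohomology, whose restriction to $X \times \{0\}$ is quasi-isomorphic to $E$.

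Next I would cut this down to a coherent sheaf. At $t = 0$ the restriction of $F_T^\bullet$ is concentrated in degree $0$, and the locus in $T$ over which the restriction $F_T^\bullet|_{X \times \{t\}}$ has cohomology concentrated in degree $0$ is open --- this is the semicontinuity of the amplitude of a $T$-perfect complex, using properness of $X$ --- and contains $0$. Shrinking $\uU$ (hence $T$) to this locus, $F_T^\bullet$ becomes quasi-isomorphic to $F_T \cneq \hH^0(F_T^\bullet)$, which by cohomology-and-base-change is a $T$-flat coherent sheaf on $X \times T$ with $F_T|_{X \times \{0\}} \cong E$. By the modular description (\ref{stack:M}) of $\mM$, the $T$-flat family $F_T$ determines the morphism of complex analytic stacks $I_\ast \colon T \to \mM$; that it is compatible with (\ref{restrict:Banach}) and with the functorial isomorphism of~\cite[Section~8]{DDE} is built into the construction.

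The main obstacle is the family and flatness analysis of the last two steps: making precise the relative analogue of~\cite[Lemma~4.1.5]{MR2648899} over the possibly non-reduced analytic base $T$ --- that is, extracting an honest $\oO_{X \times T}$-module-level flat family of coherent sheaves from a Dolbeault-theoretic twisted complex --- and verifying flatness via base change for the perfect complex $F_T^\bullet$, together with the semicontinuity statement that isolates a neighbourhood of $0$ on which the cohomology collapses to degree $0$. Independence of the morphism $I_\ast \colon T \to \mM$ of the auxiliary K\"ahler and Hermitian data entering the $A_\infty$-structure is not needed for the statement and can be set aside here.
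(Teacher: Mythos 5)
Your proposal is correct and follows essentially the same route as the paper: twist $\aA^{0,\ast}(\eE^{\bullet})\otimes\oO_T$ by the universal Maurer--Cartan solution from Lemma~\ref{prop:restrict}, establish coherence of the cohomologies via (a relative form of) \cite[Lemma~4.1.5]{MR2648899}, shrink $\uU$ so that the resulting perfect complex collapses to a $T$-flat sheaf in degree $0$, and invoke the moduli functor. The two points you defer as the main obstacles are exactly what the paper's proof supplies: the gauge-fixing isomorphism $\phi_t$ depending analytically on $t$ (following Donaldson--Kronheimer) to get coherence in families over the possibly non-reduced $T$, and the $\tT or$ spectral sequence at $t=0$ in place of your appeal to semicontinuity of amplitude and cohomology-and-base-change.
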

\begin{proof}
The map in Lemma~\ref{prop:restrict} corresponds to 
the element 
\begin{align*}
\alpha \in \mathfrak{g}_E^{1} \otimes \Gamma(\oO_T)
\end{align*}
satisfying the MC equation of the 
dg-algebra 
$\mathfrak{g}_E^{\ast} \otimes \Gamma(\oO_T)$. 
Then we obtain 
the dg-$\aA^{0, \ast}(\oO_X) \otimes \oO_T$-module 
\begin{align}\label{A:coh}
(\aA^{0, \ast}(\eE^{\bullet}) \otimes \oO_T, d_{\aA^{0, \ast}(\eE^{\bullet}) \otimes \oO_T}+\alpha). 
\end{align}
Here $\eE^{\bullet}$ is the complex (\ref{seq:E}). 

The dg-module (\ref{A:coh}) 
is a bounded complex of $\oO_{X \times T}$-modules. 
We can show that each cohomology of (\ref{A:coh}) is a 
coherent $\oO_{X \times T}$-module 
as in~\cite[Lemma~4.1.5]{MR2648899},
which essentially follows the argument in~\cite[p51-52]{MR1079726}. 
Indeed for each $t \in T$
and $x \in X$, by the proof of~\cite[Lemma~4.1.5]{MR2648899}
there is an open neighborhood 
$x \in U$ such that 
there is a degree zero $\cC^{\infty}$-isomorphism
\begin{align}\label{phi_t}
\phi_t \colon \aA^{0, \ast}(\eE^{\bullet})|_{U}
\stackrel{\cong}{\to}
\aA^{0, \ast}(\eE^{\bullet})|_{U}
\end{align}
satisfying that 
\begin{align}\notag
\phi_t^{-1} \circ (d_{\aA^{0, \ast}(\eE^{\bullet})}+\alpha_t) \circ
\phi_t =d_{\aA^{0, \ast}(\eE^{\bullet})}+\beta_t.
\end{align}
Here in the notation (\ref{write:alpha}),  
$\beta_t$ is of the form 
\begin{align*}
\beta_t=((\beta_0^{i})_t, 0, 0, \ldots), \ 
(\beta_0^i)_t \in \Hom(\eE^i|_{U}, \eE^{i+1}|_{U}).
\end{align*}
This implies that 
the dg-module (\ref{A:coh})
restricted to $U \times \{t\}$
is gauge equivalent to a complex which is quasi-isomorphic 
to a bounded complex of holomorphic vector bundles on $U$. 
The isomorphism (\ref{phi_t}) can be found 
by solving a certain differential equation, 
as in~\cite[p51-52]{MR1079726}.
As remarked in~\cite[p52]{MR1079726}, 
the solution $\phi_t$ is analytic in $t \in T$
as $\alpha_t$ is. Therefore by shrinking $U$, $T$
if necessary we see that 
(\ref{A:coh}) restricted to $U \times T$
is gauge equivalent to a complex 
which is quasi-isomorphic to a bounded complex of 
analytic vector bundles on $U \times T$. 
In particular, each cohomology of (\ref{A:coh}) is coherent. 

Therefore (\ref{A:coh}) determines an object 
\begin{align*}
\eE_T^{\bullet} \in D^b_{\Coh(X \times T)}(\Modu \oO_{X \times T}).
\end{align*}
We show that by shrinking $\uU$ if necessary, 
the object $\eE_T^{\bullet}$ is 
quasi-isomorphic 
to a $T$-flat 
sheaf 
\begin{align*}
E_T \cneq \hH^0(\eE_T^{\bullet}) \in \Coh(X \times T). 
\end{align*}
By the construction of $\eE_T^{\bullet}$, at $t=0$
we have $\eE_T^{\bullet} \dotimes_{\oO_T} \oO_{\{0\}} \cong E$. 
We have 
the spectral sequence
\begin{align*}
E_2^{p, q}=
\tT or_{-p}^{\oO_{X \times T}}(\hH^q(\eE_T^{\bullet}), \oO_{X \times \{0\}}) \Rightarrow 
\hH^{p+q}(E).
\end{align*}
Let $q_0$ be the maximal 
$q \in \mathbb{Z}$ such that 
$\hH^q(\eE_T^{\bullet}) \neq 0$. 
If $q_0>0$, then 
by the above spectral sequence 
we have $\hH^{q_0}(\eE_T^{\bullet})|_{t=0}=0$. 
Therefore by shrinking $\uU$, we have $q_0 \le 0$, 
and as $E\neq 0$ it follows that $q_0=0$ by the above spectral 
sequence. 
Moreover we have $E_2^{-1, 0}=0$, which implies that 
$E_T$ is flat at $t=0$, hence $E_2^{p, 0}=0$ for 
any $p<0$. 
Then by the above spectral sequence again, 
we have $E_2^{0, -1}=0$, hence we
may assume $\hH^{-1}(\eE_{T}^{\bullet})=0$. 
Inductively, by shrinking $\uU$ we see that $\hH^q(\eE_T^{\bullet})=0$
for any $q<0$. 
Therefore the above claim holds. 

By the universal property of $\mM$, the sheaf $E_T$ defines the 
morphism (\ref{mor:stack}). 
\end{proof}
\begin{prop}\label{prop:rest3}
The morphism of complex analytic stacks 
$I_{\ast} \colon T \to \mM$ in 
 (\ref{mor:stack})
is smooth of relative dimension $\dim \Aut(E)$. 
\end{prop}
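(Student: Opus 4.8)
The plan is to prove that $I_\ast$ is smooth via the infinitesimal lifting criterion, and then to identify the relative dimension by a cohomological computation. Since $T$ is a finite-dimensional complex analytic space and $\mM$ is an algebraic stack locally of finite type, $I_\ast$ is locally of finite presentation; hence it suffices to prove that $I_\ast$ is formally smooth, after which $\mathbb{L}_{I_\ast}$ is a locally free sheaf in degree $0$ and the relative dimension is its rank.

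The technical core is a versality statement for the family $E_T$ along the whole of $T$. For $x\in T$ put $F_x\cneq E_T|_{X\times\{x\}}$, the degree $0$ cohomology of the twisted dg-module (\ref{A:coh}). Twisting the minimal $A_\infty$-algebra $(\Ext^{\ast}(E,E),\{m_n\})$ by the Maurer-Cartan element $x$ produces an $A_\infty$-algebra with differential the twisted $m_1^{x}$ and cohomology $\Ext^{\ast}(F_x,F_x)$, and, near $x$, the analytic space $T$ is the Maurer-Cartan locus of this twisted algebra, since the linear term of $\kappa$ at $x$ is $m_1^{x}$. Repeating the homotopy-transfer and gauge-theoretic estimates of Section~\ref{sec:deform} and Lemma~\ref{prop:restrict} with $E$ replaced by $F_x$ --- equivalently, after twisting by $x$ --- one checks that the Kodaira-Spencer map of $E_T$ at $x$ is the canonical surjection $\ker\bigl(m_1^{x}\colon\Ext^1(E,E)\to\Ext^2(E,E)\bigr)\twoheadrightarrow\Ext^1(F_x,F_x)$, and that the obstruction theory of $T$ at $x$ coming from $\kappa$ is compatible with the canonical $\Ext^2(F_x,F_x)$-valued obstruction theory of $\mM$ at $[F_x]$; that is, $E_T$ is versal along $T$. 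I expect the main obstacle to be exactly here: one must verify that the convergence and Banach-analytic statements of Section~\ref{sec:deform} hold uniformly in a neighbourhood of every $x\in T$, not only of $0$.

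Granting versality, formal smoothness is the standard lifting argument: for a square-zero extension $B\twoheadrightarrow A$ of local Artinian $\mathbb{C}$-algebras with kernel $J$ and a $2$-commutative square $\Spec A\to T$, $\Spec B\to\mM$ over it, the deformation of $F_x$ to $\Spec B$ prescribed by the right-hand map is realised, by versality, by a Maurer-Cartan element over $B$ lifting the one over $A$ --- its obstruction lies in $\Ext^2(F_x,F_x)\otimes J$ and maps compatibly to the obstruction for $\mM$, which vanishes by hypothesis --- giving the required lift $\Spec B\to T$. Hence $I_\ast$ is smooth, so $\mathbb{L}_{I_\ast}$ is a vector bundle; to compute its rank at $x\in T$ I would use the exact triangle $I_\ast^{\ast}\mathbb{L}_{\mM}\to\mathbb{L}_T\to\mathbb{L}_{I_\ast}$. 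From the presentation of $\mM$ as a quotient stack one has $H^0(\mathbb{L}_{\mM}|_{[F_x]})=\Ext^1(F_x,F_x)^{\vee}$ and $H^1(\mathbb{L}_{\mM}|_{[F_x]})=\Ext^0(F_x,F_x)^{\vee}$; since $T$ is a classical analytic space, $H^0(\mathbb{L}_T|_x)=(T_xT)^{\vee}=\ker(m_1^{x}|_{\Ext^1(E,E)})^{\vee}$ and $H^1(\mathbb{L}_T|_x)=0$; and the map on $H^0$ is the dual of the Kodaira-Spencer surjection. Running the resulting cohomology sequence, the rank of $\mathbb{L}_{I_\ast}$ at $x$ equals $\dim\Ext^0(F_x,F_x)+\bigl(\dim\Ext^0(E,E)-\dim\Ext^0(F_x,F_x)\bigr)=\dim\Ext^0(E,E)=\dim\Aut(E)$, independently of $x$. (At $x=0$ this is visible directly: $I_\ast^{-1}([E])$ is the reduced point $\{0\}\subset T$ together with its automorphisms, i.e. a torsor under $\Aut(E)$, since the Kodaira-Spencer map $T_0T=\Ext^1(E,E)\to\Ext^1(E,E)$ is an isomorphism.) This shows $I_\ast$ is smooth of relative dimension $\dim\Aut(E)$.

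Alternatively one can bypass the $A_\infty$-twisting using the Quot-scheme presentation of Section~\ref{subsec:moduli}: taking the resolution (\ref{seq:E}) with $\eE^0=\mathbf{V}\otimes\oO_X(-m)$, $\mathbf{V}=H^0(E(m))$, the family $E_T$ gives a $\GL(\mathbf{V})$-invariant classifying morphism identifying $T$, analytically-locally, with a slice of $\mathrm{Quot}^{\circ}(\mathbf{V},v)$ transverse to the orbit of $[s_0]$ and with stabiliser $\Aut(E)$; an analytic Luna slice theorem then yields $\mM\cong[T/\Aut(E)]$ near $[E]$ compatibly with $I_\ast$, which becomes the quotient morphism, smooth of relative dimension $\dim\Aut(E)$. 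The obstacle there is to construct the classifying morphism and verify transversality analytically in families.
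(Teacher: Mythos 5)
Your strategy is viable and your dimension count agrees with the paper's, but you take a genuinely different route. The paper does not verify formal smoothness pointwise along $T$: it proves a single \emph{analytic} versality statement at the distinguished point, namely that any germ $(S,s)\to(\mM,[E])$ factors through $(T,0)$. The factorization is produced explicitly: pulling back the universal resolution $\eE_A^{\bullet}$ from the atlas $(A,p)\to(\mM,[E])$ gives a map $f_2\colon (S,s)\to(\mathrm{MC}(\mathfrak{g}_E^{\ast}),0)$; one sets $f_3=P_{\ast}\circ f_2$ using the $A_{\infty}$-homotopy inverse $P$ of $I$ (convergent by the bounds of Lemma~\ref{lem:mbound}); and the homotopy $H$ with $H(0)=\id$, $H(1)=I\circ P$ shows that $I_{\ast}\circ f_3$ and $f_2$ are gauge equivalent. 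The relative dimension is then read off from tangent spaces at $0$ alone ($T_0T=\Ext^1(E,E)$ since $\kappa$ has no linear term, against $\dim\Ext^1(E,E)-\dim\Aut(E)$ for $\mM$ at $[E]$), which suffices because the relative dimension of a smooth morphism is locally constant and $T$ may be shrunk. By contrast, you twist the $A_{\infty}$-structure by each Maurer--Cartan element $x\in T$ and check Kodaira--Spencer surjectivity and compatibility of obstruction theories at every point; the step you yourself flag as the main obstacle --- uniform convergence of the transfer data near every $x$ and the identification of the twisted cohomology with $\Ext^{\ast}(F_x,F_x)$ --- is exactly what the paper's argument avoids having to prove. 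Your route, if completed, yields finer pointwise information (e.g.\ that $T$ near $x$ is the MC locus of the twisted algebra, and the constancy of $\mathrm{rank}\,\mathbb{L}_{I_{\ast}}$ checked fibrewise); the paper's route is more economical, and its stronger analytic factorization (for arbitrary analytic germs, not only Artinian thickenings) is reused verbatim in the proof of Proposition~\ref{prop:complete} to construct the inverse map $\phi\colon\wW_1\to R$.

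Two smaller cautions. First, the reduction ``locally of finite presentation plus formally smooth implies smooth'' is an algebraic (EGA) statement; in the analytic category you should either invoke the analytic lifting criterion directly or compare with the algebraic atlas $A\to\mM$ near $[E]$. Second, in your obstruction-theoretic lifting step it is not enough that \emph{some} Maurer--Cartan lift over $B$ exists: you must produce one inducing the \emph{given} morphism $\Spec B\to\mM$ up to $2$-isomorphism, which is precisely the gauge-equivalence assertion the paper establishes via the homotopy $H$; this should be stated rather than absorbed into ``versality.''
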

\begin{proof}
We first show that $I_{\ast} \colon T \to \mM$
 is smooth. 
Let $(S, s)$ be a complex analytic space and 
$(S, s) \to (\mM, [E])$ a morphism of complex analytic stacks
which sends $s$ to $[E]$. 
It is enough to show
that, 
after replacing $S$ by its open neighborhood at $s \in S$
if necessary, we have the factorization
\begin{align}\label{fact:S}
(S, s) \to (T, 0) \stackrel{I_{\ast}}{\to} (\mM, [E]).
\end{align}
By shrinking $S$ if necessary, we may assume that
$S \to \mM$ factors through 
\begin{align*}
(S, s) \stackrel{f_1}{\to} (A, p)
\to (\mM, [E])
\end{align*}
where the right morphism 
is the local 
atlas in (\ref{atlas}). 
Let $\eE_A^{\bullet}$ be the complex on $X \times A$
constructed in (\ref{seq:Eu}). 
By pulling $\eE_A^{\bullet}$
back by $f^{\ast}_1$, 
we obtain the complex
\begin{align*}
\eE_S^{\bullet}=f^{\ast}_1\eE_A^{\bullet}. 
\end{align*}
Then
as described in Section~\ref{subsec:dg}, 
the complex structures of 
each term of $\eE_S^{\bullet}$ and their differentials 
give rise to the solution of the MC equation of 
the dg-algebra $\mathfrak{g}_E^{\ast} \otimes \oO_S(S)$. 
Thus we obtain a map of Banach analytic spaces
\begin{align*}
f_2 \colon
(S, s) \to (\mathrm{MC}(\mathfrak{g}_E^{\ast}), 0). 
\end{align*}
We are left to prove the existence of the
morphism $f_3 \colon (S, s) \to (T, 0)$ such that 
the composition
\begin{align*}
(S, s) \stackrel{f_3}{\to} (T, 0) \stackrel{I_{\ast}}{\to}  (\mathrm{MC}(\mathfrak{g}_E^{\ast}), 0)
\end{align*}
differs from $f_2$ only up to gauge equivalence. 
The existence of such $f_3$ is proved in~\cite[Theorem~2.2.2]{JuTu}
when $E$ is a vector bundle, and the same 
argument applies for the complex of vector bundles (\ref{seq:E}). 
Below we give an outline of the proof. 

For $y \in \mathfrak{g}_E^{1}$, consider the series
\begin{align*}
P_{\ast}(y) \cneq \sum_{n\ge 1}P_n(y, \ldots, y)
\end{align*}
where $P$ is the homotopy inverse of $I$ in (\ref{P:inverse}). 
By Lemma~\ref{lem:mbound}, there is an open neighborhood 
$0 \in \uU' \subset \mathfrak{g}_E^1$ 
in $\lVert - \rVert_{l}$-norm
such that $P_{\ast}$ 
gives the analytic map
\begin{align*}
P_{\ast} \colon \uU' \to \Ext^1(E, E). 
\end{align*}
Since $P$ is an $A_{\infty}$-homomorphism, 
after shrinking $\uU'$ if necessary 
the above map induces the morphism of Banach analytic spaces
\begin{align*}
P_{\ast} \colon 
\mathrm{MC}(\mathfrak{g}_E^{\ast}) \cap \uU' \to T. 
\end{align*}
Therefore by shrinking $S$ if necessary so that $f_2(S) \subset \uU'$, 
we have the analytic map
\begin{align*}
f_3=
P_{\ast} \circ f_2 \colon (S, s) \to (T, 0). 
\end{align*}
It remains to show that two maps
\begin{align*}
I_{\ast} \circ f_3=I_{\ast} \circ P_{\ast} \circ f_2, \ f_2
\colon (S, s) \to (\mathrm{MC}(\mathfrak{g}_E^{\ast}), 0)
\end{align*}
 are 
gauge equivalent. 
Since $P$ is a homotopy inverse of $I$, there is an 
$A_{\infty}$-homomorphism 
\begin{align*}
H \colon \mathfrak{g}_E^{\ast}  \to \mathfrak{g}_E^{\ast} \otimes \Omega_{[0, 1]}^{\ast}
\end{align*}
such that $H(0)=\id$ and $H(1)=I \circ P$. 
Then $H$ also satisfies the boundedness property as in Lemma~\ref{lem:mbound}
(see~\cite[Corollary~A.2.7]{JuTu}), so that after shrinking $\uU'$ if necessary 
the $A_{\infty}$-homomorphism $H$ induces the analytic map
\begin{align*}
H_{\ast} \colon \mathrm{MC}(\mathfrak{g}_E^{\ast}) \cap \uU'  \to \mathrm{MC}(\mathfrak{g}_E^{\ast} \otimes \Omega_{[0, 1]}^{\ast}). 
\end{align*}
Then the analytic map
\begin{align*}
H_{\ast} \circ f_2 \colon S \to \mathrm{MC}(\mathfrak{g}_E^{\ast} \otimes \Omega_{[0, 1]}^{\ast})
\end{align*}
satisfies 
\begin{align*}
H_{\ast} \circ f_2(0)=f_2, \quad H_{\ast} \circ f_2(1)= I_{\ast} \circ P_{\ast} \circ f_2.
\end{align*}
This implies that $f_2$ and $I_{\ast} \circ P_{\ast} \circ f_2$ are 
gauge equivalent in the sense of~\cite[Definition~2.2.2]{MR1950958}. 
As proved in~\cite[Lemma~2.2.2]{MR1950958}, this notion of gauge equivalence 
coincides with the gauge equivalence in (\ref{isom:gauge}).
Therefore the smoothness of $I_{\ast}$ follows.

Finally, the relative dimension of 
$I_{\ast} \colon T \to \mM$ 
is $\dim \Aut(E)$ since 
the dimension of the tangent space of $T$ at $0$ 
is $\dim \Ext^1(E, E)$, and that of $\mM$ at $[E]$ is 
$\dim \Ext^1(E, E)-\dim \Aut(E)$. 
 \end{proof}

\section{Local descriptions of moduli stacks of semistable sheaves}
\label{sec:thm}
In this section, we use the results in the previous 
sections to prove Theorem~\ref{thm:precise}. 
By applying the arguments to the CY 3-fold case, 
we also obtain Corollary~\ref{cor:CY3}. 

\subsection{Convergent relation of the Ext-quiver}
For a smooth projective variety $X$, 
let
\begin{align*}
E_{\bullet}=(E_1, \ldots, E_k)
\end{align*}
be a simple collection of 
coherent sheaves on $X$, and 
$Q_{E_{\bullet}}$ the associated Ext-quiver
(see Subsection~\ref{subsec:Extquiver}).  
Here we construct a convergent relation of
$Q_{E_{\bullet}}$ from the minimal $A_{\infty}$-structure
on the derived category of coherent sheaves on $X$.

Let us consider the sheaf on $X$ of the form
\begin{align}\label{object:E}
E=\bigoplus_{i=1}^k V_i \otimes E_i
\end{align}
for vector spaces $V_i$, 
and set 
$m_i=\dim V_i$. 
Note that we have the decomposition
\begin{align}\label{Ext:decom0}
\Ext^{\ast}(E, E) &=\bigoplus_{1\le a, b\le k}
\Hom(V_a, V_b) \otimes \Ext^{\ast}(E_a, E_b).
\end{align}
Let us take 
a resolution $\eE^{\bullet} \to E$ as in (\ref{seq:E}). 
From its construction, it naturally decomposes into 
the direct sum of resolutions of $E_i$. 
Namely, let
\begin{align}\notag
0 \to \eE^{-N}_i \stackrel{d^{-N}_i}{\to}
  \cdots \to \eE^{-1}_i \stackrel{d^{-1}_i}{\to} \eE^0_i \to E_i \to 0
\end{align}
be the resolution (\ref{seq:E}) applied for $E_i$. 
By taking $N\gg0$, we may assume that $N$ is independent of $i$. 
Then the complex $\eE^{\bullet}$ in (\ref{seq:E}) is 
\begin{align*}
\eE^{\bullet}=\bigoplus_{i=1}^k V_i \otimes \eE_i^{\bullet}. 
\end{align*}
Therefore we have the decompositions
\begin{align}\label{Ext:decom}
\mathfrak{g}_E^{\ast} &=\bigoplus_{1\le a, b \le k}
\Hom(V_a, V_b) \otimes A^{0, \ast}
(\hH om^{\ast}(\eE_a^{\bullet}, \eE_b^{\bullet})). 
\end{align}
Here $\mathfrak{g}_E^{\ast}$ is the dg-algebra (\ref{minimal:g}), 
defined via the above complex $\eE^{\bullet}$. 
The decomposition of $\mathfrak{g}_E^{\ast}$ is compatible with the 
Laplacian operator $\Delta$. 
Indeed each complex 
$\aA^{0, \ast}(\hH om^{\ast}
(\eE_a^{\bullet}, \eE_b^{\bullet}))$ is elliptic 
and hence we have linear operators
\begin{align*}
&i_{a, b} \colon 
\Ext^{\ast}(E_a, E_b) \hookrightarrow
 A^{0, \ast}(\hH om^{\ast}(\eE_a^{\bullet}, \eE_b^{\bullet})) \\ 
&p_{a, b} \colon 
 A^{0, \ast}(\hH om^{\ast}(\eE_a^{\bullet}, \eE_b^{\bullet}))
\twoheadrightarrow \Ext^{\ast}(E_a, E_b) \\
& h_{a, b} \colon 
 A^{0, \ast}(\hH om^{\ast}(\eE_a^{\bullet}, \eE_b^{\bullet}))
\to  A^{0, \ast-1}(\hH om^{\ast}(\eE_a^{\bullet}, \eE_b^{\bullet}))
\end{align*}
satisfying the same relations as (\ref{relation})
and 
\begin{align}\label{u:sum}
\star=\bigoplus_{1\le a, b\le k} \id_{\Hom(V_a, V_b)} \otimes \star_{a, b}
\end{align}
where $\star$ is either $i$ or $p$ or $h$
given in Subsection~\ref{subsec:minimal}. 

Let 
$\overline{E}$ be the coherent sheaf on $X$
defined by 
\begin{align}\label{E:bar}
\overline{E} \cneq \bigoplus_{i=1}^k E_i
\end{align}
and consider the $A_{\infty}$-product
\begin{align}\label{mn:Eb}
m_n \colon 
\Ext^1(\overline{E}, \overline{E})^{\otimes n} \to \Ext^2(\overline{E}, \overline{E}). 
\end{align}
By the relation (\ref{u:sum}) and 
the explicit formula (\ref{def:mn}) of 
the $A_{\infty}$-product, 
the 
map (\ref{mn:Eb})
only consists of the direct sum factors of the form
\begin{align}\notag
m_n \colon 
\Ext^1(E_{\psi(1)}, E_{\psi(2)}) \otimes 
&\Ext^1(E_{\psi(2)}, E_{\psi(3)}) \otimes
\cdots \\
\cdots \otimes 
&\Ext^1(E_{\psi(n)}, E_{\psi(n+1)})  
\label{factor}
\to \Ext^2(E_{\psi(1)}, E_{\psi(n+1)})
\end{align}
for maps $\psi \colon \{1, \ldots, n+1\} \to \{1, \ldots, k\}$, 
which give a minimal $A_{\infty}$-category structure 
on the dg-category generated by $(E_1, \ldots, E_k)$. 
By taking the dual and the products of (\ref{factor})
for all $n\ge 2$, we obtain 
the linear map
\begin{align*}
\mathbf{m}^{\vee} \cneq \prod_{n\ge 2} m_n^{\vee} \colon 
\Ext^2(\overline{E}, \overline{E})^{\vee} \to 
\prod_{n\ge 2} \bigoplus_{\begin{subarray}{c}
\{1, \ldots, n+1 \} \\ \stackrel{\psi}{\to}  
\{1, \ldots, k\}
\end{subarray}}
&\Ext^1(E_{\psi(1)}, E_{\psi(2)})^{\vee} \otimes \cdots \\
&\cdots \otimes 
\Ext^1(E_{\psi(n)}, E_{\psi(n+1)})^{\vee}. 
\end{align*}
Note that an element of the RHS is 
an element of $\mathbb{C}\lkakko Q_{E_{\bullet}} \rkakko$
by (\ref{f:element}). 
Let $\{\mathbf{o}_1, \ldots, \mathbf{o}_l\}$ be a basis of 
$\Ext^2(\overline{E}, \overline{E})^{\vee}$
and set
\begin{align*}
f_i=\mathbf{m}^{\vee}(\mathbf{o}_i) \in \mathbb{C}\lkakko Q_{E_{\bullet}} \rkakko. 
\end{align*}
Then by Lemma~\ref{lem:mbound}, we 
have $f_i \in \mathbb{C}\{Q_{E_{\bullet}}\}$. 
We obtain the convergent relation of $Q_{E_{\bullet}}$
\begin{align}\label{relation:I}
I_{E_{\bullet}} \cneq (f_1, \ldots, f_l). 
\end{align}

\subsection{Deformations of direct sums of simple collections}
We consider the deformations of sheaves of the form (\ref{object:E}).
By the decomposition (\ref{Ext:decom0}), the 
space $\Ext^1(E, E)$ is identified with the space of 
$Q_{E_{\bullet}}$-representations
\begin{align}\label{identify:Ext}
\Ext^1(E, E)=\mathrm{Rep}_{Q_{E_{\bullet}}}(\vec{m}). 
\end{align} 
Here $\vec{m}$ is the dimension vector of
$Q_{E_{\bullet}}$ given by $m_i=\dim V_i$. 
We also have 
\begin{align}\label{G:aut}
G=\Aut(E)=\prod_{i=1}^k \GL(V_i)
\end{align}
and the adjoint action of $\Aut(E)$ 
on $\Ext^1(E, E)$ coincides with the action 
(\ref{G:act}) under the identification (\ref{identify:Ext}).  
Recall that in (\ref{kappa:U}) and (\ref{I:analytic}), we 
constructed analytic maps 
\begin{align}\label{construct:k}
\kappa \colon \uU \to \Ext^2(E, E), \ 
I_{\ast} \colon \uU \to \widehat{\mathfrak{g}}_{E, l}^{\ast}
\end{align}
for a sufficiently small analytic 
open subset $0 \in \uU \subset \Ext^1(E, E)$. 
Explicitly under the identification (\ref{identify:Ext}), 
for a $Q_{E_{\bullet}}$-representation
\begin{align*}
u=(u_e)_{e \in E(Q_{E_{\bullet}})} \in \uU, \ 
u_e \colon V_{s(e)} \to V_{t(e)},
\end{align*}
we have the following 
identities by the decompositions (\ref{Ext:decom0}), 
(\ref{Ext:decom}), (\ref{u:sum}). 
\begin{align}\label{kI:explict}
&\kappa(u)=\sum_{\begin{subarray}{c}
n\ge 2, \\
\{1, \ldots, n+1\} \stackrel{\psi}{\to}
 \{1, \ldots, k\}
\end{subarray}}
\sum_{e_i \in E_{\psi(i), \psi(i+1)}} 
m_n(e_1^{\vee}, \ldots, e_n^{\vee}) \cdot 
u_{e_n} \circ \cdots \circ u_{e_2} \circ u_{e_1}, \\
&\notag 
I_{\ast}(u)=\sum_{\begin{subarray}{c}
n\ge 2, \\
\{1, \ldots, n+1\} \stackrel{\psi}{\to}
 \{1, \ldots, k\}
\end{subarray}}
\sum_{e_i \in E_{\psi(i), \psi(i+1)}} 
I_n(e_1^{\vee}, \ldots, e_n^{\vee}) \cdot 
u_{e_n} \circ \cdots \circ u_{e_2} \circ u_{e_1}. 
\end{align}
Here for $e \in E_{i, j}$, the element 
$e^{\vee} \in \Ext^1(E_i, E_j)$ 
is defined as in (\ref{e:dual}). 
\begin{lem}\label{lem:Extsat}
There is a saturated 
open subset $\vV$ in $\Ext^1(E, E)$ 
w.r.t. the $G$-action on $\Ext^1(E, E)$, 
satisfying
\begin{align*}
0 \in \vV \subset G \cdot \uU \subset
\Ext^1(E, E)
\end{align*}
such that the maps in 
(\ref{construct:k}) induce $G$-equivariant
analytic maps
\begin{align*}
\kappa \colon \vV \to \Ext^2(E, E), \ 
I_{\ast} \colon \vV \to \widehat{\mathfrak{g}}_{E, l}^{\ast}
\end{align*} 
Here 
$G$ acts on $\Ext^2(E, E)$ and 
$\widehat{\mathfrak{g}}_{E, l}^{\ast}$ by adjoint. 
\end{lem}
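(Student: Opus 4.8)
The plan is to imitate the proof of Lemma~\ref{lem:Qconv}, the only new ingredient being the identification of the $G$-actions on $\Ext^1(E,E)$, $\Ext^2(E,E)$ and $\widehat{\mathfrak{g}}_{E,l}^{\ast}$ with the block structures coming from the decompositions (\ref{Ext:decom0}) and (\ref{Ext:decom}). Concretely, I would (i) check that $\kappa$ and $I_{\ast}$ are $G$-equivariant on $\uU$ wherever this makes sense, (ii) use this together with the boundedness of the adjoint operators to spread the domain of absolute convergence over the $G$-invariant open set $G \cdot \uU$, and (iii) apply Lemma~\ref{lem:saturated2} to shrink $G \cdot \uU$ to a saturated open neighborhood $\vV$ of $0$.

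First I would record, for $g = (g_i)_i \in G$ and $u \in \uU$ with $g \cdot u \in \uU$, the identities
\[
\kappa(g \cdot u) = \mathrm{Ad}_g\big(\kappa(u)\big), \qquad I_{\ast}(g \cdot u) = \mathrm{Ad}_g\big(I_{\ast}(u)\big),
\]
where $\mathrm{Ad}_g$ denotes the adjoint action. This is read off from the explicit formulas (\ref{kI:explict}): by (\ref{u:sum}) the operators $i$, $p$, $h$ are block-diagonal for the decomposition (\ref{Ext:decom}), hence so are all the $m_n$ and $I_n$, and in each term of (\ref{kI:explict}) the factor $u_{e_n} \circ \cdots \circ u_{e_1} \in \Hom(V_{\psi(1)}, V_{\psi(n+1)})$ transforms under $g$, by (\ref{G:act}), exactly by $g_{\psi(n+1)}^{-1} \circ (-) \circ g_{\psi(1)}$, which is the adjoint action restricted to the corresponding summand of $\Ext^2(E,E)$ (respectively of $\widehat{\mathfrak{g}}_{E,l}^{\ast}$). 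Summing over all terms gives the displayed identities.

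Next, since for a fixed $g$ the operator $\mathrm{Ad}_g$ is a bounded linear endomorphism of both $\Ext^2(E,E)$ and $\widehat{\mathfrak{g}}_{E,l}^{\ast}$ --- on each summand it is the identity tensored with a linear endomorphism of the finite-dimensional factor $\Hom(V_a, V_b)$ --- absolute convergence of the defining series at a point $u$ forces absolute convergence at $g \cdot u$. Thus the series for $\kappa$ and $I_{\ast}$ converge absolutely on the open set $G \cdot \uU = \bigcup_{g \in G} g \cdot \uU$, which is open (each translate $g \cdot \uU$ is open) and $G$-invariant; on the chart $g \cdot \uU$ they are given by $\mathrm{Ad}_g \circ \kappa \circ g^{-1}$ and $\mathrm{Ad}_g \circ I_{\ast} \circ g^{-1}$, which are analytic, and by the equivariance identities these agree on overlaps, so they glue to $G$-equivariant analytic maps on $G \cdot \uU$. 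Finally, $\Ext^1(E,E)$ is an affine space with the $G$-action (\ref{G:act}) and $0$ is a $G$-fixed point contained in the $G$-invariant analytic open set $G \cdot \uU$, so Lemma~\ref{lem:saturated2} yields a saturated open subset $\vV$ with $0 \in \vV \subset G \cdot \uU$; restricting $\kappa$ and $I_{\ast}$ to $\vV$ then completes the proof.

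I expect the only point that needs genuine care is the assertion that $\mathrm{Ad}_g$ acts by a bounded operator on the Banach space $\widehat{\mathfrak{g}}_{E,l}^{\ast}$, but this is immediate from (\ref{Ext:decom}) since the $G$-action moves only the finite-dimensional $\Hom(V_a, V_b)$-factors and leaves the Sobolev-completed factors untouched; everything else is a straightforward transcription of the argument for Lemma~\ref{lem:Qconv}.
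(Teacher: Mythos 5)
Your proposal is correct and follows essentially the same route as the paper: the paper's proof simply notes that the formal series in (\ref{kI:explict}) are $G$-equivariant, extends $\kappa$ and $I_{\ast}$ to $G\cdot\uU$ by this equivariance, and invokes Lemma~\ref{lem:saturated2}. Your write-up just makes explicit the block-diagonality from (\ref{u:sum}) and the boundedness of $\mathrm{Ad}_g$ that the paper leaves implicit.
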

\begin{proof}
The formal series $\kappa$ and $I_{\ast}$ in (\ref{kI:explict}) 
are obviously 
$G$-equivariant.  
Therefore for a choice of $\uU$ in 
(\ref{kappa:U}), (\ref{I:analytic}), the maps 
$\kappa$, $I_{\ast}$ can be extended to 
analytic maps
\begin{align*}
\kappa \colon G \cdot \uU \to \Ext^2(E, E), \ 
I_{\ast} \colon G \cdot \uU \to \widehat{\mathfrak{g}}_{E, l}^{\ast}.
\end{align*}
By
Lemma~\ref{lem:saturated2}, there is a saturated analytic open subset 
$\vV \subset G \cdot \uU$ which contains $0 \in \Ext^1(E, E)$, so the 
lemma follows. 
\end{proof}

Let $\vV \subset \Ext^1(E, E)$
be as in Lemma~\ref{lem:Extsat}. 
By Lemma~\ref{lem:saturated}, it is written as 
\begin{align*}
\vV=\pi_{Q_{E_{\bullet}}}^{-1}(V)
\end{align*}
for some analytic 
open subset $0 \in V \subset M_{Q_{E_{\bullet}}}(\vec{m})$, 
where $\pi_{Q_{E_{\bullet}}}$ is the quotient map
\begin{align*}
\pi_{Q_{E_{\bullet}}} \colon \mathrm{Rep}_{Q_{E_{\bullet}}}(\vec{m})
\to M_{Q_{E_{\bullet}}}(\vec{m}). 
\end{align*}
Let 
$R \subset \vV$ be the closed 
analytic subspace given by
\begin{align*}
R \cneq \kappa^{-1}(0) \subset \vV \subset \Ext^1(E, E).
\end{align*}
By the definition of $I_{E_{\bullet}}$ in (\ref{relation:I}), 
under the identification (\ref{identify:Ext}) we have 
\begin{align*}
R=\mathrm{Rep}_{(Q_{E_{\bullet}}, I_{E_{\bullet}})}(\vec{m})|_{V}.
\end{align*}
Here we have used the notation (\ref{Rep:V}) for the RHS. 
Therefore in the notation of Definition~\ref{defi:cmoduli}, we have
\begin{align*}
\mM_{(Q_{E_{\bullet}}, I_{E_{\bullet}})}(\vec{m})|_{V}=[R/G]. 
\end{align*}
\begin{lem}\label{lem:etale}
By shrinking $V$ if necessary, 
the map $I_{\ast}$ in Lemma~\ref{lem:Extsat}
induces the smooth morphism of relative dimension zero
\begin{align}\label{induce:MQ}
I_{\ast} \colon \mM_{(Q_{E_{\bullet}}, I_{E_{\bullet}})}(\vec{m})|_{V} \to 
\mM. 
\end{align}
Here $\mM$ is the moduli stack of coherent sheaves on $X$. 
\end{lem}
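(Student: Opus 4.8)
The plan is to obtain the morphism $I_{\ast}$ in (\ref{induce:MQ}) by descending the morphism $R \to \mM$ produced by Proposition~\ref{prop:rest2} along the tautological $G$-torsor $R \to [R/G]$, and then to read off its smoothness and relative dimension from Proposition~\ref{prop:rest3} together with a descent along that torsor. Throughout I keep the notation preceding the lemma: $R = \kappa^{-1}(0) \cap \vV \subset \Ext^1(E, E)$, $G = \Aut(E) = \prod_i \GL(V_i)$ acting by (\ref{G:act}), and $\mM_{(Q_{E_{\bullet}}, I_{E_{\bullet}})}(\vec{m})|_{V} = [R/G]$. All the shrinkings of $V$ needed below are those already required for Lemma~\ref{lem:Extsat} and Propositions~\ref{prop:rest2}, \ref{prop:rest3}; since the conditions involved are $G$-invariant, they survive the passage from $\uU$ to $\vV \subseteq G \cdot \uU$.

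First I would construct (\ref{induce:MQ}). By Lemma~\ref{lem:Extsat} the map $I_{\ast} \colon \vV \to \widehat{\mathfrak{g}}_{E, l}^{\ast}$ is $G$-equivariant for the adjoint action, so $\alpha \cneq I_{\ast}|_{R} \in \mathfrak{g}_E^1 \otimes \Gamma(\oO_R)$ solves the Mauer-Cartan equation of $\mathfrak{g}_E^{\ast} \otimes \Gamma(\oO_R)$ and satisfies $\alpha(g \cdot u) = \mathrm{Ad}(g)(\alpha(u))$. Running the construction of Proposition~\ref{prop:rest2} over $R$ in place of $T$ yields an $R$-flat sheaf $E_R \in \Coh(X \times R)$, the zeroth cohomology of $(\aA^{0, \ast}(\eE^{\bullet}) \otimes \oO_R, d_{\mathfrak{g}} + \alpha)$. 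The key point is that $E_R$ is naturally $G$-equivariant: the complex $\eE^{\bullet} = \bigoplus_i V_i \otimes \eE_i^{\bullet}$ carries the tautological $G$-action on the factors $V_i$, the differential $d_{\mathfrak{g}}$ is $G$-invariant, and the equivariance of $\alpha$ shows that acting by $g$ conjugates $d_{\mathfrak{g}} + \alpha$ into $d_{\mathfrak{g}} + \alpha(g \cdot -)$. Hence $E_R$ descends to a family on $X \times [R/G]$ and, by the universal property of $\mM$, defines the morphism (\ref{induce:MQ}), whose composite with the atlas $q \colon R \to [R/G]$ is the morphism $R \to \mM$ classifying $E_R$.

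Next I would check that $R \to \mM$ is smooth of relative dimension $\dim \Aut(E)$. Put $T_0 \cneq R \cap \uU$, an open subset of $R$ contained in $T = \kappa^{-1}(0) \cap \uU$; on $T_0$ the morphism $R \to \mM$ agrees with the restriction of the morphism $I_{\ast} \colon T \to \mM$ of Proposition~\ref{prop:rest3}, hence is smooth of relative dimension $\dim \Aut(E)$ there. Since $\vV \subseteq G \cdot \uU$, any $u \in R$ can be written $u = g \cdot u'$ with $u' \in \uU$, and then $\kappa(u') = 0$ (as $\kappa$ is $G$-equivariant) and $u' = g^{-1} \cdot u \in \vV$ (as $\vV$ is $G$-invariant), so $u' \in T_0$ and $R = G \cdot T_0$. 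Because $I_{\ast}(g \cdot u)$ and $I_{\ast}(u)$ differ by the constant gauge transformation $g$ and so classify isomorphic sheaves functorially in $g$, the restriction of $R \to \mM$ to each open $g \cdot T_0$ is isomorphic to its restriction to $T_0$; therefore $R \to \mM$ is smooth of relative dimension $\dim \Aut(E)$ everywhere.

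Finally I would conclude by descent along $q \colon R \to [R/G]$, which is a $G$-torsor, hence smooth and surjective of relative dimension $\dim G = \dim \Aut(E)$. Since the composite of (\ref{induce:MQ}) with $q$ is the smooth morphism $R \to \mM$, descent of smoothness along the smooth surjection $q$ (checked, as usual, after base change along an atlas $W \to \mM$: if a smooth atlas of an analytic stack composes with the projection to $W$ to a smooth map of analytic spaces, then the projection is smooth) shows (\ref{induce:MQ}) is smooth, and additivity of relative dimension along $R \to [R/G] \to \mM$ forces its relative dimension to be $\dim \Aut(E) - \dim G = 0$. I expect the only real subtleties to be the $G$-equivariance of the family $E_R$ — which is what makes (\ref{induce:MQ}) exist at all — and the bookkeeping by which the relative dimension $\dim \Aut(E)$ of Proposition~\ref{prop:rest3} is exactly cancelled by the dimension of the quotient by $\Aut(E)$.
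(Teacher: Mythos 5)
Your proposal is correct and follows essentially the same route as the paper: extend $I_{\ast}$ from $R\cap \uU$ to all of $R$ using $G$-equivariance and $\vV\subset G\cdot \uU$, descend to $[R/G]$ by equivariance of the family, and obtain relative dimension zero by cancelling the relative dimension $\dim\Aut(E)$ of $R\to\mM$ from Proposition~\ref{prop:rest3} against $\dim G=\dim\Aut(E)$ for the torsor $R\to[R/G]$. The paper's own proof is just a terser version of the same argument, leaving the equivariant descent of the family $E_R$ and the covering $R=G\cdot(R\cap\uU)$ implicit where you spell them out.
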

\begin{proof}
By Lemma~\ref{prop:restrict}
and Proposition~\ref{prop:rest3}, 
the map $I_{\ast}$ in 
Lemma~\ref{lem:Extsat}
gives 
the analytic maps
\begin{align*}
I_{\ast} \colon R \cap \uU \to \mathrm{MC}(\mathfrak{g}_E^{\ast}), \ 
I_{\ast} \colon R \cap \uU \to \mM. 
\end{align*}
Then by the $G$-equivalence of $I_{\ast}$
and 
the property $\vV \subset G \cdot \uU$
 in Lemma~\ref{lem:Extsat}, 
the above maps extend to the 
$G$-equivariant analytic maps
\begin{align}\label{induce:ana}
I_{\ast} \colon R \to \mathrm{MC}(\mathfrak{g}_E^{\ast}), \ 
I_{\ast} \colon R \to \mM. 
\end{align}
Here the right map is induced by the left map 
as in the proof of Proposition~\ref{prop:rest3}. 
By 
the 
$G$-equivalence of $I_{\ast}$, 
the right map of (\ref{induce:ana}) 
descends to the quotient by $G$
to induce (\ref{induce:MQ}), which is of relative dimension 
zero by Lemma~\ref{lem:Extsat}. 
\end{proof}

\subsection{Functoriality of $I_{\ast}$}\label{subsec:functI}
In this subsection, 
by the explicit description (\ref{kI:explict}) of
the map $I_{\ast}$ 
in Proposition~\ref{prop:complete}, 
we see that it has some functorial property. 
In particular, it implies that $I_{\ast}$ sends 
subsheaves to subrepresentations of Ext-quivers. 
This fact will not be used in the 
rest of this section, but will be used in 
the proof of 
Theorem~\ref{cor:equiv:I}, which will 
be used in 
Theorem~\ref{thm:onedim} to compare 
stability conditions of sheaves and quiver representations. 

For each $i \in V(Q_{E_{\bullet}})=\{1, 2, \ldots, k\}$,
let $V_i, V_i'$ be vector spaces
with dimensions $m_i$, $m_i'$, 
and set 
\begin{align*}
E=\bigoplus_{i=1}^k V_i \otimes E_i, \ 
E'=\bigoplus_{i=1}^k V_i' \otimes E_i. 
\end{align*}
Let us take 
\begin{align}\label{uu'}
u=(u_e)_{e \in E(Q_{E_{\bullet}})}, \ 
u'=(u_e')_{e \in E(Q_{E_{\bullet}})}
\end{align}
where $u_e, u_e'$ are linear maps 
\begin{align*}
u_e \colon V_{s(e)} \to V_{t(e)}, \ 
u_e' \colon V_{s(e)}' \to V_{t(e)}',
\end{align*}
whose operator norms are sufficiently small
so that they give 
$Q_{E_{\bullet}}$-representations
satisfying the relation $I_{E_{\bullet}}$. 
Let $\phi_i \colon V_i \to V_i'$ be linear maps
for $1\le i\le k$ such that the following diagram 
commutes 
for each $e \in E(Q_{E_{\bullet}})$
\begin{align*}
\xymatrix{
V_{s(e)} \ar[r]^-{u_e} 
\ar[d]_-{\phi_{s(e)}} & V_{t(e)} \ar[d]^-{\phi_{t(e)}} \\
V_{s(e)}' \ar[r]_-{u_e'} & V_{t(e)}'.
}
\end{align*}
Then each term of 
\begin{align}\label{Iu:MC}
I_{\ast}(u) \in \mathrm{MC}(\mathfrak{g}_{E}^{\ast}), \ 
I_{\ast}(u') \in \mathrm{MC}(\mathfrak{g}_{E'}^{\ast})
\end{align}
in (\ref{kI:explict}) satisfy 
\begin{align*}
&I_n(e_1^{\vee}, \ldots, e_n^{\vee}) \cdot \phi_{t(e_n)} \circ
u_{e_n} \circ \cdots \circ
u_{e_1} \\
&=I_n(e_1^{\vee}, \ldots, e_n^{\vee}) \cdot 
u_{e_n}' \circ \cdots \circ
u_{e_1}' \circ \phi_{s(e_1)}.
\end{align*}
This implies that  
the map 
\begin{align*}
\bigoplus_{i=1}^k \phi_i \otimes \id \colon
&\left(\aA^{0, \ast}\left(\bigoplus_{i=1}^k V_i \otimes \eE_i^{\bullet}\right), 
d_{\aA^{0, \ast}(\bigoplus_{i=1}^k V_i \otimes \eE_i^{\bullet})}+I_{\ast}(u) \right) \\
& \to 
\left(\aA^{0, \ast}\left(\bigoplus_{i=1}^k V_i' \otimes \eE_i^{\bullet}\right), 
d_{\aA^{0, \ast}(\bigoplus_{i=1}^k V_i' \otimes \eE_i^{\bullet})}+I_{\ast}(u') \right)
\end{align*}
is a map of dg-$\aA^{0, \ast}(\oO_X)$-modules. 
By taking the cohomology of the above map, we 
obtain the morphism of coherent sheaves
\begin{align}\label{mor:coh}
\hH^0 \left(
\bigoplus_{i=1}^k \phi_i \otimes \id \right) \colon
E_{u} \to E_{u'}.
\end{align}
Here $E_{u}$, $E_{u'}$ are coherent sheaves 
corresponding to 
$u$, $u'$
under the map in
Proposition~\ref{prop:rest2} respectively. 

\begin{rmk}\label{rmk:operator}
In the above argument, we assumed that the operator norms 
of $u, u'$ are enough small so that $I_{\ast}$ is defined. 
We can relax this condition in the following cases. 
First suppose that each $\phi_i$ is injective or surjective. 
Then the operator norm of $u$ is bounded by that of $u'$, so 
if the operator norm of $u'$ is enough small then
so is $u$ 
and $I_{\ast}(u)$ is defined. 
Next if 
$u, u'$ correspond to nilpotent 
$Q_{E_{\bullet}}$-representations, then 
whatever the operator norms of $u, u'$
the infinite sums
$I_{\ast}(u), I_{\ast}(u')$
in (\ref{kI:explict})
are finite sums.
So in the above cases,   
$E_u, E_{u'}$ and the morphism (\ref{mor:coh})
 are well-defined. 
\end{rmk}

\subsection{\'Etale slice}
Below we
use the notation in Subsection~\ref{subsec:moduli}. 
Let 
$\mM_{\omega}(v)$ be the moduli 
stack of 
$\omega$-Gieseker semistable sheaves on $X$
with Chern character $v$,  
$M_{\omega}(v)$ its coarse moduli space.
Let $E$ be a polystable sheaf of the form (\ref{polystable}), 
and take closed points
\begin{align*}
p=[E] \in M_{\omega}(v), \ p'=[E] \in \mM_{\omega}(v).
\end{align*}
For $m\gg 0$, let 
$\mathbf{V}$ be the vector space given by
\begin{align*}
\mathbf{V}=H^0(E(m))=\bigoplus_{i=1}^k V_i \otimes H^0(E_i(m)). 
\end{align*}
Let $q \in \mathrm{Quot}^{\circ}(\mathbf{V}, v)$ 
be a point which is mapped to $p'$
under the quotient morphism
$\mathrm{Quot}^{\circ}(\mathbf{V}, v)
\to \mM_{\omega}(v)$. 
Then we have 
\begin{align*}
\Stab_{\GL(\mathbf{V})}(q)=G \subset \GL(\mathbf{V})
\end{align*}
where $G$ is given as in (\ref{G:aut}). 
By Luna's \'etale slice theorem~\cite{MR0342523}, there is an 
affine locally closed $G$-invariant subscheme 
\begin{align*}
q \in Z \subset \mathrm{Quot}^{\circ}(\mathbf{V}, v)
\end{align*}
such that the natural 
$\GL(\mathbf{V})$-equivariant morphism
\begin{align*}
\GL(\mathbf{V}) \times_G Z \to \mathrm{Quot}^{\circ}(\mathbf{V}, v)
\end{align*}
is \'etale. 
Moreover by taking the quotients by $\GL(\mathbf{V})$, 
we obtain the Cartesian diagram
\begin{align}\label{dia:etale}
\xymatrix{
[Z/G]
\ar[r]
\ar[d]_{p_{Z}}\ar@{}[dr]|\square &
 \mM_{\omega}(v) \ar[d]^{p_{M}} \\
Z \sslash G \ar[r] & M_{\omega}(v)
}
\end{align}
such that each horizontal arrows 
are \'etale. Therefore 
there is a saturated 
analytic open subset $\wW \subset Z$
(w.r.t. the $G$-action on $Z$)
which contains $q$ and the 
Cartesian diagram of complex analytic stacks
\begin{align*}
\xymatrix{
[\wW/G]
\ar[r]
\ar[d]_{p_{\wW}}\ar@{}[dr]|\square &
 \mM_{\omega}(v) \ar[d]^{p_{M}} \\
\wW \sslash G \ar[r] & M_{\omega}(v)
}
\end{align*}
such that each horizontal arrows are analytic 
open immersions. 

On the other hand, let us consider the morphism $I_{\ast}$
in Lemma~\ref{lem:etale} applied for 
the above polystable sheaf 
$p'=[E] \in \mM_{\omega}(v)$. 
By the openness of stability, by shrinking $\uU$ in Lemma~\ref{lem:Extsat} if necessary, 
the map $I_{\ast}$ in Lemma~\ref{lem:etale} 
factors through the open substack 
$\mM_{\omega}(v) \subset \mM$:
\begin{align}\label{Iast:M}
I_{\ast} \colon \mM_{(Q_{E_{\bullet}}, I_{E_{\bullet}})}(\vec{m})|_{V} \to 
\mM_{\omega}(v). 
\end{align}
Now the following proposition completes the proof of 
Theorem~\ref{thm:precise}. 
\begin{prop}\label{prop:complete}
By shrinking $\vV$ in Lemma~\ref{lem:Extsat}
and $\wW$ if necessary
(while keeping the condition to be 
saturated in 
$\Ext^1(E, E)$, $Z$ respectively)
 the map (\ref{Iast:M}) induces the commutative isomorphisms
\begin{align}\label{dia:MW}
\xymatrix{
[R/G]=\mM_{(Q_{E_{\bullet}}, I_{E_{\bullet}})}(\vec{m})|_{V} 
\ar[r]_-{\cong}^-{I_{\ast}} \ar[d]_{p_Q} &
[\wW/G] \ar[d]^{p_{\wW}} \\
R\sslash G=M_{(Q_{E_{\bullet}}, I_{E_{\bullet}})}(\vec{m})|_{V}
\ar[r]_-{\cong} & 
\wW \sslash G. 
}
\end{align}
\end{prop}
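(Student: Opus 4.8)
The plan is to play the étale slice description of $\mM_\omega(v)$ near $[E]$ against the étaleness of $I_\ast$ established in Lemma~\ref{lem:etale}, and to upgrade the resulting morphism of analytic stacks to a local isomorphism at the distinguished point. Since the Luna slice already gives an open immersion $[\wW/G]\xrightarrow{\cong}p_M^{-1}(\wW\sslash G)\subset\mM_\omega(v)$, it suffices to show that $I_\ast\colon[R/G]\to\mM_\omega(v)$ restricts to an isomorphism of an open substack containing $[0]$ onto an open substack containing $[E]$, and then to carry out the shrinkings compatibly with the saturatedness conditions; the statement for the coarse spaces will then follow formally.

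First I would analyze the behaviour at the base point. At $0\in\Ext^1(E,E)$ the series defining $I_\ast$ vanishes termwise, so under the morphism of Proposition~\ref{prop:rest2} the point $0$ is sent to the cohomology of the undeformed complex, i.e. $I_\ast([0])=[E]$, which under the open immersion coming from (\ref{dia:etale}) is the $G$-orbit of $q$. Moreover $0$ is a $G$-fixed point of $R$, so $\Aut_{[R/G]}([0])=G$, while $q$ is a $G$-fixed point of $\wW$ with $\Aut_{[\wW/G]}(q)=\Stab_{\GL(\mathbf V)}(q)=G$; and, because $I_\ast$ is constructed $G$-equivariantly with $G=\Aut(E)$ via (\ref{G:aut}), the induced map $\Aut_{[R/G]}([0])=G\to\Aut_{\mM_\omega(v)}([E])=\Aut(E)=G$ is the identity. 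Shrinking $\vV$ (equivalently $V$), which is possible since $[0]\mapsto[E]$ lies in the open substack $p_M^{-1}(\wW\sslash G)\cong[\wW/G]$, we may assume the image of the étale morphism $I_\ast$ is contained there, so we obtain an étale morphism $\bar I_\ast\colon[R/G]\to[\wW/G]$ carrying $[0]$ to $q$ and inducing the identity on $G$.

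Next I would show $\bar I_\ast$ is a local isomorphism at $[0]$. Using that $\mM_\omega(v)$, hence $[\wW/G]$, has affine diagonal together with the functoriality of $I_\ast$ recorded in Section~\ref{subsec:functI}, one checks that $\bar I_\ast$ is representable; a representable étale morphism which induces an isomorphism on automorphism groups at a point is a local isomorphism there. Concretely, pulling $\bar I_\ast$ back along the atlases $\mathrm{Rep}_{Q_{E_\bullet}}(\vec m)\to\mM_{Q_{E_\bullet}}(\vec m)$ and $\wW\to[\wW/G]$ turns it into a $G$-equivariant étale morphism of analytic spaces, which by the inverse function theorem is a local isomorphism near the $G$-fixed point $0$, and $G$-equivariance lets one take the neighbourhoods $G$-invariant. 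Applying Lemma~\ref{lem:saturated2} at the $G$-fixed points $0\in R$ and $q\in\wW$, I would then shrink these $G$-invariant neighbourhoods to saturated ones; by Lemma~\ref{lem:saturated} they descend to analytic open subsets $0\in V\subset M_{Q_{E_\bullet}}(\vec m)$ and $p\in U\subset M_\omega(v)$, and after this shrinking $\bar I_\ast$ is an isomorphism $[R/G]=\mM_{(Q_{E_\bullet},I_{E_\bullet})}(\vec m)|_V\xrightarrow{\cong}[\wW/G]$, giving the top row of (\ref{dia:MW}).

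Finally, for the bottom row I would invoke the universality of analytic Hilbert quotients: $R\sslash G$ is the analytic Hilbert quotient of $R$ by Lemma~\ref{lem:Zquot} and Definition~\ref{defi:cmoduli}, while $\wW\sslash G$ is, by Lemma~\ref{lem:aquot} together with Lemma~\ref{lem:saturated}, an analytic Hilbert quotient of $\wW$ realized as an open subset of $M_\omega(v)$. Since the $G$-invariant maps $R\to\wW\sslash G$ and $\wW\to R\sslash G$ induced by $\bar I_\ast$ factor uniquely through the respective quotients (Lemma~\ref{lem:universal}), they produce mutually inverse isomorphisms $R\sslash G\xrightarrow{\cong}\wW\sslash G$ making (\ref{dia:MW}) commute, and by uniqueness this isomorphism sends $0$ to $p$. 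I expect the main obstacle to be the third step: passing from "$I_\ast$ is smooth of relative dimension zero" to "$I_\ast$ is an open immersion after shrinking" forces one to verify that $I_\ast$ is representable and that it induces an isomorphism on automorphism groups at $[0]$ — so that it is not a nontrivial étale cover — and then to arrange all of the shrinkings simultaneously with the saturatedness requirements on $R\subset\Ext^1(E,E)$ and on $\wW\subset Z$.
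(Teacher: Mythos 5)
Your overall architecture is reasonable, and the first and last steps do match the paper: the identification $[0]\mapsto[E]$ with matching stabilizer groups, and the descent of the isomorphism to the coarse spaces via the universality of analytic Hilbert quotients, are both consistent with what the paper does. But the central step --- upgrading ``$I_{\ast}$ is smooth of relative dimension zero'' to ``$I_{\ast}$ is an isomorphism onto $[\wW/G]$ after shrinking to saturated opens'' --- is exactly where the content of the proposition lies, and your justification of it does not work as stated. The claim that ``pulling $\bar I_{\ast}$ back along the atlases turns it into a $G$-equivariant \'etale morphism of analytic spaces'' is not correct: a morphism of quotient stacks $[R/G]\to[\wW/G]$ by the same group is not in general induced by a $G$-equivariant map $R\to\wW$; the pullback along the atlas $\wW\to[\wW/G]$ is a $G$-torsor over $R$, and producing a $G$-equivariant trivialization of that torsor is precisely the hard point. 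Likewise, even granting such a map and local injectivity near $0$ from the inverse function theorem, ``$G$-equivariance lets one take the neighbourhoods $G$-invariant'' does not follow: injectivity on a small $U_0$ plus equivariance gives injectivity on each translate $gU_0$ but not on $G\cdot U_0$, since $G$ is non-compact and the translates overlap in an uncontrolled way. The abstract criterion you invoke (representable \'etale and stabilizer-preserving at a closed point implies isomorphism on saturated neighbourhoods) is essentially an \'etale-local structure theorem for stacks with good moduli spaces; it is a genuine theorem, not a formal consequence of the inverse function theorem, and in the analytic setting of this paper it has to be proved by hand.

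The paper fills this gap as follows: using the homotopy inverse $P$ of the $A_{\infty}$-quasi-isomorphism $I$ (as in the proof of Proposition~\ref{prop:rest3}) it constructs an explicit local lift $\phi\colon\wW_1\to R$ over a small $K$-invariant neighbourhood of $q$, where $K\subset G$ is a maximal compact subgroup; it shows via versality of both families that $\phi$ restricts to a $K$-equivariant isomorphism between $K$-invariant neighbourhoods $\wW_1$ and $R_1\ni 0$; it then extends this to a $G$-equivariant isomorphism $G\cdot R_1\to G\cdot\wW_1$ by $g\cdot x\mapsto g\cdot\psi(x)$, the well-definedness of which is the content of Lemma~\ref{lem:welldef}, proved by a connectedness argument on subsets of $G$ containing $K$; and only then does it cut down to saturated open subsets (Lemma~\ref{lem:Zsat}) and pass to quotients. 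Your proof would need either to reproduce this construction or to supply a complete proof of the stacky local-isomorphism criterion on which it relies.
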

\begin{proof}
The map (\ref{Iast:M}) induces the analytic map 
$R\sslash G \to M_{\omega}(v)$. 
So by shrinking $0 \in V \subset M_{Q_{E_{\bullet}}}(\vec{m})$ if 
necessary, we may assume
that the above map factors through 
$R\sslash G \to \wW\sslash G$. 
Then we have the commutative diagram
\begin{align*}
\xymatrix{
[R/G]
\ar[r]^-{I_{\ast}} \ar[d]_{p_Q} &
[\wW/G] \ar[d]^-{p_{\wW}} \\
R\sslash G
\ar[r] & 
\wW \sslash G. 
}
\end{align*}
Let $K\subset G$ be a maximal compact subgroup, and 
take a sufficiently small 
$K$-invariant analytic open subset
$q \in \wW_1 \subset \wW$. 
Then as in the proof of Proposition~\ref{prop:rest3}, 
the composition
\begin{align*}
\wW_1 \to \wW \to [\wW/G] \subset \mM_{\omega}(v)
\end{align*}
admits a lift $\phi \colon \wW_1 \to R$
using the homotopy inverse $P$ of $I$. 
Moreover the proof in \textit{loc.cit.} immediately 
implies that $\phi$ can taken to be $K$-equivariant. 
(Indeed if the map $f_2$ in \textit{loc.cit.} is $K$-equivariant, 
then so is $f_3$ as $P_{\ast}$ is $K$-equivariant.)
So we have the commutative diagram
\begin{align}\label{2commute2}
\xymatrix{
R  \ar[d] & \wW_1 \ar[d] \ar[l]_-{\phi} \\
[R/G] \ar[r]^{I_{\ast}} & [\wW/G].
}
\end{align}
Note that the bottom arrow is a smooth morphism of 
relative dimension zero by Lemma~\ref{lem:etale}. 
Let $0 \in R_1 \subset R$ be a sufficiently small 
$K$-invariant analytic open neighborhood. 
Since both of $R_1$ and $\wW_1$ 
are the bases of versal families of flat deformations of $E$
with tangent space $\Ext^1(E, E)$, 
and $\phi$ is isomorphism at the tangent by the 
diagram (\ref{2commute2}), 
the $K$-equivariant map $\phi$
gives an isomorphism
$\psi \colon \wW_1 \stackrel{\cong}{\to} R_1$
for some suitable choices of $\wW_1$, $R_1$.  
By setting $\psi=\phi^{-1}$, 
we obtain the commutative diagram
\begin{align}\label{2commute}
\xymatrix{
R_1 \ar[r]^-{\psi}_-{\cong} \ar[d] & \wW_1 \ar[d] \\
[R/G] \ar[r]^{I_{\ast}} & [\wW/G].
}
\end{align}
By Lemma~\ref{lem:welldef} below, 
after shrinking $R_1$ if necessary we
 can extend the $K$-equivariant isomorphism 
$\psi \colon R_1 \stackrel{\cong}{\to} \wW_1$ to 
a $G$-equivariant isomorphism between 
$G$-invariant open subsets in 
$R$ and $\wW$
\begin{align}\label{isom:R2}
\widetilde{\psi} \colon R_2 \cneq G \cdot R_1 
\stackrel{\cong}{\to} \wW_2 \cneq 
G \cdot \wW_1
\end{align}
by sending $g \cdot x$ to 
$g \cdot \psi(x)$
for $g \in G$ and $x \in R_1$. 
Then by Lemma~\ref{lem:Zsat} below, 
the isomorphism (\ref{isom:R2}) restricts to the 
isomorphism of saturated open subsets. 
By taking the quotients of $G$-actions, we obtain the 
desired isomorphisms (\ref{dia:MW}). 
\end{proof}

In the proof of the above proposition, we postponed the following two lemmas: 
\begin{lem}\label{lem:welldef}
The map (\ref{isom:R2}) is well-defined and 
an isomorphism.
\end{lem}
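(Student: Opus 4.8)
The plan is to verify that the assignment $g\cdot x \mapsto g\cdot\psi(x)$, for $g\in G$ and $x\in R_1$, is independent of the chosen presentation of a point of $R_2 = G\cdot R_1$, and then that it is an isomorphism onto $\wW_2 = G\cdot\wW_1$. The only thing to check for well-definedness is compatibility with the $K$-equivariance of $\psi$: if $g_1\cdot x_1 = g_2\cdot x_2$ with $x_1,x_2\in R_1$ and $g_1,g_2\in G$, I must show $g_1\cdot\psi(x_1) = g_2\cdot\psi(x_2)$. Setting $h = g_2^{-1}g_1$, this amounts to: whenever $h\cdot x_1 = x_2$ with $x_1,x_2\in R_1$, we have $h\cdot\psi(x_1) = \psi(x_2)$. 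First I would observe that $R_1$ is $K$-invariant, so for $h\in K$ this is exactly the $K$-equivariance of $\psi$ established in the diagram (\ref{2commute}). The issue is therefore $h\in G\setminus K$, and here the key input is that $\Ext^1(E,E) = \mathrm{Rep}_{Q_{E_{\bullet}}}(\vec m)$ with $G = \prod\GL(V_i)$ acting by conjugation, while $K = \prod U(V_i)$ is a maximal compact subgroup, and $R\subset\Ext^1(E,E)$ is a $G$-invariant (Kempf--Ness-relevant) analytic subspace.

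The main obstacle is precisely this: two points of the small slice $R_1$ may be $G$-equivalent without being $K$-equivalent. My approach is to shrink $R_1$ so that this cannot happen for $G$-elements that move $R_1$ into itself. Concretely, I would use the Kempf--Ness set $S\subset\Ext^1(E,E)$ from Theorem~\ref{thm:KN}: after replacing $R_1$ by $R_1\cap (G\cdot(S\cap R_1))$ and shrinking, one arranges that $R_1$ is a $K$-invariant neighborhood of $0$ meeting each closed $G$-orbit it touches in a single $K$-orbit, i.e. the natural map $R_1/K \to R_2\sslash G$ is injective near $0$. Then if $h\cdot x_1 = x_2$ with $x_1,x_2\in R_1$, both lie over the same point of $R_2\sslash G$, so by the injectivity there is $k\in K$ with $k\cdot x_1 = x_2$; applying $K$-equivariance of $\psi$ gives $\psi(x_2) = k\cdot\psi(x_1)$. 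It remains to upgrade $k$ to $h$: since $\psi = \phi^{-1}$ is the inverse of the lift $\phi$ coming from the homotopy inverse $P_\ast$, and both $R_1$ and $\wW_1$ are bases of versal families of flat deformations of $E$ with tangent space $\Ext^1(E,E)$ (as noted before (\ref{2commute})), the morphisms to the stack $[\wW/G]$ agree, so the images of $k\cdot\psi(x_1)$ and $h\cdot\psi(x_1)$ in the stack coincide, forcing $h\cdot\psi(x_1) = \psi(x_2)$ after a further shrink (the stabilizer ambiguity being absorbed into the $G$-action). This is the technical heart of the lemma.

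For the isomorphism statement, I would argue that $\widetilde\psi \colon R_2\to\wW_2$ is bijective with analytic inverse. Surjectivity is immediate since $\widetilde\psi(g\cdot x) = g\cdot\psi(x)$ and $\psi$ maps $R_1$ onto $\wW_1$, so the image contains $G\cdot\wW_1 = \wW_2$. Injectivity follows from the well-definedness argument applied symmetrically, using that $\psi^{-1} = \phi$ is itself $K$-equivariant and satisfies the analogous compatibility (by the same versality argument with the roles of $R$ and $\wW$ exchanged). Analyticity of $\widetilde\psi$ and of its inverse is local: near any point $g_0\cdot x_0$ one may use the local section $g\mapsto g\cdot x$ of the $G$-action transverse to the slice to write $\widetilde\psi$ as $g\cdot(-) \circ \psi \circ (g^{-1}\cdot(-))$, a composition of analytic maps, so $\widetilde\psi$ is a biholomorphism of complex analytic spaces. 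Hence $\widetilde\psi$ is the asserted $G$-equivariant isomorphism $R_2\xrightarrow{\ \cong\ }\wW_2$.
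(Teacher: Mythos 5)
Your reduction of well-definedness to the statement ``$h\cdot x_1=x_2$ with $x_1,x_2\in R_1$ implies $h\cdot\psi(x_1)=\psi(x_2)$'' is the right starting point, but both steps you use to handle $h\in G\setminus K$ fail. First, it is not true that one can shrink $R_1$ so that $G$-equivalent points of $R_1$ become $K$-equivalent: the Kempf--Ness set $S$ meets only the \emph{closed} $G$-orbits, while arbitrarily close to $0$ there are points of $\mathrm{Rep}_{Q_{E_{\bullet}}}(\vec{m})$ lying on non-closed orbits (any nonzero nilpotent representation, rescaled), for which $h\cdot x_1=x_2$ can hold with no $k\in K$ carrying $x_1$ to $x_2$; moreover replacing $R_1$ by $R_1\cap\bigl(G\cdot(S\cap R_1)\bigr)$ destroys openness. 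Second, even granting such a $k$, your ``upgrade $k$ to $h$'' step only compares images in the stack $[\wW/G]$, which shows that $h\cdot\psi(x_1)$ and $\psi(x_2)$ lie in the same $G$-orbit --- not that they are equal. Equality on the nose is exactly what well-definedness of $\widetilde{\psi}$ as a map of analytic spaces requires, so the argument does not close.

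The paper's proof (following Joyce--Song) avoids both problems by an analytic-continuation argument in the group variable: setting $G'=\{g\in G: gR_1\cap R_1\neq\emptyset\}$ and $G''=\{g\in G': g\psi g^{-1}=\psi \mbox{ on } gR_1\cap R_1\}$, one observes that $G''$ is a closed analytic subset of the open set $G'$ containing $K$; since $K$ is a maximal compact subgroup, hence a totally real submanifold of $G$ of maximal dimension, $G''$ must contain the whole connected component of $G'$ containing $K$. A further shrinking $R_1'\subset R_1$ is then chosen so that every $g$ with $gR_1'\cap R_1'\neq\emptyset$ lies in that component, which yields the identity $g\psi g^{-1}=\psi$ for all relevant $g$ and hence well-definedness; the inverse is obtained by running the same argument for $\psi^{-1}$. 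If you want to salvage your approach, the missing idea is precisely this propagation of the $K$-equivariance identity from $K$ to a neighborhood of $K$ in $G$ by the identity theorem, together with the connectivity control on $\{g: gR_1\cap R_1\neq\emptyset\}$; the Kempf--Ness set and versality considerations you invoke cannot substitute for it.
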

\begin{proof}
The lemma is essentially 
proved in the proof of~\cite[Theorem~5.5]{JS}. 
In order to show that (\ref{isom:R2}) is well-defined, 
it is enough to show that 
if $g_1 R_1 \cap g_2 R_1 \neq \emptyset$
for $g_1, g_2 \in G$, then we have 
the identity
$g_1 \psi g_1^{-1}=g_2 \psi g_2^{-1}$ on 
$g_1 R_1 \cap g_2 R_1$. 
By applying $g_2^{-1}$, we may assume
that $g_2=1$. Let $G' \subset G$ be the open 
subset given by
\begin{align*}
G' \cneq \{g\in G : gR_1 \cap R_1 \neq \emptyset\}.
\end{align*}
If we define $G''$ to be
\begin{align*}
G'' \cneq \{g \in G' : g\psi g^{-1}=\psi \mbox{ on } g R_1 \cap R_1\}
\end{align*}
then $G''$ is a closed analytic subset of 
$G'$ which contains $K$. 
Therefore if $(G')^{\circ}$, $(G'')^{\circ}$
are the connected components of 
$G'$, $G''$ which contain $K$, then 
we have $(G')^{\circ}=(G'')^{\circ}$. 
Then we take a sufficiently small $K$-invariant
open subset $0 \in R_1' \subset R_1$
satisfying the following: for any
$x_1, x_2 \in R_1'$ with $G \cdot x_1=G\cdot x_2$, 
the connected component of $(G \cdot x_1) \cap R_1$
containing $x_1$ should contain $x_2$. 
The above choice of $R_1'$ implies that
\begin{align*}
G'''\cneq 
\{g \in G : gR_1' \cap R_1' \neq \emptyset\} \subset
(G')^{\circ}.
\end{align*}
Therefore as $(G')^{\circ}=(G'')^{\circ}$, 
for $g \in G'''$ 
we have $g \psi g^{-1}=\psi$ 
on $gR_1' \cap R_1' \neq \emptyset$. 
By replacing $R_1$ with $R_1'$, we see that (\ref{isom:R2}) is well-defined. 
Applying the above argument for the inverse of 
$\psi \colon R_1 \stackrel{\cong}{\to} \wW_1$, we have 
the inverse of (\ref{isom:R2}), showing 
that (\ref{isom:R2}) is an isomorphism. 
\end{proof}

\begin{lem}\label{lem:Zsat}
There exist saturated open subsets 
$\widetilde{\vV} \subset \Ext^1(E, E)$, $\widetilde{\wW} \subset Z$
satisfying $0 \in R \cap \widetilde{\vV} \subset R_2$, 
$q \in \widetilde{\wW} \subset \wW_2$ such that the isomorphism 
(\ref{isom:R2}) restricts to the isomorphism
\begin{align*}
\widetilde{\psi} \colon R \cap \widetilde{\vV} \stackrel{\cong}{\to}
\widetilde{\wW}.
\end{align*}
\end{lem}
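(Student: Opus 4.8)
The plan is to shrink $R_1$ on the $\Ext$-side and $\wW_1$ on the slice-side to genuinely saturated neighbourhoods, form a single $G$-invariant open set $\Omega\subset R$ on which orbit closures are under control, and then read off both $\widetilde{\vV}$ and $\widetilde{\wW}$ from $\Omega$.

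First I would produce two auxiliary saturated opens. Writing $R_1=R\cap O_1$ for an open $O_1\subset\vV$, the set $G\cdot O_1\subset\Ext^1(E,E)$ is a $G$-invariant open neighbourhood of the $G$-fixed point $0$, so by Lemma~\ref{lem:saturated2} there is a saturated open $\widetilde{\vV}_1$ with $0\in\widetilde{\vV}_1\subset G\cdot O_1$; then $R\cap\widetilde{\vV}_1\subset R\cap(G\cdot O_1)=G\cdot R_1=R_2$ (using that $R$ is $G$-invariant), and by Lemma~\ref{lem:saturated} $\widetilde{\vV}_1=\pi_{Q_{E_{\bullet}}}^{-1}(\widehat{V}_1)$ for an open neighbourhood $0\in\widehat{V}_1\subset M_{Q_{E_{\bullet}}}(\vec{m})$. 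Similarly, since $q$ is a $G$-fixed point of the affine slice $Z$, Lemma~\ref{lem:saturated2} gives a saturated open $\wW_3$ with $q\in\wW_3\subset\wW_2$. I then set $\Omega\cneq(R\cap\widetilde{\vV}_1)\cap\widetilde{\psi}^{-1}(\wW_3)$, a $G$-invariant open subset of $R$ containing $0=\widetilde{\psi}^{-1}(q)$. The crucial claim to verify is that $\overline{G\cdot x}\subset\Omega$ for every $x\in\Omega$, the closure being taken in $\Ext^1(E,E)$: since $\widetilde{\vV}_1$ is saturated and $R$ is closed in $\vV$, the closure $\overline{G\cdot x}$ lies in $R\cap\widetilde{\vV}_1\subset R_2$ and hence coincides with the closure of $G\cdot x$ inside $R_2$; then, $\widetilde{\psi}$ being a $G$-equivariant homeomorphism onto $\wW_2$, its image under $\widetilde{\psi}$ is $\overline{G\cdot\widetilde{\psi}(x)}$ computed in $\wW_2$, which is contained in $\wW_3$ because $\widetilde{\psi}(x)\in\wW_3$ and $\wW_3$ is saturated in $Z$. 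Thus $\overline{G\cdot x}\subset(R\cap\widetilde{\vV}_1)\cap\widetilde{\psi}^{-1}(\wW_3)=\Omega$.

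Granting this, I set $\widetilde{\wW}\cneq\widetilde{\psi}(\Omega)$: it is a $G$-invariant open subset of $Z$ containing $q$, and for $y=\widetilde{\psi}(x)\in\widetilde{\wW}$ the orbit closure $\overline{G\cdot y}$ in $Z$ equals its closure inside $\wW_2$ (again since $\wW_3$ is saturated in $Z$), hence equals $\widetilde{\psi}\big(\overline{G\cdot x}\big)\subset\widetilde{\psi}(\Omega)=\widetilde{\wW}$, so $\widetilde{\wW}$ is saturated in $Z$ by Definition~\ref{def:saturated}. For $\widetilde{\vV}$, the set $(R\cap\widetilde{\vV}_1)\setminus\Omega$ is a $G$-invariant closed subset of $\widetilde{\vV}_1=\pi_{Q_{E_{\bullet}}}^{-1}(\widehat{V}_1)$, so by Lemma~\ref{lem:univ:prepare} its image under $\pi_{Q_{E_{\bullet}}}$ is closed in $\widehat{V}_1$, and it misses $0$ because $\pi_{Q_{E_{\bullet}}}(y)=0$ with $y\in R\cap\widetilde{\vV}_1$ forces $0\in\overline{G\cdot y}$ and hence, $\Omega$ being open and $G$-invariant, $y\in\Omega$. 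Setting $\widehat{V}\cneq\widehat{V}_1\setminus\pi_{Q_{E_{\bullet}}}\big((R\cap\widetilde{\vV}_1)\setminus\Omega\big)$ and $\widetilde{\vV}\cneq\pi_{Q_{E_{\bullet}}}^{-1}(\widehat{V})$ yields a saturated open with $R\cap\widetilde{\vV}=\Omega$, $0\in R\cap\widetilde{\vV}\subset R_2$, and $\widetilde{\psi}$ restricts to the isomorphism $R\cap\widetilde{\vV}=\Omega\xrightarrow{\ \cong\ }\widetilde{\psi}(\Omega)=\widetilde{\wW}$, as required. The main obstacle is the crucial claim of the second paragraph: ``saturated'' is a notion relative to an ambient affine space, and the ambient spaces $\Ext^1(E,E)\supset R$ and $Z\supset\wW$ are a priori unrelated, so the identification of $\overline{G\cdot x}$ with the orbit closure computed inside $R_2$, and the transport of this identification through the $G$-equivariant map $\widetilde{\psi}$, must be carried out carefully before the saturatedness of $\wW_3$ in $Z$ can be brought to bear.
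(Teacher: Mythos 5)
Your argument is correct, and its core is the same as the paper's: produce saturated shrinkings on both sides via Lemma~\ref{lem:saturated2}, transport orbit closures through the $G$-equivariant isomorphism $\widetilde{\psi}$, and check that the image on the $Z$-side is saturated. The organizational difference is in how $\widetilde{\vV}$ is obtained. The paper nests the two saturated opens: it first sets $R_3=\widetilde{\psi}^{-1}(\wW_3)$ and writes $R_3=R\cap\vV'$ for a $G$-invariant open $\vV'$, then applies Lemma~\ref{lem:saturated2} \emph{inside} $\vV'$ to get a saturated $\vV''$ with $R\cap\vV''\subset R_3$ automatically; only the saturation of $\wW_4=\widetilde{\psi}(R\cap\vV'')$ then needs a manual check. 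You instead intersect two independently chosen saturated opens to form $\Omega$, prove orbit-closure stability of $\Omega$, and then must carve $\widetilde{\vV}$ out of $\widehat{V}_1$ by deleting the image of the closed complement, invoking Lemma~\ref{lem:univ:prepare}. This works, but costs you an extra lemma and an extra verification: the equality $R\cap\widetilde{\vV}=\Omega$ has a nontrivial inclusion $\Omega\subset\pi_{Q_{E_\bullet}}^{-1}(\widehat{V})$, which you assert rather than prove --- it does follow (if $\pi(y)=\pi(z)$ with $y\in\Omega$ and $z\in(R\cap\widetilde{\vV}_1)\setminus\Omega$, then $\overline{G\cdot y}\subset\Omega$ meets $\overline{G\cdot z}$, and openness plus $G$-invariance of $\Omega$ force $z\in\Omega$, a contradiction) --- but you should say so. The paper's nesting trick buys a shorter proof; your version makes the orbit-closure stability of the common locus explicit, which is arguably clearer about where the content lies.
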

\begin{proof}
Let $\wW_3 \subset Z$ be a saturated open subset in $Z$
satisfying 
$q \in \wW_3 \subset \wW_2$, 
which exists by Lemma~\ref{lem:saturated2}, and 
set $R_3 \cneq \widetilde{\psi}^{-1}(\wW_3) \subset R_2$. 
Then $R_3$ is written as 
$R_3=R \cap \vV'$ for some 
$G$-invariant open subset $0 \in \vV' \subset \vV$. 
Let $\vV'' \subset \Ext^1(E, E)$ be a saturated open subset 
satisfying $0 \in \vV'' \subset \vV'$, 
which again exists by Lemma~\ref{lem:saturated2},
 and 
set $R_4 \cneq R \cap \vV'' \subset R_3$. 
Let $\wW_4 \cneq \widetilde{\psi}(R_4)$. We show that 
$\wW_4$ is a saturated open subset in $Z$. 
Indeed for $x \in \wW_4$, 
the orbit closure $\overline{G \cdot x}$ in $Z$
is contained in $\wW_3$ since $\wW_3$ is saturated. 
Take $y \in \overline{G \cdot x}$
and consider $\widetilde{\psi}^{-1}(y) \in R_3$. 
Then since $\vV''$ is saturated, we have 
$\widetilde{\psi}^{-1}(y) \in R_4$, hence $y \in \wW_4$ as desired. 
Now $\vV''$, $\wW_4$ are saturated in $\Ext^1(E, E)$, $Z$. 
By setting $\widetilde{\vV}=\vV''$, 
$\widetilde{\wW}=\wW_4$, we obtain the lemma. 
\end{proof}

\subsection{Calabi-Yau 3-fold case}
We keep the situation in the previous subsections. 
Suppose furthermore that 
$X$ is a smooth projective CY 3-fold, i.e. 
\begin{align*}
\dim X=3, \ \oO_X(K_X) \cong \oO_X.
\end{align*}
In this case, the $A_{\infty}$-structure 
(\ref{mn:Eb})
is cyclic (see~\cite{MR1876072}), i.e. 
for a map 
\begin{align*}
\psi \colon \{1, \ldots, n+1\} \to \{1, \ldots, k\}, \ 
\psi(1)=\psi(n+1)
\end{align*}
and elements
\begin{align*}
a_i \in \Ext^1(E_{\psi(i)}, E_{\psi(i+1)}), \
1\le i\le  n,
\end{align*}
we have the relation
\begin{align}\label{cyclic}
(m_{n-1}(a_1, \ldots, a_{n-1}), a_{n})=(m_{n-1}(a_2, \ldots, a_{n}), a_1). 
\end{align}
Here $m_n$ is the $A_{\infty}$-product 
(\ref{factor}), 
$(-, -)$ is the Serre duality pairing
\begin{align}\label{Serre}
(-, -) \colon 
\Ext^{j}(E_a, E_b) \times \Ext^{3-j}(E_b, E_a)
\to \Ext^3(E_a, E_a) \stackrel{\int_X \mathrm{tr}}{\to} \mathbb{C}.
\end{align} 
Let $W_{E_{\bullet}} \in \mathbb{C}\lkakko Q_{E_{\bullet}} \rkakko$
be defined by
\begin{align}\notag
W_{E_{\bullet}} \cneq \sum_{n\ge 3}
\sum_{\begin{subarray}{c} 
\{1, \ldots, n+1 \} \stackrel{\psi}{\to} \{1, \ldots, k\} \\
\psi(1)=\psi(n+1)
\end{subarray}}
\sum_{e_i \in E_{\psi(i), \psi(i+1)}}
a_{\psi, e_{\bullet}} \cdot e_1 e_2 \ldots e_n. 
\end{align}
Here the coefficient $a_{\psi, e_{\bullet}}$ is given by 
\begin{align}\label{af}
a_{\psi, e_{\bullet}}=\frac{1}{n}(m_{n-1}(e_1^{\vee}, e_2^{\vee}, \ldots, e_{n-1}^{\vee}), 
e_n^{\vee}). 
\end{align}
Then by Lemma~\ref{lem:mbound}, we have
\begin{align*}
W_{E_{\bullet}} \in \mathbb{C}\{ Q_{E_{\bullet}} \} \subset \mathbb{C}\lkakko Q_{E_{\bullet}} \rkakko .
\end{align*}
Therefore $W_{E_{\bullet}}$ determines 
a convergent super-potential of $Q_{E_{\bullet}}$
(see Definition~\ref{def:conv:pot}). 

Let $\overline{E}$ be the object given by (\ref{E:bar}). 
By the Serre duality, 
$\Ext^2(\overline{E}, \overline{E})^{\vee}$
is identified with  
$\Ext^1(\overline{E}, \overline{E})$. 
Thus 
\begin{align}\label{basis}
\{e^{\vee} : e \in E(Q_{E_{\bullet}})\} \subset
\Ext^1(\overline{E}, \overline{E})
\end{align}
gives a basis of $\Ext^2(\overline{E}, \overline{E})^{\vee}$. 
Using this basis, 
the relation $I_{E_{\bullet}}$
defined in (\ref{relation:I}) satisfies
\begin{align*}
I_{E_{\bullet}}=\{\mathbf{m}^{\vee}(e^{\vee}) : e \in E(Q_{E_{\bullet}})\}
=\partial W_{E_{\bullet}}. 
\end{align*}
Here the first identity is due to the definition of $I_{E_{\bullet}}$
via the basis (\ref{basis}), 
and the second identity follows from the 
construction of $W_{E_{\bullet}}$ and the cyclic 
condition (\ref{cyclic}).
As a corollary of Theorem~\ref{thm:precise}, we obtain the following: 
\begin{cor}\label{cor:CY3}
In the situation of Theorem~\ref{thm:precise}, 
suppose furthermore that $X$ is a smooth projective CY 3-fold. 
Then there is a convergent
super-potential $W_{E_{\bullet}}$ of $Q_{E_{\bullet}}$, 
analytic open 
neighborhoods 
$p \in U \subset M_{\omega}(v)$, 
$0 \in V \subset M_{Q_{E_{\bullet}}}(\vec{m})$
and commutative isomorphisms

\begin{align}\label{dia:comiso2}
\xymatrix{
p_M^{-1}(U) 
\ar[d]^{p_M} & \ar[l]_-{\cong}^-{I_{\ast}} \mM_{(Q_{E_{\bullet}, \partial W_{E_{\bullet}})}}(\vec{m})|_{V} 
\ar@{=}[r] \ar[d]^{p_Q}
& \left[ \{d(\tr W_{E_{\bullet}})=0\}/G  \right]
\ar@<-0.3ex>@{^{(}->}[r] & [\pi_Q^{-1}(V)/G] \ar[d]^-{\tr W_{E_{\bullet}}}
\\
U  & 
\ar[l]_-{\cong} M_{(Q_{E_{\bullet}, \partial W_{E_{\bullet}})}}(\vec{m})|_{V} & & \mathbb{C}.
}
\end{align}
Here the bottom arrow sends $0$ to $p$, 
$\pi_Q \colon \mathrm{Rep}_{Q_{E_{\bullet}}}(\vec{m}) 
\to M_{Q_{E_{\bullet}}}(\vec{m})$ is the quotient morphism, 
and $\tr W_{E_{\bullet}}$ is the $G$-invariant
analytic function on the smooth analytic space 
$\pi_Q^{-1}(V)$ (see Subsection~\ref{subsec:potential}). 
\end{cor}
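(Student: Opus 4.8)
The plan is to deduce Corollary~\ref{cor:CY3} from Theorem~\ref{thm:precise} by verifying that, in the Calabi-Yau 3-fold case, the convergent relation $I_{E_{\bullet}}$ constructed in (\ref{relation:I}) can be taken to be $\partial W_{E_{\bullet}}$ for the convergent super-potential $W_{E_{\bullet}}$ defined in (\ref{af}), and then invoking the discussion of Subsection~\ref{subsec:potential} to rewrite the substack $\mathrm{Rep}_{(Q_{E_{\bullet}}, I_{E_{\bullet}})}(\vec{m})|_{V}$ as the critical locus $\{d(\tr W_{E_{\bullet}})=0\}$. Concretely, first I would observe, using Lemma~\ref{lem:mbound}, that the coefficients $a_{\psi, e_{\bullet}}=\tfrac{1}{n}(m_{n-1}(e_1^{\vee},\ldots,e_{n-1}^{\vee}),e_n^{\vee})$ satisfy $\lvert a_{\psi, e_{\bullet}}\rvert < C^n$ for a constant $C$ independent of $n$, since the $A_{\infty}$-products $m_{n-1}$ have operator norm bounded by $C^{n-1}$ in the Sobolev norm $\lVert - \rVert_l$ and the Serre duality pairing is a fixed bounded bilinear form; hence $W_{E_{\bullet}}\in\mathbb{C}\{Q_{E_{\bullet}}\}$ is a genuine convergent super-potential in the sense of Definition~\ref{def:conv:pot}.

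\textbf{The key step} is the identity $I_{E_{\bullet}}=\partial W_{E_{\bullet}}$. Here one uses that on a smooth projective CY 3-fold the minimal $A_{\infty}$-structure on the dg-category generated by $(E_1,\ldots,E_k)$ is cyclic with respect to the Serre pairing (\ref{Serre}); this is the content of~\cite{MR1876072}, and it gives the relation (\ref{cyclic}). Under the Serre duality isomorphism $\Ext^2(\overline{E},\overline{E})^{\vee}\cong\Ext^1(\overline{E},\overline{E})$, the basis $\{\mathbf{o}_i\}$ of $\Ext^2(\overline{E},\overline{E})^{\vee}$ used in (\ref{relation:I}) can be chosen to be $\{e^{\vee}: e\in E(Q_{E_{\bullet}})\}$. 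Then $\mathbf{m}^{\vee}(e^{\vee})$ unwinds, term by term, to $\sum_{n}\sum_{e_{\bullet}}(m_{n-1}(e_1^{\vee},\ldots,e_{n-1}^{\vee}),e_n^{\vee})\cdot e_1\cdots e_{n-1}$ with $e$ placed at the last edge-slot; and the cyclicity (\ref{cyclic}) shows that this is exactly the cyclic derivative $\partial_{e^{\vee}}W_{E_{\bullet}}$ of the super-potential whose coefficient of the cyclic word $e_1\cdots e_n$ is $\tfrac{1}{n}(m_{n-1}(e_1^{\vee},\ldots,e_{n-1}^{\vee}),e_n^{\vee})$. (The factor $\tfrac1n$ is precisely what is absorbed by the $n$-fold symmetry of the cyclic derivative.) This is a formal computation with the tree formula and the cyclic symmetry; no analysis is involved, as convergence has already been handled.

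\textbf{Once $I_{E_{\bullet}}=\partial W_{E_{\bullet}}$ is established}, I would apply Theorem~\ref{thm:precise} to obtain the analytic open neighborhoods $p\in U\subset M_{\omega}(v)$, $0\in V\subset M_{Q_{E_{\bullet}}}(\vec{m})$ and the commutative isomorphisms (\ref{dia:comiso}). By Subsection~\ref{subsec:potential}, after further shrinking $V$ so that the formal function $\tr W_{E_{\bullet}}$ converges on $\pi_Q^{-1}(V)$ and defines a $G$-invariant holomorphic function $\tr W_{E_{\bullet}}\colon\pi_Q^{-1}(V)\to\mathbb{C}$, we have the identification $\mathrm{Rep}_{(Q_{E_{\bullet}},\partial W_{E_{\bullet}})}(\vec{m})|_{V}=\{d(\tr W_{E_{\bullet}})=0\}$, hence $\mM_{(Q_{E_{\bullet}},\partial W_{E_{\bullet}})}(\vec{m})|_{V}=[\{d(\tr W_{E_{\bullet}})=0\}/G]$. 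Splicing this equality into (\ref{dia:comiso}) and adjoining the obvious open immersion $\{d(\tr W_{E_{\bullet}})=0\}\hookrightarrow\pi_Q^{-1}(V)$ together with the map $\tr W_{E_{\bullet}}\colon[\pi_Q^{-1}(V)/G]\to\mathbb{C}$ yields the diagram (\ref{dia:comiso2}), completing the proof.

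\textbf{The main obstacle} I anticipate is the bookkeeping in the identity $I_{E_{\bullet}}=\partial W_{E_{\bullet}}$: one must match the sign conventions and the index shifts between the definition of $\mathbf{m}^{\vee}$ (which reads off the dual of $m_n\colon\Ext^1{}^{\otimes n}\to\Ext^2$), the definition of $\partial_{e^{\vee}}$ in Subsection~\ref{subsec:potential} (a sum over cyclic rotations), and the cyclic symmetry (\ref{cyclic}) of the $A_{\infty}$-products. The numerical coefficient $\tfrac1n$ in (\ref{af}) and the length-$n$ cyclic orbit must be tracked carefully, but since cyclicity makes the pairing $(m_{n-1}(a_1,\ldots,a_{n-1}),a_n)$ invariant under cyclic permutation of $(a_1,\ldots,a_n)$, the argument is purely combinatorial and goes through as in the usual (polynomial super-potential) case, which is well-known.
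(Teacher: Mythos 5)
Your proposal is correct and follows essentially the same route as the paper: the paper likewise checks $W_{E_{\bullet}}\in\mathbb{C}\{Q_{E_{\bullet}}\}$ via Lemma~\ref{lem:mbound}, chooses the basis $\{e^{\vee}\}$ of $\Ext^2(\overline{E},\overline{E})^{\vee}$ through Serre duality so that $I_{E_{\bullet}}=\{\mathbf{m}^{\vee}(e^{\vee})\}=\partial W_{E_{\bullet}}$ by the cyclicity (\ref{cyclic}), and then reads off the diagram from Theorem~\ref{thm:precise} together with the critical-locus identification of Subsection~\ref{subsec:potential}. Your filling-in of the $\tfrac1n$ versus $n$-fold cyclic-rotation bookkeeping is exactly the computation the paper leaves implicit.
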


\section{Non-commutative deformation theory}\label{sec:NC}
Note that the diagram (\ref{dia:comiso})
in Theorem~\ref{thm:precise}
in particular implies the isomorphism 
\begin{align}\label{I:nil}
I_{\ast} \colon p_Q^{-1}(0) \stackrel{\cong}{\to} p_M^{-1}(p). 
\end{align}
In this section, we recall the NC deformation theory 
associated to a simple collection of sheaves, and 
explain its relationship to 
the isomorphism (\ref{I:nil}). 
 
More precisely 
in Theorem~\ref{cor:equiv:I}, using NC deformation theory 
 we show that
the map $I_{\ast}$ gives an equivalence of categories 
between the category of nilpotent 
representations of the Ext-quiver and 
the subcategory of coherent sheaves on $X$ 
generated by the given simple collection. 
The result of Theorem~\ref{cor:equiv:I} immediately 
implies the isomorphism (\ref{I:nil}), so giving 
an interpretation of (\ref{I:nil}) via NC deformation theory. 
The result of Theorem~\ref{cor:equiv:I} will be only used in the proof of 
Lemma~\ref{lem:pstab} in the next section, but 
seems to be an interesting result in it's own right
as it gives intrinsic understanding of the isomorphism (\ref{I:nil}). 

\subsection{NC deformation functors}
Let $X$ be a smooth projective variety, 
and take a simple collection of coherent sheaves on it
\begin{align}\label{simple}
E_{\bullet}=(E_1, E_2, \ldots, E_k). 
\end{align}
The NC deformation theory associated to the simple collection 
(\ref{simple}) is formulated for such a 
collection~\cite{Lau, Erik, Kawnc, BoBo}. 
The following convention is due to Kawamata~\cite{Kawnc}. 

By definition, a $k$-\textit{pointed 
$\mathbb{C}$-algebra} is an associative ring $R$
with $\mathbb{C}$-algebra homomorphisms
\begin{align*}
\mathbb{C}^k \stackrel{p}{\to} R 
\stackrel{q}\to \mathbb{C}^k
\end{align*}
whose composition is the identity. 
Then $R$ decomposes as
\begin{align*}
R=\mathbb{C}^{k} \oplus \mathbf{m}, \ 
\mathbf{m} \cneq \Ker q.
\end{align*}
For $1\le i \le k$, let 
$\mathbf{m}_i$ be the kernel of the composition
\begin{align*}
R \stackrel{q}{\to} \mathbb{C}^k \to \mathbb{C}
\end{align*}
where the second map is the $i$-th projection. 
Note that $\mathbf{m}=\cap_{i=1}^{k} \mathbf{m}_i$. 
We define $\aA rt_k$ to be the 
category of finite dimensional $k$-pointed 
$\mathbb{C}$-algebras $R=\mathbb{C}^k \oplus \mathbf{m}$
such that $\mathbf{m}$ is nilpotent. 

For a simple collection (\ref{simple}), 
we have the NC deformation functor
\begin{align}\label{rDef}
\mathrm{Def}_{E_{\bullet}}^{\rm{nc}} \colon 
\aA rt_k  \to \sS et. 
\end{align}
The above functor is defined by sending 
$R=\mathbb{C}^k \oplus \mathbf{m}$ to the set of 
isomorphism classes of pairs 
\begin{align*}
(\eE, \psi), \ \eE \in \Coh(R \otimes_{\mathbb{C}}\oO_X)
\end{align*}
where $\eE$ is a coherent left $R \otimes_{\mathbb{C}}\oO_X$-module 
which is flat
 over $R$, 
and $\psi$ is an isomorphism 
$R/\mathbf{m} \otimes_R \eE \stackrel{\cong}{\to} \oplus_i E_i$
which induces isomorphisms
\begin{align*}
R/\mathbf{m}_i \otimes_R \eE \stackrel{\cong}{\to} E_i, \ 
1\le i\le k.
\end{align*}

\subsection{Pro-representable hull}
Let $\widehat{\aA rt}_k$ be the category whose objects consist of 
$\mathbb{C}^k$-algebras given by inverse
limits of objects in $\aA rt_k$. 
An object $A \in \widehat{\aA rt}_k$ is called a 
\textit{pro-representable hull}
of the functor $\mathrm{Def}_{E_{\bullet}}^{\rm{nc}}$
if there is a formally smooth 
morphism
\begin{align*}
\Hom_{\widehat{\aA rt}_k}(A, -) \to \mathrm{Def}_{E_{\bullet}}^{\rm{nc}}(-)
\end{align*}
which are isomorphisms in first orders.  
A pro-representable hull is, if it exists, 
unique up to non-canonical isomorphisms
(see~\cite{Schle}). 

A pro-representable hull of the functor
$\mathrm{Def}_{E_{\bullet}}^{\rm{nc}}$
is known to exist by~\cite{Lau, Erik}. 
By~\cite{Kawnc}, it 
is explicitly 
constructed by taking the iterated universal extensions
of sheaves $E_i$, which we review here. 
We first set 
$E_i^{(0)}=E_i
$ for $1\le i\le k$. 
Suppose that $E^{(n)}_i$ is constructed
for some $n\ge 0$ and all $1\le i\le k$. 
Then $E^{(n+1)}_i$ is constructed as 
the universal extension
\begin{align}\label{univ:ext}
0 \to \bigoplus_{j=1}^k \Ext^1(E_i^{(n)}, E_j)^{\vee} \otimes E_j 
\to E_i^{(n+1)} \to E_i^{(n)} \to 0. 
\end{align}
Let us set
\begin{align*}
E^{(n)} \cneq \bigoplus_{i=1}^n E_i^{(n)}, \ 
R^{(n)} \cneq \Hom(E^{(n)}, E^{(n)}). 
\end{align*}
Then $R^{(n)}$ is an object of $\aA rt_k$, and 
$E^{(n)}$ 
is an element of $\mathrm{Def}_{E_{\bullet}}^{\rm{nc}}(R^{(n)})$
by~\cite[Theorem~4.8]{Kawnc}. 
Moreover by~\cite[Lemma~4.3, Corollary~4.6, Theorem~4.8]{Kawnc}, 
there exist 
natural surjections 
$R^{(n+1)} \twoheadrightarrow R^{(n)}$ such that 
the inverse limit
\begin{align*}
R_{E_{\bullet}}^{\rm{nc}}=\lim_{\longleftarrow} R^{(n)}
\in \widehat{\aA rt}_k
\end{align*}
is a pro-representable hull of (\ref{rDef}). 
Moreover the surjection 
$E^{(n+1)} \twoheadrightarrow E^{(n)}$ induces the 
isomorphism
\begin{align}\label{RnE}
R^{(n)} \otimes_{R^{(n+1)}}
E^{(n+1)} \stackrel{\cong}{\to} E^{(n)}.
\end{align}
By the surjection 
$R^{(n+1)} \twoheadrightarrow R^{(n)}$, we have the 
fully-faithful embedding 
\begin{align}\label{R:emb}
\modu R^{(n)} \hookrightarrow \modu R^{(n+1)}.
\end{align}
Then the category 
$\modu_{\rm{nil}} R_{E_{\bullet}}^{\rm{nc}}$
is defined by
\begin{align}\label{mod:nil}
\modu_{\rm{nil}} R_{E_{\bullet}}^{\rm{nc}} \cneq 
\lim_{\longrightarrow}
\left(\modu R^{(n)} \right). 
\end{align}
The above category is identified with the 
abelian category of nilpotent finite dimensional 
right $R_{E_{\bullet}}^{\rm{nc}}$-modules. 

\subsection{Equivalence of categories via NC deformations}
In what follows, we show that 
the category 
(\ref{mod:nil}) is equivalent 
to the subcategory
of $\Coh(X)$
\begin{align*}
\langle E_1, E_2, \ldots, E_k \rangle \subset \Coh(X)
\end{align*}
given by the extension closure of $E_1, \ldots, E_k$, i.e. 
the smallest extension closed subcategory of $\Coh(X)$
which contains $E_1, \ldots, E_k$. 
\begin{lem}\label{lem:Phi0}
For $T \in \modu R^{(n)}$, 
we have 
\begin{align}\label{Phi0}
\Phi(T) \cneq T \otimes_{R^{(n)}} E^{(n)} \in 
\langle E_1, \ldots, E_k \rangle. 
\end{align}
\end{lem}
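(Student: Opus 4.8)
The plan is to prove the statement by induction on $n$, using the defining extensions \eqref{univ:ext} and the explicit construction of $R^{(n)}$ as $\Hom(E^{(n)}, E^{(n)})$. For the base case $n=0$ we have $R^{(0)}=\mathbb{C}^k$ (since $E_{\bullet}$ is a simple collection, $\Hom(E^{(0)}, E^{(0)})=\prod_i \Hom(E_i, E_i)=\mathbb{C}^k$), and a module $T \in \modu \mathbb{C}^k$ is just a tuple of vector spaces $(T_i)$, so $\Phi(T)=\bigoplus_i T_i \otimes E_i$, which visibly lies in $\langle E_1, \ldots, E_k\rangle$. For the inductive step, suppose the claim holds for $n$, and let $T \in \modu R^{(n+1)}$. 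The first thing I would do is analyze the structure of $R^{(n+1)}$ relative to $R^{(n)}$: by \eqref{RnE}, $E^{(n)} = R^{(n)}\otimes_{R^{(n+1)}}E^{(n+1)}$, and the surjection $R^{(n+1)} \twoheadrightarrow R^{(n)}$ has a kernel $J$ which (from Kawamata's construction) is built from $\bigoplus_j \Ext^1(E_i^{(n)}, E_j)^{\vee}$-type contributions coming from the universal extensions.

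The key step is to relate $\Phi(T)=T\otimes_{R^{(n+1)}}E^{(n+1)}$ to modules over $R^{(n)}$. Since $J\cdot T \subset T$ is an $R^{(n+1)}$-submodule, I would filter $T$ by powers of $J$: set $T_{(m)} = J^m T$, giving a finite filtration $T \supset JT \supset J^2 T \supset \cdots \supset 0$ (finite because $\mathbf{m}$ is nilpotent, or rather because we are in the pro-nilpotent situation and $T$ is finite-dimensional nilpotent). Each quotient $J^m T/J^{m+1}T$ is annihilated by $J$, hence is an $R^{(n)}=R^{(n+1)}/J$-module. Applying $-\otimes_{R^{(n+1)}}E^{(n+1)}$ is right exact, so tensoring the short exact sequences $0 \to J^{m+1}T \to J^m T \to J^m T/J^{m+1}T \to 0$ gives right-exact sequences of sheaves; to get honest short exact sequences of sheaves one checks the relevant $\Tor_1$ vanishes, which should follow from flatness properties of $E^{(n+1)}$ over $R^{(n+1)}$ (the defining property making $E^{(n+1)}$ an element of $\mathrm{Def}^{\rm nc}_{E_{\bullet}}(R^{(n+1)})$, cf.\ \eqref{rDef}). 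For a $J$-annihilated module $S$ we have $S\otimes_{R^{(n+1)}}E^{(n+1)} = S\otimes_{R^{(n)}}(R^{(n)}\otimes_{R^{(n+1)}}E^{(n+1)}) = S\otimes_{R^{(n)}}E^{(n)}$, which lies in $\langle E_1,\ldots,E_k\rangle$ by the inductive hypothesis. Since $\langle E_1,\ldots,E_k\rangle$ is extension-closed and $\Phi(T)$ is obtained as an iterated extension of the sheaves $(J^m T/J^{m+1}T)\otimes_{R^{(n)}}E^{(n)}$, we conclude $\Phi(T) \in \langle E_1,\ldots,E_k\rangle$.

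The main obstacle I anticipate is the $\Tor$-vanishing needed to turn right-exact sequences into short exact ones, i.e.\ checking that $\Phi$ sends short exact sequences of $R^{(n+1)}$-modules whose terms behave well over $R^{(n)}$ to short exact sequences of sheaves. This is really a flatness statement for the universal family $E^{(n+1)}$, and the cleanest route is probably to observe that $E^{(n+1)}$ is flat over $R^{(n+1)}$ (this is part of what it means to be an element of $\mathrm{Def}^{\rm nc}_{E_{\bullet}}(R^{(n+1)})$ by definition of the functor \eqref{rDef}), so that $-\otimes_{R^{(n+1)}}E^{(n+1)}$ is exact on the relevant subcategory of finite-dimensional modules, and then $\Tor_1^{R^{(n+1)}}(S, E^{(n+1)})=0$ for all finite-dimensional $S$. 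Once this is in hand, the filtration argument goes through routinely and the extension-closedness of $\langle E_1, \ldots, E_k\rangle$ finishes the proof. A minor alternative, avoiding $\Tor$ entirely, is to argue directly that $\Phi(T)$ admits a filtration with subquotients among the $E_i$: pull back a composition series of $T$ as an $R^{(n+1)}$-module (whose simple subquotients are the $S_i$, the simple $R^{(n+1)}$-modules supported at each vertex), apply $\Phi$, and use right-exactness plus $\Phi(S_i)=E_i$ together with a dimension/length count to see the sequences stay exact; but this still ultimately rests on the same flatness input.
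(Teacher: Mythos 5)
Your proof is correct and is essentially the paper's argument: both filter $T$ by powers of a nilpotent ideal, use flatness of the universal NC deformation over $R^{(n)}$ (part of the definition of $\mathrm{Def}^{\rm nc}_{E_{\bullet}}$) to make $\Phi$ exact on the filtration, and conclude by extension-closedness of $\langle E_1, \ldots, E_k\rangle$. The only difference is that the paper filters directly by powers of the maximal ideal $\mathbf{m}^{(n)} \subset R^{(n)}$, so each subquotient is a $\mathbb{C}^k$-module whose image under $\Phi$ is a direct sum of the $E_i$; this avoids your outer induction on $n$ and the bookkeeping with the kernel $J$ of $R^{(n+1)} \twoheadrightarrow R^{(n)}$.
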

\begin{proof}
Since $R^{(n)} \in \aA rt_k$, it decomposes as 
$R^{(n)}=\mathbb{C}^k \oplus \mathbf{m}^{(n)}$. 
We take the following filtration in $\modu R^{(n)}$
\begin{align*}
\cdots \subset T(\mathbf{m}^{(n)})^j
\subset T(\mathbf{m}^{(n)})^{j-1} \subset 
 \cdots \subset 
T\mathbf{m}^{(n)} \subset T. 
\end{align*}
Then the subquotient
\begin{align*}
T^{(j)} \cneq T(\mathbf{m}^{(n)})^j/T(\mathbf{m}^{(n)})^{j+1}
\end{align*}
is a $\mathbb{C}^k$-module, which is zero 
for $j\gg 0$. 
Since $E^{(n)}$ is an NC deformation of 
$E_{\bullet}$ to $R^{(n)}$, 
it follows that 
$T^{(j)} \otimes_{R^{(n)}}E^{(n)}$ is a direct sum 
of objects in $(E_1, \ldots, E_k)$. 
Since $T$ is given by iterated extensions of 
$T^{(j)}$, the lemma follows. 
\end{proof}
The functor
\begin{align*}
\Phi \colon \modu R^{(n)} \to \langle E_1, \ldots, E_k \rangle
\end{align*}
given by Lemma~\ref{lem:Phi0}
commutes with 
the embedding (\ref{R:emb}) by 
the isomorphism (\ref{RnE}). 
Hence we obtain the functor
\begin{align}\label{Phi}
\Phi \colon \modu_{\rm{nil}}R_{E_{\bullet}}^{\rm{nc}}
\to \langle E_1, \ldots, E_k \rangle. 
\end{align}
Below we show that the functor (\ref{Phi}) is an equivalence 
of categories. We prepare some 
lemmas. 
\begin{lem}\label{lem:Ei}
We have $\Hom(E_i^{(n)}, E_j)=\mathbb{C}^{\delta_{ij}}$
and the natural map
\begin{align*}
\Ext^1(E_i^{(n)}, E_j) \to \Ext^1(E_i^{(n+1)}, E_j)
\end{align*}
is a zero map. 
\end{lem}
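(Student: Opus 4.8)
The plan is to induct on $n$, the base case $n=0$ being the defining property of a simple collection, $\Hom(E_i^{(0)},E_j)=\Hom(E_i,E_j)=\mathbb{C}\cdot\delta_{ij}$, while the vanishing of $\Ext^1(E_i^{(0)},E_j)\to\Ext^1(E_i^{(1)},E_j)$ will drop out of the computation below. So assume $\Hom(E_i^{(n)},E_j)=\mathbb{C}\cdot\delta_{ij}$ for all $i,j$, and write the universal extension (\ref{univ:ext}) as
\begin{align*}
0\to M_i^{(n)}\to E_i^{(n+1)}\to E_i^{(n)}\to 0,\qquad
M_i^{(n)}\cneq\bigoplus_{l=1}^k\Ext^1(E_i^{(n)},E_l)^{\vee}\otimes E_l,
\end{align*}
with extension class $e_i^{(n)}\in\Ext^1(E_i^{(n)},M_i^{(n)})$. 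Applying $\Hom(-,E_j)$ to this short exact sequence produces the long exact sequence
\begin{align*}
0&\to\Hom(E_i^{(n)},E_j)\to\Hom(E_i^{(n+1)},E_j)\to\Hom(M_i^{(n)},E_j)\\
&\xrightarrow{\ \delta\ }\Ext^1(E_i^{(n)},E_j)\to\Ext^1(E_i^{(n+1)},E_j)\to\cdots,
\end{align*}
in which $\delta(\psi)=\psi\cdot e_i^{(n)}$ is Yoneda composition with the extension class.

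The crux is to show $\delta$ is an isomorphism. Using the simple collection hypothesis $\Hom(E_l,E_j)=\mathbb{C}\cdot\delta_{lj}$, I would identify
\begin{align*}
\Hom(M_i^{(n)},E_j)=\bigoplus_{l=1}^k\Ext^1(E_i^{(n)},E_l)\otimes\Hom(E_l,E_j)\cong\Ext^1(E_i^{(n)},E_j),
\end{align*}
so that a map $\psi\colon M_i^{(n)}\to E_j$ factors through the single summand $\Ext^1(E_i^{(n)},E_j)^{\vee}\otimes E_j$, where it corresponds to a class $\xi_\psi\in\Ext^1(E_i^{(n)},E_j)$ via $\omega^{\vee}\otimes v\mapsto\langle\omega^{\vee},\xi_\psi\rangle\, v$. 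Likewise $\Ext^1(E_i^{(n)},M_i^{(n)})\cong\bigoplus_l\End\bigl(\Ext^1(E_i^{(n)},E_l)\bigr)$, and $e_i^{(n)}$, being the \emph{universal} extension, is by definition the tuple of identity endomorphisms. Pushing $e_i^{(n)}$ forward along $\psi$ then amounts to applying, on the $l=j$ summand, the functional $\omega^{\vee}\mapsto\langle\omega^{\vee},\xi_\psi\rangle$ to a dual basis of $\Ext^1(E_i^{(n)},E_j)$ and summing, which returns $\xi_\psi$; hence $\delta=\mathrm{id}$ under these identifications. This bookkeeping — tracking the canonical isomorphisms for $\Hom$ and $\Ext^1$ out of an $\Ext^1$-weighted direct sum, and checking that universality of $e_i^{(n)}$ translates into $\delta=\mathrm{id}$ — is the only nontrivial point, and I expect it to be the main obstacle, although it is essentially a reformulation of what ``universal extension'' means. (Alternatively, these identities are implicit in the construction of $R^{\mathrm{nc}}_{E_{\bullet}}$ in~\cite{Kawnc} and could simply be cited.)

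Granting that $\delta$ is an isomorphism, both assertions fall out of the long exact sequence. Injectivity of $\delta$ forces the map $\Hom(E_i^{(n+1)},E_j)\to\Hom(M_i^{(n)},E_j)$ to vanish, whence $\Hom(E_i^{(n)},E_j)\xrightarrow{\ \cong\ }\Hom(E_i^{(n+1)},E_j)$, which equals $\mathbb{C}\cdot\delta_{ij}$ by the inductive hypothesis; this completes the induction for the first claim. Surjectivity of $\delta$ forces the map $\Ext^1(E_i^{(n)},E_j)\to\Ext^1(E_i^{(n+1)},E_j)$ to have full kernel, i.e. to be zero, which is the second claim (and, read at level $n=0$, it also settles the remaining part of the base case).
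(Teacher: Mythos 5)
Your proof is correct and follows essentially the same route as the paper: the paper also applies $\Hom(-,E_j)$ to the universal extension (\ref{univ:ext}), identifies the third term of the long exact sequence with $\Ext^1(E_i^{(n)},E_j)$ via the simple-collection hypothesis, and observes that the connecting map is the identity, from which both claims follow. The only difference is that the paper states this in one line, whereas you spell out the bookkeeping showing that universality of the extension class makes $\delta=\mathrm{id}$ — a detail the paper leaves implicit.
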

\begin{proof}
The lemma follows from 
the exact sequence 
\begin{align*}
0 \to \Hom(E_i^{(n)}, E_j) &\to \Hom(E_i^{(n+1)}, E_j) 
\to \Ext^1(E_i^{(n)}, E_j) \\
 &\stackrel{\id}{\to}
\Ext^1(E_i^{(n)}, E_j) \to \Ext^1(E_i^{(n+1)}, E_j)
\end{align*} 
obtained by applying $\Hom(-, E_j)$ to the exact sequence (\ref{univ:ext}). 
\end{proof}

\begin{lem}\label{lem:Ext0}
For any $U \in \langle E_1, \ldots, E_k \rangle$
and $n\ge 0$, 
the natural map 
\begin{align}\label{nat:EU}
\Ext^1(E_i^{(n)}, U) \to \Ext^1(E_i^{(n+l)}, U)
\end{align}
is a zero map for $l\gg 0$. 
\end{lem}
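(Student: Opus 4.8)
The plan is to induct on the length of a filtration of $U$ by the sheaves $E_1,\ldots,E_k$. Since $U$ lies in the extension closure $\langle E_1,\ldots,E_k\rangle$, which consists precisely of the iterated extensions of the $E_j$, it carries a finite filtration $0=U_0\subset U_1\subset\cdots\subset U_N=U$ with each $U_m/U_{m-1}$ isomorphic to some $E_{j_m}$. I will in fact prove the stronger assertion that there is an integer $l_0$, depending on $U$ but not on $n$, such that the transition map $\Ext^1(E_i^{(n)},U)\to\Ext^1(E_i^{(n+l_0)},U)$ vanishes for every $n\ge 0$; the uniformity in $n$ is what makes the induction close up. Once this is known for some $l_0$, the statement for every $l\ge l_0$ follows, because the transition maps compose: by functoriality of $\Ext^1(-,U)$ applied to the composite of the surjections $E_i^{(m+1)}\twoheadrightarrow E_i^{(m)}$ appearing in (\ref{univ:ext}), the map $\Ext^1(E_i^{(n)},U)\to\Ext^1(E_i^{(n+l)},U)$ factors through $\Ext^1(E_i^{(n+l_0)},U)$.

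The base case $N=1$, i.e.\ $U\cong E_j$, is exactly Lemma~\ref{lem:Ei}: since the one-step map $\Ext^1(E_i^{(n)},E_j)\to\Ext^1(E_i^{(n+1)},E_j)$ is zero for every $n$, the value $l_0=1$ works uniformly. For the inductive step I will establish the following subadditivity statement: if $0\to A\to B\to C\to 0$ is exact in $\langle E_1,\ldots,E_k\rangle$ and the transition maps $\Ext^1(E_i^{(n)},A)\to\Ext^1(E_i^{(n+a)},A)$ and $\Ext^1(E_i^{(n)},C)\to\Ext^1(E_i^{(n+c)},C)$ vanish for every $n$, then $\Ext^1(E_i^{(n)},B)\to\Ext^1(E_i^{(n+a+c)},B)$ vanishes for every $n$. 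Granting this, take $C=U_N/U_{N-1}\cong E_{j_N}$ (so $c=1$ by the base case) and $A=U_{N-1}$ (so $a=N-1$ by the inductive hypothesis, since $U_{N-1}$ has a filtration of length $N-1$); this gives $l_0=N$ for $U=B$.

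To prove the subadditivity statement I will chase the long exact sequences obtained by applying $\Hom(E_i^{(m)},-)$, for $m=n$, $m=n+c$ and $m=n+c+a$, to $0\to A\to B\to C\to 0$, using that these sequences are compatible with the transition maps coming from the surjections $E_i^{(m+1)}\twoheadrightarrow E_i^{(m)}$. Given $x\in\Ext^1(E_i^{(n)},B)$, its image in $\Ext^1(E_i^{(n)},C)$ dies in $\Ext^1(E_i^{(n+c)},C)$ by hypothesis, so by exactness at $\Ext^1(E_i^{(n+c)},B)$ the image of $x$ in $\Ext^1(E_i^{(n+c)},B)$ is the image of some $y\in\Ext^1(E_i^{(n+c)},A)$; applying the hypothesis on $A$ with starting index $n+c$, the image of $y$ in $\Ext^1(E_i^{(n+c+a)},A)$ is zero, hence the image of $x$ in $\Ext^1(E_i^{(n+c+a)},B)$, which factors through that of $y$, is zero. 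This is routine; the one point that needs care is that the hypothesis on $A$ is invoked at the shifted index $n+c$ rather than $n$, which is exactly why the assertion is proved uniformly in $n$ in the first place. Accordingly I do not expect a genuine obstacle: the content of the lemma is Lemma~\ref{lem:Ei} together with this bookkeeping.
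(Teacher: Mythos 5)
Your proposal is correct and follows essentially the same route as the paper: induction on the length of an extension filtration of $U$, with the base case given by Lemma~\ref{lem:Ei} and the inductive step carried out by the same two-stage diagram chase (first killing the image in $\Ext^1(-,C)$ after $c$ steps, then killing the resulting lift to $\Ext^1(-,A)$ after $a$ more steps). Your explicit insistence on uniformity of $l_0$ in $n$ is a harmless strengthening; the paper instead just applies the inductive hypothesis at the shifted index $n+l''$, which amounts to the same bookkeeping.
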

\begin{proof}
If $U=E_j$ for some $1\le j\le k$, 
the lemma follows from Lemma~\ref{lem:Ei}. 
Otherwise 
there is an exact sequence 
\begin{align*}
0 \to U' \to U \to U'' \to 0, \ 
U', U'' \in \langle E_1, \ldots, E_k \rangle \setminus \{0\}.
\end{align*}
Suppose that the lemma holds for 
$U'$ and $U''$. 
For $l'\gg 0$ and $l'' \gg 0$, 
We have the commutative diagram
\begin{align*}
\xymatrix{
\Ext^1(E^{(n)}, U') \ar[r] \ar[d] & \Ext^1(E^{(n)}, U) \ar[r] \ar[d] &\Ext^1(E^{(n)}, U'') \ar[d]^{0} \\
\Ext^1(E^{(n+l'')}, U') \ar[r] \ar[d]^{0} & \Ext^1(E^{(n+l'')}, U) \ar[r] \ar[d] & \Ext^1(E^{(n+l'')}, U'') \ar[d] \\
\Ext^1(E^{(n+l'+l'')}, U') \ar[r]  & \Ext^1(E^{(n+l'+l'')}, U) \ar[r] & \Ext^1(E^{(n+l+l'')}, U'').
}
\end{align*} 
Here the horizontal arrows are exact sequences. 
The map (\ref{nat:EU}) for $l=l'+l''$ 
is the composition of 
middle vertical arrows, which is zero 
by a diagram chasing. 
Therefore the lemma follows by the induction on the number of 
iterated extensions of $U$ by $E_1, \ldots, E_k$. 
\end{proof}

\begin{lem}\label{lem:terminate}
For any $U \in \langle E_1, \ldots, E_k \rangle$, 
the sequence
\begin{align}\label{terminate}
\Hom(E^{(0)}, U) \subset \Hom(E^{(1)}, U) \subset \cdots \subset
\Hom(E^{(n)}, U) \subset \cdots
\end{align}
terminates for $n\gg 0$. 
\end{lem}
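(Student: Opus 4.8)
The plan is to argue by induction on the number of iterated extensions of the objects $E_1,\dots,E_k$ needed to build $U$ (such an expression exists since $U\in\langle E_1,\dots,E_k\rangle$). First note that each $\Hom(E^{(n)},U)$ is a finite dimensional $\mathbb{C}$-vector space, and that the transition maps $\Hom(E^{(n)},U)\to\Hom(E^{(n+1)},U)$ in (\ref{terminate}) — precomposition with the surjection $E^{(n+1)}\twoheadrightarrow E^{(n)}$ — are injective. Consequently the chain (\ref{terminate}) terminates if and only if the dimensions $\dim\Hom(E^{(n)},U)$ are bounded (equivalently, eventually constant). The base case $U=E_j$ is immediate from Lemma~\ref{lem:Ei}: one has $\Hom(E^{(n)},E_j)=\bigoplus_{i}\Hom(E_i^{(n)},E_j)=\mathbb{C}$ for every $n$.

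For the inductive step I would pick a short exact sequence $0\to U'\to U\to U''\to 0$ with $U',U''\in\langle E_1,\dots,E_k\rangle$ of strictly smaller length, and apply $\Hom(E^{(n)},-)$ to obtain the exact sequences
\[
0\to\Hom(E^{(n)},U')\to\Hom(E^{(n)},U)\to\Hom(E^{(n)},U'')\xrightarrow{\ \partial_n\ }\Ext^1(E^{(n)},U'),
\]
which are functorial in $n$ with respect to the maps induced by $E^{(n+1)}\twoheadrightarrow E^{(n)}$. By the inductive hypothesis there is an $N$ such that for all $n\ge N$ the subspaces $\Hom(E^{(n)},U')$ and $\Hom(E^{(n)},U'')$ of (\ref{terminate}) have stabilized; call them $H'$ and $H''$.

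The key step is then to show that the connecting maps $\partial_n$ vanish for $n\gg 0$. By Lemma~\ref{lem:Ext0}, summed over the finitely many vertices $i$, there is $l>0$ with $\Ext^1(E^{(N)},U')\to\Ext^1(E^{(N+l)},U')$ the zero map. Since for $n\ge N$ the map $\Hom(E^{(n)},U'')\to\Hom(E^{(n+l)},U'')$ is an isomorphism (both equal $H''$) and the connecting maps are natural, the commutative square relating $\partial_N$ and $\partial_{N+l}$ forces $\partial_{N+l}=0$. From the exact sequence this gives $\dim\Hom(E^{(N+l)},U)=\dim H'+\dim H''$. On the other hand, for every $n\ge N$ the same exact sequence yields $\dim\Hom(E^{(n)},U)\le\dim H'+\dim H''$, while the dimensions are nondecreasing in $n$ by the injectivity of the transition maps. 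Hence $\dim\Hom(E^{(n)},U)=\dim H'+\dim H''$ for all $n\ge N+l$, the transition maps $\Hom(E^{(n)},U)\to\Hom(E^{(n+1)},U)$ are isomorphisms for such $n$, and (\ref{terminate}) terminates.

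Thus the only genuine content is the inductive step, and within it the vanishing $\partial_{N+l}=0$, which combines the stabilization of $\Hom(E^{(\bullet)},U'')$ with the eventual vanishing of the transition maps on $\Ext^1(E^{(\bullet)},U')$ from Lemma~\ref{lem:Ext0}; the rest is bookkeeping with dimensions and the injectivity of the transition maps.
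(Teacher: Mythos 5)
Your proof is correct and follows essentially the same route as the paper: induction on the number of iterated extensions, applying $\Hom(E^{(n)},-)$ to a short exact sequence in $\langle E_1,\ldots,E_k\rangle$ to bound $\dim \Hom(E^{(n)},U)$ uniformly in $n$ by the inductive hypothesis, and using the injectivity of the transition maps to conclude that the increasing chain terminates. The only differences are cosmetic: the paper peels off a single $E_i$ as the subobject, so the bound is $\hom(E^{(n)},U)\le \hom(E^{(n)},U')+1$ via Lemma~\ref{lem:Ei}, and your additional step showing $\partial_{N+l}=0$ via Lemma~\ref{lem:Ext0} is correct but not needed for this lemma, since boundedness already follows from left-exactness alone.
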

\begin{proof}
The lemma can be proved by the induction on the number of iterated
extensions of $U$ by $E_1, \ldots, E_k$. 
If $U=E_i$ for some $i$, then 
the sequence (\ref{terminate}) terminates 
by Lemma~\ref{lem:Ei}. 
Otherwise there is an exact sequence
\begin{align*}
0 \to E_i \to U \to U' \to 0
\end{align*}
for some $1\le i\le k$ and $U' \in 
\langle E_1, \ldots, E_k \rangle$. 
By applying $\Hom(E^{(n)}, -)$, we obtain 
the exact sequence
\begin{align*}
0 \to \Hom(E^{(n)}, E_i) \to \Hom(E^{(n)}, U) \to 
\Hom(E^{(n)}, U'). 
\end{align*}
By Lemma~\ref{lem:Ei}, it follows that 
\begin{align*}
\hom(E^{(n)}, U) \le \hom(E^{(n)}, U')+1.
\end{align*}
By the induction hypothesis, $\hom(E^{(n)}, U')$ is 
bounded above by a number which is independent of $n$. 
Therefore $\hom(E^{(n)}, U)$ is also bounded above. 
\end{proof}
By Lemma~\ref{lem:terminate}, we have 
the functor
\begin{align}\label{Psi}
\Psi \colon \langle E_1, \ldots, E_k \rangle \to 
\modu_{\rm{nil}} R_{E_{\bullet}}^{\rm{nc}}
\end{align}
sending 
$U$ to $\Hom(E^{(n)}, U)$ for $n\gg 0$. 
\begin{lem}\label{lem:Psiexact}
The functor (\ref{Psi}) is exact. 
\end{lem}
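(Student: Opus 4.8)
The plan is to verify exactness at a finite stage of the direct system $\modu R^{(0)}\hookrightarrow\modu R^{(1)}\hookrightarrow\cdots$ whose colimit defines $\modu_{\rm{nil}}R_{E_{\bullet}}^{\rm{nc}}$. Recall that for $U\in\langle E_1,\ldots,E_k\rangle$ the functor $\Psi$ in (\ref{Psi}) sends $U$ to the right $R^{(n)}$-module $\Hom(E^{(n)},U)$ for any $n\gg 0$; by Lemma~\ref{lem:terminate} the transition maps $\Hom(E^{(n)},U)\hookrightarrow\Hom(E^{(n+1)},U)$ — precomposition with the surjections $E^{(n+1)}\twoheadrightarrow E^{(n)}$ coming from the universal extensions (\ref{univ:ext}) — are isomorphisms once $n$ is large. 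So, given a short exact sequence $0\to U'\to U\to U''\to 0$ in $\langle E_1,\ldots,E_k\rangle$, it suffices to show that the sequence of right $R^{(n)}$-modules
\begin{align*}
0\to \Hom(E^{(n)},U')\to \Hom(E^{(n)},U)\stackrel{\alpha_n}{\to}\Hom(E^{(n)},U'')\to 0
\end{align*}
is exact for $n\gg 0$: this sequence represents $\Psi$ applied to $0\to U'\to U\to U''\to 0$ in $\modu_{\rm{nil}}R_{E_{\bullet}}^{\rm{nc}}$, and exactness there can be checked after forgetting the module structure.

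Exactness on the left and in the middle is automatic and $R^{(n)}$-linear for every $n$, since $\Hom(E^{(n)},-)$ is left exact and functorial; the only point is surjectivity of $\alpha_n$ for $n\gg 0$. For fixed $n$ the long exact sequence identifies $\Cok(\alpha_n)$ with the image of the connecting map $\delta_n\colon \Hom(E^{(n)},U'')\to \Ext^1(E^{(n)},U')$. For any $l\ge 0$, naturality of the connecting homomorphism in the first argument gives a commutative square
\begin{align*}
\xymatrix{
\Hom(E^{(n)},U'')\ar[r]^-{\delta_n}\ar[d] & \Ext^1(E^{(n)},U')\ar[d]\\
\Hom(E^{(n+l)},U'')\ar[r]^-{\delta_{n+l}} & \Ext^1(E^{(n+l)},U')
}
\end{align*}
whose vertical maps are precomposition with $E^{(n+l)}\twoheadrightarrow E^{(n)}$. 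The left arrow is an isomorphism for $n\gg 0$ by Lemma~\ref{lem:terminate}, while the right arrow vanishes for $l\gg 0$ by Lemma~\ref{lem:Ext0} (applied to each of the finitely many summands $E_i^{(n)}$ of $E^{(n)}$, choosing $l$ uniform in $i$). Hence $\delta_{n+l}\circ(\text{iso})=0$, so $\delta_{n+l}=0$ and $\alpha_{n+l}$ is surjective. Taking $n':=n+l\gg 0$, the displayed sequence for $n'$ is then a short exact sequence in $\modu R^{(n')}$, which will prove that $\Psi$ is exact.

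The whole argument is essentially bookkeeping on top of the two inputs Lemma~\ref{lem:terminate} (stabilization of $\Hom(E^{(n)},U)$) and Lemma~\ref{lem:Ext0} (eventual vanishing of $\Ext^1(E_i^{(n)},-)$ along the tower), so I do not expect a genuine obstacle. The one place requiring a little care is to check that the transition maps in $\{\Hom(E^{(n)},U'')\}_n$, the transition maps in $\{\Ext^1(E^{(n)},U')\}_n$, and the connecting maps $\delta_n$ are all induced by one and the same family of surjections $E^{(n+l)}\twoheadrightarrow E^{(n)}$, so that they assemble into the single commutative square above — which is immediate from the functoriality of the $\Hom$--$\Ext$ long exact sequence.
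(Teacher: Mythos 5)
Your proof is correct and follows essentially the same route as the paper: reduce to right-exactness, then kill the connecting homomorphism by combining the stabilization of $\Hom(E^{(n)},-)$ (Lemma~\ref{lem:terminate}) with the eventual vanishing of $\Ext^1(E^{(n)},U')\to\Ext^1(E^{(n+l)},U')$ (Lemma~\ref{lem:Ext0}) in a commutative square. The paper's proof is the same diagram chase, just displayed with the full three-term portion of the long exact sequence.
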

\begin{proof}
It is enough to show that (\ref{Psi}) is right exact. 
Let $0 \to U' \to U \to U'' \to 0$ be an exact sequence in 
$\langle E_1, \ldots, E_k \rangle$. 
For $n\gg 0$ and $l\gg 0$, 
we have the commutative diagram
\begin{align*}
\xymatrix{
\Hom(E^{(n)}, U) \ar[r] \ar[d]
^{\cong} & \Hom(E^{(n)}, U'') \ar[r] \ar[d]^{\cong} & \Ext^1(E^{(n)}, U') \ar[d]^{0} \\
\Hom(E^{(n+l)}, U) \ar[r]  & \Hom(E^{(n+l)}, U'') \ar[r] & \Ext^1(E^{(n+l)}, U'). 
}
\end{align*}
Here the isomorphisms of the left and middle 
vertical arrows 
follow from Lemma~\ref{lem:terminate} and the 
right vertical arrow is a zero map by Lemma~\ref{lem:Ext0}. 
Therefore the 
right bottom horizontal arrow is a zero map, 
which shows that $\Hom(E^{(n)}, U) \to \Hom(E^{(n)}, U'')$ is surjective
for $n\gg 0$. Therefore the functor (\ref{Psi}) is exact. 
\end{proof}

We then show the following proposition: 
\begin{prop}\label{prop:equiv}
The functor (\ref{Phi}) is an equivalence of categories. 
\end{prop}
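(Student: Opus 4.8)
The plan is to prove that $\Psi$ from (\ref{Psi}) is a quasi-inverse of $\Phi$ from (\ref{Phi}), by producing natural isomorphisms $\id \cong \Psi \circ \Phi$ and $\Phi \circ \Psi \cong \id$. I would begin with two exactness observations. Since $E^{(n)}$ is an element of $\mathrm{Def}_{E_{\bullet}}^{\rm{nc}}(R^{(n)})$ it is flat over $R^{(n)}$, so $-\otimes_{R^{(n)}}E^{(n)}$ is exact on $\modu R^{(n)}$, and hence $\Phi$ is exact; and $\Psi$ is exact by Lemma~\ref{lem:Psiexact}. Therefore both composites $\Psi\Phi$ and $\Phi\Psi$ are exact functors.

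Next I would set up the comparison maps. For each $n$, the functor $\Phi_n \cneq -\otimes_{R^{(n)}}E^{(n)}\colon \modu R^{(n)}\to\Coh(X)$ is left adjoint to $\Hom(E^{(n)},-)\colon \Coh(X)\to\modu R^{(n)}$, with unit $\eta^{(n)}\colon T\to\Hom(E^{(n)},T\otimes_{R^{(n)}}E^{(n)})$ and counit $\epsilon^{(n)}\colon \Hom(E^{(n)},U)\otimes_{R^{(n)}}E^{(n)}\to U$. By the isomorphism (\ref{RnE}) and the compatibility of the surjections $R^{(n+1)}\twoheadrightarrow R^{(n)}$, $E^{(n+1)}\twoheadrightarrow E^{(n)}$, these units and counits are compatible as $n$ increases. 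Any $T\in\modu_{\rm{nil}}R_{E_{\bullet}}^{\rm{nc}}$ lies in $\modu R^{(n)}$ for some $n$, and then $\Phi(T)=\Phi_m(T)$ for all $m\ge n$; since $\Phi(T)\in\langle E_1,\ldots,E_k\rangle$, the sequence $\Hom(E^{(m)},\Phi(T))$ stabilises by Lemma~\ref{lem:terminate} and its colimit is $\Psi\Phi(T)$, so passing to the colimit over $m$ of the $\eta^{(m)}$ gives a natural transformation $\eta\colon \id\to\Psi\Phi$. Dually one obtains $\epsilon\colon \Phi\Psi\to\id$ on $\langle E_1,\ldots,E_k\rangle$.

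Now I would verify that $\eta$ and $\epsilon$ are isomorphisms on the generating objects and conclude by d\'evissage. For the simple $R^{(n)}$-module $S_i=R^{(n)}/\mathbf{m}_i$ at the vertex $i$ we have $\Phi(S_i)=E^{(n)}/\mathbf{m}_iE^{(n)}\cong E_i$ via the defining isomorphism of the deformation $E^{(n)}$, while $\Psi(E_i)=\Hom(E^{(m)},E_i)\cong S_i$ for $m\gg0$ by Lemma~\ref{lem:Ei}; unwinding the adjunctions, $\eta_{S_i}$ and $\epsilon_{E_i}$ are then the nonzero maps $S_i\to S_i$ and $E_i\to E_i$ induced by the canonical surjection $E^{(n)}\twoheadrightarrow E_i$, hence are isomorphisms. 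Every object of $\modu_{\rm{nil}}R_{E_{\bullet}}^{\rm{nc}}$ is a finite iterated extension of the $S_i$, and every object of $\langle E_1,\ldots,E_k\rangle$ is, by definition of the extension closure, a finite iterated extension of the $E_i$. Given a short exact sequence, naturality of $\eta$ together with exactness of $\Psi\Phi$ yields a morphism of short exact sequences whose middle vertical map is an isomorphism once the two outer ones are, so $\eta$ is an isomorphism on every object by induction on the length of such a filtration, starting from the $S_i$; the same argument applies to $\epsilon$ with the $E_i$. Hence $\Phi$ and $\Psi$ are mutually quasi-inverse equivalences.

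I expect the main obstacle to be the bookkeeping in the middle step: identifying $\Psi\Phi$ with the colimit $\varinjlim_m\Hom(E^{(m)},\Phi(-))$ compatibly with the adjunction units $\eta^{(m)}$, so that $\eta$ is genuinely well defined and natural, and, relatedly, checking that the adjunction unit $\eta_{S_i}$ and counit $\epsilon_{E_i}$ really are the expected nonzero maps on the one-dimensional pieces, which requires carefully unwinding the deformation-theoretic isomorphism $R^{(n)}/\mathbf{m}_i\otimes_{R^{(n)}}E^{(n)}\cong E_i$ together with Lemma~\ref{lem:Ei}. The d\'evissage and five-lemma steps themselves are routine.
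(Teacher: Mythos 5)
Your proposal is correct and follows essentially the same route as the paper: exactness of $\Phi$ (flatness of $E^{(n)}$) and of $\Psi$ (Lemma~\ref{lem:Psiexact}), the adjunction unit and counit, reduction to the simple objects $S_i$ and $E_i$ via the five lemma and induction on the length of the extension filtration, and the identifications $\Phi(S_i)=E_i$, $\Psi(E_i)=S_i$. The only difference is one of detail: the paper simply asserts that $\Psi$ is right adjoint to $\Phi$ and that the isomorphisms on simples are obvious, whereas you spell out the level-$n$ adjunctions, their compatibility under (\ref{RnE}), and the passage to the colimit.
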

\begin{proof}
The functor (\ref{Psi})
is a right adjoint functor of $\Phi$, so 
there exist canonical natural transformations
\begin{align*}
\id \to \Psi \circ \Phi(-), \ \Phi \circ \Psi(-) \to \id. 
\end{align*}
It is enough to show that both of them are 
isomorphisms of functors. 

As $E^{(n)}$ is flat over $R^{(n)}$, 
the functor $\Phi$ is exact. 
The functor $\Psi$ is also exact 
by Lemma~\ref{lem:Psiexact}, so 
the compositions 
$\Psi \circ \Phi$, $\Phi \circ \Psi$ are also exact. 
Therefore by the induction on the number of 
iterated extensions by simple objects
and the five lemma, 
it is enough to check the isomorphisms
\begin{align*}
S_i \stackrel{\cong}{\to} \Psi \circ \Phi(S_i), \ 
\Phi \circ \Psi(E_i) \stackrel{\cong}{\to}E_i. 
\end{align*}
Here $S_1, \ldots, S_k$ are simple 
$R^{(0)}=\mathbb{C}^k$-modules. 
Since $\Phi(S_i)=E_i$ and $\Psi(E_i)=S_i$, 
the above isomorphisms are obvious. 
\end{proof}

\subsection{Mauer-Cartan formalism of NC deformations}
We can interpret the NC deformation functor (\ref{rDef}) in terms of 
Mauer-Cartan formalism. 
The argument below is also available in~\cite{ESe}. 

For $R \in \aA rt_k$
with the decomposition $R=\mathbb{C}^k \oplus \mathbf{m}$,  
an argument similar to Subsection~\ref{subsec:complex}
shows that 
\begin{align}\notag
\mathrm{Def}_{E_{\bullet}}^{\rm{nc}}(R)
&\cong 
\mathrm{MC}\left(A^{0, \ast}\left( \hH om^{\ast}
\left(\bigoplus_{i=1}^k \eE_{i}^{\bullet}, \bigoplus_{i=1}^k \eE_{i}^{\bullet}
  \right) \underline{\otimes} \mathbf{m} 
 \right)  \right)/\sim \\
\label{MC:NC}&=\mathrm{MC}\left( \bigoplus_{i, j}A^{0, \ast}
(\hH om^{\ast}(\eE_{i}^{\bullet}, \eE_j^{\bullet})) \otimes_{\mathbb{C}} 
\mathbf{m}_{ij}
\right)/\sim.
\end{align}
Here $\sim$ means gauge equivalence, 
$\underline{\otimes}$ is the 
tensor product of $k$-pointed $\mathbb{C}$-algebras 
(see~\cite[Section~1.3]{ESe}), and 
$\mathbf{m}_{ij}=
\mathbf{e}_i \cdot \mathbf{m}\cdot \mathbf{e}_j$
for the idempotents 
$\{\mathbf{e}_1, \ldots, \mathbf{e}_k\}$ of $R$. 
Then using the $A_{\infty}$-operation
$\{I_n\}_{n\ge 1}$
in Subsection~\ref{subsec:minimal},  
we have the map 
\begin{align}
\label{I:MC:map}
I_{\ast} \colon &
\mathrm{MC}\left( \bigoplus_{i, j}\Ext^{\ast}(E_i, E_j) \otimes_{\mathbb{C}}
\mathbf{m}_{ij} \right) \\
\notag &\to
\mathrm{MC}\left( \bigoplus_{i, j}A^{0, \ast}
(\hH om(\eE_{i}^{\bullet}, \eE_j^{\bullet})) \otimes_{\mathbb{C}} 
\mathbf{m}_{ij}
\right)
\end{align}
which is an isomorphism after taking the quotients by gauge equivalence. 
Here the LHS is the solution of the MC equation of the $A_{\infty}$-algebra
\begin{align*}
\bigoplus_{i, j}\Ext^{\ast}(E_i, E_j) \otimes_{\mathbb{C}}
\mathbf{m}_{ij}
\end{align*}
whose $A_{\infty}$-product is given by (\ref{factor}), 
and the map $I_{\ast}$ is constructed as in (\ref{series:I}).

Let $A$ be the $\mathbb{C}^k$-algebra defined by 
\begin{align}\label{alg:A}
A \cneq \mathbb{C}\lkakko Q_{E_{\bullet}} \rkakko/(f_1, \ldots, f_l)
\end{align}
where $(f_1, \ldots, f_l)$ is the convergent 
relation of $Q_{E_{\bullet}}$ given in 
(\ref{relation:I}). 
We have the tautological identification
\begin{align}\label{tautological}
\mathrm{MC}\left( \bigoplus_{i, j}\Ext^{\ast}(E_i, E_j) \otimes_{\mathbb{C}}
\mathbf{m}_{ij} \right)=\Hom_{\widehat{\aA rt}_k}\left(A, R \right). 
\end{align}
Here 
$(e_{i, j} \otimes r_{i, j})$ in the LHS 
corresponds to 
$A \to R$ given by
\begin{align*}
\Ext^1(E_i, E_j)^{\vee} \supset E_{i, j} \ni z
\mapsto e_{i, j}(z) \cdot r_{i, j}.
\end{align*} 
As proved in~\cite[Proposition~2.13]{ESe}, 
under the above identification
 the gauge equivalence in the LHS corresponds to the 
conjugation by an element in $1+\oplus_{i} \mathbf{m}_{ii}$
in the RHS. 

Thus we see that $A$ is a pro-representable hull
of $\mathrm{Def}_{E_{\bullet}}^{\rm{nc}}$. 
By the uniqueness of pro-representable hull, we have an isomorphism
\begin{align*}
R_{E_{\bullet}}^{\rm{nc}} \cong A
\end{align*}
which commute with maps to $\mathrm{Def}_{E_{\bullet}}^{\rm{nc}}$. 
Combined with Proposition~\ref{prop:equiv}, we have the following 
corollary: 
\begin{cor}\label{cor:equiv}
We have an equivalence of categories
\begin{align}\label{Phi:eq}
\Phi \colon \modu_{\rm{nil}}A \stackrel{\sim}{\to} \langle 
E_1, E_2, \ldots, E_k \rangle. 
\end{align}
Here $A$ is the $\mathbb{C}^k$-algebra (\ref{alg:A}). 
\end{cor}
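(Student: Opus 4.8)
The plan is to bootstrap from Proposition~\ref{prop:equiv}, which already gives an equivalence $\Phi\colon\modu_{\rm{nil}}R_{E_{\bullet}}^{\rm{nc}}\stackrel{\sim}{\to}\langle E_1,\ldots,E_k\rangle$; it then suffices to transport it along an equivalence $\modu_{\rm{nil}}A\stackrel{\sim}{\to}\modu_{\rm{nil}}R_{E_{\bullet}}^{\rm{nc}}$ coming from an isomorphism of $\mathbb{C}^k$-algebras $R_{E_{\bullet}}^{\rm{nc}}\cong A$. Constructing that isomorphism is exactly what the preceding paragraphs set up: I would first record that $A=\mathbb{C}\lkakko Q_{E_{\bullet}}\rkakko/(f_1,\ldots,f_l)$, being the completion of the path algebra with respect to path length modulo the relations, lies in $\widehat{\aA rt}_k$; then that the chain of functorial identifications (\ref{MC:NC}) and (\ref{tautological}), together with the map $I_{\ast}$ of (\ref{I:MC:map}) (a bijection on $\mathrm{MC}/\!\sim$ since $I_{\ast}$ is a quasi-isomorphism) and with~\cite[Proposition~2.13]{ESe} matching gauge equivalence with conjugation by $1+\oplus_i\mathbf{m}_{ii}$, exhibits $\Hom_{\widehat{\aA rt}_k}(A,-)\to\mathrm{Def}_{E_{\bullet}}^{\rm{nc}}$ as formally smooth and an isomorphism in first order, i.e. $A$ is a pro-representable hull of $\mathrm{Def}_{E_{\bullet}}^{\rm{nc}}$.

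With $A$ a hull, uniqueness of pro-representable hulls (\cite{Schle}) yields a $\mathbb{C}^k$-algebra isomorphism $R_{E_{\bullet}}^{\rm{nc}}\cong A$ commuting with the structure maps to $\mathrm{Def}_{E_{\bullet}}^{\rm{nc}}$. Restriction of scalars along this isomorphism is an exact equivalence of right-module categories preserving nilpotence (vanishing of a high power of the augmentation ideal is intrinsic), so after passing to the colimit in (\ref{mod:nil}) it induces $\modu_{\rm{nil}}A\stackrel{\sim}{\to}\modu_{\rm{nil}}R_{E_{\bullet}}^{\rm{nc}}$. Composing with the $\Phi$ of Proposition~\ref{prop:equiv} gives an equivalence, which is precisely what is meant by $\Phi$ in the statement, thus proving Corollary~\ref{cor:equiv}.

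The genuine content, hence the main obstacle, is the first step: verifying that $A$ pro-represents a hull of $\mathrm{Def}_{E_{\bullet}}^{\rm{nc}}$. This rests on the $A_{\infty}$ Maurer--Cartan description of NC deformations, on $I_{\ast}$ being a quasi-isomorphism so that it descends to a bijection on $\mathrm{MC}$ modulo gauge, and on the careful identification of gauge classes with conjugacy classes of $\mathbb{C}^k$-algebra homomorphisms out of $\mathbb{C}\lkakko Q_{E_{\bullet}}\rkakko$ subject to the relations $(f_i)$. The tools are all in place---Subsection~\ref{subsec:minimal} for the $A_{\infty}$-structure and the boundedness of $I_n$, and~\cite{ESe} for the pointed-algebra formalism---so what remains is the bookkeeping of conventions (pointed tensor products, the idempotents $\mathbf{e}_i$, signs in the $A_{\infty}$ operations); everything downstream of that is formal.
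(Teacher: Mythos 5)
Your proposal is correct and follows essentially the same route as the paper: the paper's argument likewise establishes that $A$ is a pro-representable hull of $\mathrm{Def}_{E_{\bullet}}^{\rm{nc}}$ via the Maurer--Cartan identification (\ref{MC:NC}), the map $I_{\ast}$ of (\ref{I:MC:map}) inducing a bijection modulo gauge, the tautological identification (\ref{tautological}), and the matching of gauge equivalence with conjugation from~\cite[Proposition~2.13]{ESe}, then invokes uniqueness of hulls to get $R_{E_{\bullet}}^{\rm{nc}}\cong A$ and transports the equivalence of Proposition~\ref{prop:equiv}. The only difference is one of emphasis, not substance.
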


\subsection{Equivalence of categories via $I_{\ast}$}
Let us take a nilpotent $Q_{E_{\bullet}}$-representation
\begin{align}\label{nilp:u}
u=(u_e)_{e \in E(Q_{E_{\bullet}})}, \ 
u_e \colon V_{s(e)} \to V_{t(e)}.
\end{align}
By the argument in Subsection~\ref{subsec:functI}
and Remark~\ref{rmk:operator}, 
the correspondence $u \mapsto I_{\ast}(u)$ 
forms a functor
\begin{align}\label{funct:I}
I_{\ast} \colon \modu_{\rm{nil}}(A) \to \Coh(X). 
\end{align}
We compare the above functor with the equivalence (\ref{Phi:eq})
in the following proposition: 
\begin{thm}\label{cor:equiv:I}
The functor (\ref{funct:I}) is isomorphic to the 
functor $\Phi$ in (\ref{Phi}).
In particular,  
the functor $I_{\ast}$ in (\ref{funct:I})
is an equivalence of categories
\begin{align*}
I_{\ast} \colon \modu_{\rm{nil}}(A) \stackrel{\sim}{\to}
\langle E_1, E_2, \ldots, E_k \rangle \subset \Coh(X). 
\end{align*}
\end{thm}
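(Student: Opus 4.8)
The plan is to construct a natural isomorphism $I_{\ast}\cong\Phi$ of functors $\modu_{\rm{nil}}(A)\to\Coh(X)$ directly, by computing both sides out of the same versal family. Recall from the proof of Corollary~\ref{cor:equiv} that the isomorphism $R_{E_{\bullet}}^{\rm{nc}}\cong A$ is the one induced by the $A_{\infty}$-map $I_{\ast}$ of (\ref{I:MC:map}) together with the identifications (\ref{tautological}) and (\ref{MC:NC}). Concretely, for each $n$ the surjection $A\twoheadrightarrow R^{(n)}$ makes Kawamata's versal family $E^{(n)}$ over $X\times\Spec R^{(n)}$ correspond, under (\ref{MC:NC}), to the Mauer--Cartan element $I_{\ast}(\widetilde{\alpha}^{(n)})$, where $\widetilde{\alpha}^{(n)}\in\bigoplus_{i,j}\Ext^1(E_i,E_j)\otimes\mathbf{m}^{(n)}_{ij}$ is the tautological class; in other words $E^{(n)}$ is represented by the bounded complex
\begin{align*}
\mathcal{C}^{(n)}\cneq\left(\aA^{0,\ast}\Bigl(\bigoplus_{i=1}^{k}\eE_i^{\bullet}\Bigr)\otimes_{\mathbb{C}^k}R^{(n)},\ d_{\aA}+I_{\ast}(\widetilde{\alpha}^{(n)})\right),
\end{align*}
and $\mathcal{C}^{(n)}$ has cohomology only in degree zero, equal to $E^{(n)}$, since $E^{(n)}$ is flat over $R^{(n)}$.

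First I would record that $\mathcal{C}^{(n)}$ is termwise free over $R^{(n)}$ and represents $E^{(n)}$ in $D^b_{\Coh}(\Modu\oO_{X\times\Spec R^{(n)}})$, so that for any nilpotent right $R^{(n)}$-module $T$ one has
\begin{align*}
T\dotimes_{R^{(n)}}E^{(n)}\;\simeq\;T\dotimes_{R^{(n)}}\mathcal{C}^{(n)}\;\simeq\;T\otimes_{R^{(n)}}\mathcal{C}^{(n)},
\end{align*}
the first because $E^{(n)}$ is $R^{(n)}$-flat (so in particular $\Phi(T)=T\otimes_{R^{(n)}}E^{(n)}$ is a sheaf, consistent with Lemma~\ref{lem:Phi0}), the second because $\mathcal{C}^{(n)}$ is a bounded termwise flat complex. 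Next I would unwind the right-hand side: writing $T=\bigoplus_i V_i$ with $V_i=Te_i$, the right $R^{(n)}$-module structure on $T$ is exactly the nilpotent $Q_{E_{\bullet}}$-representation $u=(u_e)_{e\in E(Q_{E_{\bullet}})}$, and $T\otimes_{R^{(n)}}$ applied to the underlying graded module of $\mathcal{C}^{(n)}$ gives $\aA^{0,\ast}(\bigoplus_i V_i\otimes\eE_i^{\bullet})$. Since each $I_n$ is $\mathbb{C}$-multilinear and the action of $\mathbf{m}^{(n)}$ on $T$ composes the maps $u_e$, the base change $T\otimes_{R^{(n)}}I_{\ast}(\widetilde{\alpha}^{(n)})$ is precisely the element $I_{\ast}(u)$ of (\ref{kI:explict}). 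Hence
\begin{align*}
T\otimes_{R^{(n)}}\mathcal{C}^{(n)}=\left(\aA^{0,\ast}\Bigl(\bigoplus_{i=1}^{k}V_i\otimes\eE_i^{\bullet}\Bigr),\ d_{\aA}+I_{\ast}(u)\right),
\end{align*}
whose degree-zero cohomology is $I_{\ast}(T)$ by the very construction of the functor (\ref{funct:I}) via Proposition~\ref{prop:rest2}. Combining the three displays yields $\Phi(T)=H^0(T\otimes_{R^{(n)}}\mathcal{C}^{(n)})\cong I_{\ast}(T)$, with the isomorphism independent of the auxiliary $n$ by the compatibilities (\ref{RnE}).

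To promote this to a natural isomorphism of functors, I would note that a morphism $\phi\colon T\to T'$ of nilpotent modules corresponds on the quiver side to linear maps $\phi_i\colon V_i\to V_i'$ intertwining $u$ and $u'$, and that $\Phi(\phi)=\phi\otimes_{R^{(n)}}\id_{E^{(n)}}$ goes over, under the identifications above, to $H^0(\bigoplus_i\phi_i\otimes\id)$, which is exactly the morphism $I_{\ast}(\phi)$ produced in Subsection~\ref{subsec:functI} and Remark~\ref{rmk:operator} (the nilpotent case, in which all the series defining $I_{\ast}$ are finite). Therefore the isomorphisms $\Phi(T)\cong I_{\ast}(T)$ are natural, giving $\Phi\cong I_{\ast}$. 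The final assertion is then immediate: by Proposition~\ref{prop:equiv}, and the isomorphism $A\cong R_{E_{\bullet}}^{\rm{nc}}$ of Corollary~\ref{cor:equiv}, the functor $\Phi$ is an equivalence $\modu_{\rm{nil}}(A)\stackrel{\sim}{\to}\langle E_1,\ldots,E_k\rangle$, hence so is $I_{\ast}$.

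The main obstacle is the second paragraph, namely the two compatibility statements underlying it: that $\mathcal{C}^{(n)}$ genuinely represents the versal object $E^{(n)}$ and is termwise $R^{(n)}$-flat --- this uses the flatness of $E^{(n)}$ over $R^{(n)}$ furnished by the NC deformation functor together with the Mauer--Cartan/deformation dictionary of Subsection~\ref{subsec:complex} --- and that the $A_{\infty}$-map $I_{\ast}$ commutes with the base change $(-)\otimes_{R^{(n)}}$, which reduces to the $\mathbb{C}$-multilinearity of the maps $I_n$ and the explicit coordinate formula (\ref{kI:explict}). Once these are in place, the remainder is bookkeeping with the identifications already set up in Corollary~\ref{cor:equiv} and Subsection~\ref{subsec:functI}.
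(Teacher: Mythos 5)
Your proof is correct and follows essentially the same route as the paper: your identity $T\otimes_{R^{(n)}}I_{\ast}(\widetilde{\alpha}^{(n)})=I_{\ast}(u)$ is exactly the paper's key identity $I_{\ast}(u)=\mathbf{u}\circ I_{\ast}(c^{(n)})$ relating $I_{\ast}(u)$ to the universal Mauer--Cartan element, and both arguments conclude by identifying $\Phi(T)$ with $T$ tensored with the universal NC deformation and invoking Proposition~\ref{prop:equiv}. The only cosmetic difference is that you phrase the truncation via Kawamata's $R^{(n)}$ rather than $A^{(n)}=A/\mathbf{m}^{n+1}$, which changes nothing of substance.
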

 \begin{proof}
Let $A=\mathbb{C}^k \oplus \mathbf{m}$ be 
the decomposition, $\{\mathbf{e}_1, \ldots, \mathbf{e}_k\}$
the idempotents of $A$, and set 
$A^{(n)} \cneq A/\mathbf{m}^{n+1}$, 
$\mathbf{m}^{(n)} \cneq \mathbf{m}/\mathbf{m}^{n+1}$. 
Then for an element $u$ as in (\ref{nilp:u}), 
the compositions of $u_e$ for $e \in E(Q_{E_{\bullet}})$
 along with the 
path in $Q_{E_{\bullet}}$ defines the linear map
\begin{align*}
\mathbf{u} \colon \mathbf{m}_{ij}^{(n)} \to \Hom(V_i, V_j), \ 
\mathbf{m}_{ij}^{(n)}
\cneq 
\mathbf{e}_i \cdot \mathbf{m}^{(n)} \cdot \mathbf{e}_j.
\end{align*}
On the other hand, let  
\begin{align*}
c^{(n)}
 \in 
\mathrm{MC}\left( \bigoplus_{i, j}\Ext^{\ast}(E_i, E_j) \otimes_{\mathbb{C}}
\mathbf{m}_{ij}^{(n)} \right)
\end{align*}
be the canonical element 
corresponding to the surjection $A \twoheadrightarrow A^{(n)}$
under the tautological identity (\ref{tautological}). 
Applying the map (\ref{I:MC:map}),
we obtain 
\begin{align}\label{I:kappan}
I_{\ast} (c^{(n)}) \in \mathrm{MC}\left( \bigoplus_{i, j}A^{0, \ast}
(\hH om^{\ast}(\eE_{i}^{\bullet}, \eE_j^{\bullet})) \otimes_{\mathbb{C}} 
\mathbf{m}_{ij}^{(n)} \right). 
\end{align}
Then for $n\gg 0$, we have the identity
\begin{align}\label{Iu:kappa}
I_{\ast}(u)=\mathbf{u} \circ I_{\ast}(c^{(n)}) 
\in \mathrm{MC}(\mathfrak{g}_E^{\ast}).
\end{align}
Let  
$\fF^{(n)} \in \mathrm{Def}_{E_{\bullet}}^{\rm{nc}}(A^{(n)})$
the NC deformation of $E_{\bullet}$ over $A^{(n)}$
corresponding to 
(\ref{I:kappan}) under the isomorphism (\ref{MC:NC}). 
Note that $\fF^{(n)}$ is 
the universal NC deformation over $A$ pulled 
back by the surjection $A\twoheadrightarrow A^{(n)}$. 
Let $T \in \modu_{\rm{nil}}(A)$ be the object 
given by the $Q_{E_{\bullet}}$-representation $u$.
Then the identity (\ref{Iu:kappa}) implies that 
\begin{align*}
I_{\ast}(T) \cong T \otimes_{A^{(n)}} \fF^{(n)}.
\end{align*}
By the construction of $\Phi$ in (\ref{Phi:eq}), 
which goes back to the construction in Lemma~\ref{lem:Phi0}, 
and the universality of $\fF^{(n)}$, 
we have $\Phi(T)=T \otimes_{A^{(n)}} \fF^{(n)}$.
Therefore the proposition holds. 
\end{proof}
In the diagram (\ref{dia:comiso}), note that 
$p_Q^{-1}(0)$ consists of nilpotent 
$A$-modules and 
$p_M^{-1}(p)$ consists of 
objects in 
the extension closure $\langle E_1, \ldots, E_k \rangle$. 
The above proposition implies that 
the isomorphism (\ref{I:nil}) is induced by the 
universal family over NC deformations.

\section{Moduli spaces of one dimensional semistable sheaves}\label{sec:one}
In this section, we focus on the case of moduli spaces of one dimensional 
semistable sheaves, and prove Theorem~\ref{intro:thm:onedim}. 
\subsection{Twisted semistable sheaves}
Let $X$ be a smooth projective variety, and 
$A(X)_{\mathbb{C}}$ its complexified ample cone 
\begin{align*}
A(X)_{\mathbb{C}} \cneq \{B+i\omega \in \mathrm{NS}(X)_{\mathbb{C}} : 
\omega \mbox{ is ample }\}. 
\end{align*}
Let
\begin{align*}
\Coh_{\le 1}(X) \subset \Coh(X)
\end{align*}
be the abelian subcategory of coherent sheaves whose 
supports have dimensions less than or equal to one. 
For an object $E \in \Coh_{\le 1}(X)$
and $B+i\omega \in A(X)_{\mathbb{C}}$, the 
\textit{$B$-twisted $\omega$-slope} 
$\mu_{B, \omega}(E)$ is defined by
\begin{align*}
\mu_{B, \omega}(E) \cneq 
\frac{\chi(E)-B \cdot \ch_{d-1}(E)}{\omega \cdot \ch_{d-1}(E)} 
\in \mathbb{R} \cup \{\infty\}. 
\end{align*}
Here $d=\dim X$, and we
set $\mu_{B, \omega}(E)=\infty$ if 
$\omega \cdot \ch_{d-1}(E)=0$, i.e. if
$E$ is a zero dimensional sheaf. 
\begin{defi}\label{def:Bw}
An object 
$E \in \Coh_{\le 1}(X)$ is 
$(B, \omega)$-(semi)stable if for any 
non-zero subsheaf $F \subsetneq E$, we have 
the inequality
\begin{align*}
\mu_{B, \omega}(F)<(\le) \mu_{B, \omega}(E). 
\end{align*}
\end{defi}
\begin{rmk}\label{rmk:B=0}
If $B=0$, then $E \in \Coh_{\le 1}(X)$ is 
$(0, \omega)$-(semi)stable iff it is 
$\omega$-Gieseker (semi)stable sheaf. 
\end{rmk}
\begin{rmk}\label{rmk:tensor}
For any integer $k\ge 1$ and a line bundle 
$\lL$ on $X$, we have
\begin{align*}
\mu_{B, \omega}(E)=\mu_{kB, k\omega}(E)=
\mu_{kB+c_1(\lL), k\omega}(E \otimes \lL). 
\end{align*}
In particular if $B, \omega$ are elements of $\mathrm{NS}(X)_{\mathbb{Q}}$
so that $kB, k\omega$ are integral, 
then for a line bundle $\lL$ with 
$c_1(\lL)=-kB$
a sheaf $E \in \Coh_{\le 1}(X)$ is 
$(B, \omega)$-semistable iff 
$E \otimes \lL$ is a $\omega$-Gieseker semistable sheaf. 
\end{rmk}

The $(B, \omega)$-stability condition is interpreted 
in terms of Bridgeland stability conditions~\cite{Brs1} as follows. 
Let $N_1(X) \subset H_2(X, \mathbb{Z})$ be the group of numerical classes of 
algebraic one cycles on $X$
and set 
\begin{align*}
\Gamma_X \cneq N_1(X) \oplus \mathbb{Z}.
\end{align*}
Let $\cl$ be the 
group homomorphism defined by
\begin{align}\label{def:cl}
\cl \colon K(\Coh_{\le 1}(X)) \to \Gamma_X, \ 
E \mapsto ([E], \chi(E))
\end{align}
where $[E]$ is the fundamental one cycle associated to $E$. 
By definition, a \textit{Bridgeland stability condition} on 
$D^b(\Coh_{\le 1}(X))$
w.r.t. the group homomorphism map (\ref{def:cl})
consists of data
\begin{align}\label{def:stab}
\sigma=(Z, \aA), \ Z \colon \Gamma_X \to \mathbb{C}, \ 
\aA \subset D^b(\Coh_{\le 1}(X))
\end{align}
where $Z$ is a group homomorphism, 
$\aA$ is the heart of a bounded t-structure
satisfying some axioms (see~\cite{Brs1, K-S} for details). 
It determines the set of \textit{$\sigma$-(semi)stable objects}: 
$E \in D^b(\Coh_{\le 1}(X))$ is $\sigma$-(semi)stable if 
$E[k] \in \aA$ for some $k\in \mathbb{Z}$, and for any 
non-zero subobject $0\neq F \subsetneq E[k]$ in $\aA$, we have the 
inequality in $(0, \pi]$: 
\begin{align*}
\arg Z(\cl(F))<(\le) \arg Z(\cl(E[k])).
\end{align*}

The set of 
Bridgeland stability conditions (\ref{def:stab})
forms a complex manifold, which 
we denote by $\Stab_{\le 1}(X)$. 
The forgetting map $(Z, \aA) \mapsto Z$ gives a local 
homeomorphism
 \begin{align*}
\Stab_{\le 1}(X) \to (\Gamma_X)_{\mathbb{C}}^{\vee}. 
\end{align*}
For a given element $B+i\omega \in A(X)_{\mathbb{C}}$, let 
$Z_{B, \omega}$ be the group homomorphism 
$\Gamma_X \to \mathbb{C}$ defined by 
\begin{align}\label{ZBw}
Z_{B, \omega}(\beta, m) \cneq -m+(B+i\omega)\beta. 
\end{align}
Then the pair
\begin{align}\label{sigma:Bw}
\sigma_{B, \omega} \cneq (Z_{B, \omega}, \Coh_{\le 1}(X))
\end{align}
determines a point in $\Stab_{\le 1}(X)$. 

It is obvious that 
an object in $\Coh_{\le 1}(X)$ is 
$(B, \omega)$-(semi)stable iff it is 
Bridgeland $\sigma_{B, \omega}$-(semi)stable. 
We also call $(B, \omega)$-(semi)stable sheaves as 
\textit{$\sigma_{B, \omega}$-(semi)stable objects}. 
Moreover the map
\begin{align*}
A(X)_{\mathbb{C}} \to \Stab_{\le 1}(X),  \ 
(B, \omega) \mapsto \sigma_{B, \omega}
\end{align*}
is a continuous injective 
map, whose image is denoted by 
\begin{align*}
U(X) \subset \Stab_{\le 1}(X).
\end{align*}

\subsection{Moduli stacks of twisted semistable sheaves}
For $\sigma=\sigma_{B, \omega} \in U(X)$, 
and $v \in \Gamma_X$, let 
\begin{align*}
\mM_{\sigma}(v) \subset \mM
\end{align*}
be the moduli stack of 
$\sigma$-semistable 
$E \in \Coh_{\le 1}(X)$ with $\cl(E)=v$. 
As in the case of Gieseker
stability, we have the following: 
\begin{lem}\label{stack:twist}
The stack $\mM_{\sigma}(v)$ is an algebraic stack of finite type 
with a projective coarse moduli space $M_{\sigma}(v)$.
So we have the natural morphism
\begin{align*}
p_M \colon \mM_{\sigma}(v) \to M_{\sigma}(v). 
\end{align*}
Moreover for each closed point $p \in M_{\sigma}(v)$, 
the same conclusion of Theorem~\ref{thm:precise}
holds. 
\end{lem}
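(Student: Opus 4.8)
The plan is to reduce the lemma to the Gieseker case treated in Sections~\ref{sec:deform}--\ref{sec:thm}. The starting point is the observation, implicit in Remark~\ref{rmk:B=0} and Remark~\ref{rmk:tensor}, that for one dimensional sheaves the $\sigma_{B,\omega}$-(semi)stability of Definition~\ref{def:Bw} is nothing but $B$-twisted $\omega$-Gieseker (semi)stability: a pure one dimensional sheaf $E$ with $[E]=\beta$ has Hilbert polynomial $(\omega\cdot\beta)m+\chi(E)$, so its reduced $B$-twisted Hilbert polynomial is the monic linear polynomial $m+\mu_{B,\omega}(E)$, and comparison of reduced twisted Hilbert polynomials of subsheaves is exactly comparison of twisted slopes $\mu_{B,\omega}$.

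First I would establish that $\mM_\sigma(v)$ is an algebraic stack of finite type with a projective coarse moduli space $M_\sigma(v)$ together with the morphism $p_M$. For this one runs the GIT construction of moduli of $B$-twisted semistable sheaves exactly as recalled in Subsection~\ref{subsec:moduli}: boundedness and openness of $\sigma$-semistability of class $v$ give an open substack $\mM_\sigma(v)\subset\mM$ of finite type, presented as $[\mathrm{Quot}^\circ(\mathbf{V},v)/\GL(\mathbf{V})]$ with $\mathrm{Quot}^\circ(\mathbf{V},v)$ the semistable locus for a suitable $\GL(\mathbf{V})$-linearization incorporating the $B$-twist, whence $M_\sigma(v)=\mathrm{Quot}^\circ(\mathbf{V},v)\sslash\GL(\mathbf{V})$ is projective. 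When $B\in\mathrm{NS}(X)_{\mathbb{Q}}$ this needs nothing new: choosing $k\ge 1$ with $kB$ integral and $\lL\in\Pic(X)$ with $c_1(\lL)=-kB$, the autoequivalence $(-)\otimes\lL$ of $\Coh(X)$ identifies $\mM_\sigma(v)$ with a Gieseker moduli stack $\mM_{\omega'}(v')$ of Subsection~\ref{subsec:moduli} (Remark~\ref{rmk:tensor}), and transporting the assertions back along $(-)\otimes\lL^{-1}$ gives the claim; for general $B$ the same GIT construction applies with the $B$-twisted Hilbert polynomial playing the role of the usual one.

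It remains to show that the conclusion of Theorem~\ref{thm:precise} holds at each closed point $p\in M_\sigma(v)$. The key point is that the proof of Theorem~\ref{thm:precise} given in Section~\ref{sec:thm} used only two ingredients, both now available: (i) the deformation theory of coherent sheaves on $X$ via minimal $A_\infty$-algebras developed in Section~\ref{sec:deform}, which is intrinsic to $\Coh(X)$ and makes no reference to any stability notion; and (ii) the presentation of the moduli stack as a global quotient $[\mathrm{Quot}^\circ/\GL(\mathbf{V})]$ of a quasi-projective scheme by a reductive group, with the relevant locus being the GIT-semistable one, so that Luna's \'etale slice theorem applies and produces the Cartesian diagram~\eqref{dia:etale}. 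A closed point $p\in M_\sigma(v)$ is represented by a $\sigma$-polystable sheaf $E=\bigoplus_{i=1}^k V_i\otimes E_i$ with the $E_i$ mutually non-isomorphic $\sigma$-stable of the same twisted slope, hence a simple collection, so the Ext-quiver $Q_{E_{\bullet}}$ and its convergent relation $I_{E_{\bullet}}$ from Subsection~\ref{subsec:Extquiver} and Section~\ref{sec:thm} are defined, and running the argument of Proposition~\ref{prop:complete} verbatim yields the commutative diagram of isomorphisms. The main obstacle is (ii) for irrational $B$, namely the $B$-twisted GIT construction producing a projective $M_\sigma(v)$ with the required quotient presentation; this is standard in the literature on twisted Gieseker stability but lies outside the specific reference~\cite{MR1450870} invoked for the Gieseker case, and if one insists on staying within~\cite{MR1450870} one handles rational $B$ as above, the general case then requiring in addition a wall-crossing argument comparing $\sigma_{B,\omega}$ with a nearby stability condition whose $B$ is rational and whose $\sigma$-semistable objects of class $v$ coincide with those of $\sigma_{B,\omega}$.
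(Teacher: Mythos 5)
Your proposal is correct, and your ``fallback'' route is in fact exactly the route the paper takes: the paper's proof of Lemma~\ref{stack:twist} is precisely (a) for rational $B$ and $\omega$, tensor by a line bundle $\lL$ with $c_1(\lL)=-kB$ as in Remark~\ref{rmk:tensor} to reduce to integral $\omega$-Gieseker stability and invoke Theorem~\ref{thm:precise}, and (b) for irrational $(B,\omega)$, use the wall-and-chamber structure on $A(X)_{\mathbb{C}}$ --- each wall is a real hypersurface cut out by $\mu_{B,\omega}(\beta,n)=\mu_{B,\omega}(\beta',n')$, hence contains dense rational points --- to perturb $(B,\omega)$ within its stratum $\cap_{j\in J'}\wW_j\setminus\cup_{j\notin J'}\wW_j$ to a rational pair without changing $\mM_{\sigma}(v)$. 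Your lead suggestion of running a $B$-twisted GIT construction directly is a legitimate alternative but, as you note, requires references beyond~\cite{MR1450870}; the paper avoids this entirely by the perturbation argument, which is why you should regard that argument as the main step rather than an afterthought. Two small corrections: the reduction via Remark~\ref{rmk:tensor} needs \emph{both} $kB$ and $k\omega$ integral (the Gieseker construction of Subsection~\ref{subsec:moduli} requires an honest polarization $\oO_X(1)$), so the perturbation must make the whole pair $(B,\omega)$ rational, not just $B$; and your points (i)--(ii) about the proof of Theorem~\ref{thm:precise} transporting verbatim are sound but unnecessary once the reduction is done, since the Ext-quiver and the minimal $A_{\infty}$-data are unchanged under $(-)\otimes\lL$ and the statement of Theorem~\ref{thm:precise} can simply be quoted for the tensored moduli problem.
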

\begin{proof}
If $B$ and $\omega$ are rational, 
then we can reduce the lemma 
in the case of $B=0$ and $\omega$ is integral
by Remark~\ref{rmk:tensor}.
In that case, 
the lemma follows from Theorem~\ref{thm:precise}. 
In general
by wall-chamber 
structure on the space of Bridgeland 
stability conditions, 
there is a collection of real codimension 
one submanifolds $\{\wW_{j}\}_{j \in J}$ in $A(X)_{\mathbb{C}}$ called 
\textit{walls} such that $\mM_{\sigma}(v)$ is constant if 
$\sigma$ is contained in 
a strata 
\begin{align}\label{strata}
\cap_{j \in J'} \wW_{j} \setminus \cup_{j\notin J' }\wW_j
\end{align}
for some subset $J' \subset J$. 
Each wall is given by 
$\mu_{B, \omega}(\beta, n)=\mu_{B, \omega}(\beta', n')$
for other $(\beta', n') \in \Gamma_X$ which is not 
proportional to $(\beta, n)$, i.e. 
\begin{align*}
(n'\beta-n\beta')\omega =
B\beta' \cdot \omega \beta-B\beta \cdot \omega \beta'. 
\end{align*}
The above equation determines a hypersurface in $A(X)_{\mathbb{C}}$
which contains dense rational points. 
Therefore if $(B, \omega)$ is not rational, then we can 
perturb it in the strata (\ref{strata}) 
 and can assume that $(B, \omega)$ is rational. 
\end{proof}

\subsection{Moduli stacks of semistable Ext-quiver representations}
For
$v \in \Gamma_X$ and
$\sigma=\sigma_{B, \omega} \in U(X)$, 
take a point $p\in M_{\sigma}(v)$. 
Suppose that $p$
is represented by a $(B, \omega)$-polystable 
sheaf $E$ of the form
\begin{align}\label{onedim:poly}
E=\bigoplus_{i=1}^k V_i \otimes E_i
\end{align}
where $E_i \in \Coh_{\le 1}(X)$ is $(B, \omega)$-stable 
with $\mu_{B, \omega}(E_i)=\mu_{B, \omega}(E)$. 
Then we have the Ext-quiver $Q_{E_{\bullet}}$
associated to the simple collection 
\begin{align*}
E_{\bullet}=(E_1, \ldots, E_k),
\end{align*}
together with
a convergent relation $I_{E_{\bullet}}$ as in (\ref{relation:I}).
For $i \in V(Q_{E_{\bullet}})=\{1, 2, \ldots, k\}$, 
let $S_i$ be the one dimensional
$Q_{E_{\bullet}}$-representation corresponding 
to the vertex $i$. 
We denote by 
$K(Q_{E_{\bullet}})$ the 
Grothendieck group of finite dimensional 
$Q_{E_{\bullet}}$-representations, and 
take the group homomorphism
\begin{align*}
\mathbf{dim} \colon K(Q_{E_{\bullet}}) \to 
\Gamma_Q \cneq \bigoplus_{i=1}^k \mathbb{Z} \cdot \mathbf{dim} (S_i)
\end{align*}
by taking the dimension vectors. 

Let us take another stability condition 
\begin{align}\label{sigma+}
\sigma^{+}=\sigma_{B^{+}, \omega^{+}}=
(Z_{B^{+}, \omega^{+}}, \Coh_{\le 1}(X)) \in U(X).
\end{align} 
Then we have the group homomorphism
\begin{align*}
Z_{Q}^{+} \colon K(Q_{E_{\bullet}}) \stackrel{\mathbf{dim}}{\to} \Gamma_Q \to
\mathbb{C}, \ 
[S_i] \mapsto Z_{B^{+}, \omega^{+}}(E_i).
\end{align*}
The above group homomorphism
determines a Bridgeland stability condition on
the category of $Q_{E_{\bullet}}$-representations, and the 
associated (semi)stable representations.
They are described in terms of slope stability 
condition as in Definition~\ref{def:Bw}. 
Let $\mu_Q^{+}$ be the slope 
function on the category of $Q_{E_{\bullet}}$-representations
defined by 
\begin{align*}
\mu_Q^{+}(-) \cneq -\frac{\Ree Z_{Q}^+(-)}{\Imm Z_Q^+(-)}. 
\end{align*}
Note that if $\mathbb{V}$ is a $Q_{E_{\bullet}}$-representation 
with dimension vector 
\begin{align}\label{vec:onedim:m}
\vec{m}=(m_i)_{1\le i\le k}, \ m_i=\dim V_i
\end{align}
then we have the identity
\begin{align}\label{id:slopes}
\mu_Q^{+}(\mathbb{V})=\mu_{B^{+}, \omega^{+}}(E)
\end{align}
where $E$ is given by (\ref{onedim:poly}). 
We have the following definition: 
\begin{defi}
A $Q_{E_{\bullet}}$-representation 
$\mathbb{V}$ is 
$\mu_{Q}^{+}$-(semi)stable if for 
any sub 
$Q_{E_{\bullet}}$-representation $0\neq \mathbb{V}' \subsetneq \mathbb{V}$, 
we have the inequality
\begin{align*}
\mu_Q^{+}(\mathbb{V}') <(\le) \mu_Q^+(\mathbb{V}). 
\end{align*}
\end{defi}
For the dimension vector (\ref{vec:onedim:m}), let
\begin{align*}
\mathrm{Rep}_{Q_{E_{\bullet}}}^{+}(\vec{m}) \subset 
\mathrm{Rep}_{Q_{E_{\bullet}}}(\vec{m})
\end{align*}
be the (Zariski) open subset 
consisting of $\mu_Q^{+}$-semistable 
$Q_{E_{\bullet}}$-representations. 
The above open subset is a GIT semistable locus 
with respect to a certain character of $G$
(see~\cite[Section~3]{MR1315461}). 
The quotients by $G$
\begin{align*}
\mM_{Q_{E_{\bullet}}}^{+}(\vec{m})=
[\mathrm{Rep}_{Q_{E_{\bullet}}}^{+}(\vec{m})/G], \ 
M_{Q_{E_{\bullet}}}^{+}(\vec{m})=
\mathrm{Rep}_{Q_{E_{\bullet}}}^{+}(\vec{m}) \sslash G
\end{align*}
are the moduli stack of $\mu_{Q}^{+}$-semistable 
$Q_{E_{\bullet}}$-representations
with dimension vector $\vec{m}$, and its coarse 
moduli space, respectively. 
We have the commutative diagram
\begin{align*}
\xymatrix{
\mM_{Q_{E_{\bullet}}}^{+}(\vec{m}) 
\ar@<-0.3ex>@{^{(}->}[r] \ar[d]_{p_Q^{+}} & 
\mM_{Q_{E_{\bullet}}}(\vec{m}) \ar[d]^{p_Q} \\
M_{Q_{E_{\bullet}}}^{+}(\vec{m}) \ar[r]_{q_Q} & 
M_{Q_{E_{\bullet}}}(\vec{m}).
}
\end{align*}
Here the vertical arrows are natural 
morphisms to the coarse moduli spaces, 
the top horizontal arrow is an open 
immersion and the bottom horizontal arrow $q_Q$ 
is induced by the universality 
of the GIT quotients. 
Note that $q_Q$ is  
projective due to a general 
argument of affine GIT quotients (see~\cite[Section~6]{MR2004218}). 

Let $0 \in V \subset M_{Q_{E_{\bullet}}}(\vec{m})$ be a 
sufficiently small analytic open subset as in Definition~\ref{defi:cmoduli}. 
Let 
\begin{align*}
\mathrm{Rep}_{(Q_{E_{\bullet}}, I_{E_{\bullet}})}^{+}(\vec{m})|_{V} \subset 
\mathrm{Rep}_{(Q_{E_{\bullet}}, I_{E_{\bullet}})}(\vec{m})|_{V}
\end{align*}
be the open locus
consisting of $\mu_Q^{+}$-semistable representations,
where the RHS is defined as in (\ref{Rep:V}). 
Then we set
\begin{align*}
\mM_{(Q_{E_{\bullet}}, I_{E_{\bullet}})}^{+}(\vec{m})|_{V} & \cneq
[\mathrm{Rep}_{(Q_{E_{\bullet}}, I_{E_{\bullet}})}^{+}(\vec{m})|_{V}/G], \\ 
M_{(Q_{E_{\bullet}}, I_{E_{\bullet}})}^{+}(\vec{m})|_{V} & \cneq
\mathrm{Rep}_{(Q_{E_{\bullet}}, I_{E_{\bullet}})}^{+}(\vec{m})|_{V} \sslash G. 
\end{align*}
Here 
$M_{(Q_{E_{\bullet}}, I_{E_{\bullet}})}^{+}(\vec{m})|_{V}$
is the analytic Hilbert quotient 
given in Lemma~\ref{lem:Zquot2}, which is 
a closed analytic subspace of 
$V^+=q_Q^{-1}(V)$. 
We have the commutative diagram
\begin{align}\label{dia:quiver}
\xymatrix{
\mM_{(Q_{E_{\bullet}}, I_{E_{\bullet}})}^{+}(\vec{m})|_{V} 
\ar@<-0.3ex>@{^{(}->}[r] \ar[d]_{p_{(Q, I)}^{+}} \ar[dr]_-{r_{(Q, I)}}& 
\mM_{(Q_{E_{\bullet}}, I_{E_{\bullet}})}(\vec{m})|_{V} \ar[d]^{p_{(Q, I)}} \\
M_{(Q_{E_{\bullet}}, I_{E_{\bullet}})}^{+}(\vec{m})|_{V} \ar[r]_{q_{(Q, I)}} & 
M_{(Q_{E_{\bullet}}, I_{E_{\bullet}})}(\vec{m})|_{V}.
}
\end{align}
Here the vertical arrows are natural 
morphisms to the coarse moduli spaces, 
the top horizontal arrow is an open 
immersion and the bottom horizontal arrow $q_{(Q, I)}$ 
is induced by the universality of analytic Hilbert quotients
(see Lemma~\ref{lem:universal}). 
\begin{lem}\label{lem:proj}
The morphism $q_{(Q, I)}$ in the diagram (\ref{dia:quiver}) is projective. 
\end{lem}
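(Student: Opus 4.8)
The statement to prove is that $q_{(Q,I)} \colon M^+_{(Q_{E_\bullet}, I_{E_\bullet})}(\vec m)|_V \to M_{(Q_{E_\bullet}, I_{E_\bullet})}(\vec m)|_V$ is projective. The natural strategy is to deduce this from the already-established projectivity of $q_Q \colon M^+_{Q_{E_\bullet}}(\vec m) \to M_{Q_{E_\bullet}}(\vec m)$ (noted just before the lemma, via \cite[Section~6]{MR2004218}) together with the fact that the $(Q,I)$-moduli spaces are \emph{closed analytic subspaces} of the corresponding $(Q)$-moduli spaces restricted over $V$. First I would recall the commutative square: $M^+_{(Q_{E_\bullet}, I_{E_\bullet})}(\vec m)|_V$ is, by its definition in the text, the analytic Hilbert quotient $\mathrm{Rep}^+_{(Q,I)}(\vec m)|_V \sslash G$, which sits as a closed analytic subspace of $V^+ = q_Q^{-1}(V)$ (this is exactly what Lemma~\ref{lem:Zquot2} delivers); and likewise $M_{(Q,I)}(\vec m)|_V$ is a closed analytic subspace of $V$. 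The morphism $q_{(Q,I)}$ is the restriction of $q_Q|_{q_Q^{-1}(V)} \colon q_Q^{-1}(V) \to V$.

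**Key steps.** (1) Observe that projectivity of $q_Q$ is an algebraic statement, so its analytification $q_Q^{an}$ is a proper morphism of complex analytic spaces; restricting to the analytic open $V \subset M_{Q_{E_\bullet}}(\vec m)$, the map $q_Q^{-1}(V) \to V$ is proper, in fact projective in the sense that it factors through a closed immersion into $\mathbb{P}^N \times V$ for some $N$ (pull back a relatively ample line bundle). (2) Identify $M^+_{(Q,I)}(\vec m)|_V$ inside $q_Q^{-1}(V)$ as a closed analytic subspace: by construction its structure sheaf is a quotient of $\mathcal{O}_{q_Q^{-1}(V)}$ by the ideal generated by the $G$-invariant parts of the relations $f_i(a,b,\vec m)_{jk}$, exactly as in Definition~\ref{defi:cmoduli} and Lemma~\ref{lem:Zquot}/\ref{lem:Zquot2}. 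Then $q_{(Q,I)}$ is the composition $M^+_{(Q,I)}(\vec m)|_V \hookrightarrow q_Q^{-1}(V) \to V$, and its image lands inside the closed analytic subspace $M_{(Q,I)}(\vec m)|_V \subset V$ — this compatibility is precisely the commutativity of diagram (\ref{dia:quiver}), which follows from the universality of analytic Hilbert quotients (Lemma~\ref{lem:universal}). (3) Conclude: a morphism obtained by composing a closed immersion with a projective morphism and then corestricting to a closed subspace of the target is projective. Concretely, $M^+_{(Q,I)}(\vec m)|_V \hookrightarrow \mathbb{P}^N \times V$ is a closed immersion (being a closed subspace of $q_Q^{-1}(V)$, itself closed in $\mathbb{P}^N \times V$), and it factors through $\mathbb{P}^N \times M_{(Q,I)}(\vec m)|_V$ since the composite to $V$ lands in $M_{(Q,I)}(\vec m)|_V$; hence $q_{(Q,I)}$ is a closed immersion composed with the projection $\mathbb{P}^N \times M_{(Q,I)}(\vec m)|_V \to M_{(Q,I)}(\vec m)|_V$, which is the definition of projective.

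**Main obstacle.** The delicate point is not the formal diagram-chase but verifying that the image of $q_{(Q,I)}$ really is contained in $M_{(Q,I)}(\vec m)|_V$ as a closed analytic subspace (not merely set-theoretically), i.e. that the scheme/analytic structure is compatible. This is where one must invoke the universality established in Lemma~\ref{lem:universal}: the $G$-invariant analytic map $\mathrm{Rep}^+_{(Q,I)}(\vec m)|_V \to M_{(Q,I)}(\vec m)|_V$ obtained by composing $\mathrm{Rep}^+_{(Q,I)}(\vec m)|_V \hookrightarrow \mathrm{Rep}_{(Q,I)}(\vec m)|_V \to M_{(Q,I)}(\vec m)|_V$ factors uniquely through the analytic Hilbert quotient $\mathrm{Rep}^+_{(Q,I)}(\vec m)|_V \sslash G = M^+_{(Q,I)}(\vec m)|_V$, giving $q_{(Q,I)}$ with the correct target. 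A secondary subtlety is that $q_Q$ is stated to be projective as a morphism of algebraic schemes, so one should be slightly careful that restriction to an analytic open $V$ and passing to analytic spaces preserves the existence of a relatively very ample line bundle, which is immediate. Once these identifications are in place, projectivity of $q_{(Q,I)}$ is formal.
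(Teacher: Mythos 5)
Your proposal is correct and follows essentially the same route as the paper: the paper's proof consists of the commutative diagram in which the right square is Cartesian with open immersions (giving projectivity of $V^{+}\to V$ from that of $q_Q$) and the left horizontal arrows are closed immersions (from Lemma~\ref{lem:Zquot2}), whence $q_{(Q,I)}$ is projective. You merely spell out the final formal step (factoring through $\mathbb{P}^N\times V$ and corestricting to the closed subspace $M_{(Q_{E_{\bullet}},I_{E_{\bullet}})}(\vec{m})|_{V}$) that the paper leaves implicit.
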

\begin{proof}
We have the following commutative diagram
\begin{align*}
\xymatrix{
M_{(Q_{E_{\bullet}}, I_{E_{\bullet}})}^{+}(\vec{m})|_{V} \ar[d]_{q_{(Q, I)}}
\ar@<-0.3ex>@{^{(}->}[r] & V^{+} \ar[d] \ar@<-0.3ex>@{^{(}->}[r] 
\ar@{}[dr]|\square
& M_{Q_{E_{\bullet}}}^{+}(\vec{m}) \ar[d]^{q_Q} \\
M_{(Q_{E_{\bullet}}, I_{E_{\bullet}})}(\vec{m})|_{V} 
\ar@<-0.3ex>@{^{(}->}[r] & V  \ar@<-0.3ex>@{^{(}->}[r] 
& M_{Q_{E_{\bullet}}}(\vec{m}).
}
\end{align*}
Here the right diagram is a Cartesian square whose horizontal 
arrows are open immersions, and the horizontal arrows 
in the left diagram are closed immersions. 
Since $q_{Q}$ is projective, the
morphism $q_{(Q, I)}$ is projective by the above diagram. 
\end{proof}

\subsection{Moduli stacks of semistable sheaves under the change of stability}
Let us take $\sigma^{+}$ in (\ref{sigma+})
sufficiently close to $\sigma$. 
Then 
by wall-chamber structure on $U(X)$, 
any $\sigma^{+}$-semistable object $E$ 
with $\cl(E)=v$ is $\sigma$-semistable.   
Then
we have the commutative diagram
\begin{align}\label{dia:M+}
\xymatrix{
\mM_{\sigma^{+}}(v) \ar@<-0.3ex>@{^{(}->}[r] \ar[dr]_{r_M}
 \ar[d]_{p_M^{+}}
 & \mM_{\sigma}(v) 
\ar[d]^{p_M} \\
M_{\sigma^{+}}(v) \ar[r]_{q_M}  & M_{\sigma}(v). 
}
\end{align}
Here 
the vertical arrows are natural morphisms to the coarse moduli 
spaces, 
the top arrow is an open immersion
and the bottom arrow is induced by 
the universality of coarse moduli spaces. 
The following is the main 
result in this section. 
\begin{thm}\label{thm:onedim}
For a closed point $p\in M_{\sigma}(v)$
represented by a polystable sheaf (\ref{onedim:poly}), 
there is an analytic 
open neighborhoods $p \in U \subset M_{\sigma}(v)$
and $0 \in V \subset M_{Q_{E_{\bullet}}}(\vec{m})$, 
where $Q_{E_{\bullet}}$ is the Ext-quiver associated to $p$
with convergent relation $I_{E_{\bullet}}$, 
and the dimension vector $\vec{m}$ is given by (\ref{vec:onedim:m}), 
such that 
the diagram (\ref{dia:M+}) pulled back to $U$
\begin{align}\notag
\xymatrix{
r_M^{-1}(U) \ar@<-0.3ex>@{^{(}->}[r] 
 \ar[d]_{p_M^{+}}
 & p_M^{-1}(U)
\ar[d]^{p_M} \\
q_M^{-1}(U) \ar[r]_{q_M}  & U 
}
\end{align}
is isomorphic to the diagram (\ref{dia:quiver}). 
\end{thm}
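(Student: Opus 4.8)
The plan is to upgrade the local model of Theorem~\ref{thm:precise} so that it becomes compatible with the change of stability $\sigma\rightsquigarrow\sigma^{+}$. By Lemma~\ref{stack:twist}, Theorem~\ref{thm:precise} applies verbatim to $M_{\sigma}(v)$, so there are analytic neighborhoods $p\in U\subset M_{\sigma}(v)$, $0\in V\subset M_{Q_{E_{\bullet}}}(\vec m)$ together with the commutative isomorphism of (\ref{dia:comiso}) induced by $I_{\ast}$; more precisely, writing $\wW$ for the saturated slice in the relevant Quot scheme produced in Proposition~\ref{prop:complete}, $I_{\ast}$ gives a $G$-equivariant isomorphism $\mathrm{Rep}_{(Q_{E_{\bullet}},I_{E_{\bullet}})}(\vec m)|_{V}\xrightarrow{\ \cong\ }\wW$ which covers $M_{(Q_{E_{\bullet}},I_{E_{\bullet}})}(\vec m)|_{V}\xrightarrow{\ \cong\ }U$. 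The whole of Theorem~\ref{thm:onedim} will follow once I show that, after shrinking $U$ (equivalently $V$), this isomorphism carries the $\mu_{Q}^{+}$-semistable locus onto the open subset of $\wW$ parametrizing $\sigma^{+}$-semistable sheaves: passing to quotient stacks then identifies $\mM^{+}_{(Q_{E_{\bullet}},I_{E_{\bullet}})}(\vec m)|_{V}$ with $r_{M}^{-1}(U)$ compatibly with the open immersions into $\mM_{(Q_{E_{\bullet}},I_{E_{\bullet}})}(\vec m)|_{V}\cong p_{M}^{-1}(U)$, and taking the associated quotients identifies $M^{+}_{(Q_{E_{\bullet}},I_{E_{\bullet}})}(\vec m)|_{V}$ with $q_{M}^{-1}(U)$.

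The heart of the argument is the following comparison, which I would isolate as a lemma: for $\sigma^{+}$ sufficiently close to $\sigma$ and $U$ sufficiently small, a point $u\in\mathrm{Rep}_{(Q_{E_{\bullet}},I_{E_{\bullet}})}(\vec m)|_{V}$ yields a $\sigma^{+}$-(semi)stable sheaf $E_{u}=I_{\ast}(u)$ if and only if the corresponding $Q_{E_{\bullet}}$-representation $\mathbb{V}_{u}$ is $\mu_{Q}^{+}$-(semi)stable. First I would observe that $E_{u}$ is a small deformation of the $\sigma$-polystable sheaf $E=\bigoplus_{i}V_{i}\otimes E_{i}$, so by openness of $\sigma$-semistability, after shrinking $U$ every $E_{u}$ is $\sigma$-semistable of slope $\mu\cneq\mu_{B,\omega}(E)$, with all its $\sigma$-Jordan--H\"older factors small deformations of the $E_{i}$. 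Because the family $\{E_{u}\}_{u}$ is bounded, only finitely many classes can occur as classes of $\sigma$-semistable subsheaves of the $E_{u}$, so for $\sigma^{+}$ chosen close enough to $\sigma$ any $\sigma^{+}$-destabilizing subsheaf $F'\subsetneq E_{u}$ must have $\mu_{B,\omega}(F')=\mu$; then both $F'$ and $E_{u}/F'$ are $\sigma$-semistable of slope $\mu$ with composition factors among deformations of the $E_{i}$. By the functoriality of $I_{\ast}$ from Subsection~\ref{subsec:functI} (and Remark~\ref{rmk:operator}), a subrepresentation $\mathbb{V}'\subseteq\mathbb{V}_{u}$ of dimension vector $\vec m'$ produces a subsheaf $E_{u'}\hookrightarrow E_{u}$, and the class of $E_{u'}$ depends only on $\vec m'$; so by the slope identity (\ref{id:slopes}) one has $\mu_{B^{+},\omega^{+}}(E_{u'})=\mu_{Q}^{+}(\mathbb{V}')$ and $\mu_{B^{+},\omega^{+}}(E_{u})=\mu_{Q}^{+}(\mathbb{V}_{u})$, so the inequalities defining $\sigma^{+}$- and $\mu_{Q}^{+}$-(semi)stability match term by term. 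It then remains to see conversely that every slope-$\mu$ subsheaf $F'$ of $E_{u}$ is of the form $E_{u'}$ for a subrepresentation $\mathbb{V}'\subseteq\mathbb{V}_{u}$.

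This last point is the main obstacle. For $u=0$ it is immediate, since $E_{0}=E$, $\mathbb{V}_{0}=\bigoplus_{i}V_{i}\otimes S_{i}$, and the slope-$\mu$ subsheaves of $E$ are exactly the $\bigoplus_{i}V_{i}'\otimes E_{i}$. For nilpotent $u$ it follows from Theorem~\ref{cor:equiv:I}: then $E_{u}$ lies in $\langle E_{1},\dots,E_{k}\rangle$ and $\mathbb{V}_{u}$ in $\modu_{\mathrm{nil}}(A)$, and $I_{\ast}=\Phi$ is an equivalence of abelian categories, hence preserves lattices of subobjects. For general $u$ near $0$ the sheaf $E_{u}$ need not lie in $\langle E_{1},\dots,E_{k}\rangle$, so Theorem~\ref{cor:equiv:I} does not apply directly; the slope-$\mu$ subsheaves of $E_{u}$ are the subobjects of $E_{u}$ in the finite-length abelian category of $\sigma$-semistable sheaves of slope $\mu$, whose simple objects are the deformed stable sheaves $F_{i}(u)$, a ``deformed copy'' of the $u=0$ picture. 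I would therefore run the comparison in families over $V$: using $I_{\ast}$ and its functoriality one identifies, after shrinking $V$, the scheme of relative subrepresentations of the universal $\mathbb{V}$ over $\mathrm{Rep}_{(Q_{E_{\bullet}},I_{E_{\bullet}})}(\vec m)|_{V}$ with the relative Quot-type scheme parametrizing slope-$\mu$ subsheaves of the universal sheaf, i.e.\ a relative version of Theorem~\ref{cor:equiv:I} over $V$; granting this, the comparison lemma is proved. (Note that reducing to nilpotent $u$ alone does not suffice, since a $G$-invariant open is not determined by its intersection with the nilpotent locus $p_{Q}^{-1}(0)$.)

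With the lemma in hand the theorem is assembled formally. The $G$-equivariant isomorphism $\mathrm{Rep}_{(Q_{E_{\bullet}},I_{E_{\bullet}})}(\vec m)|_{V}\xrightarrow{\cong}\wW$ restricts, by the lemma, to a $G$-equivariant isomorphism of the open subsets of $\mu_{Q}^{+}$-semistable, resp.\ $\sigma^{+}$-semistable, points; passing to quotient stacks gives $\mM^{+}_{(Q_{E_{\bullet}},I_{E_{\bullet}})}(\vec m)|_{V}\cong r_{M}^{-1}(U)$ compatibly with the open immersions into $p_{M}^{-1}(U)$. Both the $\mu_{Q}^{+}$-semistable locus and the $\sigma^{+}$-semistable locus are GIT-semistable loci for suitable linearizations (Remark~\ref{rmk:Zquot2}), so by Lemma~\ref{lem:aquot} and Lemma~\ref{lem:Zquot2} their analytic Hilbert quotients exist and are computed by the corresponding GIT quotients; since the isomorphism above is compatible with the $G$-actions and these linearizations, it descends, by the universality of analytic Hilbert quotients (Lemma~\ref{lem:universal}), to an isomorphism $M^{+}_{(Q_{E_{\bullet}},I_{E_{\bullet}})}(\vec m)|_{V}\cong q_{M}^{-1}(U)$. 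Finally the two projective morphisms $q_{(Q,I)}$ (Lemma~\ref{lem:proj}) and $q_{M}\colon q_{M}^{-1}(U)\to U$ (see (\ref{intro:mor:wall})) are identified because both are induced on quotients by the same $G$-equivariant isomorphism, and the same applies to the open immersions $r_{M}\hookrightarrow p_{M}$ and $p^{+}_{(Q,I)}\hookrightarrow p_{(Q,I)}$ and to all vertical maps to coarse moduli. Hence the pullback of (\ref{dia:M+}) to $U$ is isomorphic to (\ref{dia:quiver}), as claimed.
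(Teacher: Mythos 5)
Your proposal correctly isolates the crux, but the step you flag as ``the main obstacle'' is left as a genuine gap: you need that for a \emph{general} (non-nilpotent) $u$ near $0$, every slope-$\mu$ subsheaf $F'\subsetneq E_u$ arises as $I_{\ast}(\mathbb{V}')$ for a subrepresentation $\mathbb{V}'\subseteq\mathbb{V}_u$, and you propose to get this from ``a relative version of Theorem~\ref{cor:equiv:I} over $V$'' which you explicitly only grant. That is a substantial unproved claim: Theorem~\ref{cor:equiv:I} is an equivalence between $\modu_{\rm{nil}}(A)$ and the extension closure $\langle E_1,\ldots,E_k\rangle$, and for non-nilpotent $u$ the sheaf $E_u$ does not lie in that category, so there is no abelian-category equivalence available whose subobject lattices you can compare; constructing and proving such a relative statement would be a significant piece of work, and it is not clear it holds in the form you state.

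The paper's proof avoids this entirely, and in doing so answers your parenthetical objection that ``reducing to nilpotent $u$ alone does not suffice.'' The two implications are treated asymmetrically. The easy direction (a $\mu_Q^{+}$-destabilizing subrepresentation yields a $\sigma^{+}$-destabilizing subsheaf) uses only the functoriality of $I_{\ast}$ from Subsection~\ref{subsec:functI} and Remark~\ref{rmk:operator}, exactly as you say, and needs no converse subobject correspondence. The hard direction is proved \emph{only} over the nilpotent fiber: Lemma~\ref{lem:pstab} shows, using Theorem~\ref{cor:equiv:I} and the no-wall condition, that for nilpotent $\mathbb{V}$ the two semistability notions agree. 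One then spreads this out not by a relative equivalence but by a properness argument: the locus $\zZ$ of $\mu_Q^{+}$-semistable points $x$ with $I_{\ast}(x)$ not $\sigma^{+}$-semistable is closed, its image $r_{(Q,I)}(\zZ)=q_{(Q,I)}\circ p_{(Q,I)}^{+}(\zZ)$ is closed in $M_{(Q_{E_{\bullet}},I_{E_{\bullet}})}(\vec m)|_{V}$ by Lemma~\ref{lem:univ:prepare} (closedness of images of $G$-invariant closed sets under the analytic Hilbert quotient) together with the projectivity of $q_{(Q,I)}$ (Lemma~\ref{lem:proj}), and by Lemma~\ref{lem:pstab} this closed image misses $0$; shrinking $V$ therefore makes $\zZ=\emptyset$. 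So a $G$-invariant open set need not be determined by its trace on $p_Q^{-1}(0)$ in general, but here the complement is proper over the coarse space, which is exactly what the shrinking argument exploits. You should replace your granted relative equivalence by this closedness-plus-projectivity argument; the rest of your assembly of the diagram is essentially the paper's.
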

\begin{proof}
We take $U=\wW \sslash G$, 
$V \subset M_{Q_{E_{\bullet}}}(\vec{m})$
 and the isomorphism 
\begin{align}\label{isom:Iast}
I_{\ast} \colon \mM_{(Q_{E_{\bullet}}, I_{E_{\bullet}})}(\vec{m})|_{V}
\stackrel{\cong}{\to} p_M^{-1}(U)
\end{align}
as in Proposition~\ref{prop:complete}.
It is enough to show that 
the isomorphism (\ref{isom:Iast})
restricts to the isomorphism
\begin{align}\label{desire}
I_{\ast} \colon \mM_{(Q_{E_{\bullet}}, I_{E_{\bullet}})}^{+}(\vec{m})|_{V}
\stackrel{\cong}{\to} r_M^{-1}(U). 
\end{align}
 
For a $\mathbb{C}$-valued point
$x \in \mM_{(Q_{E_{\bullet}}, I_{E_{\bullet}})}(\vec{m})|_{V}$, 
let $\mathbb{V}_x$ be the corresponding 
$Q_{E_{\bullet}}$-representation, 
and $E_x \in \Coh_{\le 1}(X)$
the $(B, \omega)$-semistable sheaf
corresponding to  
$I_{\ast}(x) \in p_M^{-1}(U)$. 
Let $\zZ \subset \mM_{(Q_{E_{\bullet}}, I_{E_{\bullet}})}^{+}(\vec{m})|_{V}$
be the closed substack given by
\begin{align*}
\zZ \cneq \{ x\in \mM_{(Q_{E_{\bullet}}, I_{E_{\bullet}})}^{+}(\vec{m})|_{V} :
I_{\ast}(x) \notin r_M^{-1}(U)\}.
\end{align*}
Namely 
$x \in \mM_{(Q_{E_{\bullet}}, I_{E_{\bullet}})}(\vec{m})|_{V}$
is a $\mathbb{C}$-valued point of $\zZ$ iff
$\mathbb{V}_x$ is $\mu_Q^{+}$-semistable but 
$E_x$ is not $(B^{+}, \omega^{+})$-semistable. 
Below we use the notation in the diagram (\ref{dia:quiver}). 
By Lemma~\ref{lem:pstab} below, 
we have 
\begin{align}\label{h:emptyset}
\zZ \cap (r_{(Q, I)})^{-1}(0)=\emptyset. 
\end{align}
On the other hand, by
Lemma~\ref{lem:univ:prepare}
the subset 
\begin{align*}
p_{(Q, I)}^{+}(\zZ) \subset
 M_{(Q_{E_{\bullet}}, I_{E_{\bullet}})}^{+}(\vec{m})|_{V}
\end{align*}
is closed. Together with Lemma~\ref{lem:proj}, we see that 
\begin{align*}
r_{(Q, I)}(\zZ)=q_{(Q, I)} \circ p_{(Q, I)}^{+}(\zZ) \subset
 M_{(Q_{E_{\bullet}}, I_{E_{\bullet}})}(\vec{m})|_{V}
\end{align*}
is a closed subset. 
By (\ref{h:emptyset}), the above closed subset does not contain 
$0$. Therefore by shrinking $V$ if necessary, we may assume that 
$\zZ=\emptyset$, i.e. (\ref{isom:Iast})
takes
$\mM^{+}_{(Q_{E_{\bullet}}, I_{E_{\bullet}})}(\vec{m})|_{V}$
to $r_M^{-1}(U)$.

Next 
for $x \in \mM_{(Q_{E_{\bullet}}, I_{E_{\bullet}})}(\vec{m})|_{V}$, 
suppose that 
$E_x$ is $(B^{+}, \omega^{+})$-semistable, i.e. 
$I_{\ast}(x) \in r_M^{-1}(U)$. 
Note that by (\ref{id:slopes}), we have
\begin{align}\notag
\mu_{Q}^{+}(\mathbb{V}_x)=\mu_{B^{+}, \omega^{+}}(E_x).
\end{align}
By the functoriality of $I_{\ast}$
in Subsection~\ref{subsec:functI}
and the above equality, 
if a sub $Q_{E_{\bullet}}$-representation 
$\mathbb{V}' \subset \mathbb{V}_x$ 
destabilizes $\mathbb{V}_x$ in 
$\mu_Q^{+}$-stability, then 
by applying $I_{\ast}$ and noting 
Remark~\ref{rmk:operator} we obtain 
the subsheaf $E' \subset E_x$
 which destabilizes $E_x$ 
in $(B^{+}, \omega^{+})$-stability. 
This is a contradiction, so 
$\mathbb{V}_x$ is $\mu_Q^+$-semistable, i.e. 
$x \in \mM^{+}_{(Q_{E_{\bullet}}, I_{E_{\bullet}})}(\vec{m})|_{V}$. 
Therefore we obtain the desired isomorphism (\ref{desire}).  
\end{proof}

We have used the following lemma: 

\begin{lem}\label{lem:pstab}
Under the equivalence $I_{\ast}$ in 
Theorem~\ref{cor:equiv:I},
an object $\mathbb{V} \in \modu_{\rm{nil}}(A)$
with $\dim \mathbb{V}=\vec{m}$
is $\mu_{Q}^+$-semistable iff
$F=I_{\ast}(\mathbb{V})$
is $(B^{+}, \omega^{+})$-semistable in $\Coh_{\le 1}(X)$. 
\end{lem}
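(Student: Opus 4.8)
The plan is to transport the (semi)stability condition back and forth along the equivalence $I_{\ast}\colon\modu_{\rm{nil}}(A)\xrightarrow{\ \sim\ }\langle E_1,\ldots,E_k\rangle\subset\Coh_{\le 1}(X)$ of Theorem~\ref{cor:equiv:I}. The three ingredients are: (i) $I_{\ast}$ is exact, so it carries monomorphisms to monomorphisms and is additive on Grothendieck groups; (ii) the two slope functions match under $I_{\ast}$; and (iii) $\langle E_1,\ldots,E_k\rangle$ sits as a Serre subcategory inside the abelian category of $(B,\omega)$-semistable sheaves of the relevant slope.

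First I would record (i) and (ii). Exactness of $I_{\ast}$ and the isomorphism $I_{\ast}\cong\Phi$ come from Theorem~\ref{cor:equiv:I} together with the exactness of $\Phi$ proved inside Proposition~\ref{prop:equiv} (since $E^{(n)}$ is $R^{(n)}$-flat). In particular, for $\mathbb{W}\in\modu_{\rm{nil}}(A)$ with dimension vector $(m_i')$ one has $[I_{\ast}(\mathbb{W})]=\sum_i m_i'[E_i]$ in $K(\Coh_{\le 1}(X))$ because $\Phi(S_i)=E_i$; combined with $\cl(E_i)=([E_i],\chi(E_i))$ and the defining formula $Z_Q^{+}([S_i])=Z_{B^{+},\omega^{+}}(E_i)$, this gives $Z_Q^{+}([\mathbb{W}])=Z_{B^{+},\omega^{+}}(\cl(I_{\ast}\mathbb{W}))$, hence $\mu_Q^{+}(\mathbb{W})=\mu_{B^{+},\omega^{+}}(I_{\ast}\mathbb{W})$ for every $\mathbb{W}$ (with the convention that both sides are $\infty$ when $\Imm Z^{+}$ vanishes). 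Since subrepresentations of a nilpotent $A$-module are again objects of $\modu_{\rm{nil}}(A)$, the notion of subobject on both sides of $I_{\ast}$ is unambiguous.

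For the forward implication, assume $F=I_{\ast}(\mathbb{V})$ is $(B^{+},\omega^{+})$-semistable and take $0\neq\mathbb{V}'\subsetneq\mathbb{V}$. By exactness $I_{\ast}(\mathbb{V}')$ is a nonzero subsheaf of $F$ in $\Coh_{\le 1}(X)$, so $\mu_{B^{+},\omega^{+}}(I_{\ast}\mathbb{V}')\le\mu_{B^{+},\omega^{+}}(F)$, and the slope identity of the previous paragraph yields $\mu_Q^{+}(\mathbb{V}')\le\mu_Q^{+}(\mathbb{V})$; hence $\mathbb{V}$ is $\mu_Q^{+}$-semistable. For the converse, suppose $\mathbb{V}$ is $\mu_Q^{+}$-semistable but $F$ is not $(B^{+},\omega^{+})$-semistable, and let $F_1\subset F$ be the maximal $\sigma^{+}$-destabilizing subsheaf, so $\mu_{B^{+},\omega^{+}}(F_1)>\mu_{B^{+},\omega^{+}}(F)$. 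Now $F\in\langle E_1,\ldots,E_k\rangle$ is $\sigma=\sigma_{B,\omega}$-semistable of slope $\mu\cneq\mu_{B,\omega}(E)$, and by the standing assumption that $\sigma^{+}$ is sufficiently close to $\sigma$, the wall-and-chamber structure of $U(X)$ (already used in Lemma~\ref{stack:twist}) forces every $\sigma^{+}$-Harder--Narasimhan factor of $F$ — in particular $F_1$ — to be $\sigma$-semistable of the same slope $\mu$. Let $\mathcal{A}_\mu\subset\Coh_{\le 1}(X)$ be the abelian category whose objects are $0$ and the $(B,\omega)$-semistable sheaves of slope $\mu$; a sheaf inclusion between objects of $\mathcal{A}_\mu$ of slope $\mu$ is then a monomorphism in $\mathcal{A}_\mu$ with cokernel again in $\mathcal{A}_\mu$, and $\langle E_1,\ldots,E_k\rangle$, being the extension closure of the pairwise non-isomorphic simple objects $E_1,\ldots,E_k$ of $\mathcal{A}_\mu$, is a Serre subcategory. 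Therefore $F_1\in\langle E_1,\ldots,E_k\rangle$, so applying the quasi-inverse of $I_{\ast}$ to $F_1\hookrightarrow F$ produces a subrepresentation $\mathbb{V}_1\subsetneq\mathbb{V}$ with $I_{\ast}(\mathbb{V}_1)\cong F_1$; then $\mu_Q^{+}(\mathbb{V}_1)=\mu_{B^{+},\omega^{+}}(F_1)>\mu_{B^{+},\omega^{+}}(F)=\mu_Q^{+}(\mathbb{V})$, contradicting $\mu_Q^{+}$-semistability of $\mathbb{V}$.

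The main obstacle is the step in the converse asserting that the $\sigma^{+}$-destabilizing subsheaf $F_1$ is still $\sigma$-semistable of slope $\mu$; this is precisely where the hypothesis that $\sigma^{+}$ be close to $\sigma$ is used, and it rests on local finiteness of walls together with boundedness of the families of subsheaves of a fixed $\sigma$-semistable $F$. Once this is granted, the argument is formal: exactness of $I_{\ast}$ and the slope identity handle the forward direction, and the Serre-subcategory property of $\langle E_1,\ldots,E_k\rangle$ inside the slice $\mathcal{A}_\mu$ lets one lift the destabilizing subsheaf back to a subrepresentation in the converse direction.
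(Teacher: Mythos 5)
Your proof is correct and follows essentially the same route as the paper's: the forward direction by pushing subrepresentations to subsheaves via the exact equivalence $I_{\ast}$ together with the slope identity, and the converse by showing that a $\sigma^{+}$-destabilizing subsheaf of $F$ must have the same $(B,\omega)$-slope and be $(B,\omega)$-semistable (no wall between $\sigma$ and $\sigma^{+}$), hence lies in $\langle E_1,\ldots,E_k\rangle$ by the Jordan--H\"older argument, and then lifting it through the equivalence to contradict $\mu_Q^{+}$-semistability. The only cosmetic differences are that the paper works with an arbitrary destabilizing subsheaf rather than the maximal one and phrases your Serre-subcategory step as uniqueness of JH factors of $(B,\omega)$-semistable sheaves.
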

\begin{proof}
The if direction is proved in the 
first part of the proof of Theorem~\ref{thm:onedim}, 
so we only prove the only if direction. 
Suppose by contradiction 
that $\mathbb{V}$ is $\mu_Q^+$-semistable 
but $F$ is not 
$(B^{+}, \omega^{+})$-semistable. 
Then there is a non-zero 
subsheaf $F' \subsetneq F$
such that 
$\mu_{B^{+}, \omega^+}(F')>\mu_{B^{+}, \omega^+}(F)$.
On the other hand, as $\sigma^+$ is sufficiently close to 
$\sigma$ we may assume that there is no wall between $\sigma$
and $\sigma^+$ w.r.t. the numerical class $\cl(F)$.
So we have $\mu_{B, \omega}(F')\ge \mu_{B, \omega}(F)$. 
Since $F \in \langle E_1, \ldots, E_k \rangle$
and each $E_i$ is $(B, \omega)$-stable with the same slope, 
the sheaf $F$ is $(B, \omega)$-semistable. 
Therefore we have $\mu_{B, \omega}(F')\le \mu_{B, \omega}(F)$, 
thus $\mu_{B, \omega}(F')=\mu_{B, \omega}(F)$ and
$F'$ is also $(B, \omega)$-semistable. 
By the uniqueness of JH factors of 
$(B, \omega)$-semistable sheaves, we have 
$F' \in \langle E_1, \ldots, E_k \rangle$. 
Then by the equivalence $I_{\ast}$ in 
Theorem~\ref{cor:equiv:I}, 
we find a subobject
$\mathbb{V}' \subset \mathbb{V}$
in $\modu_{\rm{nil}}(A)$ 
with $I_{\ast}(\mathbb{V}') \cong F'$. 
By the identity (\ref{id:slopes}), the subobject $\mathbb{V}'$
destabilizes $\mathbb{V}$, hence a contradiction. 
\end{proof}

\providecommand{\bysame}{\leavevmode\hbox to3em{\hrulefill}\thinspace}
\providecommand{\MR}{\relax\ifhmode\unskip\space\fi MR }
% \MRhref is called by the amsart/book/proc definition of \MR.
\providecommand{\MRhref}[2]{%
  \href{http://www.ams.org/mathscinet-getitem?mr=#1}{#2}
}
\providecommand{\href}[2]{#2}

Kavli Institute for the Physics and 
Mathematics of the Universe, University of Tokyo,
5-1-5 Kashiwanoha, Kashiwa, 277-8583, Japan.

\textit{E-mail address}: yukinobu.toda@ipmu.jp

\end{document}